\newtheorem{theorem}{Theorem}[section]
\newtheorem{proposition}[theorem]{Proposition}
\newtheorem{corollary}[theorem]{Corollary}
\newtheorem{definition}[theorem]{Definition}
\newtheorem{lemma}[theorem]{Lemma}
\newtheorem{thIntro}{Theorem}
\theoremstyle{remark}
\newtheorem{remark}[theorem]{Remark}
\newtheorem{example}[theorem]{Example}
\newcommand{\fonc}[5]{                     
            \begin{array}{crll}#1 :& #2 & \rightarrow & #3 \\[.2em]   %
                         &#4 &\mapsto & #5          %
            \end{array}}
\newcommand{\smallsquare}{%
  \vcenter{\hbox{\:\scalebox{0.35}{$\blacksquare$}\:}}%
}
\DeclareMathOperator{\Ext}{Ext}
\DeclareMathOperator{\Hom}{Hom}
\title{\bf Derived representations\\of quantum character varieties}
\author[1]{M. Faitg}
\affil[1]{\small \textit{Institut de Mathématiques de Toulouse, Universit\'e Paul Sabatier, 118 route de Narbonne, 
\newline F-31062 Toulouse, France.}}
\date{}
\begin{document}

\maketitle

\vspace{-2em}

\noindent {\small {\em E-mail adress:} matthieu.faitg@math.univ-toulouse.fr}

\begin{abstract}
Quantum moduli algebras $\mathcal{L}_{g,n}^{\mathrm{inv}}(H)$ were introduced by Alekseev--Grosse--Schomerus and Buffenoir--Roche in the context of quantization of character varieties of surfaces and exist for any quasitriangular Hopf algebra $H$. In this paper we construct representations of $\mathcal{L}_{g,n}^{\mathrm{inv}}(H)$ on cohomology spaces $\mathrm{Ext}_H^m(X,M)$ for all $m \geq 0$, where $X$ is any $H$-module and $M$ is any $\mathcal{L}_{g,n}(H)$-module endowed with a compatible $H$-module structure. As a corollary and under suitable assumptions on $H$, we obtain projective representations of mapping class groups of surfaces on such Ext spaces. This recovers the projective representations obtained in \cite{LMSS} from Lyubashenko theory, when the category $\mathcal{C} = H\text{-}\mathrm{mod}$ is used in their construction. Other topological applications are matrix-valued invariants of knots in thickened surfaces and representations of skein algebras on Ext spaces.
\end{abstract}

\section{Introduction}

\subsection{Context}\label{contextIntro}
\indent The basic algebraic inputs in quantum topology are ribbon tensor categories, from which one constructs invariants of links, 3-manifolds or more generally TQFTs. Some assumptions are generally made on the category $\mathcal{C}$ one wants to use. In the famous Reshetikhin--Turaev construction \cite{RT2, turaev}, one assumes that $\mathcal{C}$ is finite, semisimple (or is truncated to its semisimple part) and its braiding is non-degenerate. While non-degeneracy and usually also finiteness are widely accepted as reasonable assumptions, many works have focused on the creation of topological invariants based on non-semisimple categories, like representations of mapping class groups \cite{lyu}, link invariants \cite{GKP}, invariants of 3-manifolds \cite{CGP} (where finiteness is also weakened) and TQFTs \cite{DGGPR}, to cite a few.

\indent An aspect which is peculiar to the non-semisimple situation, and which has been much less explored from a topological perspective, is the existence of non-zero Ext spaces $\Ext_{\mathcal{C}}^m(-,-)$ for $m>0$. It is natural to wonder how these cohomology spaces interact with the topological invariants defined from $\mathcal{C}$. Eventually one would like to obtain ``derived TQFTs'' with values in a category of complexes, or whose state spaces are cohomology spaces. Pioneering work in this direction are \cite{LMSS} where it is proved that the Lyubashenko construction \cite{lyu} can be extended to projective representations of mapping class groups of surfaces on certain Ext spaces (the torus mapping class group was proved to act on the Hochschild cohomology of suitable Hopf algebras in \cite{LMSSmodular}) and \cite{SWb} where homotopy coherent actions of mapping class groups of surfaces on the Hochshild complex of a modular category are constructed (the torus case being treated in \cite{SWa}).

\indent The present paper was inspired by the work of Lentner, Mierach, Schweigert and Sommerh\"auser \cite{LMSS}. Let us explain their main result in some detail; this will give a more precise idea of what is a ``derived topological invariant'' and will facilitate the comparison with our results explained below.

\smallskip

\indent Let $\Sigma_{g,n}^{\circ}$ be the compact oriented surface of genus $g$ with $n+1$ boundary circles, one of which is distinguished. Let $\mathcal{C}$ be a ribbon finite tensor category which is factorizable (\textit{i.e.} non-degeneracy of the braiding). Label the boundary circles of $\Sigma_{g,n}^{\circ}$ by any objects $X, X_1,\ldots,X_n \in \mathcal{C}$ and denote the resulting ``colored surface'' by $\Sigma^X_{g,X_1,\ldots,X_n}$:
\begin{center}
\begingroup%
  \makeatletter%
  \providecommand\color[2][]{%
    \errmessage{(Inkscape) Color is used for the text in Inkscape, but the package 'color.sty' is not loaded}%
    \renewcommand\color[2][]{}%
  }%
  \providecommand\transparent[1]{%
    \errmessage{(Inkscape) Transparency is used (non-zero) for the text in Inkscape, but the package 'transparent.sty' is not loaded}%
    \renewcommand\transparent[1]{}%
  }%
  \providecommand\rotatebox[2]{#2}%
  \newcommand*\fsize{\dimexpr\f@size pt\relax}%
  \newcommand*\lineheight[1]{\fontsize{\fsize}{#1\fsize}\selectfont}%
  \ifx\svgwidth\undefined%
    \setlength{\unitlength}{316.20078346bp}%
    \ifx\svgscale\undefined%
      \relax%
    \else%
      \setlength{\unitlength}{\unitlength * \real{\svgscale}}%
    \fi%
  \else%
    \setlength{\unitlength}{\svgwidth}%
  \fi%
  \global\let\svgwidth\undefined%
  \global\let\svgscale\undefined%
  \makeatother%
  \begin{picture}(1,0.17004427)%
    \lineheight{1}%
    \setlength\tabcolsep{0pt}%
    \put(0,0){\includegraphics[width=\unitlength,page=1]{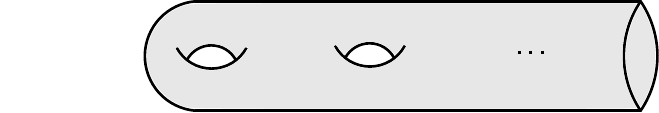}}%
    \put(0.55720768,0.05129965){\color[rgb]{0,0,0}\makebox(0,0)[lt]{\lineheight{1.25}\smash{\begin{tabular}[t]{l}$_g$\end{tabular}}}}%
    \put(0.70898186,0.04977554){\color[rgb]{0,0,0}\makebox(0,0)[lt]{\lineheight{1.25}\smash{\begin{tabular}[t]{l}$_{X_1}$\end{tabular}}}}%
    \put(0.86188511,0.05126635){\color[rgb]{0,0,0}\makebox(0,0)[lt]{\lineheight{1.25}\smash{\begin{tabular}[t]{l}$_{X_n}$\end{tabular}}}}%
    \put(0.3098096,0.04181755){\color[rgb]{0,0,0}\makebox(0,0)[lt]{\lineheight{1.25}\smash{\begin{tabular}[t]{l}$_1$\end{tabular}}}}%
    \put(0,0){\includegraphics[width=\unitlength,page=2]{coloredSurfaceIntro.pdf}}%
    \put(0.95481879,0.0793426){\color[rgb]{0,0,0}\makebox(0,0)[lt]{\lineheight{1.25}\smash{\begin{tabular}[t]{l}$X$\end{tabular}}}}%
    \put(-0.00099061,0.0741717){\color[rgb]{0,0,0}\makebox(0,0)[lt]{\lineheight{1.25}\smash{\begin{tabular}[t]{l}$\Sigma^X_{g,X_1,\ldots,X_n}=$\end{tabular}}}}%
    \put(0,0){\includegraphics[width=\unitlength,page=3]{coloredSurfaceIntro.pdf}}%
  \end{picture}%
\endgroup%

\end{center}
To such a colored surface, associate the vector space
\begin{equation}\label{lyubSpacesIntro}
Z(\Sigma^X_{g,X_1,\ldots,X_n}) := \Hom_{\mathcal{C}}\bigl( X \otimes X_1 \otimes \ldots \otimes X_n, L^{\otimes g} \bigr)
\end{equation}
where $L$ is the coend $\int^{X \in \mathcal{C}} X^* \otimes X$, an important object in $\mathcal{C}$. Denote by $\mathrm{MCG}(\Sigma_{g,n}^{\circ})$ the mapping class group which fixes the boundary pointwise.   In \cite[\S 4.4]{lyu}, Lyubashenko defined a projective representation
\begin{equation}\label{lyuIntro}
Z_X : \mathrm{MCG}(\Sigma_{g,n}^{\circ}) \to \mathrm{PGL}\bigl( Z(\Sigma_{g,X_1,\ldots,X_n}^X) \bigr).
\end{equation}
The values of $Z_X$ are defined on a set of generators of $\mathrm{MCG}(\Sigma_{g,n}^{\circ})$ by explicit dinatural transformations which factor through the coend $L^{\otimes g}$. Moreover the choice $X = \boldsymbol{1}$ (the unit object of $\mathcal{C}$) is special: $Z_{\boldsymbol{1}}$ descends to a projective representation of the mapping class group of the surface $\Sigma_{g,n}$ where the distinguished boundary component has been closed.

\indent The idea of \cite{LMSS} is to replace the space $Z(\Sigma_{g,X_1,\ldots,X_n}^X)$ by its {\em derived version}:
\[ Z^m(\Sigma_{g,X_1,\ldots,X_n}^X) = \mathrm{Ext}^m_{\mathcal{C}}(X \otimes X_1 \otimes \ldots \otimes X_n,L^{\otimes g}) \quad \text{for all } m \geq 0. \]
One can imagine that the distinguished boundary circle of $\Sigma_{g,n}^{\circ}$ is now colored by a projective resolution $0 \leftarrow X \leftarrow P_0 \leftarrow P_1 \leftarrow \ldots$ in $\mathcal{C}$, which by application of the functor $Z(\Sigma^{-}_{g,X_1,\ldots,X_n}) = \Hom_{\mathcal{C}}(- \otimes X_1 \otimes \ldots \otimes X_n,L^{\otimes g})$ gives the cochain complex
\[ 0 \longrightarrow Z(\Sigma^{P_0}_{g,X_1,\ldots,X_n}) \longrightarrow Z(\Sigma^{P_1}_{g,X_1,\ldots,X_n}) \longrightarrow \ldots  \]
whose cohomology is $Z^m(\Sigma_{g,X_1,\ldots,X_n}^X)$. It is not hard to see that the differential of this complex commute with the representations \eqref{lyuIntro} on each cochain space, resulting in a projective representation of $\mathrm{MCG}(\Sigma_{g,n}^{\circ})$ on the cohomology spaces:
\begin{equation}\label{LMSSIntro}
Z_X^m : \mathrm{MCG}(\Sigma_{g,n}^{\circ}) \to \mathrm{PGL}\bigl( Z^m(\Sigma_{g,X_1,\ldots,X_n}^X) \bigr), \quad Z^m_X(f)\bigl( [s] \bigr) = \bigl[ Z_{P_m}(s) \bigr]
\end{equation}
for all cocycle $s \in Z(\Sigma^{P_m}_{g,X_1,\ldots,X_n})$, where $[\:\:]$ denotes cohomology classes. The hard part, which is the main result of \cite{LMSS}, is to prove that for $X = \boldsymbol{1}$ the morphism $Z^m_{\boldsymbol{1}}$ descends to a projective representation of $\mathrm{MCG}(\Sigma_{g,n})$. Thanks to the classical fact that $\Ext^0_{\mathcal{C}} = \Hom_{\mathcal{C}}$, the case $m=0$ recovers Lyubashenko's construction.

\smallskip

\indent Let $H$ be a ribbon Hopf algebra and take $\mathcal{C} = H\text{-}\mathrm{mod}$. To give an overview, the contents of the present paper can be quickly summarized as follows:

\smallskip

\indent (1) We prove that a certain quantization $\mathcal{L}_{g,n}^{\mathrm{inv}}(H)$ of the {\em character variety} of $\Sigma_{g,n}^{\circ}$ has representations on (a variant of) the cohomology spaces $Z^m(\Sigma_{g,X_1,\ldots,X_n}^X)$. As the notation suggests, the algebra $\mathcal{L}_{g,n}^{\mathrm{inv}}(H)$ is defined as the subalgebra of $H$-invariant elements in a $H$-module-algebra $\mathcal{L}_{g,n}(H)$, which is itself a quantization of the {\em representation variety} of $\Sigma_{g,n}^{\circ}$. The existence of such ``derived representations'' of $\mathcal{L}_{g,n}^{\mathrm{inv}}(H)$ follows easily from elementary remarks on representations of $H$-module-algebras. See Theorem \ref{thmRepDerIntro} in \S\ref{subsecSummaryRes} below.

\smallskip

\indent (2) The $H$-module-algebra $\mathcal{L}_{g,n}(H)$ is known to be isomorphic to the {\em stated skein algebra} of the surface $\Sigma_{g,n}^\circ$ \cite[\S 6]{BFR}, which is defined as a vector space of ``decorated'' ribbon graphs in $\Sigma_{g,n}^\circ \times [0,1]$ modulo relations given by the Reshetikhin--Turaev functor \cite{Le, CL, LS, BFR}. We observe that this isomorphism allows for a natural definition of {\em Wilson loop maps}, which to a knot $K \subset \Sigma_{g,n}^\circ \times [0,1]$ associates an element $\mathbb{W}^\varphi(K) \in \mathcal{L}_{g,n}^{\mathrm{inv}}(H)$, where $\varphi \in H^*$ is any ``trace-like'' linear form. These maps $\mathbb{W}^\varphi$ agree with previous definitions in \cite{BuR2, BFK, FaitgHol}. We prove that if two knots $K_1,K_2 \subset \Sigma_{g,n}^\circ \times [0,1]$ become isotopic in  $\Sigma_{g,n} \times [0,1]$, then $\mathbb{W}^\varphi(K_1)$ and $\mathbb{W}^\varphi(K_2)$ have equal representations on $Z^m(\Sigma_{g,X_1,\ldots,X_n}^\Bbbk)$ for all $m$. This gives a sense to the idea that coloring the distinguished boundary component $\circ$ of $\Sigma_{g,n}^\circ$ by the trivial $H$-module $\Bbbk$ is like ``closing'' it, thus passing to $\Sigma_{g,n}$. See Theorem \ref{thmKnotInvariant} in \S\ref{subsecSummaryRes} below.

\smallskip

\indent (3) There is a natural action of the mapping class group $\mathrm{MCG}(\Sigma_{g,n}^\circ)$ on the character variety of $\Sigma_{g,n}^\circ$. It is known that the action survives quantization: there is an action of $\mathrm{MCG}(\Sigma_{g,n}^\circ)$ on $\mathcal{L}_{g,n}(H)$ and $\mathcal{L}_{g,n}^{\mathrm{inv}}(H)$ by algebra automorphisms \cite{AS,FaitgMCG}. Actually this fact becomes obvious through the isomorphism between $\mathcal{L}_{g,n}(H)$ and the stated skein algebra of $\Sigma_{g,n}^\circ$, because $\mathrm{MCG}(\Sigma_{g,n}^\circ)$ clearly acts on the latter. Let us restrict to surfaces $\Sigma_{g,0}^\circ$ (i.e. $n=0$). When $H$ is finite-dimensional and factorizable, $\mathcal{L}_{g,0}(H)$ is a matrix algebra. The Sk\"olem--Noether theorem then implies that each $f \in \mathrm{MCG}(\Sigma_{g,0}^\circ)$ acts by conjugation on $\mathcal{L}_{g,0}(H)$ by a unique up to scalar element $\widehat{f}$, which turns out to belong to the subalgebra $\mathcal{L}_{g,0}^{\mathrm{inv}}(H)$. Then using item (1) above, we can represent $\widehat{f}$ on $Z^m(\Sigma_{g}^X)$. This defines a projective representation of $\mathrm{MCG}(\Sigma_{g,0}^\circ)$ on $Z^m(\Sigma_{g}^X)$ for each $m$. We prove that when $X = \Bbbk$ is the trivial $H$-module, it descends to a projective representation of the mapping class group of the closed surface $\Sigma_{g,0}$. Through the use of explicit formulas, we show that our projective representations of $\mathrm{MCG}(\Sigma_{g,0}^\circ)$ and $\mathrm{MCG}(\Sigma_{g,0})$ are equivalent to those of \cite{LMSS}. See Theorem \ref{thmIntroRepMCG} in \S\ref{subsecSummaryRes} below.

\smallskip

\indent In the next section the definitions and results are explained in detail and precisely.

\subsection{Detailed summary of results}\label{subsecSummaryRes}
\indent In order to explain our results precisely, we begin with a short description of the quantization of character varieties of surfaces that we use here and which is due to Alekseev--Grosse--Schomerus \cite{AGS1} and Buffenoir--Roche \cite{BuR1}. For a reductive algebraic Lie group $G$ and an oriented surface $\Sigma$, the affine variety of representations (resp. characters) of $\pi_1(\Sigma)$ with values in $G$ is
\[ \mathrm{Rep}_G(\Sigma) = \Hom_{\mathrm{Grp}}\bigl( \pi_1(\Sigma),G \bigr) \qquad (\text{resp. } \mathrm{Ch}_G(\Sigma) = \mathrm{Rep}_G(\Sigma)/\!\!/G) \]
where $G$ acts by conjugation on $\mathrm{Rep}_G(\Sigma)$. By definition $\mathcal{O}\bigl[ \mathrm{Ch}_G(\Sigma)\bigr] = \mathcal{O}\bigl[ \mathrm{Rep}_G(\Sigma)\bigr]^{G\text{-inv}}$, where $\mathcal{O}$ denotes the algebra of regular functions. Assume that $\Sigma = \Sigma_{g,n}^{\circ}$.\footnote{Because of the use of stated skein algebras to be explained below, it is more convenient to assume that $\Sigma^{\circ}_{g,n}$ is the oriented surface of genus with $n$ punctures and one boundary circle, instead of $n+1$ boundary circles as in \S\ref{contextIntro}. This is what $\Sigma_{g,n}^{\circ}$ means in the sequel.} Since $\pi_1(\Sigma_{g,n}^{\circ})$ is a free group on $2g+n$ generators, we have $\mathcal{O}\bigl[ \mathrm{Rep}_G(\Sigma_{g,n}^{\circ}) \bigr] \cong \mathcal{O}(G)^{\otimes (2g+n)}$. The idea of the quantization is to consider the vector space $\mathcal{O}_q\bigl[ \mathrm{Rep}_G(\Sigma_{g,n}^{\circ}) \bigr] = \mathcal{O}_q(G)^{\otimes (2g+n)}$, where $\mathcal{O}_q(G)$ is the standard quantized algebra of functions on $G$, and to endow it with a ``twisted product'' which involves the $R$-matrix of $U_q(\mathfrak{g})$ where $\mathfrak{g} = \mathrm{Lie}(G)$. A crucial feature of the construction is that $\mathcal{O}_q\bigl[ \mathrm{Rep}_G(\Sigma_{g,n}^{\circ}) \bigr]$ is an $U_q(\mathfrak{g})$-module-algebra under a certain coadjoint action. This permits to define the subalgebra $\mathcal{O}_q\bigl[ \mathrm{Ch}_G(\Sigma_{g,n}^{\circ}) \bigr] := \mathcal{O}_q\bigl[ \mathrm{Rep}_G(\Sigma_{g,n}^{\circ}) \bigr]^{U_q(\mathfrak{g})\text{-inv}}$, called {\em quantum moduli algebra} or {\em quantum character variety}.

\indent Being purely algebraic, this construction can be applied to any quasitriangular Hopf algebra $H$ instead of $U_q(\mathfrak{g})$, with the restricted dual $H^{\circ}$ instead of $\mathcal{O}_q(G)$. In this way, see \S\ref{subsectionDefLgnH} for the precise definitions, one gets a (right) $H$-module-algebra $\mathcal{L}_{g,n}(H)$ which is $(H^{\circ})^{\otimes (2g+n)}$ as a vector space and which has a subalgebra 
\[ \mathcal{L}_{g,n}^{\mathrm{inv}}(H) := \mathcal{L}_{g,n}(H)^{H\text{-}\mathrm{inv}} \]
of invariant elements under the coadjoint action of $H$.\footnote{The relation with the previous notations is thus $\mathcal{O}_q\bigl[ \mathrm{Rep}_G(\Sigma_{g,n}^{\circ}) \bigr] = \mathcal{L}_{g,n}\bigl(  U_q(\mathfrak{g}) \bigr)$ and $\mathcal{O}_q\bigl[ \mathrm{Ch}_G(\Sigma_{g,n}^{\circ}) \bigr] = \mathcal{L}_{g,n}^{\mathrm{inv}}\bigl(  U_q(\mathfrak{g}) \bigr)$, modulo the use of $\mathcal{O}_q(G)$ instead of the whole $U_q(\mathfrak{g})^{\circ}$.} In particular $\mathcal{L}_{0,1}(H)$ is a variant of the coend $L = \int^{X \in H\text{-}\mathrm{mod}} X^* \otimes X$ used in Lyubashenko theory.

\smallskip

\indent It is well-known that $\mathcal{L}_{g,n}(H)$ has a representation on the space
\begin{equation}\label{repSpacesLgnIntro}
(H^{\circ})^{\otimes g} \otimes X_1 \otimes \ldots \otimes X_n
\end{equation}
for all $H$-modules $X_1,\ldots,X_n$. Moreover, we prove in \S\ref{sectionSomeEquivModules} that there is a $H$-module structure on the space \eqref{repSpacesLgnIntro} such that the action of $\mathcal{L}_{g,n}(H)$ is $H$-linear. This is called a {\em $H$-equivariant module}, see Definition~\ref{defEquivariantModule}. Note that, topologically, the representation of $\mathcal{L}_{g,0}(H)$ on $(H^\circ)^{\otimes g}$ corresponds to the representation of the stated skein algebra of the surface $\Sigma_{g,0}^\circ$ (as defined in \S\ref{sectionKnotMaps}) on the stated skein module of the handlebody $\Sigma_{0,g}^\circ$ \cite[\S 6]{FaitgHol}.

\indent Let $\Bbbk$ be the ground field of the ribbon Hopf algebra $H$. Take another $H$-module $X$ and consider the $\Bbbk$-vector spaces
\begin{align}
V(\Sigma^X_{g,X_1,\ldots,X_n}) &= \Hom_H\bigl( X, (H^{\circ}_{\mathrm{coad}})^{\otimes g} \otimes X_1 \otimes \ldots \otimes X_n \bigr)\label{repSpaceVIntro}\\
V^m(\Sigma^X_{g,X_1,\ldots,X_n}) &= \Ext^m_H\bigl( X, (H^{\circ}_{\mathrm{coad}})^{\otimes g} \otimes X_1 \otimes \ldots \otimes X_n \bigr) \quad \text{for all } m \geq 0 \nonumber
\end{align}
where $\mathrm{coad}$ is the coadjoint action of $H$ on $H^{\circ}$. These are variants of the Lyubashenko spaces $Z(\Sigma^X_{g,X_1,\ldots,X_n})$ and of their derived version discussed in \S\ref{contextIntro}.\footnote{One might think that $V(\Sigma^X_{g,X_1,\ldots,X_n})$ is related to $Z(\Sigma^X_{g,X_1,\ldots,X_n})$ from \eqref{lyubSpacesIntro} simply by duality, but actually this is more subtle because we use a {\em right} $H$-action. See \S\ref{subsubsExplicitFormulas} for an explicit isomorphism in the case $n=0$.} The $H$-equivariance of \eqref{repSpacesLgnIntro} easily implies that $\mathcal{L}_{g,n}^{\mathrm{inv}}(H)$ has a representation on $V(\Sigma^X_{g,X_1,\ldots,X_n})$ for all $X$, given by acting on the target of any $H$-linear map. Thus if $0 \leftarrow X \leftarrow P_0 \leftarrow P_1 \leftarrow \ldots$ is a projective resolution of $H$-modules, we have a representation of $\mathcal{L}_{g,n}^{\mathrm{inv}}(H)$ on each of the cochain spaces in the complex
\begin{equation*}
0 \longrightarrow V(\Sigma^{P_0}_{g,X_1,\ldots,X_n}) \longrightarrow V(\Sigma^{P_1}_{g,X_1,\ldots,X_n}) \longrightarrow \ldots
\end{equation*}
which are easily seen to commute with the differential. Hence:
\begin{thIntro}\label{thmRepDerIntro}{\em (Thm.\,\ref{thmDerivedRepModuli})} 
Let $H$ be any quasitriangular Hopf algebra over a field $\Bbbk$. For all $H$-modules $X$, $X_1$, $\ldots$, $X_n$ and all $m \geq 0$ there is a representation of $\mathcal{L}_{g,n}^{\mathrm{inv}}(H)$ on $V^m(\Sigma^X_{g,X_1,\ldots,X_n})$.
\end{thIntro}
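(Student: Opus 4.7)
The plan is to reduce the derived statement to a formal consequence of one structural input proved earlier in the paper. The input I would use is the $H$-equivariance of the natural $\mathcal{L}_{g,n}(H)$-module
\[ W := (H^{\circ}_{\mathrm{coad}})^{\otimes g} \otimes X_1 \otimes \ldots \otimes X_n, \]
established in Section \ref{sectionSomeEquivModules} (Definition \ref{defEquivariantModule}): the action map $\rho : \mathcal{L}_{g,n}(H) \otimes W \to W$ is $H$-linear when $\mathcal{L}_{g,n}(H)$ is given the coadjoint $H$-action. Restricting to invariants, this says precisely that for every $a \in \mathcal{L}_{g,n}^{\mathrm{inv}}(H)$ the $\Bbbk$-linear endomorphism $\rho(a) : W \to W$ is itself $H$-linear.

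Next I would upgrade this to a representation on $\Hom_H(Y,W)$ for any $H$-module $Y$ by the formula $a \cdot \varphi := \rho(a) \circ \varphi$. This is well-defined in $\Hom_H(Y,W)$ thanks to the $H$-linearity of $\rho(a)$, and it is a representation by associativity of composition. This already establishes the case $m=0$. To handle $m \geq 1$, I would pick a projective resolution $P_\bullet \twoheadrightarrow X$ in $H\text{-}\mathrm{mod}$ and consider the cochain complex $C^\bullet = \Hom_H(P_\bullet, W)$ computing $V^m(\Sigma^X_{g,X_1,\ldots,X_n})$. Each $C^m$ carries the postcomposition action of $\mathcal{L}_{g,n}^{\mathrm{inv}}(H)$; the Hom-complex differential $d : C^m \to C^{m+1}$ is precomposition with the boundary $P_{m+1} \to P_m$; and pre- and post-composition commute, so $d$ is $\mathcal{L}_{g,n}^{\mathrm{inv}}(H)$-linear. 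The action therefore descends to $H^m(C^\bullet) = V^m(\Sigma^X_{g,X_1,\ldots,X_n})$. Finally, independence of the resolution follows from the standard argument: any two projective resolutions of $X$ are related by $H$-linear chain maps, unique up to $H$-linear homotopy, and both $H$-linear chain maps and homotopies automatically commute with postcomposition by $\rho(a)$, so the induced isomorphism on $\Ext^m$ is $\mathcal{L}_{g,n}^{\mathrm{inv}}(H)$-linear.

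All of the homological content here is entirely formal. The only nontrivial ingredient is the $H$-equivariance of the $\mathcal{L}_{g,n}(H)$-module structure on $W$, which is precisely the main point of Section \ref{sectionSomeEquivModules}; the difficulty is thus not in the derived statement per se but in the compatibility between the coadjoint $H$-action on the quantum moduli algebra and the concrete action on $(H^\circ)^{\otimes g} \otimes X_1 \otimes \ldots \otimes X_n$. Once that is in hand, Theorem \ref{thmRepDerIntro} is the observation that endomorphisms of the target of an $\Ext$ functor always act on $\Ext$, and that $H$-equivariant subalgebras of module-algebras supply such endomorphisms.
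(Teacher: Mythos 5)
Your proposal is correct and follows essentially the same route as the paper: the paper also reduces the statement to the general fact (Lemma \ref{lemmaObviousRemarksInvSubalgebra} and Theorem \ref{thmDerivedRepGeneral}) that for any $H$-equivariant module $M$ over a right $H$-module-algebra $A$, postcomposition by $\rho(a)$ for $a \in A^{\mathrm{inv}}$ is $H$-linear, commutes with the pullback differentials of $\Hom_H(P_\bullet,M)$, and hence descends to $\Ext^m_H(X,M)$, the only substantive input being the $H$-equivariant structure on $(H^\circ)^{\otimes g}\otimes X_1\otimes\ldots\otimes X_n$ built in \S\ref{sectionSomeEquivModules}. Your additional remark on independence of the projective resolution is a standard point the paper leaves implicit (citing \cite{macLane}), and your argument for it is correct.
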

\noindent Besides being elementary, the arguments which lead to Theorem \ref{thmRepDerIntro} are completely general. They are explained in detail for {\em any} $H$-module-algebra $A$ in \S\ref{sectionDerRepModAlg}, giving representations of $A^{\mathrm{inv}}$ on $\Ext^m_H(X,M)$ where $X$ is any $H$-module and $M$ is any $H$-equivariant $A$-module. The only property which is peculiar to $\mathcal{L}_{g,n}^{\mathrm{inv}}(H)$ in Theorem \ref{thmRepDerIntro} is thus the existence of the $H$-equivariant modules \eqref{repSpacesLgnIntro}.

\smallskip

\indent Assume from now on that $H$ is a ribbon Hopf algebra. Before stating our first topological application of Theorem \ref{thmRepDerIntro} we need a few preliminary material. Given an element $\varphi \in \mathcal{L}_{0,1}^{\mathrm{inv}}(H)$, \textit{i.e.} a linear form $\varphi \in H^{\circ}$ which satisfies $\varphi(xy) = \varphi\bigl(yS^2(x)\bigr)$, one can define a map
\begin{equation}\label{genWilsonLoopIntro}
\mathbb{W}^{\varphi} : \bigl\{ \!\!\begin{array}{c}\text{\scriptsize isotopy classes}\\[-.5em]
\text{\scriptsize of oriented framed} \end{array} \!\!\text{ knots in } \Sigma_{g,n}^{\circ} \times [0,1] \bigr\} \to \mathcal{L}_{g,n}^{\mathrm{inv}}(H).
\end{equation}
They appear in \cite{BuR2,BFK,FaitgHol} under the name of {\em Wilson loop maps}. In \S\ref{sectionKnotsMaps} we give yet another construction of $\mathbb{W}^{\varphi}$ which is based on an isomorphism of algebras between $\mathcal{L}_{g,n}(H)$ and the {\em stated skein algebra} $\mathcal{S}^{\mathrm{st}}_H(\Sigma_{g,n}^{\circ,\bullet})$, where $\Sigma_{g,n}^{\circ,\bullet}$ is $\Sigma_{g,n}^{\circ}$ with a puncture on the boundary. The main defining properties of $\mathcal{S}^{\mathrm{st}}_H(\Sigma_{g,n}^{\circ,\bullet})$ are recalled in \S\ref{sectionKnotMaps} from \cite[\S 6.1]{BFR}. It is a generalization to any ribbon Hopf algebra $H$ of the stated skein algebras defined in \cite{Le,CL} and \cite{LS}, which correspond to the cases $H = U_q(\mathfrak{sl}_2)$ and $H = U_q(\mathfrak{sl}_n)$. The isomorphism $\mathcal{S}^{\mathrm{st}}_H(\Sigma_{g,n}^{\circ,\bullet}) \cong \mathcal{L}_{g,n}(H)$ was established in \cite[\S 6.2]{BFR} building upon \cite{FaitgHol} and is quickly recalled in \S\ref{subsubKnotsMapsLgn}. Taking advantage of the topological nature of stated skein algebras, we note in \S\ref{subsubKnotMaps}-\S\ref{subsubKnotsMapsLgn} that there is a natural definition of {\em knot maps}
\[ \mathfrak{i}_K : \mathcal{L}_{0,1}(H) \to \mathcal{S}^{\mathrm{st}}_H(\Sigma_{g,n}^{\circ,\bullet}) \cong \mathcal{L}_{g,n}(H) \]
where $K \subset \Sigma_{g,n}^{\circ,\bullet} \times [0,1]$ is any oriented framed knot which has a basepoint in $\partial(\Sigma_{g,n}^{\circ,\bullet}) \times [0,1]$. When $\mathfrak{i}_K$ is restricted to $\mathcal{L}_{0,1}^{\mathrm{inv}}(H)$ the basepoint can be forgotten and we define $\mathbb{W}^{\varphi}(K) = \mathfrak{i}_K(\varphi)$.

\indent Combining $\mathbb{W}^{\varphi}$ with the representation in Theorem \ref{thmRepDerIntro}, we obtain a map
\[ \mathcal{I}^{\varphi,m}_X : \bigl\{ \!\!\begin{array}{c}\text{\scriptsize isotopy classes}\\[-.5em]
\text{\scriptsize of oriented framed} \end{array}\!\! \text{ knots in } \Sigma_{g,n}^{\circ} \times [0,1] \bigr\} \xrightarrow{\: \mathbb{W}^{\varphi} \:} \mathcal{L}_{g,n}^{\mathrm{inv}}(H) \xrightarrow{\:\text{rep.}\:} \mathrm{End}_{\Bbbk}\bigl( V^m(\Sigma^X_{g,X_1,\ldots,X_n}) \bigr). \] 
for any $\varphi \in \mathcal{L}_{0,1}^{\mathrm{inv}}(H)$, $m \geq 0$ and $H$-module $X$. This map is an isotopy invariant of knots and allows us to give a precise statement of the idea that the distinguished boundary component of $\Sigma_{g,n}^{\circ}$ ``disappears'' if we color it by $X = \Bbbk$, the trivial $H$-module:
\begin{thIntro}\label{thmKnotInvariant}{\em (Thm.\,\ref{thmClosingBoundary})}
Assume that the ribbon Hopf algebra $H$ and the $H$-modules $X_1,\ldots,X_n$ are finite-dimensional. If two knots $K_1,K_2 \subset \Sigma_{g,n}^{\circ} \times [0,1]$ become isotopic in $\Sigma_{g,n} \times [0,1]$, then $\mathcal{I}^{\varphi,m}_{\Bbbk}(K_1) = \mathcal{I}^{\varphi,m}_{\Bbbk}(K_2)$.
\end{thIntro}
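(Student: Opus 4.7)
The plan is to combine the stated-skein interpretation of $\mathbb{W}^\varphi$ with the observation that coloring the distinguished boundary by $\Bbbk$ trivializes its holonomy. Write $\Sigma_{g,n} = \Sigma_{g,n}^\circ \cup_{S^1} D$, where $D$ is the disk capping off the distinguished boundary. A generic isotopy $K_1 \simeq K_2$ inside $\Sigma_{g,n}\times[0,1]$ can be put in transverse position with respect to $D\times[0,1]$; it then decomposes as a sequence of isotopies in $\Sigma_{g,n}^\circ\times[0,1]$ — which preserve $\mathbb{W}^\varphi$ tautologically by the skein definition recalled in \S\ref{sectionKnotMaps} — interleaved with finitely many elementary \emph{boundary slides} in which a subarc of the knot is pushed across $D$. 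By composition it is enough to treat a single boundary slide.

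A local computation in $\mathcal{S}^{\mathrm{st}}_H(\Sigma_{g,n}^{\circ,\bullet}) \cong \mathcal{L}_{g,n}(H)$ translates a boundary slide into an algebraic identity. Expanding the braiding of the slid strand with the distinguished boundary circle by means of the $R$-matrix (using the skein relations of \cite[\S 6]{BFR}) yields an identity of the form $\mathbb{W}^\varphi(K_2) = \Omega\, \mathbb{W}^\varphi(K_1)\, \Omega^{-1}$, where $\Omega \in \mathcal{L}_{g,n}^{\mathrm{inv}}(H)$ is the holonomy of the distinguished boundary — the class of a framed loop parallel to $\partial D$, colored by the trace-like element coming from the Reshetikhin--Turaev pairing. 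In particular, $\mathbb{W}^\varphi(K_2) - \mathbb{W}^\varphi(K_1)$ lies in the two-sided ideal $J \subset \mathcal{L}_{g,n}^{\mathrm{inv}}(H)$ generated by $\Omega - 1$.

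To conclude, one checks that $J$ acts by zero on $V^m(\Sigma^\Bbbk_{g,X_1,\ldots,X_n})$. In the $H$-equivariant module $M = (H^\circ_{\mathrm{coad}})^{\otimes g}\otimes X_1\otimes\ldots\otimes X_n$ of \eqref{repSpacesLgnIntro}, the element $\Omega$ acts on $\Hom_H(\Bbbk, M) = M^{H\text{-inv}}$ as the identity: indeed $\Omega$ is the algebraic counterpart of a loop that bounds an embedded disk as soon as the distinguished boundary is capped, and the $\Bbbk$-coloring of the cap forces this disk to be evaluated by the counit, giving $1$. By functoriality of $\Ext^m_H(\Bbbk,-) = R^m\Hom_H(\Bbbk,-)$, the trivial action of $\Omega$ on the underived space lifts to a trivial action on $V^m(\Sigma^\Bbbk_{g,X_1,\ldots,X_n})$ for every $m\geq 0$, because $\mathcal{L}_{g,n}^{\mathrm{inv}}(H)$ acts on the cochain complex $\Hom_H(P_\bullet, M)$ (for any projective resolution $P_\bullet \to \Bbbk$) by post-composition with its action on $M$, and the specific element $\Omega - 1$ already post-composes by zero on $\Hom_H(P_i, M)$ for each $i$. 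Hence $\mathbb{W}^\varphi(K_1) - \mathbb{W}^\varphi(K_2) \in J$ acts as zero, and $\mathcal{I}^{\varphi,m}_\Bbbk(K_1) = \mathcal{I}^{\varphi,m}_\Bbbk(K_2)$.

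The main obstacle is the explicit algebraic translation of the boundary slide carried out in the second paragraph: one must make the $R$-matrix expansion of the braiding between the slid strand and the distinguished boundary circle explicit enough to exhibit conjugation by a precise holonomy element $\Omega$, and then identify $\Omega$ in the matrix-coefficient description of $\mathcal{L}_{g,n}^{\mathrm{inv}}(H)$ (in AGS/BR terms, this should amount to the ordered product of the matrix coefficients associated with the handles and punctures of $\Sigma_{g,n}^\circ$). The last paragraph is then essentially formal, using the derived-representation machinery of Theorem \ref{thmRepDerIntro}.
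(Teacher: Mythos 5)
Your reduction of the isotopy to finitely many boundary slides is the same first step as the paper, but the algebraic translation of a single slide in your second paragraph is not correct. Because the arc being pushed across the boundary belongs to the \emph{same} knot, the stated-skein computation does not produce a conjugation $\Omega\,\mathbb{W}^\varphi(K_1)\,\Omega^{-1}$ by a fixed boundary holonomy; it produces the coproduct-twisted identity of Lemma \ref{lemmaBoundaryMoveAndKnotMaps},
\[ \mathfrak{i}_{\mathbf{K}_{\partial}}(\varphi) \;=\; \mathfrak{i}_{\mathbf{K}}(\varphi_{(1)})\, \mathfrak{i}_{\partial_{g,n}}\bigl( S_{\mathcal{L}_{0,1}}(\varphi_{(2)}) \bigr), \]
in which the color $\varphi$ is split by $\Delta$ between the piece following $K$ and the boundary-parallel piece. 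There is no fixed invertible $\Omega$ and hence no two-sided ideal generated by $\Omega-1$; the difference $\mathbb{W}^\varphi(K_\partial)-\mathbb{W}^\varphi(K)$ is a sum $\sum_i s_i\bigl(\mathfrak{i}_{\partial_{g,n}}(t_i)-\varepsilon(t_i)\bigr)$ with the legs $s_i,t_i$ entangled through the coproduct, which is what makes the problem hard.

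The last paragraph is where the real content of the theorem lives, and the inference there does not hold. The element $\mathfrak{i}_{\partial_{g,n}}(\psi)$ acts on $M=(H^\circ)^{\otimes g}\otimes X_1\otimes\ldots\otimes X_n$ by the right $H$-action of $S^{-1}(\Phi_{0,1}(\psi))$ (Prop.\,\ref{actionBoundaryOnEquivReps}); on a cocycle $f\in\Hom_H(P_i,M)$ with $P_i$ a nontrivial projective, post-composition by this map equals $f(?\smallsquare S^{-1}(\Phi_{0,1}(\psi)))$, which is \emph{not} $\varepsilon(\cdot)f$ — it acts trivially only on $\Hom_H(\Bbbk,M)=M^{\mathrm{inv}}$. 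So it is false that the relevant element ``already post-composes by zero on $\Hom_H(P_i,M)$ for each $i$'', and ``trivial on $\Ext^0$ implies trivial on $\Ext^m$ by functoriality'' is not a valid deduction for an $H$-linear endomorphism of $M$. In the paper the two cochain-level operators genuinely differ; one dualizes to the projective resolution $M^*\otimes P_\bullet$ of $M^*$ (this is where the finite-dimensionality hypothesis enters — your argument never uses it, which is a symptom of the gap), checks that both operators are chain maps lifting the same map in degree $-1$, and invokes the comparison theorem to conclude they are chain homotopic and therefore agree on cohomology. Some version of this homotopy argument is unavoidable and is missing from your proposal.
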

\noindent Said differently, $\mathcal{I}^{\varphi,m}_{\Bbbk}$ descends to a (matrix-valued) isotopy invariant of knots in $\Sigma_{g,n} \times [0,1]$. This is a nice interaction between topology and homological algebra, because the boundary component of $\Sigma_{g,n}^{\circ}$ disappears only when passing to the cohomology spaces. For $n=0$ we obtain an isotopy invariant of knots in closed surfaces which in the case $m=0$ was studied in \cite[\S 6]{FaitgHol}. It makes no doubt that Theorem \ref{thmKnotInvariant} can be extended to links, see Remark \ref{remarkGeneralizationW}.

\smallskip

\indent The second topological application of Theorem \ref{thmRepDerIntro} is the construction of projective representations of mapping class groups. There is a natural action of the mapping class group on the character variety. This action survives quantization: there is an action of $\mathrm{MCG}(\Sigma^{\circ}_{g,n})$ on $\mathcal{L}_{g,n}(H)$ by algebra automorphisms. The previous proof of this fact \cite{AS,FaitgMCG} was rather involved. In \S\ref{MCGactionLgn} we note that $\mathrm{MCG}(\Sigma^{\circ}_{g,n})$ obviously acts on the stated skein algebra $\mathcal{S}^{\mathrm{st}}_H(\Sigma_{g,n}^{\circ,\bullet})$ and thus on $\mathcal{L}_{g,n}(H)$ thanks to the isomorphism between them; we then prove that it equals the ``old'' action defined by means of explicit formulas. Moreover the subalgebra $\mathcal{L}_{g,n}^{\mathrm{inv}}(H)$ is stable under this action.

\indent We then restrict to $n=0$, that is to surfaces $\Sigma_{g,0}^{\circ}$, and assume that the ribbon Hopf $\Bbbk$-algebra $H$ is finite-dimensional and factorizable. It is known that for such $H$ there is an algebra isomorphism $\mathcal{L}_{g,0}(H) \cong \mathrm{End}_{\Bbbk}\bigl( (H^*)^{\otimes g} \bigr)$ (\cite{FaitgMCG}, reviewed in Appendix \ref{appendixComparisonReps}). In other words $\mathcal{L}_{g,0}(H)$ is a matrix algebra and hence any algebra automorphism is by conjugation. Since $\mathrm{MCG}(\Sigma^{\circ}_{g,0})$ acts on  $\mathcal{L}_{g,0}(H)$ by algebra automorphisms, we obtain a unique-up-to-scalar conjugation element $\widehat{f} \in \mathcal{L}_{g,0}(H)$ for each mapping class $f \in \mathrm{MCG}(\Sigma^{\circ}_{g,0})$. Actually $\widehat{f} \in \mathcal{L}_{g,0}^{\mathrm{inv}}(H)$ (Lemma \ref{lemmaHatFInv}) and Theorem \ref{thmRepDerIntro} gives a projective representation
\[ \begin{array}{rccccc}
\mathcal{R}_X^m : \!\! &\mathrm{MCG}(\Sigma_{g,0}^{\circ})\!\! &\!\! \longrightarrow \!\!&\!\! \mathcal{L}_{g,0}^{\mathrm{inv}}(H)^{\times}/\Bbbk^{\times} \!\!&\!\! \longrightarrow \!\!&\!\! \mathrm{PGL}\bigl( V^m(\Sigma^X_g) \bigr)\\[.3em]
&f & \longmapsto & \widehat{f} & \longmapsto & \text{rep. of } \widehat{f}
\end{array} \]
for all $H$-module $X$ and $m \geq 0$.

\indent For a Dehn twist $\tau_c$ about a simple closed curve $c$ there is a simple formula for $\widehat{\tau_c}$ in terms of the knot map $\mathfrak{i}_c$ (Prop. \ref{explicitConjugatingElmt}). This allows us to deduce the following result from Theorem \ref{thmKnotInvariant}, thanks to the presentation of $\mathrm{MCG}(\Sigma_{g,0})$ as a quotient of $\mathrm{MCG}(\Sigma_{g,0}^{\circ})$ \cite{wajnryb}:
\begin{thIntro}\label{thmIntroRepMCG}{\em (Thm.\,\ref{thmRepDeriveesMCG} and \S\ref{subsubsExplicitFormulas})} 
Assume that the ribbon Hopf algebra $H$ is finite-dimensional and factorizable, and let $\Bbbk$ be the trivial $H$-module. For all $m \geq 0$, $\mathcal{R}_{\Bbbk}^m$ descends to a projective representation of $\mathrm{MCG}(\Sigma_{g,0})$ which is equivalent to the representation $Z^m_X$ of \cite{LMSS} for the category $\mathcal{C} = H\text{-}\mathrm{mod}$.
\end{thIntro}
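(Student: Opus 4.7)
The plan is to prove the two assertions separately: first that $\mathcal{R}_{\Bbbk}^m$ factors through $\mathrm{MCG}(\Sigma_{g,0})$, then that the resulting projective representation is equivalent to $Z_{\boldsymbol{1}}^m$ of \cite{LMSS}. Both parts rely on having a concrete presentation of the closed mapping class group by Wajnryb as a quotient of $\mathrm{MCG}(\Sigma_{g,0}^{\circ})$, namely the central extension
\[
1 \longrightarrow \mathbb{Z}\langle \tau_\partial \rangle \longrightarrow \mathrm{MCG}(\Sigma_{g,0}^{\circ}) \longrightarrow \mathrm{MCG}(\Sigma_{g,0}) \longrightarrow 1,
\]
where $\tau_\partial$ is the Dehn twist about the boundary circle. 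Since $\mathcal{R}_{\Bbbk}^m$ is already projective, the descent reduces to showing that $\mathcal{R}_{\Bbbk}^m(\tau_\partial)$ is a scalar multiple of the identity on $V^m(\Sigma_g^{\Bbbk})$.

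For the descent step I would first invoke the explicit formula of Proposition~\ref{explicitConjugatingElmt}, which expresses $\widehat{\tau_c}$ as $\mathfrak{i}_c(\nu)$ for a suitable ribbon-type element $\nu \in \mathcal{L}_{0,1}^{\mathrm{inv}}(H)$, thus realizing $\widehat{\tau_\partial}$ as $\mathbb{W}^{\nu}(\partial)$ with $\partial$ viewed as a framed oriented knot at a fixed height in $\Sigma_{g,0}^{\circ}\times[0,1]$. The key geometric observation is that $\partial$ bounds a disk in $\Sigma_{g,0}$, so as a knot in $\Sigma_{g,0}\times[0,1]$ it is isotopic to the unknot $U$ with trivial framing. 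By Theorem~\ref{thmKnotInvariant} applied with $X=\Bbbk$ and $n=0$, the operator $\mathcal{I}^{\nu,m}_{\Bbbk}(\partial)$ equals $\mathcal{I}^{\nu,m}_{\Bbbk}(U)$, and the unknot acts by a scalar (it is central in $\mathcal{L}_{0,1}^{\mathrm{inv}}(H)$ and trivially inserts as a central element in the skein algebra, hence acts as a scalar on any simple module, which $V^m(\Sigma_g^{\Bbbk})$ inherits through Schur-type reasoning using the matrix-algebra structure). Consequently $\mathcal{R}_{\Bbbk}^m(\tau_\partial)$ is scalar and the representation descends.

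For the equivalence with $Z_{\boldsymbol{1}}^m$, I would first build an explicit isomorphism $\Phi: Z^m(\Sigma_g^{\Bbbk}) \xrightarrow{\sim} V^m(\Sigma_g^{\Bbbk})$ at the level of cochain spaces, using the identification of $\Hom_{\mathcal{C}}(X,L^{\otimes g})$ with $\Hom_H(X,(H^{\circ}_{\mathrm{coad}})^{\otimes g})$ sketched in \S\ref{subsubsExplicitFormulas} (the subtlety being the passage from the coend $L$ to its ``right'' version $H^{\circ}_{\mathrm{coad}}$, essentially transport along the pivotal structure). Since $\Phi$ is induced by a chain isomorphism of complexes obtained from a projective resolution, it commutes with the differentials and descends to cohomology. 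I would then check the equivariance on Wajnryb's Dehn-twist generators: on the LMSS side each $\tau_c$ is implemented by an explicit dinatural transformation factoring through $L^{\otimes g}$, while on our side it is implemented by conjugation by $\widehat{\tau_c} = \mathfrak{i}_c(\nu)$ acting on the target. Both descriptions reduce to inserting the same ribbon/knot element along $c$ in the surface, so comparing the formulas generator by generator yields commutativity of $\Phi$ with the two actions up to scalars.

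The main obstacle is the bookkeeping in this last comparison: one must carefully match the conventions (left vs.\ right $H$-actions, choice of coevaluations, normalization of the ribbon element and of the integral used in $\nu$), and show that the projective ambiguities on the two sides are consistent on every generator, not just on isolated examples. Once this is established for a Humphries-type generating set of Dehn twists, no further relation needs to be checked beyond the boundary relation already handled in the first step, because both $Z_{\boldsymbol{1}}^m$ and $\mathcal{R}_{\Bbbk}^m$ are \emph{a priori} well-defined projective representations of $\mathrm{MCG}(\Sigma_{g,0})$; agreement on generators then forces agreement on the whole group.
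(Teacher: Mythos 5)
Your descent argument contains a genuine gap. The kernel of the surjection $\mathrm{MCG}(\Sigma_{g,0}^{\circ}) \to \mathrm{MCG}(\Sigma_{g,0})$ is \emph{not} the central subgroup $\mathbb{Z}\langle \tau_{\partial}\rangle$ once $g \geq 2$: by the (combined) Birman exact sequence it is $\pi_1\bigl(UT\Sigma_{g,0}\bigr)$, a central extension of $\pi_1(\Sigma_{g,0})$ by $\mathbb{Z}\langle\tau_\partial\rangle$. Killing the normal closure of $\tau_\partial$ only yields $\mathrm{MCG}(\Sigma_{g,1})$, the mapping class group of the once-punctured surface. So even granting that $\mathcal{R}^m_{\Bbbk}(\tau_\partial)$ is a scalar, you have not shown that the point-pushing maps act by scalars, and the representation has not been shown to descend to the closed surface. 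A secondary problem in the same step: Proposition \ref{explicitConjugatingElmt} is stated only for \emph{non-separating} simple closed curves, and the boundary-parallel curve $\partial_{g,0}$ is separating in $\Sigma_{g,0}^{\circ}$ (its complement contains an annulus component), so you cannot invoke that formula to write $\widehat{\tau_\partial}=\mathfrak{i}_{\partial}(v\triangleright\lambda)$; and the ``Schur-type'' argument is also unnecessary/unjustified since $V^m(\Sigma_g^{\Bbbk})$ is not a simple $\mathcal{L}_{g,0}^{\mathrm{inv}}(H)$-module.

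The paper's route avoids all of this by using Wajnryb's \emph{presentation}: $\mathrm{MCG}(\Sigma_{g,0})$ is obtained from $\mathrm{MCG}(\Sigma_{g,0}^{\circ})$ by adding the single relation $f\tau_\delta f^{-1}=\tau_\delta$, which (since $f(\delta)=\alpha_g$) reduces the descent to the single identity $\mathcal{R}^m_{\Bbbk}(\tau_{\alpha_g})=\mathcal{R}^m_{\Bbbk}(\tau_\delta)$ for the two \emph{non-separating} curves $\alpha_g,\delta$ of Fig.~\ref{courbesCanoniques}; these become isotopic in $\Sigma_{g,0}$, so Proposition \ref{explicitConjugatingElmt} together with Theorem \ref{thmClosingBoundary} applies verbatim. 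Your idea of combining the conjugating-element formula with the knot-closure theorem is exactly the right one, but it must be applied to this pair of curves (or, if you insist on the Birman sequence, you must additionally treat every point-push as a product $\tau_{\gamma_1}\tau_{\gamma_2}^{-1}$ of twists about curves that become isotopic in the closed surface). Your second half — the intertwiner $Z^m(\Sigma_g^{\Bbbk})\cong V^m(\Sigma_g^{\Bbbk})$ induced by an $H$-module isomorphism $L^{\otimes g}\to L'_g$ and a generator-by-generator comparison — matches the paper's argument in \S\ref{subsubsExplicitFormulas}, though the paper obtains the identity $I\circ Z(f)=\mathcal{R}(f)\circ I$ for \emph{all} $f$ at the undersived level, which makes the cohomological statement immediate.
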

\noindent It should be noted that the definition of $\mathcal{R}^m_X$ and the proof of the fact that it descends to $\mathrm{MCG}(\Sigma_{g,0})$ for $X = \Bbbk$ do not rely on explicit formulas for the representation of generators of the mapping class group. However the proof of the equivalence with \cite{LMSS} in \S\ref{subsubsExplicitFormulas} uses such formulas, which are given in Prop.\,\ref{propExplicitFormulas} for the Lickorish generators.

\indent For $m=0$, Theorem \ref{thmIntroRepMCG} recovers the projective representations of \cite{AS} (which was for semisimple $H$) and of \cite{FaitgMCG} (non-semisimple $H$). The approach with stated skein algebras and knot maps presented here gives a less computational construction of these representations. Moreover, we use new notations (taken from \cite{BFR}) that should be easier to understand than the matrix formalism employed in previous papers.

\smallskip

\indent Let $\mathcal{S}_H(\Sigma_{g,n}^{\circ})$ be the skein algebra spanned by $H$-colored oriented framed links in $\Sigma_{g,n}^{\circ} \times [0,1]$, modulo the linear relations given by the Reshetikhin--Turaev tangle invariant for $H\text{-}\mathrm{mod}$. When $H = U_q(\mathfrak{sl}_2)$ this is the Kauffman bracket skein algebra $\mathcal{S}_q(\Sigma_{g,n}^{\circ})$. In \S\ref{sectionDerRepSkeinAlg} we obtain, as another corollary of Theorem \ref{thmRepDerIntro}, representations of the skein algebra $\mathcal{S}_H(\Sigma_{g,n}^{\circ})$ on the spaces $V^m(\Sigma_{g,X_1,\ldots,X_n}^X)$. This is simply due to the existence of an algebra morphism $W : \mathcal{S}_H(\Sigma_{g,n}^{\circ}) \to \mathcal{L}_{g,n}^{\mathrm{inv}}(H)$ which is a variant of \eqref{genWilsonLoopIntro} and also called Wilson loop map. In particular, for the Kauffman algebra $\mathcal{S}_{\epsilon}(\Sigma_{g,n}^{\circ})$ with $q$ specialized to a root of unity $\epsilon \in \mathbb{C}$, we use a finite-dimensional quotient of $U_{\epsilon^2}(\mathfrak{sl}_2)$, known as {\em small quantum group} and denoted by $\overline{U}_{\!\epsilon^2}(\mathfrak{sl}_2)$, to define representations of $\mathcal{S}_{\epsilon}(\Sigma_{g,n}^{\circ})$ on the finite-dimensional spaces $\Ext^m_{\overline{U}_{\!{\scriptstyle \epsilon}^2}}\bigl(X, (\overline{U}{_{\!{\scriptstyle \epsilon}^2}^*})^{\otimes g} \otimes X_1 \otimes \ldots \otimes X_n \bigr)$ for all $X,X_1,\ldots,X_n \in \overline{U}_{\!\epsilon^2}(\mathfrak{sl}_2)\text{-}\mathrm{mod}$ and $m \geq 0$ (Corollary \ref{coroDerRepKauffman}).

\begin{remark}
Quantum character varieties and mapping class group representations can also be defined and studied through the use of factorization homology, as proved in \cite{BZBJ}. Here are some remarks relating the two approaches. First we have $\textstyle \int_{\Sigma_{g,n}^\circ} H\text{-}\mathrm{mod} \cong \mathcal{L}_{g,n}(H)\text{-}\mathrm{mod}_H$, see \cite[Th.\,5.14]{BZBJ}, and the existence of the mapping class group action on $\mathcal{L}_{g,n}(H)$ is recovered through this equivalence in \cite[\S 5.4]{BZBJ}.  At least for $H = U_q(\mathfrak{sl}_2)$, it is known that $\mathcal{L}_{g,n}(H)$ is isomorphic to the so-called {\em internal skein algebra} appearing in factorization homology; this follows from  \cite{haioun} and \cite[\S 6.2]{BFR} (or \cite[\S 5]{FaitgHol}). Representations of mapping class groups, or more precisely modular functors, are investigated through the lens of factorization homology in \cite{MW, BW}. There are also categorical versions of skein algebras and handlebody skein modules which are suited to the non-semisimple setting \cite[\S 4]{BW}, \cite{BH} and which according to \cite[\S 4.3]{BW} should be closely related to the algebra $\mathcal{L}_{g,n}^{\mathrm{inv}}(H)$ and its representation space $V(\Sigma_{g,X_1,\ldots,X_n}^X)$ defined in \eqref{repSpaceVIntro}.
\end{remark}
 
\medskip

\noindent \textbf{Acknowledgements.} I thank S. Baseilhac and P. Roche for discussions and encouragements, and the anonymous referees for their relevant remarks. Part of this work was done when the author was a postdoc at IMT funded by the CIMI (Labex ANR 11-LABX-0040). 

\section{Equivariant modules over $H$-module-algebras}
This section contains elementary remarks on representations of $H$-module-algebras, where $H$ is a Hopf algebra. The key notion is that of {\em $H$-equivariant module} over a $H$-module-algebra $A$. We observe that any such module $M$ gives rise to representations of the invariant subalgebra $A^{\mathrm{inv}}$ on the spaces $\Ext_ H^m(X,M)$ for any $H$-module $X$ (``derived representations'', \S\ref{sectionDerRepModAlg}). The derived representations of quantum character varieties constructed in \S\ref{sectionDerivedRep} will appear as a corollary of this general fact and of the braided tensor product operation for $H$-equivariant modules which will be recalled in \S\ref{sectionBraidedTensorProduct} below. Subsection \ref{QMMandDerReps} is independent and discusses a situation where any $A$-module is automatically $H$-equivariant.

\subsection{Derived representations for $H$-equivariant modules}\label{sectionDerRepModAlg}
\indent Let $H = (H,\cdot, 1, \Delta, \varepsilon, S)$ be a Hopf algebra with invertible antipode $S$ over a field $\Bbbk$. We use Sweedler's notation for the coproduct and its iterations:
\[ \Delta(h) = h_{(1)} \otimes h_{(2)}, \quad (\Delta \otimes \mathrm{id}) \circ \Delta(h) = h_{(1)} \otimes h_{(2)} \otimes h_{(3)}, \quad \text{\it etc}. \]
Denote by $\mathrm{Mod}\text{-}H$ the category of (not necessarily finite-dimensional) right modules over $H$, which is monoidal. To avoid confusion, in all this paper, the action on a {\em left} module over a given associative algebra $A$ will be denoted by a dot ($\,\cdot\,$) while the action on a {\em right} $H$-module will be denoted by a black square ($\smallsquare$).

\smallskip

\indent A (right) {\em $H$-module-algebra} is an algebra object $A \in \mathrm{Mod}\text{-}H$. More explicitly, it is a vector space $A$ endowed with structures of associative algebra and of right $H$-module such that
\begin{align}
&\forall \, x,y \in A, \:\: \forall \, h \in H, \quad (xy) \smallsquare h = (x \smallsquare h_{(1)})(y \smallsquare h_{(2)}),\label{conditionModAlg}\\
&\forall \, h \in H, \quad 1_A \smallsquare h = \varepsilon(h) 1_A.\label{conditionModAlgUnit}
\end{align}
We use the following name for $A$-module objects in $\mathrm{Mod}\text{-}H$ (taken from \cite[\S 1.4]{VV} except that here we use right $H$-modules instead of left ones):
\begin{definition}\label{defEquivariantModule}
A $H$-equivariant $A$-module $(M,\cdot,\smallsquare)$ is a left $A$-module $(M,\cdot)$ which is also a right $H$-module $(M,\smallsquare)$ and such that
\begin{equation}\label{conditionModuleInModH}
\forall \, a \in A, \:\: \forall \, v \in M, \:\: \forall \, h \in H, \quad (a \cdot v) \smallsquare h = (a \smallsquare h_{(1)}) \cdot (v \smallsquare h_{(2)}).
\end{equation}
We denote by $(A\text{-}\mathrm{Mod})_H$ the category of such modules; its morphisms are linear maps which commutes with both the actions of $A$ and $H$.
\end{definition}
\begin{remark}\label{remarkEquivariantModuleInTermsOfRep}
In terms of representations, a $H$-equivariant module is a triple $(M,\rho,\mathsf{D})$ where $\rho : A \to \mathrm{End}_{\Bbbk}(M)$ and $\mathsf{D} : H \to \mathrm{End}_{\Bbbk}(M)^{\mathrm{op}}$ are morphisms of algebras such that
\begin{equation}\label{equivariantConditionInTermsOfRep}
\forall \, a \in A, \:\:\forall \, h \in H, \quad \rho(a \smallsquare h) = \mathsf{D}(h_{(1)}) \circ \rho(a) \circ \mathsf{D}\bigl( S^{-1}(h_{(2)}) \bigr).
\end{equation}
The relation with the notations in Def.\,\ref{defEquivariantModule} is of course $\rho(a)(v) = a \cdot v$ and $\mathsf{D}(h)(v) = v \smallsquare h$. The superscript $^{\mathrm{op}}$ indicates that $\mathsf{D}$ is an anti-morphism (because $M$ is a right $H$-module).
\end{remark}
For any $X,M \in \mathrm{Mod}\text{-}H$ let $\mathrm{Hom}_H(X,M)$ be the subspace of linear maps $X \to M$ which commute with the right action of $H$. In particular when $X = \Bbbk$ is the trivial $H$-module defined by the counit $\varepsilon : H \to \Bbbk$ we get
\[ \mathrm{Hom}_H(\Bbbk,M) \cong M^{\mathrm{inv}} = \bigl\{ v \in M \, \big| \, \forall \, h \in H, \: v \smallsquare h = \varepsilon(h)v \bigr\}  \]
\textit{i.e.} the subspace of $H$-invariant elements in $M$. Note that $A^{\mathrm{inv}}$ is a subalgebra of $A$.
\begin{lemma}\label{lemmaObviousRemarksInvSubalgebra}
1. Any $H$-equivariant $A$-module is an $(A^{\mathrm{inv}},H)$-bimodule, {\it i.e.} the left action of $A^{\mathrm{inv}}$ and the right action of $H$ commute.
\\2. For any right $H$-module $X$ there is a functor
\[ \mathcal{V}_X : (A\text{-}\mathrm{Mod})_H \to A^{\mathrm{inv}}\text{-}\mathrm{Mod}, \quad M \mapsto \mathrm{Hom}_H(X,M). \]
where the action of $a \in A^{\mathrm{inv}}$ on $f \in \mathrm{Hom}_H(X,M)$ is by acting on the target:
\begin{equation}\label{pushforwardAction}
\forall \, x \in X, \quad (a \cdot f)(x) = a \cdot f(x).
\end{equation}
3. In particular for any $M \in (A\text{-}\mathrm{Mod})_H$ the subspace $M^{\mathrm{inv}}$ is stable under the action of $A^{\mathrm{inv}}$.
\end{lemma}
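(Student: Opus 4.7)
The plan is to prove the three assertions in order, each being a quick consequence of the $H$-equivariance condition \eqref{conditionModuleInModH} combined with the invariance property $a \smallsquare h = \varepsilon(h) a$ for $a \in A^{\mathrm{inv}}$.

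For (1), I would start from \eqref{conditionModuleInModH} applied to an invariant $a$, substitute $a \smallsquare h_{(1)} = \varepsilon(h_{(1)}) a$, pull the scalar $\varepsilon(h_{(1)})$ across the $A$-action, and collapse $\varepsilon(h_{(1)}) h_{(2)} = h$ via the counit axiom. The resulting identity $(a \cdot v) \smallsquare h = a \cdot (v \smallsquare h)$ is exactly the statement that the left $A^{\mathrm{inv}}$-action commutes with the right $H$-action.

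For (2), the essential point is that formula \eqref{pushforwardAction} actually lands in $\Hom_H(X,M)$ when $a \in A^{\mathrm{inv}}$. Given $f \in \Hom_H(X,M)$, $x \in X$ and $h \in H$, I would use the $H$-linearity of $f$ followed by Part (1) to get
\[ (a \cdot f)(x \smallsquare h) = a \cdot (f(x) \smallsquare h) = (a \cdot f(x)) \smallsquare h = \bigl((a \cdot f)(x)\bigr) \smallsquare h, \]
so $a \cdot f$ is $H$-linear. Once this is in place, associativity and unitality of the $A^{\mathrm{inv}}$-action on $\Hom_H(X,M)$ are inherited pointwise from those of the $A$-action on $M$, and functoriality in $M$ is immediate via post-composition: any morphism $\phi : M \to N$ in $(A\text{-}\mathrm{Mod})_H$ induces $\mathcal{V}_X(\phi) : f \mapsto \phi \circ f$, which is $A^{\mathrm{inv}}$-linear because $\phi$ is $A$-linear.

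Assertion (3) is then the specialization of (2) to $X = \Bbbk$, the trivial right $H$-module, under the identification $\Hom_H(\Bbbk, M) \cong M^{\mathrm{inv}}$ recalled just before the lemma. There is no genuine obstacle in this proof; the only conceptual point worth stressing is that Part (1) is precisely the computation needed so that the pointwise action in Part (2) preserves $H$-linearity, after which everything is formal.
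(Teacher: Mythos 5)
Your proof is correct and follows the same route as the paper's: item (1) by substituting $a \smallsquare h_{(1)} = \varepsilon(h_{(1)})a$ and using the counit axiom, item (2) by checking that $a \cdot f$ remains $H$-linear via item (1) and the $H$-linearity of $f$, and item (3) as the specialization $X = \Bbbk$. No gaps.
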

\begin{proof}
1. Obvious from \eqref{conditionModuleInModH}.
\\2. We just need to show that the subspace $\mathrm{Hom}_H(X,M) \subset \mathrm{Hom}_{\Bbbk}(X,M)$ is stable under the action \eqref{pushforwardAction}, \textit{i.e.} that $a \cdot f$ is $H$-linear. Using item 1 and $H$-linearity of $f$, we have for all $a \in A^{\mathrm{inv}}$:
\[ \bigl( (a \cdot f)(x) \bigr) \smallsquare h \overset{\eqref{pushforwardAction}}{=} \bigl( a \cdot f(x) \bigr) \smallsquare h = a \cdot \bigl( f(x) \smallsquare h \bigr) = a \cdot f(x \smallsquare h) \overset{\eqref{pushforwardAction}}{=} (a \cdot f)(x \smallsquare h). \]
3. Immediate consequence of item 1. It is also a consequence of item 2 with $X = \Bbbk$.
\end{proof}

\indent We now establish the ``derived version'' of Lemma \ref{lemmaObviousRemarksInvSubalgebra}. Let $X,Y \in \mathrm{Mod}\text{-}H$ and take a projective resolution of $X$ (in $\mathrm{Mod}\text{-}H$):
\begin{equation}\label{projectiveResX}
\begin{tikzcd}
0 &\ar{l} X &\arrow[l, "d_0"'] P_0 &\arrow[l, "d_1"'] P_1 &\arrow[l, "d_2"'] \ldots
\end{tikzcd}
\end{equation}
Then the cohomology of the complex of vector spaces
\begin{equation}\label{derivedComplexX}
\begin{tikzcd}
\mathrm{Hom}_H(P_0, Y) \ar{r}{d_1^*} & \mathrm{Hom}_H(P_1, Y) \ar{r}{d_2^*} & \mathrm{Hom}_H(P_2, Y) \ar{r}{d_3^*} & \ldots
\end{tikzcd}
\end{equation}
(where $d_n^*$ means pullback) is denoted by $\mathrm{Ext}_H^{\bullet}(X,Y)$, \textit{i.e.}
\[ \Ext_H^n(X,Y) = \ker(d_{n+1}^*)/\mathrm{im}(d_n^*) \] for all $n \geq 1$ and $\Ext_H^0(X,Y) = \ker(d_1^*)$. In other words:
\begin{itemize}[itemsep=0em,topsep=.2em]
\item An element in $\Ext_H^n(X,Y)$ is a {\em cohomology class} $[f]$ where $f \in \Hom_H(P_n,Y)$ satisfies $f \circ d_{n+1} = 0$, \textit{i.e.} $f$ is a cocycle of degree $n$ in the complex \eqref{derivedComplexX}.
\item Two cohomology classes $[f]$, $[f']$ are equal if and only if $f - f' = g \circ d_n$ for some $g \in \Hom_H(P_{n-1},Y)$, \textit{i.e.} $f - f'$ is a coboundary of degree $n$ in the complex \eqref{derivedComplexX}.
\end{itemize}
Two different projective resolutions of $X$ yield isomorphic Ext spaces, see e.g. \cite[\S III.6]{macLane}.

\smallskip

\indent Recall that $(A\text{-}\mathrm{Mod})_H$ denotes the category of $H$-equivariant $A$-modules (Def.\,\ref{defEquivariantModule}).
\begin{theorem}\label{thmDerivedRepGeneral}
For all $n \in \mathbb{N}$ and any $X \in \mathrm{Mod}\text{-}H$ there is a functor
\[ \mathcal{V}^n_X : (A\text{-}\mathrm{Mod})_H \to A^{\mathrm{inv}}\text{-}\mathrm{Mod}, \quad M \mapsto \mathrm{Ext}^n_H(X,M). \]
The action of $a \in A^{\mathrm{inv}}$ on a cohomology class $[f] \in \mathrm{Ext}^n_H(X,M)$ is given by $a \cdot [f] = [a \cdot f]$, where $a \cdot f$ is the action on the target $M$ of the cocycle $f$ as defined in \eqref{pushforwardAction}.
\end{theorem}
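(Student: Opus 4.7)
The plan is to lift the action of $A^{\mathrm{inv}}$ from $\Hom_H(-,M)$ to cohomology by checking that the differentials in the complex \eqref{derivedComplexX} are $A^{\mathrm{inv}}$-linear, so that cocycles and coboundaries form $A^{\mathrm{inv}}$-submodules.

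First I would fix a projective resolution \eqref{projectiveResX} of $X$ and, for every $n \geq 0$, apply Lemma \ref{lemmaObviousRemarksInvSubalgebra}(2) with the module $P_n$ playing the role of ``$X$'' there. This endows each $\Hom_H(P_n,M)$ with the $A^{\mathrm{inv}}$-action $(a\cdot f)(p) = a\cdot f(p)$, for $a \in A^{\mathrm{inv}}$, $f \in \Hom_H(P_n,M)$, $p \in P_n$. The crucial step is then the verification that the differential $d_{n+1}^*$ in \eqref{derivedComplexX} is $A^{\mathrm{inv}}$-linear, which is a one-line computation: for all $p \in P_{n+1}$,
\[ \bigl( d_{n+1}^*(a\cdot f) \bigr)(p) = (a \cdot f)\bigl(d_{n+1}(p)\bigr) = a \cdot f\bigl(d_{n+1}(p)\bigr) = a \cdot \bigl( d_{n+1}^*(f) \bigr)(p) = \bigl( a \cdot d_{n+1}^*(f) \bigr)(p). \]
Hence $\ker(d_{n+1}^*)$ and $\mathrm{im}(d_n^*)$ are $A^{\mathrm{inv}}$-submodules of $\Hom_H(P_n,M)$, and the quotient $\Ext_H^n(X,M)$ inherits an $A^{\mathrm{inv}}$-module structure given by $a\cdot [f] = [a\cdot f]$, as required.

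For functoriality in $M$: any morphism $\phi : M \to M'$ in $(A\text{-}\mathrm{Mod})_H$ is both $A$-linear and $H$-linear, so postcomposition $\phi_* : \Hom_H(P_n,M) \to \Hom_H(P_n,M')$ is well-defined, commutes with the differentials $d_{n+1}^*$, and is $A^{\mathrm{inv}}$-linear since $\phi_*(a\cdot f) = \phi \circ (a \cdot f) = a \cdot (\phi \circ f) = a \cdot \phi_*(f)$ by $A$-linearity of $\phi$. It therefore induces an $A^{\mathrm{inv}}$-linear map $\mathcal{V}^n_X(\phi)$ on cohomology, and functoriality with respect to composition and identities is immediate.

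The only genuine subtlety is independence of the chosen resolution, but this follows from the standard comparison theorem in homological algebra (\textit{cf.} \cite[\S III.6]{macLane}): any two projective resolutions of $X$ are chain-homotopy equivalent via $H$-linear maps, so the induced maps on $\Hom_H(P_\bullet,M)$ are themselves $A^{\mathrm{inv}}$-linear by the same ``acting on the target'' argument as above, and therefore descend to $A^{\mathrm{inv}}$-linear isomorphisms between the resulting Ext spaces. No step presents a genuine obstacle; the heart of the statement is simply the observation that acting on the target of an $H$-linear map commutes with pre-composition by the resolution differentials.
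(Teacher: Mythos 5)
Your proof is correct and takes essentially the same route as the paper: both verify that the pushforward action of $A^{\mathrm{inv}}$ on the target $M$ commutes with the pullback differentials $d_{n+1}^*$, hence descends to cohomology, and both define $\mathcal{V}^n_X$ on morphisms by postcomposition, using $A$-linearity to get $A^{\mathrm{inv}}$-linearity. Your added remark on independence of the resolution is a welcome extra detail that the paper only references implicitly via \cite[\S III.6]{macLane}.
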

\begin{proof}
Take $M \in (A\text{-}\mathrm{Mod})_H$ with representation morphism $\rho_M : A \to \mathrm{End}_{\Bbbk}(M)$. Recall the projective resolution \eqref{projectiveResX}. By Lemma \ref{lemmaObviousRemarksInvSubalgebra} the representation of $A^{\mathrm{inv}}$ on $\mathrm{Hom}_H(P_n, M)$, which we denote here by $\rho_n$, is given by pushforward of $\rho_M$, \textit{i.e.} $\rho_n(a)(f) = \rho_M(a)_*(f) = \rho_M(a) \circ f$. On the other hand the differential of the complex $\Hom_H(P_{\bullet},M)$ is given by pullback:
\[ \begin{tikzcd}
\mathrm{Hom}_H(P_0, M) \ar{r}{d_1^*} \ar[loop, out=123, in=57, distance=2em]{}{\rho^{\vphantom{\int}}_0(a) = \rho_M(a)_*} & \mathrm{Hom}_H(P_1, M) \ar[loop, out=123, in=57, distance=2em]{}{\rho^{\vphantom{\int}}_1(a) = \rho_M(a)_*} \ar{r}{d_2^*} & \mathrm{Hom}_H(P_2, M) \ar[loop, out=123, in=57, distance=2em]{}{\rho^{\vphantom{\int}}_2(a) = \rho_M(a)_*}  \ar{r}{d_3^*} & \ldots
\end{tikzcd} \]
Hence the family of morphisms $\bigl( \rho_n(a)_* \bigr)_{n \in \mathbb{N}}$ commutes the differential and thus each $\rho_n(a)$ descends to an endomorphism $\bar \rho_n(a)$ of $\mathrm{Ext}^n_H(X,M)$ given by $\bar \rho_n(a)\bigl( [f] \bigr) = \bigl[ \rho_n(a)(f) \bigr] = \bigl[ \rho_M(a) \circ f \bigr]$. It is clear that $\bar\rho_n(ab) = \bar\rho_n(a)\circ\bar\rho_n(b)$ since this is true for $\rho_n$.
\\If $\varphi : M \to W$ is a morphism of $H$-equivariant $A$-modules we define $\mathcal{V}^n_X(\varphi)( [f] ) = [ \varphi \circ f ]$ for all $[f] \in \Ext^n_H(X,M)$. It is readily seen that $\mathcal{V}^n_X(\varphi)$ is $A^{\mathrm{inv}}$-linear, by $A$-linearity of $\varphi$. This defines $\mathcal{V}^n_X$ as a functor.
\end{proof}

\indent When the Hopf algebra $H$ is finite-dimensional one might prefer to work within the category $\mathrm{mod}\text{-}H$ of {\em finite-dimensional} modules. All of the above remains true with this restriction.

\begin{remark}\label{rmkAder}
Note that $A^{\mathrm{inv}} \cong \mathrm{Hom}_H(\Bbbk, A) \cong \mathrm{Ext}^0_H(\Bbbk,A)$. One of the referees suggested to consider more generally $A^{\mathrm{der}} = \bigoplus_{m \in \mathbb{N}} \mathrm{Ext}_H^m(\Bbbk,A)$. In this remark we explain that {\em $A^{\mathrm{der}}$ is a graded algebra which acts on the graded space $M_X^{\mathrm{der}} = \bigoplus_{m \in \mathbb{N}} \mathrm{Ext}_H^m(X,M)$}, where $M$ is any $H$-equivariant $A$-module and $X$ is any right $H$-module. This uses the internal product on the Ext spaces of a Hopf algebra, which we recall from e.g. \cite[Chap.VIII, \S 4]{macLane}. If $P_\bullet \to X \to 0$ and $P'_\bullet \to X' \to 0$ are projective resolutions in $\mathrm{Mod}\text{-}H$, then we have the projective resolution $(P \otimes P')_\bullet \to X \otimes X' \to 0$ in $\mathrm{Mod}\text{-}(H \otimes H)$ and thus in particular in $\mathrm{Mod}\text{-}H$ since $H$ acts on tensor products by means of the coproduct $H \to H \otimes H$, where $(P \otimes P')_c = \bigoplus_{a+b=c}P_a \otimes P'_b$. For $f \in \Hom_H(P_k,Y)$ and $f' \in \Hom_H(P'_m,Y')$ define $f \cup f' \in \Hom_H\bigl( (P \otimes P')_{k+m}, Y \otimes Y' \bigr)$ by
\[ f \cup f' : (P \otimes P')_{k+m} \xrightarrow{\:\:\text{canonical projection}\:\:} P_k \otimes P'_m \xrightarrow{f \otimes f'} Y \otimes Y'.  \]
This associative operation descends to cohomology classes
\[ \cup : \mathrm{Ext}_H^k(X,Y) \otimes \mathrm{Ext}_H^m(X',Y') \to \mathrm{Ext}_H^{k+m}(X \otimes X', Y \otimes Y'), \quad [f] \cup [f'] = [f \cup f']. \]
Now, using that $\Ext^i_H$ is a bifunctor $(\mathrm{Mod}\text{-}H)^{\mathrm{op}} \times (\mathrm{Mod}\text{-}H) \to \mathrm{Vect}_\Bbbk$ for all $i$, the $H$-linearity of the multiplication map $m_A : A \otimes A \to A$ allows us to define
\[ \mathrm{Ext}_H^k(\Bbbk,A) \otimes \mathrm{Ext}_H^m(\Bbbk,A) \xrightarrow{\:\:\cup\:\:} \mathrm{Ext}_H^{k+m}(\Bbbk,A \otimes A) \xrightarrow{\:\:\mathrm{Ext}^{k+m}_H(\mathrm{id}_\Bbbk, m_A)\:\:} \mathrm{Ext}_H^{k+m}(\Bbbk,A) \]
as the product in $A^{\mathrm{der}}$, making it a graded algebra. Similarly, the $H$-linearity of the action $\rho_M : A \otimes M \to M$ allows us to define
\[ \mathrm{Ext}_H^k(\Bbbk,A) \otimes \mathrm{Ext}_H^m(X,M) \xrightarrow{\:\:\cup\:\:} \mathrm{Ext}_H^{k+m}(X,A \otimes M) \xrightarrow{\:\:\mathrm{Ext}^{k+m}_H(\mathrm{id}_X, \rho_M)\:\:} \mathrm{Ext}_H^{k+m}(X,M) \]
as the action of $A^{\mathrm{der}}$ on $M_X^{\mathrm{der}}$, making it a graded module. Theorem \ref{thmDerivedRepGeneral} is recovered by restriction to the degree $0$ subalgebra in $A^{\mathrm{der}}$, which is isomorphic to $A^{\mathrm{inv}}$.
\end{remark}

\subsection{Quantum moment maps and derived representations}\label{QMMandDerReps}
Let $A$ be a right $H$-module-algebra, with right $H$-action $\smallsquare : A \otimes H \to A$. The following definition is found in \cite[Def.\,1.2]{Lu} and \cite[\S 1.5]{VV}:
\begin{definition}\label{defQMM}
A quantum moment map (QMM) is a morphism of algebras $\mu : H \to A$ such that
\begin{equation}\label{QMMaxiom}
\forall \, a \in A, \:\: \forall \, h \in H, \quad a\, \mu(h) =  \mu(h_{(1)}) \, \bigl(a \smallsquare h_{(2)}\bigr).
\end{equation}
\end{definition}
\noindent Until the end of this subsection we assume that a QMM $\mu$ is given on $A$. The property \eqref{QMMaxiom} is equivalent to
\begin{equation}\label{equivalentQMMaxiom}
\forall \, a \in A, \:\: \forall \, h \in H, \quad a \smallsquare h = \mu\bigl( S(h_{(1)}) \bigr) a \mu(h_{(2)})
\end{equation}
and in this form we see that the right $H$-module structure on $A$ is entirely determined by $\mu$. If we put $a = \mu(x)$ in \eqref{equivalentQMMaxiom} we find in particular
\begin{equation}\label{equivarianceQMM}
\forall \, x,h \in H, \quad \mu(x) \smallsquare h = \mu\bigl( \mathrm{ad}^r(h)(x) \bigr)
\end{equation}
which means that $\mu$ is $H$-linear when $H$ is endowed with the right adjoint action defined by
\begin{equation}\label{adrH}
\forall \, x,h \in H, \quad \mathrm{ad}^r(h)(x) = S(h_{(1)})xh_{(2)}.
\end{equation}

\indent Recall that $A^{\mathrm{inv}}$ denotes the subalgebra of $H$-invariant elements in $A$. By \eqref{equivalentQMMaxiom} it is clear that
\begin{equation}\label{invariantsAreCentralizerQMM}
A^{\mathrm{inv}} = \bigl\{ a \in A \,\big|\, \forall \, h \in H,\: \mu(h)a=a\mu(h) \bigr\}.
\end{equation}
In other words {\em $A^{\mathrm{inv}}$ is the centralizer of $\mu(H)$}.

\smallskip

\indent Having a quantum moment map on $A$ implies that any $A$-module is automatically $H$-equivariant \cite[\S 1.5]{VV}:
\begin{lemma}\label{lemmaQMMImpliesEquivariance}
Let $(M,\cdot)$ be any $A$-module. Define
\begin{equation}\label{equivariantStructFromQMM}
\forall \, v \in M, \:\: \forall \, h \in H, \quad v \smallsquare_{\!\mu} \, h = \mu\bigl( S(h) \bigr) \cdot v.
\end{equation}
Then $(M,\cdot,\smallsquare_{\!\mu})$ becomes a $H$-equivariant $A$-module. As a result there is a functor
\[ A\text{-}\mathrm{Mod} \to (A\text{-}\mathrm{Mod})_H, \quad (M,\cdot) \mapsto (M,\cdot,\smallsquare_{\!\mu}). \]
\end{lemma}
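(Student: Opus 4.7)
The plan is to verify the three points packaged in the statement: that \eqref{equivariantStructFromQMM} defines a right $H$-module structure on $M$, that this structure satisfies the equivariance axiom \eqref{conditionModuleInModH} with respect to the $A$-action, and that any $A$-linear map automatically becomes $H$-linear with respect to $\smallsquare_{\!\mu}$.

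First I would check that $\smallsquare_{\!\mu}$ gives a right action. Since $\mu$ is an algebra morphism and $S$ is an anti-algebra morphism of $H$, and $(M,\cdot)$ is a left $A$-module, one computes directly
\[ (v \smallsquare_{\!\mu} h) \smallsquare_{\!\mu} k = \mu(S(k)) \cdot \bigl( \mu(S(h)) \cdot v \bigr) = \mu\bigl(S(k)S(h)\bigr) \cdot v = \mu\bigl(S(hk)\bigr) \cdot v = v \smallsquare_{\!\mu} (hk), \]
and $v \smallsquare_{\!\mu} 1 = \mu(1_H) \cdot v = 1_A \cdot v = v$.

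Next I would verify the equivariance condition \eqref{conditionModuleInModH}. The left-hand side unfolds as $(a \cdot v) \smallsquare_{\!\mu} h = \bigl(\mu(S(h)) \, a\bigr) \cdot v$, while the right-hand side becomes $\bigl((a \smallsquare h_{(1)}) \mu(S(h_{(2)}))\bigr) \cdot v$. Applying \eqref{equivalentQMMaxiom} to rewrite $a \smallsquare h_{(1)} = \mu(S(h_{(1)})) a \mu(h_{(2)})$ and relabeling Sweedler indices via coassociativity gives
\[ \mu(S(h_{(1)})) \, a \, \mu\bigl(h_{(2)} S(h_{(3)})\bigr). \]
The antipode axiom makes the inner factor collapse to $\varepsilon(h_{(2)}) 1_A$, and using the counit relation then yields $\mu(S(h)) a$, matching the left-hand side. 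This is the only nontrivial step; the main obstacle (if any) is merely the careful bookkeeping of Sweedler indices and the repeated use of \eqref{equivalentQMMaxiom} together with the antipode/counit axioms.

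Finally, for functoriality, I would observe that if $f : (M,\cdot) \to (N,\cdot)$ is a morphism of $A$-modules, then
\[ f(v \smallsquare_{\!\mu} h) = f\bigl( \mu(S(h)) \cdot v \bigr) = \mu(S(h)) \cdot f(v) = f(v) \smallsquare_{\!\mu} h, \]
so $f$ is automatically a morphism in $(A\text{-}\mathrm{Mod})_H$. Hence the assignment $(M,\cdot) \mapsto (M,\cdot,\smallsquare_{\!\mu})$ is a well-defined functor, which moreover is the identity on morphisms.
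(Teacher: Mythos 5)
Your proposal is correct and follows essentially the same route as the paper: the key step is the identity $(a \smallsquare h_{(1)})\mu(S(h_{(2)})) = \mu(S(h))a$, which you obtain by expanding via \eqref{equivalentQMMaxiom} and collapsing $\mu(h_{(2)}S(h_{(3)}))$ with the antipode axiom, exactly the computation the paper performs (read in the opposite direction, where the collapse appears as the insertion "trick"). Your additional verifications that $\smallsquare_{\!\mu}$ is a right action and that $A$-linear maps are automatically $H$-linear are correct routine checks that the paper leaves implicit.
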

\begin{proof}
Condition \eqref{conditionModuleInModH} is easily checked:
\[ (a \cdot v) \smallsquare_{\!\mu}\, h = \mu\bigl(S(h)\bigr) a \cdot v = \mu\bigl( S(h_{(1)}) \bigr) a \mu(h_{(2)}) \mu\bigl( S(h_{(3)}) \bigr) \cdot v = (a \smallsquare \, h_{(1)}) \cdot (v \smallsquare_{\!\mu}\, h_{(2)}) \]
where the second equality is a trick and the third equality uses \eqref{equivalentQMMaxiom}.
\end{proof}
\noindent {\em Warning.} It is important to distinguish the algebra $A$ and the regular module $_AA$. Indeed the right $H$-action on the algebra $A$ is $a \smallsquare h = \mu\bigl( S(h_{(1)}) \bigr) a \mu(h_{(2)})$ by \eqref{equivalentQMMaxiom} while the right $H$-action on the $A$-module $_AA$ provided by Lemma \ref{lemmaQMMImpliesEquivariance} is $a \smallsquare_{\!\mu}\, h = \mu\bigl( S(h) \bigr)a$.

\smallskip

\indent Let $X$ be any $H$-module. If we compose the functors in Lemmas \ref{lemmaQMMImpliesEquivariance} and \ref{lemmaObviousRemarksInvSubalgebra} we get a functor
\[ \widetilde{\mathcal{V}}_X : A\text{-}\mathrm{Mod} \to A^{\mathrm{inv}}\text{-}\mathrm{Mod}, \quad M \mapsto \mathrm{Hom}_H(X,M_{\mu}). \]
where $M_{\mu}$ is $M$ endowed with the $H$-equivariant structure \eqref{equivariantStructFromQMM} and the action of $A^{\mathrm{inv}}$ on $\mathrm{Hom}_H(X,M_{\mu})$ is by acting on the target as in \eqref{pushforwardAction}. In particular for any $M \in A\text{-}\mathrm{Mod}$ the subspace
\[ M^{\mathrm{inv}}_{\mu} = \Hom_H(\Bbbk, M_{\mu}) = \bigl\{ v \in M \, \big| \, \forall \, h \in H, \:\: \mu(h) \cdot v = \varepsilon(h)v \bigr\} \]
is stable under the action of $A^{\mathrm{inv}}$. More generally if we compose the functor in Lemma \ref{lemmaQMMImpliesEquivariance} with the functor $\mathcal{V}^n_X$ in Theorem \ref{thmDerivedRepGeneral} we get

\begin{corollary}\label{coroRepDeriveesQMM}
For all $n \in \mathbb{N}$ and any $X \in \mathrm{Mod}\text{-}H$ there is a functor
\[ \widetilde{\mathcal{V}}^n_X : A\text{-}\mathrm{Mod} \to A^{\mathrm{inv}}\text{-}\mathrm{Mod}, \quad M \mapsto \mathrm{Ext}^n_H(X,M_{\mu}). \]
The action of $a \in A^{\mathrm{inv}}$ on a cohomology class $[f] \in \mathrm{Ext}^n_H(X,M_{\mu})$ is given by $a \cdot [f] = [a \cdot f]$ where $a \cdot f$ denotes the action \eqref{pushforwardAction} defined by acting on the target $M$ of the cocycle $f$.
\end{corollary}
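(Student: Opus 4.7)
The plan is to obtain $\widetilde{\mathcal{V}}^n_X$ as the composition of two functors already constructed in the paper. Given any $A$-module $(M,\cdot)$, the first step is to invoke Lemma \ref{lemmaQMMImpliesEquivariance}: the quantum moment map $\mu$ endows $M$ with a canonical right $H$-action $v \smallsquare_\mu h = \mu(S(h)) \cdot v$, and the lemma guarantees that the resulting triple $M_\mu = (M,\cdot,\smallsquare_\mu)$ satisfies the compatibility \eqref{conditionModuleInModH}, hence $M_\mu \in (A\text{-}\mathrm{Mod})_H$. The second step is then simply to feed $M_\mu$ into the functor $\mathcal{V}^n_X$ of Theorem \ref{thmDerivedRepGeneral}, which outputs the $A^{\mathrm{inv}}$-module $\Ext^n_H(X,M_\mu)$ with the push-forward action on cohomology classes $a\cdot [f] = [a\cdot f]$ as required.

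What remains is to check functoriality, which amounts to verifying that the first step is itself functorial, since the second step is already known to be a functor. If $\varphi : M \to W$ is a morphism in $A\text{-}\mathrm{Mod}$, then $\varphi$ is automatically $H$-linear with respect to the $\smallsquare_\mu$ structures: by $A$-linearity of $\varphi$ we have
\[ \varphi(v \smallsquare_\mu h) = \varphi\bigl(\mu(S(h)) \cdot v\bigr) = \mu(S(h)) \cdot \varphi(v) = \varphi(v) \smallsquare_\mu h, \]
so $\varphi : M_\mu \to W_\mu$ is a morphism in $(A\text{-}\mathrm{Mod})_H$. Applying $\mathcal{V}^n_X$ then yields the $A^{\mathrm{inv}}$-linear map $\widetilde{\mathcal{V}}^n_X(\varphi) : [f] \mapsto [\varphi \circ f]$, which proves the functoriality statement. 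The description of the action on a cohomology class is inherited verbatim from Theorem \ref{thmDerivedRepGeneral}.

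There is essentially no obstacle: the content of the corollary is entirely packaged in the two previous results. The only conceptual point worth emphasizing, in line with the warning following Lemma \ref{lemmaQMMImpliesEquivariance}, is that the $H$-action used on $M$ is the one coming from $\mu$ via \eqref{equivariantStructFromQMM}, not the coadjoint-type action one might put on the algebra $A$ itself; once this is fixed, the proof is just a composition of functors.
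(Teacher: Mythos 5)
Your proposal is correct and follows exactly the paper's route: the corollary is obtained by composing the functor $A\text{-}\mathrm{Mod} \to (A\text{-}\mathrm{Mod})_H$ of Lemma \ref{lemmaQMMImpliesEquivariance} with the functor $\mathcal{V}^n_X$ of Theorem \ref{thmDerivedRepGeneral}. Your explicit check that any $A$-linear map is automatically $H$-linear for the $\smallsquare_\mu$ structures is the (implicit) functoriality content of Lemma \ref{lemmaQMMImpliesEquivariance}, so nothing is missing.
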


\section{Reminder on braided tensor product}\label{sectionBraidedTensorProduct}
If $A_1$, $A_2$ are associative $\Bbbk$-algebras then the vector space $A_1 \otimes_{\Bbbk} A_2$ has a canonical algebra structure defined by $(a_1 \otimes a_2)(a'_1 \otimes a'_2) = (a_1a'_1) \otimes (a_2a'_2)$; this is known as {\em tensor product of algebras}. If $M_i$ is an $A_i$-module for $i=1,2$ then the vector space $M_1 \otimes_{\Bbbk} M_2$ has a canonical structure of $(A_1 \otimes_{\Bbbk} A_2)$-module defined by $(a_1 \otimes a_2)(v_1 \otimes v_2) = (a_1 \cdot v_1) \otimes (a_2 \cdot v_2)$; this is known as {\em external product of modules} and denoted by $M_1 \boxtimes M_2$. The well-known definitions below generalize these basic facts to other ambient braided monoidal categories than $\mathrm{Vect}_{\Bbbk}$.

\smallskip

\indent Let $\mathcal{C}$ be a monoidal category, say strict for simplicity. We assume that $\mathcal{C}$ is equipped with a {\em braiding}, which is a natural isomorphism $c = \bigl( c_{X,Y} : X \otimes Y \overset{\sim}{\longrightarrow} Y \otimes X \bigr)_{X,Y \in \mathcal{C}}$ satisfying the usual axioms, see e.g. \cite[\S XIII.1]{kassel}. In this case it is well-known that algebras and modules in $\mathcal{C}$ behave well with respect to the monoidal product \cite[Lem.\,9.2.12]{majidFoundations}:

\begin{definition}\label{defBraidedTensorProductGeneral}
Let $A_1$, $A_2$ be algebra objects in $\mathcal{C}$, with products $m_i \in \Hom_{\mathcal{C}}(A_i \otimes A_i, A_i)$ and units $\eta_i \in \Hom_{\mathcal{C}}(\boldsymbol{1},A_i)$.
\\1. The braided tensor product $A_1 \,\widetilde{\otimes}\, A_2$ is the algebra object $(A_1 \otimes A_2, m_{12}, \eta_{12})$ where
\begin{align*}
&m_{12} : (A_1 \otimes A_2) \otimes (A_1 \otimes A_2) \xrightarrow{\mathrm{id}_{A_1} \otimes c_{A_2,A_1} \otimes \mathrm{id}_{A_2}} A_1 \otimes A_1 \otimes A_2 \otimes A_2 \xrightarrow{m_1 \otimes m_2} A_1 \otimes A_2,\\
&\eta_{12} : \boldsymbol{1} = \boldsymbol{1} \otimes \boldsymbol{1} \xrightarrow{\eta_1 \otimes \eta_2} A_1 \otimes A_2.
\end{align*}
2. For $i=1,2$ let $M_i$ be an $A_i$-module object in $\mathcal{C}$, with action $\rho_i \in \Hom_{\mathcal{C}}(A_i \otimes M_i,M_i)$. The braided external product $M_1 \,\widetilde{\boxtimes}\, M_2$ is the $(A_1 \,\widetilde{\otimes}\, A_2)$-module object $(M_1 \otimes M_2, \rho_{12})$ where 
\[ \rho_{12} : (A_1 \otimes A_2) \otimes (M_1 \otimes M_2) \xrightarrow{\mathrm{id}_{A_1} \otimes c_{A_2,M_1} \otimes \mathrm{id}_{M_2}} A_1 \otimes M_1 \otimes A_2 \otimes M_2 \xrightarrow{\rho_1 \otimes \rho_2} M_1 \otimes M_2. \]
\end{definition}

Let $H = (H,\cdot, 1, \Delta, \varepsilon, S, R)$ be a {\em quasitriangular} Hopf algebra over a field $\Bbbk$, with invertible antipode \cite{CP,kassel}. We write the $R$-matrix $R \in H^{\otimes 2}$ as
\begin{equation*}
R = R_{[1]} \otimes R_{[2]}
\end{equation*}
which is a shortand for $R =  \sum_i R_{[1],i} \otimes R_{[2],i}$. For future reference, recall:
\begin{align}
&\text{Quasi-cocommutativity:} \quad \forall \, h \in H, \quad R_{[1]} h_{(1)} \otimes R_{[2]}h_{(2)} = h_{(2)}R_{[1]} \otimes h_{(1)}R_{[2]}, \label{RDelta}\\
&\text{Quasitriangularity:} \quad R_{[1](1)} \otimes R_{[1](2)} \otimes R_{[2]} = R_{[1]}^1 \otimes R_{[1]}^2 \otimes R_{[2]}^1 R_{[2]}^2,\label{quasitriang1}\\
&\hphantom{\text{Quasitriangularity:}}\quad R_{[1]} \otimes R_{[2](1)} \otimes R_{[2](2)} = R_{[1]}^1 R_{[1]}^2 \otimes R_{[2]}^2 \otimes R_{[2]}^1,\label{quasitriang2}\\
&\text{Yang--Baxter equation:} \quad R_{[1]}^1 R_{[1]}^2 \otimes R_{[2]}^1 R_{[1]}^3 \otimes R_{[2]}^2 R_{[2]}^3 = R_{[1]}^2 R_{[1]}^1 \otimes R_{[1]}^3 R_{[2]}^1 \otimes R_{[2]}^3 R_{[2]}^2 \label{YangBaxter}\\
&\text{Inverse:} \quad S(R_{[1]}) \otimes R_{[2]} = R_{[1]} \otimes S^{-1}(R_{[2]}) = R^{-1},\label{inverseR}\\
&\text{Consequence of previous:} \quad S(R_{[1]}) \otimes S(R_{[2]}) = R_{[1]} \otimes R_{[2]},\label{SSR}\\
&\text{``Skew-inverse'':} \quad R_{[1]}^1R_{[1]}^2 \otimes S(R^2_{[2]})R^1_{[2]} = R_{[1]}^1 S^{-1}(R_{[1]}^2) \otimes R^2_{[2]} R^1_{[2]} = 1 \otimes 1,\label{skewInverseR}\\
&\text{Unitality for counit:} \quad \varepsilon(R_{[1]})\,R_{[2]} = R_{[1]}\,\varepsilon(R_{[2]}) = 1\label{epsilonR}
\end{align}
where $R^1 = R^1_{[1]} \otimes R^1_{[2]}$, $R^2 = R^2_{[1]} \otimes R^2_{[2]}$ and $R^3 = R^3_{[1]} \otimes R^3_{[2]}$ denote three copies of $R$.

\smallskip

\indent The $R$-matrix induces a braiding on the category $\mathrm{Mod}\text{-}H$ of right modules over $H$, given by
\begin{equation}\label{braidingModH}
\forall \, x \in X, \:\: \forall \, y \in Y, \quad c_{X,Y}(x \otimes y) = (y \smallsquare R_{[1]}) \otimes (x \smallsquare R_{[2]}).
\end{equation}
We record that, with this choice of ambient category, Definition \ref{defBraidedTensorProductGeneral} translates as follows:
\begin{lemma}\label{lemBraidedProductHequivModules}
Let $A_1$ and $A_2$ be right $H$-module-algebras.
\\1. The $H$-module-algebra $A_1 \,\widetilde{\otimes}\, A_2$ is the vector space $A_1 \otimes_{\Bbbk} A_2$ equipped with the product and the right $H$-module structure given by
\begin{align}
(a_1 \,\widetilde{\otimes}\, a_2) (a'_1 \,\widetilde{\otimes}\, a'_2) &= a_1 (a'_1 \smallsquare R_{[1]}) \,\widetilde{\otimes}\, (a_2 \smallsquare R_{[2]}) a'_2,\label{braidedProduct}\\
(a_1 \,\widetilde{\otimes}\, a_2) \smallsquare h &= (a_1 \smallsquare h_{(1)}) \,\widetilde{\otimes}\, (a_2 \smallsquare h_{(2)}).\label{HmodStructureOnBraidedProduct}
\end{align}
2. For $i=1,2$, let $M_i$ be a $H$-equivariant $A_i$-module. The $H$-equivariant $(A_1 \,\widetilde{\otimes}\, A_2)$-module $M_1 \,\widetilde{\boxtimes}\, M_2$ is the vector space $M_1 \otimes_{\Bbbk} M_2$ equipped with the left $(A_1 \,\widetilde{\otimes}\, A_2)$-module structure and the right $H$-module structure given by
\begin{align}
(a_1 \,\widetilde{\otimes}\, a_2) \cdot (v_1 \,\widetilde{\boxtimes}\, v_2) &= a_1 \cdot (v_1 \smallsquare R_{[1]}) \,\widetilde{\boxtimes}\, (a_2 \smallsquare R_{[2]}) \cdot v_2,\label{moduleStructOnBrTensProd}\\
(v_1 \,\widetilde{\boxtimes}\, v_2) \smallsquare h &= (v_1 \smallsquare h_{(1)}) \,\widetilde{\boxtimes}\, (v_2 \smallsquare h_{(2)}).\label{HActionOnBrTensProd}
\end{align}
\end{lemma}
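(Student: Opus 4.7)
The statement is really a direct translation: we already have the general braided tensor product of algebras and modules in any braided monoidal category (Definition \ref{defBraidedTensorProductGeneral}), and the claim is only that substituting the explicit braiding \eqref{braidingModH} of $\mathrm{Mod}\text{-}H$ produces the formulas \eqref{braidedProduct}--\eqref{HActionOnBrTensProd}. So my plan is essentially to unpack definitions and do a small bookkeeping check, with no real obstacle beyond keeping Sweedler and $R$-matrix indices straight.

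First I would dispose of the $H$-module structures. By construction $A_1 \,\widetilde{\otimes}\, A_2$ and $M_1 \,\widetilde{\boxtimes}\, M_2$ are, as underlying objects of $\mathrm{Mod}\text{-}H$, simply the tensor products $A_1 \otimes_\Bbbk A_2$ and $M_1 \otimes_\Bbbk M_2$ equipped with the standard diagonal $H$-action via the coproduct. That is exactly what \eqref{HmodStructureOnBraidedProduct} and \eqref{HActionOnBrTensProd} assert, so these two formulas are tautological.

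Next I would derive \eqref{braidedProduct} by evaluating the composition
\[ m_{12} = (m_1 \otimes m_2) \circ \bigl(\mathrm{id}_{A_1} \otimes c_{A_2,A_1} \otimes \mathrm{id}_{A_2}\bigr) \]
on $(a_1 \otimes a_2) \otimes (a'_1 \otimes a'_2)$. Using \eqref{braidingModH}, the middle factor sends $a_2 \otimes a'_1$ to $(a'_1 \smallsquare R_{[1]}) \otimes (a_2 \smallsquare R_{[2]})$, and then applying $m_1 \otimes m_2$ produces exactly $a_1(a'_1 \smallsquare R_{[1]}) \,\widetilde{\otimes}\, (a_2 \smallsquare R_{[2]}) a'_2$. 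The derivation of \eqref{moduleStructOnBrTensProd} from
\[ \rho_{12} = (\rho_1 \otimes \rho_2) \circ \bigl(\mathrm{id}_{A_1} \otimes c_{A_2,M_1} \otimes \mathrm{id}_{M_2}\bigr) \]
is identical word for word, replacing $m_i$ by $\rho_i$ and $a'_1 \in A_1$ by $v_1 \in M_1$.

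Finally, although the general categorical result guarantees that $A_1 \,\widetilde{\otimes}\, A_2$ is an algebra object and $M_1 \,\widetilde{\boxtimes}\, M_2$ an $(A_1 \,\widetilde{\otimes}\, A_2)$-module object of $\mathrm{Mod}\text{-}H$ — equivalently a $H$-module-algebra and a $H$-equivariant module — one could reassure the reader by checking \eqref{conditionModAlg} and \eqref{conditionModuleInModH} directly. For instance, to verify \eqref{conditionModAlg} for \eqref{braidedProduct}--\eqref{HmodStructureOnBraidedProduct}, the key ingredient is quasi-cocommutativity \eqref{RDelta}, which lets one move $h_{(1)} \otimes h_{(2)}$ past $R_{[1]} \otimes R_{[2]}$, combined with the fact that $A_1$ and $A_2$ themselves satisfy \eqref{conditionModAlg}. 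The same pattern (Sweedler + \eqref{RDelta}) handles \eqref{conditionModuleInModH} for $M_1 \,\widetilde{\boxtimes}\, M_2$. These verifications are routine and serve only as a sanity check; the substance of the lemma is the two formula derivations above.
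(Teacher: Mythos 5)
Your proposal is correct and matches the paper's treatment exactly: the paper offers no separate proof of Lemma \ref{lemBraidedProductHequivModules}, presenting it as the direct translation of Definition \ref{defBraidedTensorProductGeneral} once the braiding \eqref{braidingModH} of $\mathrm{Mod}\text{-}H$ is substituted, which is precisely your unpacking. Your optional direct verification of \eqref{conditionModAlg} and \eqref{conditionModuleInModH} via quasi-cocommutativity is a harmless extra sanity check that the paper delegates to the cited general result \cite[Lem.\,9.2.12]{majidFoundations}.
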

\noindent In all the sequel, tensor products are understood to be over $\Bbbk$. Note that $a_1 \,\widetilde{\otimes}\, a_2$ is just a notation for $a_1 \otimes a_2 \in A_1 \otimes A_2$ which allows us to insist on the use of the braided product in $A_1 \otimes A_2$. Similarly $v_1 \,\widetilde{\boxtimes}\, v_2$ is just a notation for $v_1 \otimes v_2 \in M_1 \otimes M_2$.

\smallskip

\indent As a side remark, quantum moment maps (Def.\,\ref{defQMM}) behave well with respect to the braided tensor product of $H$-module-algebras, see Remark \ref{remarkFusionQMM} in Appendix \ref{appendixQMMLgn}.

\smallskip

\indent Let $A_1, \ldots, A_l$ be a family of right $H$-module-algebras and let $M_j$ be a $H$-equivariant $A_j$-module for each $j$. For further use we write down the generalization of formulas \eqref{braidedProduct}--\eqref{HActionOnBrTensProd} in this situation. It is very convenient to introduce the linear maps $\mathfrak{e}_i : A_i \to A_1 \,\widetilde{\otimes}\, \ldots \,\widetilde{\otimes}\, A_l$ defined by
\begin{equation}\label{embeddingsBraidedProduct}
\mathfrak{e}_1(a_1) = a_1 \,\widetilde{\otimes}\, 1_{A_2} \,\widetilde{\otimes}\, \ldots \,\widetilde{\otimes}\, 1_{A_l}, \quad \mathfrak{e}_2(a_2) = 1_{A_1} \,\widetilde{\otimes}\, a_2 \,\widetilde{\otimes}\, 1_{A_3} \,\widetilde{\otimes}\, \ldots \,\widetilde{\otimes}\, 1_{A_l}, \quad \text{\it etc}
\end{equation}
for all $a_i \in A_i$. The maps $\mathfrak{e}_i$ are morphisms of $H$-module-algebras which moreover satisfy
\begin{align}
&\mathfrak{e}_1(a_1)\mathfrak{e}_2(a_2) \ldots \mathfrak{e}_l(a_l) = a_1 \,\widetilde{\otimes}\, a_2 \,\widetilde{\otimes}\, \ldots \,\widetilde{\otimes}\, a_l \label{monomialsBraidedProduct}\\
\text{and }\:\:&\mathfrak{e}_j(a_j)\mathfrak{e}_i(a_i) = \mathfrak{e}_i\bigl( a_i \smallsquare R_{[1]} \bigr) \mathfrak{e}_j\bigl( a_j \smallsquare R_{[2]} \bigr) \quad \text{if } i< j\label{exchangeRuleBraidedProduct}
\end{align}
because of \eqref{conditionModAlgUnit} and \eqref{epsilonR}.  It is clear that these two properties are enough to compute any product in $A_1 \,\widetilde{\otimes}\, \ldots \,\widetilde{\otimes}\, A_l$. The action of $A_1 \,\widetilde{\otimes}\, \ldots \,\widetilde{\otimes}\, A_l$ on $M_1 \,\widetilde{\boxtimes}\, \ldots \,\widetilde{\boxtimes}\, M_l$ is entirely described by
\begin{align}
\begin{split}\label{actionOnBraidedProductOfMultiplesModules}
&\mathfrak{e}_i(a_i) \cdot (v_1 \,\widetilde{\boxtimes}\, \ldots \,\widetilde{\boxtimes}\, v_l)\\
=\:&v_1 \smallsquare R_{[1](1)} \,\widetilde{\boxtimes}\, \ldots \,\widetilde{\boxtimes}\, v_{i-1} \smallsquare R_{[1](i-1)} \,\widetilde{\boxtimes}\, (a_i \smallsquare R_{[2]}) \cdot v_i \,\widetilde{\boxtimes}\, v_{i+1} \,\widetilde{\boxtimes}\, \ldots \,\widetilde{\boxtimes}\, v_l.
\end{split}
\end{align}
where the subscripts $_{(1)}, \ldots, \,_{(i-1)}$ indicate iterated coproduct on $R_{[1]}$.

\smallskip

\indent Keeping these notations, we record a few further facts which will be useful in Appendix~\ref{appendixComparisonReps}. For all $i=1,\ldots,l$ consider the associated representation morphisms $\rho_i : A_i \to \mathrm{End}_{\Bbbk}(M_i)$ and $\mathsf{D}_i : H \to \mathrm{End}_{\Bbbk}(M_i)^{\mathrm{op}}$ as in Remark \ref{remarkEquivariantModuleInTermsOfRep}. By \eqref{actionOnBraidedProductOfMultiplesModules}, the representation morphism $\rho$ of $A_1 \,\widetilde{\otimes}\, \ldots \,\widetilde{\otimes}\, A_l$ on the vector space $M_1 \otimes \ldots \otimes M_l$ is given by
\begin{equation}\label{repMorphisOnExternalProduct}
\begin{array}{c}
\rho : A_1 \,\widetilde{\otimes}\, \ldots \,\widetilde{\otimes}\, A_l \to \mathrm{End}_{\Bbbk}(M_1) \otimes \ldots \otimes \mathrm{End}_{\Bbbk}(M_l)\\[.3em]
\mathfrak{e}_i(a_i) \mapsto \mathsf{D}_1(R_{[1](1)}) \otimes \ldots \otimes \mathsf{D}_{i-1}(R_{[1](i-1)}) \otimes \rho_i( a_i \smallsquare R_{[2]} ) \otimes \mathrm{id}_{M_{i+1}} \otimes \ldots \otimes \mathrm{id}_{M_l}.
\end{array}
\end{equation}
The target of $\rho$ is of course the usual (\textit{i.e.}\,unbraided) tensor product of algebras. We can promote $\rho$ to a morphism of right $H$-module-algebras if we define a right $H$-action on $\mathrm{End}_{\Bbbk}(M_1) \otimes \ldots \otimes \mathrm{End}_{\Bbbk}(M_l)$ by
\begin{equation}\label{actionOnTargetBraidedRepMorphism}
(E_1 \otimes \ldots \otimes E_l) \smallsquare h = \textstyle \bigotimes_{i=1}^l \bigl( \mathsf{D}_i(h_{(i)}) \circ E_i \circ \mathsf{D}_i(S^{-1}(h_{(2l+1-i)})) \bigr).
\end{equation}
\begin{lemma}\label{lemmeGeneralAlekseevSurj}
Assume that the representation morphisms $\rho_i : A_i \to \mathrm{End}_{\Bbbk}(M_i)$ are injective (resp. surjective) for all $i$. Then $\rho$ defined in \eqref{repMorphisOnExternalProduct} is injective (resp. surjective) as well.
\end{lemma}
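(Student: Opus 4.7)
The plan is to exhibit $\rho$ as the composition $\rho = \Psi \circ (\rho_1 \otimes \cdots \otimes \rho_l)$, where the second map is the ordinary tensor product of representations (viewed as a linear map on the common underlying vector space $A_1 \otimes \cdots \otimes A_l$) and $\Psi$ is a vector space automorphism of $\mathrm{End}_\Bbbk(M_1) \otimes \cdots \otimes \mathrm{End}_\Bbbk(M_l)$ built from the $R$-matrix. Since $\Psi$ preserves kernels and images, and since $\rho_1 \otimes \cdots \otimes \rho_l$ is injective (resp.\ surjective) if and only if each $\rho_i$ is, the conclusion of the lemma follows at once.

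Using the associativity of the braided tensor product of algebras and the braided external product of modules, I would reduce to the base case $l = 2$. For this case, combining formula \eqref{repMorphisOnExternalProduct}, the equivariance condition \eqref{equivariantConditionInTermsOfRep} for $\rho_2$, the quasitriangularity identity \eqref{quasitriang2} and the antimorphism property of $\mathsf{D}_1$ yields the key identity
\[
\rho(a_1 \,\widetilde{\otimes}\, a_2) \;=\; (\rho_1(a_1) \otimes \mathrm{id}_{M_2}) \cdot G \cdot (\mathrm{id}_{M_1} \otimes \rho_2(a_2)) \cdot G^{-1}, \qquad G := (\mathsf{D}_1 \otimes \mathsf{D}_2)(R),
\]
which rewrites as $\rho = \Psi \circ (\rho_1 \otimes \rho_2)$ with $\Psi(X \otimes Y) := (X \otimes \mathrm{id}) \cdot G \cdot (\mathrm{id} \otimes Y) \cdot G^{-1}$ on $V_1 \otimes V_2$, where $V_i := \mathrm{End}_\Bbbk(M_i)$.

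The main obstacle will be to show that $\Psi$ is invertible. Writing $\Psi(Z) = \tilde\Psi(Z) \cdot G^{-1}$ with $\tilde\Psi(X \otimes Y) := (X \otimes \mathrm{id}) \cdot G \cdot (\mathrm{id} \otimes Y)$, it suffices to invert $\tilde\Psi$. The key observation is that $\tilde\Psi$ is the image of $G$ under the algebra morphism $V_1^{\mathrm{op}} \otimes V_2 \to \mathrm{End}_\Bbbk(V_1) \otimes \mathrm{End}_\Bbbk(V_2)$, $a \otimes b \mapsto R_a \otimes L_b$, where $R_a$ denotes right multiplication by $a$ and $L_b$ left multiplication by $b$. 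Since $\mathsf{D}_1 \otimes \mathsf{D}_2$ is an algebra morphism from $H \otimes H^{\mathrm{op}}$ (opposite product in the second tensor slot) to $V_1^{\mathrm{op}} \otimes V_2$, invertibility of $\tilde\Psi$ further reduces to the invertibility of $R$ in $H \otimes H^{\mathrm{op}}$. A two-sided inverse is $(\mathrm{id} \otimes S)(R)$: one inverse relation is precisely the first half of the skew-inverse axiom \eqref{skewInverseR}, and the other is obtained by applying the algebra isomorphism $(\mathrm{id} \otimes S^{-1}) : H \otimes H^{\mathrm{op}} \to H \otimes H$ to the desired equality, which identifies it with the trivial identity $R \cdot R^{-1} = 1 \otimes 1$ via $(\mathrm{id} \otimes S^{-1})(R) = R^{-1}$ (a reformulation of \eqref{inverseR}).
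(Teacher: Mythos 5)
Your proof is correct and follows essentially the same route as the paper: both rest on computing $\rho(a_1 \,\widetilde{\otimes}\, a_2) = \rho_1(a_1)\circ\mathsf{D}_1(R_{[1]}) \otimes \rho_2(a_2 \smallsquare R_{[2]})$ and rewriting it via \eqref{equivariantConditionInTermsOfRep}, \eqref{quasitriang2} and \eqref{skewInverseR} so that $\rho$ and $\rho_1\otimes\rho_2$ differ by an invertible linear twist built from the $R$-matrix, after reducing to $l=2$ by induction. Your packaging of that twist as a single automorphism $\Psi$ — invertible because $R$ is invertible in $H\otimes H^{\mathrm{op}}$ with inverse $(\mathrm{id}\otimes S)(R)$ — is a slightly cleaner way to obtain injectivity and surjectivity in one stroke, where the paper instead writes out the inverse identity explicitly and treats the two cases separately.
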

\begin{proof}
It suffices to prove this for $l=2$ because this case implies the result for all $l$ by induction. Note first that:
\begin{align*}
\rho(a_1 \,\widetilde{\otimes}\, a_2) &\overset{\eqref{monomialsBraidedProduct}}{=} \rho\bigl( \mathfrak{e}_1(a_1) \bigr) \circ \rho\bigl( \mathfrak{e}_2(a_2) \bigr) \overset{\eqref{repMorphisOnExternalProduct}}{=} \rho_1(a_1) \circ \mathsf{D}_1(R_{[1]}) \otimes \rho_2(a_2 \smallsquare R_{[2]})\\
&\overset{\eqref{equivariantConditionInTermsOfRep}}{=} \rho_1(a_1) \circ \mathsf{D}_1(R_{[1]}) \otimes \mathsf{D}_2(R_{[2](1)}) \circ \rho_2(a_2) \circ \mathsf{D}_2 (S^{-1}(R_{[2](2)}))
\end{align*}
Hence, using \eqref{skewInverseR} and the fact that $\mathsf{D}_1$, $\mathsf{D}_2$ and $S$ are anti-morphism of algebras, we can rewrite this as
\[ \rho_1(a_1) \otimes \rho_2(a_2) = \bigl(\mathrm{id}_{M_1} \otimes \mathsf{D}_2(R_{[2](1)})\bigr) \circ \rho(a_1 \,\widetilde{\otimes}\, a_2) \circ \bigl( \mathsf{D}_1( S^{-1}(R_{[1]})) \otimes \mathsf{D}_2( S^{-1}(R_{[2](2)})) \bigr) \]
where $\circ$ is here the product in $\mathrm{End}_{\Bbbk}(M_1) \otimes \mathrm{End}_{\Bbbk}(M_2)$. It follows that if $\rho(a_1^j \,\widetilde{\otimes}\, a_2^j) = 0$, with implicit summation on $j$, then $\rho_1(a_1^j) \otimes \rho_2(a_2^j) = 0$. If $\rho_1$ and $\rho_2$ are injective, then $\rho_1 \otimes \rho_2$ is also injective and thus $a_1^j \otimes a_2^j = 0$. Surjectivity is proven along the same lines.
\end{proof}

\section{Derived representations of moduli algebras}\label{sectionDerivedRep}
\indent In all this section, $H = (H,\cdot, 1, \Delta, \varepsilon, S, R)$ is a quasitriangular Hopf algebra over a field $\Bbbk$; recall from \S\ref{sectionBraidedTensorProduct} that we write $R = R_{[1]} \otimes R_{[2]}$. We introduce some more notations which will be intensively used in the sequel. 

\smallskip

\indent The {\em Drinfeld element} and its inverse are
\begin{equation}\label{elementDrinfeld}
u = S(R_{[2]})R_{[1]}, \qquad u^{-1} = S^{-2}(R_{[2]})R_{[1]}.
\end{equation}
Its relevance comes from the fact that
\begin{equation}\label{squareAntipodeDrinfeld}
\forall \, h \in H, \quad S^2(h) = uhu^{-1}.
\end{equation}

\indent Let $H\text{-}\mathrm{mod}$ be the category of {\em finite-dimensional left} $H$-modules. For any $X \in H\text{-}\mathrm{mod}$, $\sigma \in X^*$ and $x \in X$ the linear form
\begin{equation}\label{defMatCoeff}
_X\phi^{\sigma}_x : H \to \Bbbk, \quad h \mapsto \sigma(h \cdot x)
\end{equation}
is called a {\em matrix coefficient} of $X$. The subspace $H^{\circ} \subset H^*$ spanned by matrix coefficients of objects in $H\text{-}\mathrm{mod}$ is called {\em restricted dual} of $H$. When $H$ is finite-dimensional one has $H^{\circ} = H^*$ because any linear form $\varphi \in H$ is the matrix coefficient $_H\phi^{\varphi}_1$ of the regular module $H$. The vector space $H^{\circ}$ has the structure of a Hopf algebra which is dual to that of $H$. Its product $\star$ is defined by
\begin{equation}\label{usualProdHDual}
_X\phi^{\sigma}_x \star {_Y\phi^{\tau}_y} = {_{X \otimes Y}\phi^{\sigma \otimes \tau}_{x \otimes y}} \qquad \text{\it i.e.} \quad \forall\, h \in H,\quad (\varphi \star \psi)(h) = \varphi(h_{(1)})\psi(h_{(2)})
\end{equation}
while its coproduct is defined on matrix coefficients by
\begin{equation}\label{coproduitDualRestreint}
\Delta(_X\phi^{\sigma}_x) = {_X\phi^{\sigma}_{x_i}} \otimes {_X\phi^{x^i}_x}
\end{equation}
with implicit summation on $i$, where $(x_i)$ is a basis of $X$ and $(x^i)$ is its dual basis. It is readily seen that
\begin{equation}\label{usualCoprodHDual}
\forall \, \varphi \in H^{\circ}, \:\: \forall\, h,h' \in H,\quad \varphi_{(1)}(h) \, \varphi_{(2)}(h') = \varphi(hh').
\end{equation}
The unit is $1_{H^{\circ}} = \varepsilon = {_{\Bbbk}\phi^1_1}$, namely the counit of $H$. The counit $\varepsilon_{H^{\circ}}$ (resp. antipode $S_{H^{\circ}}$) is defined by $\varepsilon_{H^{\circ}}(\varphi) = \varphi(1_H)$ (resp. $S_{H^{\circ}}(\varphi) = \varphi \circ S$).

\smallskip

\indent The left and right coregular actions $\triangleright$, $\triangleleft$ of $H$ on $H^*$ are defined by
\begin{equation}\label{defCoregActions}
\forall \, h,h' \in H, \:\: \forall \, \varphi \in H^*, \quad (h \triangleright \varphi)(h') = \varphi(h'h), \quad (\varphi \triangleleft h)(h') = \varphi(hh').
\end{equation}
The subspace $H^{\circ} \subset H^*$ is stable by these actions; actually, even the subspace spanned by the matrix coefficients of a given $H$-module $X$ is stable:
\begin{equation}\label{coregOnMatCoeff}
\forall \, h \in H, \quad h \triangleright {_X\phi^{\sigma}_x} = {_X\phi^{\sigma}_{h \cdot x}}, \quad {_X\phi^{\sigma}_x} \triangleleft h = {_X\phi^{\sigma(h\cdot ?)}_x}
\end{equation}
where $\sigma(h \,\cdot ?) \in X^*$ is the linear form $x' \mapsto \sigma(h \cdot x')$. Note that $h \triangleright \varphi = \varphi_{(2)}(h) \, \varphi_{(1)}$ and $\varphi \triangleleft h = \varphi_{(1)}(h) \, \varphi_{(2)}$. Also
\begin{equation}\label{coregActionProduct}
h \triangleright (\varphi \star \psi) = (h_{(1)} \triangleright \varphi) \star (h_{(2)} \triangleright \psi), \quad (\varphi \star \psi) \triangleleft h = (\varphi \triangleleft h_{(1)}) \star (\psi \triangleleft h_{(2)})
\end{equation}
for all $h \in H$ and $\varphi,\psi \in H^{\circ}$.

\subsection{Definition of $\mathcal{L}_{g,n}(H)$ and $\mathcal{L}_{g,n}^{\mathrm{inv}}(H)$}\label{subsectionDefLgnH}
Recall that $H$ denotes a quasitriangular Hopf algebra. Our notations are those of \cite[\S 3.1, \S 4.1]{BFR} where a construction of $\mathcal{L}_{g,n}(H)$ using twists is provided. Here we just recall the resulting definitions.

\smallskip

\indent We first treat the cases $(g,n)=(0,1)$ and $(g,n)=(1,0)$ which are the building blocks of the general definition.
\begin{definition}\label{defL01}
The right $H$-module-algebra $\mathcal{L}_{0,1}(H)$ is the vector space $H^\circ$ endowed with the product
\begin{equation}\label{produitL01}
\varphi \psi = \bigl( R_{[2]}^2 S(R_{[2]}^1) \triangleright \varphi \bigr) \star \bigl( R_{[1]}^2 \triangleright \psi \triangleleft R_{[1]}^1 \bigr)
\end{equation}
and with the right coadjoint action of $H$:
\begin{equation}\label{defCoadL01}
\forall \, h \in H, \:\: \forall \, \varphi \in H^{\circ}, \quad \mathrm{coad}^r(h)(\varphi) = S(h_{(2)}) \triangleright \varphi \triangleleft h_{(1)} = \varphi\bigl( h_{(1)} ? S(h_{(2)}) \bigr).
\end{equation}
\end{definition}
\noindent Here $R^1,R^2$ denote two copies of the $R$-matrix $R \in H \otimes H$, $\star$ is defined in \eqref{usualProdHDual} and $\triangleright$, $\triangleleft$ are defined in \eqref{defCoregActions}. The product in $\mathcal{L}_{0,1}(H)$ is associative (straightforward computation using \eqref{YangBaxter}) and has unit element $\varepsilon$, the counit of $H$. The module-algebra axioms are easily checked.

\smallskip

\indent Definition \ref{defL01} is a version of Majid's transmutation of $H^{\circ}$ \cite[Th.\,7.4.1 and Ex.\,9.4.10]{majidFoundations} (but here we work with $H$-modules instead of $H^{\circ}$-comodules), also called ``braided dual''. It is also a variant of Lyubashenko's coend $\int^{X \in H\text{-}\mathrm{mod}} X^* \otimes X$ (see Lemma \ref{lemmaFactoDinat}).

\smallskip

\indent For later use we mention that $\mathcal{L}_{0,1}(H)$ is a Hopf algebra in $\mathrm{Mod}\text{-}H$ with the coproduct $\Delta_{H^{\circ}}$ defined in \eqref{coproduitDualRestreint}, but now viewed as a morphism of right $H$-module-algebras $\mathcal{L}_{0,1}(H) \to \mathcal{L}_{0,1}(H) \,\widetilde{\otimes}\,\mathcal{L}_{0,1}(H)$. The antipode is given by
\begin{equation}\label{antipodeL01}
S_{\mathcal{L}_{0,1}} : \mathcal{L}_{0,1}(H) \to \mathcal{L}_{0,1}(H), \quad \varphi \mapsto S_{H^\circ}\bigl( S(R_{[1]}) \triangleright \varphi \triangleleft R_{[2]}u^{-1} \bigr) = \varphi\bigl( R_{[2]}u^{-1} S(?)S(R_{[1]}) \bigr)
\end{equation}
where $S_{H^\circ} = S^*$ is the antipode on $H^{\circ}$ and $u$ is the Drinfeld element \eqref{elementDrinfeld}. The counit is the same as in $H^{\circ}$, \textit{i.e.} $\varphi \mapsto \varphi(1_H)$. By \cite[eq.\,(9.39)]{majidFoundations} the antipode of a Hopf algebra in a braided category is an antimorphism, but where one uses the braiding to switch the factors. If we apply this to $\mathcal{L}_{0,1}(H)$ in the category $\mathrm{Mod}\text{-}H$ with the braiding \eqref{braidingModH}, it means that
\begin{equation}\label{antipodeBraidedAntiMorphism}
S_{\mathcal{L}_{0,1}}(\varphi\psi) = S_{\mathcal{L}_{0,1}}\bigl( \mathrm{coad}^r(R_{[1]})(\psi) \bigr) S_{\mathcal{L}_{0,1}}\bigl( \mathrm{coad}^r(R_{[2]})(\varphi) \bigr).
\end{equation}
One can also check directly this equality from the definitions, but the computation is involved.

\smallskip

\indent The subalgebra of $H$-invariant elements in $\mathcal{L}_{0,1}(H)$ is
\begin{align}
\begin{split}\label{L01Inv}
\mathcal{L}_{0,1}^{\mathrm{inv}}(H) &= \bigl\{ \varphi \in \mathcal{L}_{0,1}(H) \,\big|\, \forall \, h \in H, \:\: \mathrm{coad}^r(h)(\varphi) = \varepsilon(h)\varphi \bigr\}\\
&= \bigl\{ \varphi \in \mathcal{L}_{0,1}(H) \,\big|\, \forall \, x,y \in H, \:\: \varphi(xy) = \varphi\bigl( yS^2(x) \bigr) \bigr\}.
\end{split}
\end{align}

\begin{example}\label{exampleQTrace}
For a finite-dimensional $H$-module $X$ let $\mathrm{qTr}_X(h) = \mathrm{Tr}\bigl( \rho_X(uh) \bigr)$ where $\rho_X : H \to \mathrm{End}_{\Bbbk}(X)$ is the representation morphism, $u$ is the Drinfeld element \eqref{elementDrinfeld} and $\mathrm{Tr}$ is the trace. It follows from \eqref{squareAntipodeDrinfeld} and the cyclicity of the trace that $\mathrm{qTr}_X \in \mathcal{L}_{0,1}^{\mathrm{inv}}(H)$. When $H\text{-}\mathrm{mod}$ is semisimple these are all the elements of $\mathcal{L}_{0,1}^{\mathrm{inv}}(H)$, see e.g. \cite[Prop.\,6.22]{BR1}. If $H$ has a ribbon element $v$ we can use the pivotal element $g = uv^{-1}$ instead of $u$ for defining $\mathrm{qTr}_X$.
\end{example}

\begin{lemma}\label{lemmaL01InvZL01}
For all $\varphi \in \mathcal{L}_{0,1}(H)$ and $\psi \in \mathcal{L}_{0,1}^{\mathrm{inv}}(H)$ we have $\varphi\psi = \varphi \star \psi = \psi\varphi$. In particular $\mathcal{L}_{0,1}^{\mathrm{inv}}(H) \subset \mathcal{Z}\bigl( \mathcal{L}_{0,1}(H) \bigr)$, where $\mathcal{Z}$ denotes the center.
\end{lemma}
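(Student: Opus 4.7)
My plan is to prove the two equalities $\varphi\psi = \varphi\star\psi$ and $\psi\varphi = \varphi\star\psi$ independently by direct computation, after which centrality is immediate. Writing $R^1 = a \otimes b$ and $R^2 = c \otimes d$ for two independent copies of the $R$-matrix, Definition \ref{defL01} unwinds to
\[ (\varphi\psi)(h) = \varphi(h_{(1)} dS(b))\,\psi(a h_{(2)} c), \qquad (\psi\varphi)(h) = \psi(h_{(1)} dS(b))\,\varphi(a h_{(2)} c). \]

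For the first equality $\varphi\psi = \varphi\star\psi$, I would apply the invariance characterization \eqref{L01Inv} in the form $\psi(xy) = \psi(yS^2(x))$ with $x = a$ and $y = h_{(2)} c$, to rewrite $\psi(a h_{(2)} c) = \psi(h_{(2)} \cdot cS^2(a))$. Expanding via the coproduct \eqref{usualCoprodHDual} of $\varphi$ and $\psi$ then factorizes
\[ (\varphi\psi)(h) = (\varphi_{(1)} \star \psi_{(1)})(h) \cdot (\varphi_{(2)} \otimes \psi_{(2)})\bigl( dS(b) \otimes cS^2(a) \bigr). \]
The crux is the purely $R$-matrix identity $dS(b) \otimes cS^2(a) = 1 \otimes 1$ in $H \otimes H$. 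This follows from $(S \otimes S)(R) = R$ \eqref{SSR} — applying $(S \otimes \mathrm{id})$ yields $S^2(a) \otimes S(b) = S(a) \otimes b$, which equals $R^{-1}$ by \eqref{inverseR} — so that $S(b) \otimes S^2(a) = (R^{-1})^{op}$ and $d \otimes c = R^{op}$, whence
\[ dS(b) \otimes cS^2(a) = (d \otimes c)(S(b) \otimes S^2(a)) = R^{op} \cdot (R^{-1})^{op} = (RR^{-1})^{op} = 1 \otimes 1, \]
using that the flip $\tau$ is an algebra morphism on $H \otimes H$. Then $(\varphi_{(2)} \otimes \psi_{(2)})(1\otimes 1) = \varepsilon_{H^\circ}(\varphi_{(2)})\varepsilon_{H^\circ}(\psi_{(2)})$ and the counit axiom in $H^\circ$ collapse the expression to $\varphi(h_{(1)})\psi(h_{(2)}) = (\varphi\star\psi)(h)$.

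For $\psi\varphi = \varphi\star\psi$ I would run an analogous computation. Using invariance together with $\psi \circ S^{-2} = \psi$ (which itself follows from iterating $\psi(xy)=\psi(yS^2(x))$), one rewrites the left-hand factor as $\psi(h_{(1)}dS(b)) = \psi\bigl(S^{-2}(d)\,S^{-1}(b)\cdot h_{(1)}\bigr)$, after which the double coproducts of $\psi$ and $\varphi$ separate out a scalar containing pairings of $R$ with various Sweedler legs and an $h$-dependent factor of the form $(\psi_{(3)} \star \varphi_{(2)})(h)$. The main obstacle, which I expect to be the most delicate step, is that the $h$-dependence is no longer packaged as $\varphi \star \psi$: to convert it back I would invoke the invariance-derived cyclic identity
\[ \psi_{(1)}(x)\psi_{(2)}(y)\psi_{(3)}(z) = \psi_{(1)}(z)\,\psi_{(2)}(S^2 x)\,\psi_{(3)}(S^2 y) \]
on $\Delta^{(2)}\psi$ (obtained by applying invariance to $\psi(xyz) = \psi(z\cdot S^2(xy))$), which reorders the Sweedler legs so that the same vanishing identity $dS(b) \otimes cS^2(a) = 1\otimes 1$ applies and delivers $(\psi\varphi)(h) = (\varphi\star\psi)(h)$.

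Once both equalities are established, the centrality claim is immediate: for any $\psi \in \mathcal{L}_{0,1}^{\mathrm{inv}}(H)$ and $\varphi \in \mathcal{L}_{0,1}(H)$ one has $\varphi\psi = \varphi\star\psi = \psi\varphi$, so $\psi$ commutes with every element of $\mathcal{L}_{0,1}(H)$ and hence $\mathcal{L}_{0,1}^{\mathrm{inv}}(H) \subset \mathcal{Z}\bigl( \mathcal{L}_{0,1}(H) \bigr)$.
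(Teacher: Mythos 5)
Your computation of the first equality $\varphi\psi = \varphi\star\psi$ is correct: after invariance moves $R^1_{[1]}$ to the right of $h_{(2)}$, all four $R$-legs occupy adjacent Sweedler slots, and the identity $R^2_{[2]}S(R^1_{[2]})\otimes R^2_{[1]}S^2(R^1_{[1]}) = R_{21}(R_{21})^{-1} = 1\otimes 1$ does follow from \eqref{SSR} and \eqref{inverseR} as you argue. This is a valid elementwise version of the paper's rewriting $\varphi\psi = \bigl(S(R_{[2]})\triangleright\varphi\bigr)\star\mathrm{coad}^r(R_{[1]})(\psi)$.

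The second equality is where your plan breaks down, and the obstacle is not merely "delicate" but structural. Starting from $(\psi\varphi)(h) = \psi\bigl(h_{(1)}R^2_{[2]}S(R^1_{[2]})\bigr)\,\varphi\bigl(R^1_{[1]}h_{(2)}R^2_{[1]}\bigr)$, every tool in your list --- invariance of $\psi$, the relation $\psi\circ S^{\pm2}=\psi$, and the cyclic identity on $\Delta^{(2)}\psi$ --- only permutes arguments \emph{within} a single evaluation of $\psi$. None of them can move $h_{(1)}$ out of $\psi$ and into $\varphi$. Consequently, whatever you end with still pairs a Sweedler leg of $\psi$ with $h_{(1)}$ and a leg of $\varphi$ with $h_{(2)}$; even if the scalar factor collapsed, you would land on $(\psi\star\varphi)(h)$, not $(\varphi\star\psi)(h)$, and these differ in general for invariant $\psi$ (for $H=U_q(\mathfrak{sl}_2)$ the quantum trace $qa+q^{-1}d$ is not central in $(\mathcal{O}_q(\mathrm{SL}_2),\star)$). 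Moreover, the claim that "the same vanishing identity applies" cannot hold: after your rewriting, $R^1_{[1]}$ and $R^2_{[1]}$ sit on opposite sides of $h_{(2)}$ inside $\varphi$, so the product $R^2_{[1]}S^2(R^1_{[1]})$ never forms; merging them would require invariance of $\varphi$ (which you do not have) or commuting $\Delta(h)$ past the $R$-matrix. That last point is exactly the missing ingredient: quasi-cocommutativity \eqref{RDelta}. The paper uses \eqref{RDelta} together with \eqref{quasitriang1} to rewrite $\psi\varphi = \bigl(\varphi\triangleleft R_{[1]}\bigr)\star\mathrm{coad}^r(R_{[2]})(\psi)$, placing the legs of one copy of $R$ so that they sandwich the argument of $\psi$ in the coadjoint pattern; only then does invariance kill them via $\mathrm{coad}^r(R_{[2]})(\psi)=\varepsilon(R_{[2]})\psi$ and $\varepsilon(R_{[2]})R_{[1]}=1$. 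You should incorporate \eqref{RDelta} into your second computation; without it the argument cannot be completed.
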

\begin{proof}
For all $\varphi,\psi \in \mathcal{L}_{0,1}(H)$ we can rewrite the product \eqref{produitL01} as
\begin{equation*}
\varphi \psi \overset{\eqref{SSR}}{=} \bigl( S(R_{[2]}^1R_{[2]}^2) \triangleright \varphi \bigr) \star \bigl( S(R_{[1]}^2) \triangleright \psi \triangleleft R_{[1]}^1 \bigr) \overset{\eqref{quasitriang1}}{=} \bigl( S(R_{[2]}) \triangleright \varphi \bigr) \star \mathrm{coad}^r(R_{[1]})(\psi)
\end{equation*}
and also as
\begin{equation*}
\psi\varphi \overset{\eqref{RDelta}}{=} \bigl( \varphi \triangleleft R^1_{[1]} R^2_{[1]} \bigr) \star \bigl( S(R^1_{[2]}) \triangleright \psi \triangleleft R^2_{[2]} \bigr) \overset{\eqref{quasitriang1}}{=} \bigl( \varphi \triangleleft R_{[1]} \bigr) \star \mathrm{coad}^r(R_{[2]})(\psi).
\end{equation*}
If $\psi \in \mathcal{L}_{0,1}^{\mathrm{inv}}(H)$ the $R$-matrices disappear because $(\varepsilon \otimes \mathrm{id})(R) = (\mathrm{id} \otimes \varepsilon)(R) = 1$.
\end{proof}
\noindent The inclusion in Lemma \ref{lemmaL01InvZL01} is actually an equality for $H = U_q(\mathfrak{g})$ \cite[Prop.\,6.19]{BR1} and for $H$ finite-dimensional and factorizable \cite[Th.\,3.7]{FaitgSL2Z}.

\medskip

\indent Let $\mathfrak{i}_b, \mathfrak{i}_a : H^\circ \to H^{\circ} \otimes H^{\circ}$ be the linear embeddings given by $\mathfrak{i}_b(\varphi) = \varphi \otimes \varepsilon$, $\mathfrak{i}_a(\varphi) = \varepsilon \otimes \varphi$.
\begin{definition}\label{defL10}
The right $H$-module-algebra $\mathcal{L}_{1,0}(H)$ is the vector space $H^{\circ} \otimes H^{\circ}$ and its structure is uniquely characterized by the following conditions:
\begin{itemize}[itemsep=0em,topsep=.2em]
\item the linear maps $\mathfrak{i}_b, \mathfrak{i}_a$ are morphisms of $H$-module-algebras $\mathcal{L}_{0,1}(H) \to \mathcal{L}_{1,0}(H)$,
\item for all $\beta,\alpha \in \mathcal{L}_{0,1}(H)$ it holds
\begin{align}
\mathfrak{i}_b(\beta) \mathfrak{i}_a(\alpha) &= \beta \otimes \alpha,\label{ibiaL10}\\
\mathfrak{i}_{a}(\alpha) \, \mathfrak{i}_{b}(\beta) &=\mathfrak{i}_{b}\left( R_{[2]}^4 R_{[1]}^3 \triangleright \beta \triangleleft R_{[1]}^1 R_{[1]}^2 \right) \, \mathfrak{i}_{a}\bigl( R_{[2]}^3 S(R_{[2]}^1) \triangleright \alpha \triangleleft R_{[2]}^2 R_{[1]}^4  \bigr)\label{echangeL10}
\end{align}
where $R^1,\ldots, R^4$ denote four copies of the $R$-matrix.
\end{itemize}
\end{definition}
\noindent See \cite[\S 3.1]{BFR} for more details. The unit element is $\varepsilon \otimes \varepsilon$. As in the case of $\mathcal{L}_{0,1}(H)$, we denote by $\mathrm{coad}^r$ the right action of $H$ on $\mathcal{L}_{1,0}(H)$. Due to \eqref{ibiaL10} and \eqref{conditionModAlg} we see that it is just the diagonal action of \eqref{defCoadL01}:
\begin{equation}\label{actionCoadL10}
\forall \, h \in H, \:\: \forall \, \beta \otimes \alpha \in \mathcal{L}_{1,0}(H), \quad \mathrm{coad}^r(h)(\beta \otimes \alpha) = \mathrm{coad}^r(h_{(1)})(\beta) \otimes \mathrm{coad}^r(h_{(2)})(\alpha).
\end{equation}
Said differently, $\mathcal{L}_{1,0}(H)$ is $\mathcal{L}_{0,1}(H) \otimes \mathcal{L}_{0,1}(H)$ {\em as a right $H$-module}.

\smallskip

\indent For the general definition we use the braided tensor product $\widetilde{\otimes}$ in $\mathrm{Mod}\text{-}H$ (recalled in \S \ref{sectionBraidedTensorProduct}) of many copies of the $H$-module-algebras $\mathcal{L}_{1,0}(H)$ and $\mathcal{L}_{0,1}(H)$:
\begin{definition}\label{defLgnHBrProduct}
$ \mathcal{L}_{g,n}(H) = \underbrace{\mathcal{L}_{1,0}(H) \,\widetilde{\otimes}\, \ldots \,\widetilde{\otimes}\, \mathcal{L}_{1,0}(H)}_{g \text{ copies}} \,\widetilde{\otimes}\, \underbrace{\mathcal{L}_{0,1}(H) \,\widetilde{\otimes}\, \ldots \,\widetilde{\otimes}\, \mathcal{L}_{0,1}(H)}_{n \text{ copies}}$.
\end{definition}
As a vector space, $\mathcal{L}_{g,n}(H)$ is $(H^\circ)^{\otimes (2g+n)}$. By construction it is a right $H$-module-algebra for the diagonal action of \eqref{defCoadL01}, which we denote again by $\mathrm{coad}^r$:
\begin{equation}\label{defCoadLgn}
\mathrm{coad}^r(h)(\varphi_1 \otimes \ldots \otimes \varphi_{2g+n}) = \mathrm{coad}^r(h_{(1)})(\varphi_1) \otimes \ldots \otimes \mathrm{coad}^r(h_{(2g+n)})(\varphi_{2g+n}).
\end{equation}
Said differently, $\mathcal{L}_{g,n}(H)$ is $\mathcal{L}_{0,1}(H)^{\otimes (2g+n)}$ {\em as a right $H$-module}.

\smallskip

\indent In order to describe the product in $\mathcal{L}_{g,n}(H)$ it is convenient to introduce some notations. 
Let $b_1,a_1,\ldots,b_g,a_g,m_{g+1},\ldots,m_{g+n}$ be formal symbols. Later they will be interpreted as generators for the fundamental group of the oriented surface of genus $g$ with $n$ punctures and one boundary component. Label each copy of $\mathcal{L}_{0,1}(H)$ in $\mathcal{L}_{g,n}(H)$ by a symbol as follows:
\[ \mathcal{L}_{g,n}(H) = \overbrace{\bigl( \underset{b_1}{\mathcal{L}_{0,1}(H)} \otimes \underset{a_1}{\mathcal{L}_{0,1}(H)}\bigr)}^{\mathcal{L}_{1,0}(H)} \,\widetilde{\otimes}\, \ldots \,\widetilde{\otimes}\, \overbrace{\bigl(\underset{b_g}{\mathcal{L}_{0,1}(H)} \otimes \underset{a_g}{\mathcal{L}_{0,1}(H)}\bigr)}^{\mathcal{L}_{1,0}(H)} \,\widetilde{\otimes}\, \underset{m_{g+1}}{\mathcal{L}_{0,1}(H)} \,\widetilde{\otimes}\, \ldots \,\widetilde{\otimes}\, \underset{m_{g+n}}{\mathcal{L}_{0,1}(H)}. \]
This gives $H$-linear embeddings\footnote{These embeddings were denoted by $\mathfrak{i}_{B(i)}, \, \mathfrak{i}_{A(i)}, \, \mathfrak{i}_{M(g+j)}$ in \cite{BFR}. This slight change of notation will be more convenient for later purpose.}
\begin{equation}\label{embeddingsLgn}
\mathfrak{i}_{b_i}, \, \mathfrak{i}_{a_i}, \, \mathfrak{i}_{m_{g+j}} : \mathcal{L}_{0,1}(H) \to \mathcal{L}_{g,n}(H)   \qquad\quad (1 \leq i \leq g, \:\: 1 \leq j \leq n).
\end{equation}
For instance $\mathfrak{i}_{b_1}(\varphi) = \varphi \otimes \varepsilon^{\otimes (2g+n-1)}$, where $\varepsilon$ is the counit of $H$. The following conditions \cite[Prop.\,4.4]{BFR} entirely characterize the $H$-module-algebra structure in $\mathcal{L}_{g,n}(H)$:
\begin{proposition}\label{propDescriptionProductLgn}
1. For all $1 \leq i \leq g$ and $1 \leq j \leq n$ the embeddings $\mathfrak{i}_{b_i}$, $\mathfrak{i}_{a_i}$, $\mathfrak{i}_{m_{g+j}} : \mathcal{L}_{0,1}(H) \to \mathcal{L}_{g,n}(H)$ are morphisms of $H$-module-algebras.
\\2. For all $\beta_1, \alpha_1, \ldots,\beta_g,\alpha_g, \gamma_1, \ldots, \gamma_n \in \mathcal{L}_{0,1}(H)$ we have
\begin{align*}
&\mathfrak{i}_{b_1}(\beta_1) \, \mathfrak{i}_{a_1}(\alpha_1) \ldots \mathfrak{i}_{b_g}(\beta_g) \, \mathfrak{i}_{a_g}(\alpha_g) \, \mathfrak{i}_{m_{g+1}}(\gamma_1) \ldots \mathfrak{i}_{m_{g+n}}(\gamma_n)\\
=\:& \beta_1 \otimes \alpha_1 \otimes \ldots \otimes \beta_g \otimes \alpha_g \otimes \gamma_1 \otimes \ldots \otimes \gamma_n.
\end{align*}
3.  For all $1 \leq i \leq g$ and all $\varphi, \psi \in \mathcal{L}_{0,1}(H)$ we have
\[ \mathfrak{i}_{a_i}(\varphi) \, \mathfrak{i}_{b_i}(\psi) = \mathfrak{i}_{b_i}\bigl( R_{[2]}^4 R_{[1]}^3 \triangleright \psi \triangleleft R_{[1]}^1 R_{[1]}^2 \bigr) \, \mathfrak{i}_{a_i}\bigl( R_{[2]}^3 S(R_{[2]}^1) \triangleright \varphi \triangleleft R_{[2]}^2 R_{[1]}^4  \bigr). \]
4. Define $\mathrm{ind}(b_i) = \mathrm{ind}(a_i) = i$ and $\mathrm{ind}(m_{g+j}) = g+j$ and let $s,t$ be symbols\footnote{That is elements in the list $b_1, a_1, \ldots,b_g,a_g,m_{g+1},\ldots,m_{g+n}$.} with $\mathrm{ind}(t) > \mathrm{ind}(s)$. For all $\varphi, \psi \in \mathcal{L}_{0,1}(H)$ and $1 \leq i < j \leq g+n$ we have
\begin{align*}
\mathfrak{i}_t(\varphi) \, \mathfrak{i}_s(\psi) &= \mathfrak{i}_s\bigl( \mathrm{coad}^r(R_{[1]})(\psi) \bigr) \, \mathfrak{i}_t\bigl( \mathrm{coad}^r(R_{[2]})(\varphi) \bigr)\\
&= \mathfrak{i}_s\bigl( S(R^3_{[1]}R^4_{[1]}) \triangleright \psi \triangleleft R^1_{[1]}R^2_{[1]} \bigr) \, \mathfrak{i}_t\bigl( S(R^1_{[2]}R^3_{[2]}) \triangleright \varphi \triangleleft R^2_{[2]}R^4_{[2]} \bigr).
\end{align*}
In the two items above $R^1, \ldots, R^4$ are four copies of the $R$-matrix.
\end{proposition}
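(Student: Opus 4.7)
The plan is to read off everything from the fact that $\mathcal{L}_{g,n}(H)$ is, by Definition~\ref{defLgnHBrProduct}, a braided tensor product in $\mathrm{Mod}\text{-}H$ of copies of $\mathcal{L}_{1,0}(H)$ and $\mathcal{L}_{0,1}(H)$, so that the general multiplication and swap formulas \eqref{monomialsBraidedProduct}--\eqref{exchangeRuleBraidedProduct} of Lemma~\ref{lemBraidedProductHequivModules} apply. The key preliminary observation is that each embedding $\mathfrak{i}_{b_i}, \mathfrak{i}_{a_i}, \mathfrak{i}_{m_{g+j}}$ factors as a composition of a ``factor-embedding'' $\mathfrak{e}_k$ of \eqref{embeddingsBraidedProduct} (into the $k$-th tensorand of the braided product) with either $\mathfrak{i}_b$ or $\mathfrak{i}_a$ from Definition~\ref{defL10} when $k\leq g$, or with nothing when $k\geq g+1$. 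Each $\mathfrak{e}_k$ is a morphism of $H$-module-algebras by the discussion following \eqref{embeddingsBraidedProduct}, and so are $\mathfrak{i}_b,\mathfrak{i}_a$ by Definition~\ref{defL10}; hence item~1 is just closure of $H$-module-algebra morphisms under composition.

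For item~2, within a single handle-factor $\mathcal{L}_{1,0}(H)$ the product $\mathfrak{i}_b(\beta_i)\mathfrak{i}_a(\alpha_i)$ collapses to $\beta_i \otimes \alpha_i$ by \eqref{ibiaL10}, and applying $\mathfrak{e}_i$ (an algebra morphism) sends this to $\mathfrak{e}_i(\beta_i \otimes \alpha_i)$. The remaining product over $k=1,\ldots,g+n$ of the images $\mathfrak{e}_k(\cdots)$ is then rewritten as the plain tensor $\beta_1 \otimes \alpha_1 \otimes \ldots \otimes \gamma_n$ by \eqref{monomialsBraidedProduct}. Item~3 is just the exchange rule \eqref{echangeL10} inside a single copy of $\mathcal{L}_{1,0}(H)$, pushed forward through the algebra morphism $\mathfrak{e}_i$.

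Item~4 is where the work lies. When $\mathrm{ind}(t) > \mathrm{ind}(s)$, the symbols $s$ and $t$ sit in distinct factors of the braided product, so \eqref{exchangeRuleBraidedProduct} applies and produces a right $\smallsquare$-action of the legs of $R$ on the entries $\psi, \varphi$. Using the $H$-equivariance of $\mathfrak{e}_k$, $\mathfrak{i}_b$, $\mathfrak{i}_a$, this $\smallsquare$-action transports, through the embeddings, to the coadjoint action $\mathrm{coad}^r$ on $\mathcal{L}_{0,1}(H)$ defined in \eqref{defCoadL01}; this gives the first equality in item~4. The second equality is an unpacking: substitute $\mathrm{coad}^r(R_{[1]})(\psi) = S(R_{[1](2)}) \triangleright \psi \triangleleft R_{[1](1)}$ and $\mathrm{coad}^r(R_{[2]})(\varphi) = S(R_{[2](2)}) \triangleright \varphi \triangleleft R_{[2](1)}$, then use \eqref{quasitriang1} to replace $R_{[1](1)} \otimes R_{[1](2)} \otimes R_{[2]}$ by two copies of $R$, and \eqref{quasitriang2} to further split the legs of $R_{[2]}$ so that each of the four resulting slots (one on each side of $\psi$, one on each side of $\varphi$) is occupied by a leg of an independent copy $R^1,R^2,R^3,R^4$. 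The main obstacle is purely the bookkeeping of this last step: the four copies need to be relabeled so that the slot assignment matches the stated normal form ($R^1_{[1]}R^2_{[1]}$ on the right of $\psi$, $S(R^3_{[1]}R^4_{[1]})$ on its left, $R^2_{[2]}R^4_{[2]}$ on the right of $\varphi$, $S(R^1_{[2]}R^3_{[2]})$ on its left), but no new identity beyond \eqref{quasitriang1}--\eqref{quasitriang2} is required.
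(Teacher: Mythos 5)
Your proposal is correct and follows essentially the same route as the paper: factor each $\mathfrak{i}_{b_i},\mathfrak{i}_{a_i},\mathfrak{i}_{m_{g+j}}$ through the tensorand embeddings $\mathfrak{e}_k$ of \eqref{embeddingsBraidedProduct}, then read items 1--3 off from \eqref{ibiaL10}, \eqref{echangeL10} and \eqref{monomialsBraidedProduct}, and item 4 from \eqref{exchangeRuleBraidedProduct} together with \eqref{quasitriang1}--\eqref{quasitriang2}. Your unpacking of the four-copy normal form in item 4 (via $(\Delta\otimes\Delta)(R)$) is a correct elaboration of the step the paper leaves implicit.
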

\begin{proof}
Note first that by \eqref{embeddingsBraidedProduct} we have embeddings of algebras
\[ \mathfrak{e}_1,\ldots,\mathfrak{e}_g : \mathcal{L}_{1,0}(H) \to \mathcal{L}_{g,n}(H) \quad \text{and} \quad \mathfrak{e}_{g+1}, \ldots,\mathfrak{e}_{g+n} : \mathcal{L}_{0,1}(H) \to \mathcal{L}_{g,n}(H). \]
By definition, $\mathfrak{i}_{b_i} = \mathfrak{e}_i \circ \mathfrak{i}_b$, $\mathfrak{i}_{a_i} = \mathfrak{e}_i \circ \mathfrak{i}_a$ and $\mathfrak{i}_{m_{g+j}} = \mathfrak{e}_{g+j}$, where $\mathfrak{i}_b, \mathfrak{i}_a : \mathcal{L}_{0,1}(H) \to \mathcal{L}_{1,0}(H)$ are morphisms of algebras by Definition \ref{defL10}. Thus item 1 is proven. For item 2 we note that $\mathfrak{i}_{b_i}(\beta_i)\mathfrak{i}_{a_i}(\alpha_i) = \mathfrak{e}_i(\beta_i \otimes \alpha_i)$ and use \eqref{monomialsBraidedProduct}. Item 3 is obtained by applying $\mathfrak{e}_i$ to \eqref{echangeL10}. Finally item 4 comes from \eqref{exchangeRuleBraidedProduct} and \eqref{quasitriang1}-\eqref{quasitriang2}.
\end{proof}

\indent In any $H$-module-algebra the subspace of elements which are $H$-invariant is a subalgebra, so we can make the following definition:
\begin{definition}\label{defModuliAlg}
The moduli algebra, or quantum character variety, is
\[ \mathcal{L}_{g,n}^{\mathrm{inv}}(H) = \bigl\{ x \in \mathcal{L}_{g,n}(H) \, \big|\, \forall \, h \in H, \: \mathrm{coad}^r(h)(x) = \varepsilon(h)x \bigr\} \]
with $\mathrm{coad}^r$ from \eqref{defCoadLgn} and $\varepsilon$ is the counit of $H$.
\end{definition}

\subsection{$H$-equivariant $\mathcal{L}_{g,n}(H)$-modules and derived representations}\label{sectionSomeEquivModules}
Since the seminal work by Alekseev \cite{A}, it is known that $\mathcal{L}_{g,n}(H)$ has representations on the vector spaces $(H^{\circ})^{\otimes g} \otimes X_1 \otimes \ldots \otimes X_n$ where each $X_i$ is a $H$-module. Our purpose here is to put a $H$-equivariant structure (Def.\,\ref{defEquivariantModule}) on these spaces and to deduce derived representations for $\mathcal{L}_{g,n}^{\mathrm{inv}}(H)$ thanks to Theorem \ref{thmDerivedRepGeneral}.

\indent For this we first define $H$-equivariant representations of $\mathcal{L}_{0,1}(H)$ on any $H$-module $X$ and of $\mathcal{L}_{1,0}(H)$ on $H^{\circ}$ and then assemble them thanks to the braided product of equivariant modules (Lem.\,\ref{lemBraidedProductHequivModules}). This will result in a representation of $\mathcal{L}_{g,n}(H)$ on $(H^{\circ})^{\otimes g} \otimes X_1 \otimes \ldots \otimes X_n$ which is $H$-equivariant and is equivalent to the one obtained from the Alekseev morphism, see App.~\ref{appendixComparisonReps}.

\medskip

\indent Let $\mathrm{ad}^r$ be the right adjoint action \eqref{adrH} of $H$ on itself; then $(H,\mathrm{ad}^r)$ is a right $H$-module-algebra. Consider the linear map
\begin{equation}\label{RSD}
\Phi_{0,1} : \mathcal{L}_{0,1}(H) \to H, \quad \varphi \mapsto (\varphi \otimes \mathrm{id}_H)(RR')
\end{equation}
where $R' = R_{[2]} \otimes R_{[1]}$ is the flip of $R$. It is well-known and not difficult to check that $\Phi_{0,1}$ is a morphism of $H$-module-algebras $\bigl( \mathcal{L}_{0,1}(H), \mathrm{coad}^r \bigr) \to \bigl( H, \mathrm{ad}^r \bigr)$.

\begin{lemma}\label{lemmaEquivarianceHModules}
Let $(X,\cdot)$ be any left $H$-module and define
\[ \forall \, \varphi \in \mathcal{L}_{0,1}(H), \:\: \forall \, h \in H, \:\: \forall \, x \in X, \quad \varphi \cdot x = \Phi_{0,1}(\varphi) \cdot x, \quad x \smallsquare h = S(h) \cdot x. \]
With this structure, $X$ is a $H$-equivariant $\mathcal{L}_{0,1}(H)$-module.
\end{lemma}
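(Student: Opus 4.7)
The statement consists of three separate verifications: that the given formulas define a left $\mathcal{L}_{0,1}(H)$-module structure, a right $H$-module structure, and that together they satisfy the equivariance axiom \eqref{conditionModuleInModH}. My plan is to reduce each verification to a formal property already at hand, so no actual computation with the twisted product \eqref{produitL01} or the $R$-matrix is needed.

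\textbf{Module structures.} For the left action of $\mathcal{L}_{0,1}(H)$, since $\Phi_{0,1}$ is a morphism of algebras $\mathcal{L}_{0,1}(H) \to H$ and $(X,\cdot)$ is a left $H$-module, the formula $\varphi \cdot x = \Phi_{0,1}(\varphi) \cdot x$ defines a left $\mathcal{L}_{0,1}(H)$-module structure by pullback (and $\Phi_{0,1}$ sends the unit $\varepsilon$ to $1_H$ thanks to \eqref{epsilonR}). For the right $H$-action, the fact that $S$ is an antimorphism of algebras immediately gives
\[ (x \smallsquare h) \smallsquare k = S(k) \cdot \bigl( S(h) \cdot x \bigr) = S(k)S(h) \cdot x = S(hk) \cdot x = x \smallsquare (hk), \]
and clearly $x \smallsquare 1 = x$.

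\textbf{Equivariance.} The key input is that $\Phi_{0,1} : \bigl( \mathcal{L}_{0,1}(H), \mathrm{coad}^r \bigr) \to \bigl( H, \mathrm{ad}^r \bigr)$ is $H$-linear, which is recalled just above the statement. Concretely, this means
\[ \Phi_{0,1}\bigl( \mathrm{coad}^r(h)(\varphi) \bigr) = \mathrm{ad}^r(h)\bigl( \Phi_{0,1}(\varphi) \bigr) = S(h_{(1)}) \, \Phi_{0,1}(\varphi) \, h_{(2)}. \]
Combining this with the antipode axiom $h_{(1)} S(h_{(2)}) = \varepsilon(h) 1$, the right-hand side of \eqref{conditionModuleInModH} computes to
\[ (\varphi \smallsquare h_{(1)}) \cdot (x \smallsquare h_{(2)}) = S(h_{(1)}) \Phi_{0,1}(\varphi) h_{(2)} S(h_{(3)}) \cdot x = S(h) \Phi_{0,1}(\varphi) \cdot x, \]
which is exactly $(\varphi \cdot x) \smallsquare h$, proving \eqref{conditionModuleInModH}.

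There is no real obstacle: the entire content is packaged in the $H$-linearity of $\Phi_{0,1}$ relating $\mathrm{coad}^r$ on $\mathcal{L}_{0,1}(H)$ with $\mathrm{ad}^r$ on $H$. Once that is accepted, the equivariance identity is a one-line manipulation using the antipode axiom, and the other two conditions are formal consequences of $\Phi_{0,1}$ being an algebra morphism and $S$ being an antimorphism.
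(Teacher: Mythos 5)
Your proof is correct and follows essentially the same route as the paper: the equivariance condition is derived from the $H$-linearity of $\Phi_{0,1}$ intertwining $\mathrm{coad}^r$ with $\mathrm{ad}^r$, followed by the antipode axiom $h_{(2)}S(h_{(3)}) = \varepsilon(h_{(2)})1$. The only difference is that you also spell out the (routine) verifications that the two module structures are well defined, which the paper leaves implicit.
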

\begin{proof}
The $H$-equivariance condition \eqref{conditionModuleInModH} follows from the $H$-linearity of $\Phi_{0,1}$:
\begin{align*}
\mathrm{coad}^r(h_{(1)})(\varphi) \cdot (x \smallsquare h_{(2)}) &= \Phi_{0,1}\bigl( \mathrm{coad}^r(h_{(1)})(\varphi) \bigr)S(h_{(2)}) \cdot x = \mathrm{ad}^r(h_{(1)})\bigl( \Phi_{0,1}(\varphi) \bigr) S(h_{(2)}) \cdot x\\
&=S(h_{(1)}) \Phi_{0,1}(\varphi) h_{(2)} S(h_{(3)}) \cdot x = S(h) \Phi_{0,1}(\varphi) \cdot x = (\varphi \cdot x) \smallsquare h. \qedhere
\end{align*}
\end{proof}

\indent Next we define a $\mathcal{L}_{1,0}(H)$-module structure on the restricted dual $H^{\circ}$ by means of an induced representation. This strategy was already used in \cite[\S 7.2]{AS} and has a topological interpretation discussed in \cite[\S 6]{FaitgHol}; here we promote it to a $H$-equivariant representation. The resulting module is cyclic, generated by the counit $\varepsilon$:
\begin{lemma}\label{lemmaHEquivRepL10}
There is a structure of $H$-equivariant $\mathcal{L}_{1,0}(H)$-module on $H^{\circ}$ given by
\[ \begin{array}{c} \mathfrak{i}_b(\varphi) \cdot \psi = \varphi\psi,\quad  \mathfrak{i}_a(\varphi) \cdot \psi = \Phi_{0,1}\bigl( \mathrm{coad}^r(R_{[2]})(\varphi) \bigr) \triangleright \psi \triangleleft R_{[1]},\\[.6em]
\psi \smallsquare h = \mathrm{coad}^r(h)(\psi)
\end{array}
 \]
for all $\varphi \in \mathcal{L}_{0,1}(H)$, $\psi \in H^{\circ}$, $h \in H$ and where $\varphi\psi$ is the product \eqref{produitL01}, $\mathrm{coad}^r$ is defined in \eqref{defCoadL01}.
\end{lemma}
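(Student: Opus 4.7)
The plan is to verify the three pieces of structure in turn: that $\smallsquare$ defines a right $H$-action, that $\cdot$ defines a left $\mathcal{L}_{1,0}(H)$-action, and that they satisfy the equivariance condition \eqref{conditionModuleInModH}. The right $H$-action is immediate, since $\smallsquare = \mathrm{coad}^r$ is the $H$-module structure making $\mathcal{L}_{0,1}(H)$ a right $H$-module-algebra. For the left action, by Definition \ref{defL10} it suffices to prove that $\mathfrak{i}_b$ and $\mathfrak{i}_a$ each give $\mathcal{L}_{0,1}(H)$-module structures on $H^\circ$ and that they satisfy the exchange relation \eqref{echangeL10} when applied to an element of $H^\circ$.

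The case of $\mathfrak{i}_b$ is immediate: $\mathfrak{i}_b(\varphi)\cdot\psi = \varphi\psi$ is the regular left action of $\mathcal{L}_{0,1}(H)$ on itself, and the $H$-equivariance for this action follows from the module-algebra axiom \eqref{conditionModAlg}. For $\mathfrak{i}_a$, the unit axiom is verified by noting that $\mathrm{coad}^r(h)(\varepsilon) = \varepsilon(h)\varepsilon$ and $\Phi_{0,1}(\varepsilon)=1$, so the formula collapses to $\psi\triangleleft \varepsilon(R_{[2]})R_{[1]} = \psi$ by \eqref{epsilonR}. For multiplicativity, expanding $\mathfrak{i}_a(\varphi)\cdot(\mathfrak{i}_a(\psi)\cdot\xi)$ gives
\[ \Phi_{0,1}\bigl(\mathrm{coad}^r(R^2_{[2]})(\varphi)\bigr)\,\Phi_{0,1}\bigl(\mathrm{coad}^r(R^1_{[2]})(\psi)\bigr) \triangleright \xi \triangleleft R^1_{[1]}R^2_{[1]}, \]
using that $\triangleright,\triangleleft$ are commuting actions; since $\Phi_{0,1}$ is an algebra morphism and $\mathrm{coad}^r$ respects the product of the module-algebra $\mathcal{L}_{0,1}(H)$, this rewrites as a single $\Phi_{0,1}\bigl(\mathrm{coad}^r(?_{(1)})(\varphi)\mathrm{coad}^r(?_{(2)})(\psi)\bigr)$. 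The quasitriangularity axiom \eqref{quasitriang2}, in the form $R^2_{[2]} \otimes R^1_{[2]} \otimes R^1_{[1]}R^2_{[1]} = R_{[2](1)} \otimes R_{[2](2)} \otimes R_{[1]}$, identifies this with $\mathfrak{i}_a(\varphi\psi)\cdot\xi$. The $H$-equivariance of $\mathfrak{i}_a$ is then a shorter computation relying on the $H$-linearity of $\Phi_{0,1}$, the identity $\mathrm{coad}^r(h)(\chi\triangleright?\triangleleft\chi') = (h_{(1)}\triangleright \chi) \triangleright \mathrm{coad}^r(h_{(2)})(?)\triangleleft(\chi' \triangleleft h_{(3)})$ for the interplay of the coregular and coadjoint actions, and quasi-cocommutativity \eqref{RDelta} to move the $R$-matrix across a coproduct.

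The main obstacle is the exchange relation: we must show that the endomorphism of $H^\circ$ given by $\mathfrak{i}_a(\alpha)\circ\mathfrak{i}_b(\beta)$ equals the one given by $\mathfrak{i}_b(R^4_{[2]}R^3_{[1]}\triangleright\beta\triangleleft R^1_{[1]}R^2_{[1]})\circ\mathfrak{i}_a(R^3_{[2]}S(R^1_{[2]})\triangleright\alpha\triangleleft R^2_{[2]}R^4_{[1]})$. The left-hand side evaluated on $\xi$ equals $\Phi_{0,1}(\mathrm{coad}^r(R_{[2]})(\alpha))\triangleright(\beta\xi)\triangleleft R_{[1]}$, which we unfold using the formula \eqref{produitL01} for $\beta\xi$ together with \eqref{coregActionProduct} to distribute the coregular actions across $\star$. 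The right-hand side is expanded similarly, producing a $\star$-product of two expressions in which the coadjoint and coregular actions carry four copies of $R$. Reconciling the two expressions is a systematic $R$-matrix manipulation using \eqref{RDelta}, \eqref{quasitriang1}, \eqref{quasitriang2}, the Yang--Baxter equation \eqref{YangBaxter}, and the fact that $\Phi_{0,1}$ is a morphism of $H$-module-algebras $(\mathcal{L}_{0,1}(H),\mathrm{coad}^r) \to (H,\mathrm{ad}^r)$, so that $\Phi_{0,1}(\mathrm{coad}^r(h)(\varphi)) = S(h_{(1)})\Phi_{0,1}(\varphi)h_{(2)}$; combined with $h\triangleright \chi \triangleleft h' = \chi(h'\, ?\, h)$ this lets the two sides be brought to a common normal form. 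Once \eqref{echangeL10} holds as equality of endomorphisms, the $H$-equivariance condition for general $a \in \mathcal{L}_{1,0}(H)$ follows because the images of $\mathfrak{i}_b$ and $\mathfrak{i}_a$ generate $\mathcal{L}_{1,0}(H)$ as an algebra and each of them was shown to satisfy \eqref{conditionModuleInModH}.
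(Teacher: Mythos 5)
Your overall strategy is legitimate but genuinely different from the paper's, and it leaves the decisive step unproven. The paper does not verify the formulas directly: it realizes $H^{\circ}$ as the \emph{induced module} $\mathcal{L}_{1,0}(H) \otimes_{\mathcal{A}} \Bbbk$, where $\mathcal{A} = \mathfrak{i}_a\bigl(\mathcal{L}_{0,1}(H)\bigr)$ acts on $\Bbbk$ via $\varphi \mapsto \varphi(1_H)$ (the trivial module from Lemma \ref{lemmaEquivarianceHModules}), and identifies $\mathfrak{i}_b(\psi)\otimes_{\mathcal{A}} 1_{\Bbbk}$ with $\psi$ using \eqref{ibiaL10}. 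With that construction the existence of a well-defined $\mathcal{L}_{1,0}(H)$-module structure is automatic, and the displayed formulas for $\mathfrak{i}_b(\varphi)\cdot\psi$ and $\mathfrak{i}_a(\varphi)\cdot\psi$ are \emph{derived} by applying the exchange relation \eqref{echangeL10} inside the algebra and then acting on the cyclic vector $\varepsilon$. In other words, the paper uses \eqref{echangeL10} as a known identity in $\mathcal{L}_{1,0}(H)$, whereas your route requires you to \emph{prove} \eqref{echangeL10} as a new identity of operators on $H^{\circ}$.

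That operator identity is exactly the point where your proposal has a gap: you correctly isolate it as ``the main obstacle'' and list the tools (\eqref{produitL01}, \eqref{coregActionProduct}, the quasitriangularity axioms, Yang--Baxter, the intertwining property of $\Phi_{0,1}$), but you never carry out the four-$R$-matrix computation; you only assert that both sides ``can be brought to a common normal form.'' Since the entire content of the lemma is concentrated in that compatibility (the parts you do verify --- the unit axiom, the multiplicativity of the $\mathfrak{i}_a$-action via \eqref{quasitriang2}, the equivariance of the regular $\mathfrak{i}_b$-action, and the reduction of \eqref{conditionModuleInModH} to generators --- are all correct but routine), the proof is incomplete as written. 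Either execute that computation in full, or switch to the induction-of-representations argument, which makes it unnecessary.
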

\begin{proof}
Let $\mathcal{A} = \bigl\{ \mathfrak{i}_a(\varphi) \,|\, \varphi \in \mathcal{L}_{0,1}(H) \bigr\}$, which is a subalgebra of $\mathcal{L}_{1,0}(H)$ isomorphic to $\mathcal{L}_{0,1}(H)$. Lemma \ref{lemmaEquivarianceHModules} gives a representation of $\mathcal{L}_{0,1}(H)$ on the ground field $\Bbbk$ given by $\varphi \cdot 1_{\Bbbk} = \varphi(1_H) 1_{\Bbbk}$ due to \eqref{epsilonR}. Consider the induced representation $\mathcal{L}_{1,0}(H) \otimes_{\mathcal{A}} \Bbbk$. Recall from \eqref{ibiaL10} that any element in $\mathcal{L}_{1,0}(H)$ can be written as a linear combination of products of the form $\mathfrak{i}_b(\beta)\,\mathfrak{i}_a(\alpha)$ with $\alpha, \beta \in \mathcal{L}_{0,1}(H)$. Since
\begin{equation}\label{AboutMonomialsOnInduced}
\mathfrak{i}_b(\beta)\,\mathfrak{i}_a(\alpha) \otimes_{\mathcal{A}} 1_{\Bbbk} = \mathfrak{i}_b(\beta) \otimes_{\mathcal{A}} \alpha \cdot 1_{\Bbbk} = \alpha(1_H) \, \mathfrak{i}_b(\beta) \otimes_{\mathcal{A}} 1_{\Bbbk}
\end{equation}
we have an isomorphism of vector spaces
\begin{equation}\label{inducedRepL10OnHcirc}
\mathcal{L}_{1,0}(H) \otimes_{\mathcal{A}} \Bbbk \overset{\sim}{\to} H^{\circ}, \quad \mathfrak{i}_b(\psi) \otimes_{\mathcal{A}} 1_{\Bbbk} \mapsto \psi.
\end{equation}
Let us compute the resulting representation of $\mathcal{L}_{1,0}(H)$ on $H^{\circ}$. We have
\[ \mathfrak{i}_b(\varphi) \cdot \psi \cong \mathfrak{i}_b(\varphi)\mathfrak{i}_b(\psi) \otimes_{\mathcal{A}} 1_{\Bbbk} = \mathfrak{i}_b(\varphi\psi) \otimes_{\mathcal{A}} 1_{\Bbbk} \cong \varphi\psi \]
for all $\varphi \in \mathcal{L}_{0,1}(H)$ and $\psi \in H^{\circ}$, where $\cong$ denotes the identification \eqref{inducedRepL10OnHcirc}. This shows in particular that this representation is cyclic, because $\psi = \mathfrak{i}_b(\psi) \cdot \varepsilon$ for all $\psi \in H^{\circ}$, 
where $\varepsilon$ is the counit of $H$. For the action of $\mathfrak{i}_a(\varphi)$, note that \eqref{AboutMonomialsOnInduced} is rewritten as
\begin{equation}\label{repL10OnCounit}
\mathfrak{i}_b(\beta)\,\mathfrak{i}_a(\alpha) \cdot \varepsilon = (\beta \otimes \alpha) \cdot \varepsilon = \alpha(1_H) \, \beta
\end{equation}
Hence
\begin{align*}
&\mathfrak{i}_a(\varphi) \cdot \psi = \mathfrak{i}_a(\varphi) \, \mathfrak{i}_b(\psi) \cdot \varepsilon
\overset{\eqref{echangeL10}}{=} \mathfrak{i}_{b}\bigl( R_{[2]}^4 R_{[1]}^3 \triangleright \psi \triangleleft R_{[1]}^1 R_{[1]}^2 \bigr) \, \mathfrak{i}_{a}\bigl( R_{[2]}^3 S(R_{[2]}^1) \triangleright \varphi \triangleleft R_{[2]}^2 R_{[1]}^4  \bigr) \cdot \varepsilon\\
=\:&\varphi\bigl( R_{[2]}^2 R_{[1]}^4 R_{[2]}^3 S(R_{[2]}^1) \bigr) \, R_{[2]}^4 R_{[1]}^3 \triangleright \psi \triangleleft R_{[1]}^1 R_{[1]}^2 \overset{\eqref{quasitriang2}}{=}\varphi\bigl( R_{[2](1)}^1 R_{[1]}^4 R_{[2]}^3 S(R_{[2](2)}^1) \bigr) \, R_{[2]}^4 R_{[1]}^3 \triangleright \psi \triangleleft R_{[1]}^1
\end{align*}
which is the announced formula, by definitions of $\mathrm{coad}^r$ in \eqref{defCoadL01} and $\Phi_{0,1}$ in \eqref{RSD}. To prove the $H$-equivariant property, note first that for all $\beta \otimes \alpha \in \mathcal{L}_{1,0}(H)$
\begin{align*}
\mathrm{coad}^r(h)\bigl( \beta \otimes \alpha \bigr) \cdot \varepsilon &\overset{\eqref{actionCoadL10}}{=} \bigl( \mathrm{coad}^r(h_{(1)})(\beta) \otimes \mathrm{coad}^r(h_{(2)})(\alpha) \bigr) \cdot \varepsilon\\
&\overset{\eqref{repL10OnCounit},\eqref{defCoadL01}}{=} \alpha\bigl( S(h_{(2)})h_{(3)} \bigr) \, \mathrm{coad}^r(h_{(1)})(\beta)\\
& = \alpha(1_H) \, \mathrm{coad}^r(h)(\beta) \overset{\eqref{repL10OnCounit}}{=} \mathrm{coad}^r(h)\bigl( (\beta \otimes \alpha) \cdot \varepsilon \bigr).
\end{align*}
It follows that $\mathrm{coad}^r(h)(x) \cdot \varepsilon = \mathrm{coad}^r(h)(x \cdot \varepsilon)$ for all $x \in \mathcal{L}_{1,0}(H)$. Since $\mathcal{L}_{1,0}(H)$ is $H$-module-algebra under $\mathrm{coad}^r$, we deduce that condition \eqref{conditionModuleInModH} holds as follows:
\begin{align*}
&\mathrm{coad}^r(h_{(1)})(x) \cdot \mathrm{coad}^r(h_{(2)})(\psi) = \mathrm{coad}^r(h_{(1)})(x) \, \mathfrak{i}_b\bigl(\mathrm{coad}^r(h_{(2)})(\psi) \bigr) \cdot \varepsilon\\
=\:&\mathrm{coad}^r(h)\bigl( x \, \mathfrak{i}_b(\psi) \bigr) \cdot \varepsilon = \mathrm{coad}^r(h) \bigl( x \, \mathfrak{i}_b(\psi) \cdot \varepsilon \bigr) = \mathrm{coad}^r(h)( x \cdot \psi)
\end{align*}
for all $x \in \mathcal{L}_{1,0}(H)$, $\psi \in H^{\circ}$ and $h \in H$.
\end{proof}

\smallskip

\indent Recall that by definition $\mathcal{L}_{g,n}(H) = \mathcal{L}_{1,0}(H)^{\widetilde{\otimes}\,g} \,\widetilde{\otimes}\, \mathcal{L}_{0,1}(H)^{\widetilde{\otimes}\, n}$. Combining Lemmas \ref{lemmaEquivarianceHModules} and \ref{lemmaHEquivRepL10} thanks to item 2 in Lemma \ref{lemBraidedProductHequivModules}, we obtain the $H$-equivariant $\mathcal{L}_{g,n}(H)$-modules
\begin{equation}\label{AlekseevRepFromExternalProduct}
(H^{\circ})^{\widetilde{\boxtimes}\, g} \,\widetilde{\boxtimes}\, X_1 \,\widetilde{\boxtimes}\, \ldots \,\widetilde{\boxtimes}\, X_n
\end{equation}
where $X_1, \ldots, X_n$ are any left $H$-modules endowed with the $H$-equivariant structure from Lemma \ref{lemmaEquivarianceHModules}. As a vector space this is just $(H^{\circ})^{\otimes g} \otimes X_1 \otimes \ldots \otimes X_n$. We write elements in the module \eqref{AlekseevRepFromExternalProduct} as $\psi_1 \otimes \ldots \otimes \psi_g \otimes x_1 \otimes \ldots \otimes x_n$ instead of the heavy notation $\psi_1 \,\widetilde{\boxtimes}\, \ldots \,\widetilde{\boxtimes}\, \psi_g \,\widetilde{\boxtimes}\, x_1 \,\widetilde{\boxtimes}\, \ldots \,\widetilde{\boxtimes}\, x_n$.

\indent In order to describe explicitly the structure on \eqref{AlekseevRepFromExternalProduct} we use the embeddings $\mathfrak{i}_{b_{\scriptstyle i}}, \mathfrak{i}_{a_{\scriptstyle i}}, \mathfrak{i}_{m_{\scriptstyle g+j}} : \mathcal{L}_{0,1}(H) \to \mathcal{L}_{g,n}(H)$ from \eqref{embeddingsLgn}, which by Prop.~\ref{propDescriptionProductLgn} are the main tool to compute in $\mathcal{L}_{g,n}(H)$.

\begin{proposition}\label{coroEquivariantRepsOfLgn}
The right $H$-module structure on \eqref{AlekseevRepFromExternalProduct} is
\vspace{-.3em}
\begin{align}
\begin{split}\label{HActionRepsOfLgn}
&\bigl( \psi_1 \otimes \ldots \otimes \psi_g \otimes x_1 \otimes \ldots \otimes x_n \bigr) \smallsquare h\\
=\:\,& \mathrm{coad}^r(h_{(1)})(\psi_1) \otimes \ldots \otimes \mathrm{coad}^r(h_{(g)})(\psi_g) \otimes S(h_{(1)}) \cdot x_1 \otimes \ldots \otimes S(h_{(g+n)}) \cdot x_n.
\end{split}
\end{align}
The left $\mathcal{L}_{g,n}(H)$-module structure on \eqref{AlekseevRepFromExternalProduct} is given by
\begin{align*}
\begin{split}
&\mathfrak{i}_{b_{\scriptstyle i}}(\varphi) \cdot \bigl( \psi_1 \otimes \ldots \otimes \psi_g \otimes x_1 \otimes \ldots \otimes x_n \bigr)\\
&=\mathrm{coad}^r(R_{[1](1)})(\psi_1) \otimes \ldots \otimes \mathrm{coad}^r(R_{[1](i-1)})(\psi_{i-1}) \otimes \mathrm{coad}^r(R_{[2]})(\varphi) \psi_i\\
&\quad \otimes \psi_{i+1} \otimes \ldots \otimes \psi_g \otimes x_1 \otimes \ldots \otimes x_n,
\end{split}\\[.5em]
\begin{split}
&\mathfrak{i}_{a_{\scriptstyle i}}(\varphi) \cdot \bigl( \psi_1 \otimes \ldots \otimes \psi_g \otimes x_1 \otimes \ldots \otimes x_n \bigr)\\
&=\mathrm{coad}^r(R^1_{[1](1)})(\psi_1) \otimes \ldots \otimes \mathrm{coad}^r(R^1_{[1](i-1)})(\psi_{i-1}) \otimes \Phi_{0,1}\bigl( \mathrm{coad}^r(R^1_{[2]}R^2_{[2]})(\varphi) \bigr) \triangleright \psi_i \triangleleft R^2_{[1]}\\
&\quad \otimes \psi_{i+1} \otimes \ldots \otimes \psi_g \otimes x_1 \otimes \ldots \otimes x_n.
\end{split}\\[.5em]
\begin{split}
&\mathfrak{i}_{m_{\scriptstyle g+j}}(\varphi) \cdot \bigl( \psi_1 \otimes \ldots \otimes \psi_g \otimes x_1 \otimes \ldots \otimes x_n \bigr)\\
&=\mathrm{coad}^r(R_{[1](1)})(\psi_1) \otimes \ldots \otimes \mathrm{coad}^r(R_{[1](g)})(\psi_g) \otimes S(R_{[1](g+1)}) \cdot x_1\\
&\quad \otimes \ldots \otimes S(R_{[1](g+j-1)}) \cdot x_{j-1} \otimes \Phi_{0,1}\bigl( \mathrm{coad}^r(R_{[2]})(\varphi)\bigr) \cdot x_j \otimes x_{j+1} \otimes \ldots \otimes x_n.
\end{split}
\end{align*}
for all $\varphi \in \mathcal{L}_{0,1}(H)$, with $1 \leq i \leq g$, $1 \leq j \leq n$, where $\mathrm{coad}^r$ is defined in \eqref{defCoadL01} and $\mathrm{coad}^r(R_{[2]})(\varphi) \psi_i$ means product in the sense of \eqref{produitL01}.
\end{proposition}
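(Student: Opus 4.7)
The plan is to assemble the proposition from three ingredients already at our disposal: the $H$-equivariant $\mathcal{L}_{0,1}(H)$-module structure on any $H$-module $X$ from Lemma \ref{lemmaEquivarianceHModules}, the $H$-equivariant $\mathcal{L}_{1,0}(H)$-module structure on $H^\circ$ from Lemma \ref{lemmaHEquivRepL10}, and the description of the braided external product of equivariant modules given by formula \eqref{actionOnBraidedProductOfMultiplesModules}. By Definition \ref{defLgnHBrProduct}, $\mathcal{L}_{g,n}(H)$ is the braided tensor product $\mathcal{L}_{1,0}(H)^{\widetilde{\otimes}\,g} \,\widetilde{\otimes}\, \mathcal{L}_{0,1}(H)^{\widetilde{\otimes}\, n}$, so the braided external product of the modules in Lemmas \ref{lemmaHEquivRepL10} and \ref{lemmaEquivarianceHModules} is exactly the claimed $H$-equivariant $\mathcal{L}_{g,n}(H)$-module \eqref{AlekseevRepFromExternalProduct}. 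The task is then to substitute the explicit formulas of those lemmas into the general formulas of Lemma \ref{lemBraidedProductHequivModules} and simplify.

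First, I would settle the right $H$-action \eqref{HActionRepsOfLgn}. By iterating \eqref{HActionOnBrTensProd} along the $g+n$ tensor factors, and using that the $H$-module structure on each $\mathcal{L}_{1,0}(H)$-factor $H^\circ$ is $\mathrm{coad}^r$ (by Lemma \ref{lemmaHEquivRepL10}, built from the diagonal coadjoint action \eqref{actionCoadL10}) while the $H$-module structure on each $X_j$ is $x \smallsquare h = S(h) \cdot x$ (by Lemma \ref{lemmaEquivarianceHModules}), formula \eqref{HActionRepsOfLgn} is immediate.

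Next, for the left action, I would specialize the general formula \eqref{actionOnBraidedProductOfMultiplesModules} to the three types of generators $\mathfrak{i}_{b_i}, \mathfrak{i}_{a_i}, \mathfrak{i}_{m_{g+j}}$. Recall from the proof of Prop.\,\ref{propDescriptionProductLgn} that $\mathfrak{i}_{b_i} = \mathfrak{e}_i \circ \mathfrak{i}_b$, $\mathfrak{i}_{a_i} = \mathfrak{e}_i \circ \mathfrak{i}_a$ and $\mathfrak{i}_{m_{g+j}} = \mathfrak{e}_{g+j}$. Plugging $a_i = \mathfrak{i}_b(\varphi)$ into \eqref{actionOnBraidedProductOfMultiplesModules} and using $\mathfrak{i}_b(\varphi) \cdot \psi_i = \varphi\psi_i$ from Lemma \ref{lemmaHEquivRepL10} yields the formula for $\mathfrak{i}_{b_i}(\varphi)$; the coadjoint twists on the factors $\psi_1, \ldots, \psi_{i-1}$ come from iterated coproducts on $R_{[1]}$ as in \eqref{actionOnBraidedProductOfMultiplesModules}. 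For $\mathfrak{i}_{a_i}$, the action on $\psi_i$ is obtained by substituting $\mathrm{coad}^r(R^2_{[2]})(\varphi)$ (the ``$a_i \smallsquare R_{[2]}$'' in \eqref{actionOnBraidedProductOfMultiplesModules}, with $R^2$ the copy of $R$ from that formula) into $\mathfrak{i}_a(-) \cdot \psi_i = \Phi_{0,1}(\mathrm{coad}^r(R^1_{[2]})(-)) \triangleright \psi_i \triangleleft R^1_{[1]}$ from Lemma \ref{lemmaHEquivRepL10}, producing the two copies $R^1,R^2$ displayed in the statement. For $\mathfrak{i}_{m_{g+j}}$, Lemma \ref{lemmaEquivarianceHModules} gives $\varphi \cdot x = \Phi_{0,1}(\varphi) \cdot x$, which after substitution into \eqref{actionOnBraidedProductOfMultiplesModules} gives the last displayed formula.

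The only real subtlety is bookkeeping: making sure the iterated coproducts on $R_{[1]}$ in \eqref{actionOnBraidedProductOfMultiplesModules} match the indexing $\mathrm{coad}^r(R_{[1](1)}), \ldots, \mathrm{coad}^r(R_{[1](i-1)})$ (for the $H^\circ$-factors) and $S(R_{[1](g+1)}),\ldots,S(R_{[1](g+j-1)})$ (for the $X$-factors), which follows from the fact that the $H$-actions are $\mathrm{coad}^r$ on $H^\circ$ and $h \mapsto S(h)\cdot$ on $X$ respectively. This is a purely mechanical verification once the correct lemmas are invoked, so there is no genuine obstacle beyond careful tracking of the Sweedler indices.
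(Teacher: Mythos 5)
Your proposal is correct and follows exactly the paper's own argument: the paper's proof consists of the single observation that the right $H$-action follows from \eqref{HActionOnBrTensProd} together with the equivariant structures of Lemmas \ref{lemmaEquivarianceHModules} and \ref{lemmaHEquivRepL10}, and that the $\mathcal{L}_{g,n}(H)$-action follows from \eqref{actionOnBraidedProductOfMultiplesModules} and Lemma \ref{lemmaHEquivRepL10}. Your extra bookkeeping (including the composition of the two $\mathrm{coad}^r(R_{[2]})$-twists, which is correct up to the harmless relabelling of the copies $R^1,R^2$) is just a more explicit write-up of the same verification.
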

\begin{proof}
The formula for the right $H$-action directly follows from \eqref{HActionOnBrTensProd} and the $H$-equivariant structures we put on $H^{\circ}$ and the $X_i$'s. The formulas for the action of $\mathcal{L}_{g,n}(H)$ directly follow from \eqref{actionOnBraidedProductOfMultiplesModules} and Lemma \ref{lemmaHEquivRepL10}.
\end{proof}

\begin{remark}\label{remarkActionL01Inv}
When $\varphi \in \mathcal{L}_{0,1}^{\mathrm{inv}}(H)$, the formulas in Proposition \ref{coroEquivariantRepsOfLgn} simplify dramatically:
\begin{align*}
\textstyle \mathfrak{i}_{b_{\scriptstyle i}}(\varphi) \cdot \bigl(\bigotimes_k \psi_k \otimes \bigotimes_l x_l \bigr) &\textstyle = \psi_1 \otimes \ldots \otimes (\psi_i \star \varphi) \otimes \ldots \otimes \psi_g \otimes \bigotimes_l x_l,\\
\textstyle \mathfrak{i}_{a_{\scriptstyle i}}(\varphi) \cdot \bigl(\bigotimes_k \psi_k \otimes \bigotimes_l x_l \bigr) &\textstyle = \psi_1 \otimes \ldots \otimes \bigl(\Phi_{0,1}(\varphi) \triangleright \psi_i\bigr) \otimes \ldots \otimes \psi_g \otimes \bigotimes_l x_l,\\
\textstyle \mathfrak{i}_{m_{\scriptstyle g+j}}(\varphi) \cdot \bigl(\bigotimes_k \psi_k \otimes \bigotimes_l x_l \bigr) &\textstyle = \bigotimes_k \psi_k \otimes x_1 \otimes \ldots \otimes \bigl( \Phi_{0,1}(\varphi) \cdot x_i \bigr) \otimes \ldots \otimes x_n,
\end{align*}
because of \eqref{epsilonR} and Lemma \ref{lemmaL01InvZL01}, where $\star$ is the usual product \eqref{usualProdHDual} in $H^{\circ}$.
\end{remark}

\indent For any left $H$-modules $X, X_1,\ldots,X_n$ let $\Sigma_{g,X_1,\ldots,X_n}^X$ be the oriented surface of genus with $n$ punctures labelled by $X_1,\ldots,X_n$ and one boundary circle labelled by $X$:\footnote{The only reason we switch to $n$ punctures and one boundary circle instead of $n+1$ boundary circles (as in \S\ref{contextIntro}) is because it will be more convenient to have only one boundary circle when using stated skein algebras in \S\ref{sectionKnotsMaps}.}
\begin{center}
\begingroup%
  \makeatletter%
  \providecommand\color[2][]{%
    \errmessage{(Inkscape) Color is used for the text in Inkscape, but the package 'color.sty' is not loaded}%
    \renewcommand\color[2][]{}%
  }%
  \providecommand\transparent[1]{%
    \errmessage{(Inkscape) Transparency is used (non-zero) for the text in Inkscape, but the package 'transparent.sty' is not loaded}%
    \renewcommand\transparent[1]{}%
  }%
  \providecommand\rotatebox[2]{#2}%
  \newcommand*\fsize{\dimexpr\f@size pt\relax}%
  \newcommand*\lineheight[1]{\fontsize{\fsize}{#1\fsize}\selectfont}%
  \ifx\svgwidth\undefined%
    \setlength{\unitlength}{316.20077177bp}%
    \ifx\svgscale\undefined%
      \relax%
    \else%
      \setlength{\unitlength}{\unitlength * \real{\svgscale}}%
    \fi%
  \else%
    \setlength{\unitlength}{\svgwidth}%
  \fi%
  \global\let\svgwidth\undefined%
  \global\let\svgscale\undefined%
  \makeatother%
  \begin{picture}(1,0.17004428)%
    \lineheight{1}%
    \setlength\tabcolsep{0pt}%
    \put(0,0){\includegraphics[width=\unitlength,page=1]{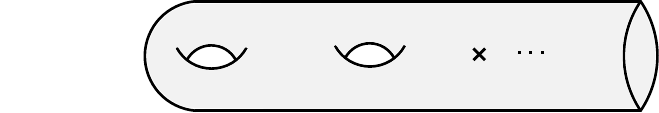}}%
    \put(0.55720771,0.05129964){\color[rgb]{0,0,0}\makebox(0,0)[lt]{\lineheight{1.25}\smash{\begin{tabular}[t]{l}$_g$\end{tabular}}}}%
    \put(0.70961202,0.03901457){\color[rgb]{0,0,0}\makebox(0,0)[lt]{\lineheight{1.25}\smash{\begin{tabular}[t]{l}$X_1$\end{tabular}}}}%
    \put(0.86251527,0.04050538){\color[rgb]{0,0,0}\makebox(0,0)[lt]{\lineheight{1.25}\smash{\begin{tabular}[t]{l}$X_n$\end{tabular}}}}%
    \put(0.30980961,0.04181754){\color[rgb]{0,0,0}\makebox(0,0)[lt]{\lineheight{1.25}\smash{\begin{tabular}[t]{l}$_1$\end{tabular}}}}%
    \put(0,0){\includegraphics[width=\unitlength,page=2]{coloredSurface.pdf}}%
    \put(0.95481883,0.0793426){\color[rgb]{0,0,0}\makebox(0,0)[lt]{\lineheight{1.25}\smash{\begin{tabular}[t]{l}$X$\end{tabular}}}}%
    \put(-0.00099061,0.0741717){\color[rgb]{0,0,0}\makebox(0,0)[lt]{\lineheight{1.25}\smash{\begin{tabular}[t]{l}$\Sigma^X_{g,X_1,\ldots,X_n}=$\end{tabular}}}}%
  \end{picture}%
\endgroup%

\end{center}
To such a labelled surface, we associate the vector space
\begin{equation}\label{defLyubashenkoSpaces}
V( \Sigma_{g,X_1,\ldots,X_n}^X ) = \Hom_H\!\left( X, (H^{\circ})^{\otimes g} \otimes X_1 \otimes \ldots \otimes X_n \right)
\end{equation}
where $(H^{\circ})^{\otimes g} \otimes X_1 \otimes \ldots \otimes X_n$ has the right $H$-module structure \eqref{HActionRepsOfLgn} and $X$ is viewed as a right $H$-module thanks to the antipode: $x \smallsquare h = S(h) \cdot x$ for all $x \in X$ and $h \in H$ (or equivalently one can directly take $X \in \mathrm{Mod}\text{-}H$). In particular when $X=\Bbbk$, the trivial $H$-module, $V(\Sigma_{g,X_1,\ldots,X_n}^{\Bbbk})$
is identified with the subspace of $H$-invariant elements in $(H^{\circ})^{\otimes g} \otimes X_1 \otimes \ldots \otimes X_n$ under \eqref{HActionRepsOfLgn}.

\smallskip

\indent Recall the subalgebra $\mathcal{L}_{g,n}^{\mathrm{inv}}(H)$ of $H$-invariant elements under the right action $\mathrm{coad}^r$ (Def.\,\ref{defModuliAlg}). By Lemma \ref{lemmaObviousRemarksInvSubalgebra} applied to the $H$-equivariant module $(H^{\circ})^{\otimes g} \otimes X_1 \otimes \ldots \otimes X_n$ there is a representation of $\mathcal{L}_{g,n}^{\mathrm{inv}}(H)$ on the space $V( \Sigma_{g,X_1,\ldots,X_n}^X )$ given by acting on the target:
\begin{equation}\label{pushfrowardRepModuliAlg}
\forall \, a \in \mathcal{L}_{g,n}^{\mathrm{inv}}(H), \:\: \forall f \in V(\Sigma_{g,X_1,\ldots,X_n}^X), \:\:\forall \,x \in X, \quad (a \cdot f)(x) = a \cdot f(x).
\end{equation}

\indent In order to obtain the ``derived'' generalization of this representation, consider a projective resolution $0 \leftarrow X \leftarrow P_0 \leftarrow P_1 \leftarrow \ldots$ of $X$ in $\mathrm{Mod}\text{-}H$. Using the functor $V(\Sigma^{-}_{g,X_1,\ldots,X_n}) = \Hom_H\left(-,(H^{\circ})^{\otimes g} \otimes X_1 \otimes \ldots \otimes X_n \right)$ we build the complex
\begin{equation*}
0 \longrightarrow V(\Sigma^{P_0}_{g,X_1,\ldots,X_n}) \longrightarrow V(\Sigma^{P_1}_{g,X_1,\ldots,X_n}) \longrightarrow \ldots
\end{equation*}
We denote the $m$-th cohomology space of this complex by
\begin{equation}\label{derivedStateSpaces}
V^m(\Sigma^X_{g,X_1,\ldots,X_n}) = \Ext^m_H\!\left(X, (H^{\circ})^{\otimes g} \otimes X_1 \otimes \ldots \otimes X_n \right).
\end{equation}
Note that $V^0(\Sigma^X_{g,X_1,\ldots,X_n}) \cong V(\Sigma^X_{g,X_1,\ldots,X_n})$ due to the general fact that $\Ext^0(M,N) \cong \Hom(M,N)$.
 
\begin{theorem}\label{thmDerivedRepModuli}
For all $H$-modules $X$, $X_1$, $\ldots$, $X_n$ and all $m \geq 0$ there is a representation of $\mathcal{L}_{g,n}^{\mathrm{inv}}(H)$ on $V^m(\Sigma_{g,X_1,\ldots,X_n}^X)$ given as follows:
\[ \forall \, a \in \mathcal{L}_{g,n}^{\mathrm{inv}}(H), \quad \forall \, [f] \in V^m(\Sigma_{g,X_1,\ldots,X_n}^X), \qquad a \cdot [f] = [a \cdot f] \]
where $[f]$ is the cohomology class of a cocycle $f \in V(\Sigma_{g,X_1,\ldots,X_n}^{P_m})$ and $a \cdot f$ is defined in \eqref{pushfrowardRepModuliAlg}.
\end{theorem}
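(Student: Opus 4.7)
The plan is to recognize this as an immediate corollary of Theorem \ref{thmDerivedRepGeneral} applied to the specific $H$-module-algebra $A = \mathcal{L}_{g,n}(H)$ and the $H$-equivariant module $M = (H^{\circ})^{\otimes g} \otimes X_1 \otimes \ldots \otimes X_n$ built above. All the substantive work has already been carried out in the section: the derived representation machinery in Theorem \ref{thmDerivedRepGeneral} is stated for \emph{any} $H$-module-algebra, and what remained was to produce a supply of $H$-equivariant modules over $\mathcal{L}_{g,n}(H)$, which is precisely what Lemmas \ref{lemmaEquivarianceHModules}, \ref{lemmaHEquivRepL10} and \ref{lemBraidedProductHequivModules} provide.

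First I would recall that by Proposition \ref{coroEquivariantRepsOfLgn} the vector space $M = (H^{\circ})^{\otimes g} \otimes X_1 \otimes \ldots \otimes X_n$, equipped with the right $H$-action \eqref{HActionRepsOfLgn} and the $\mathcal{L}_{g,n}(H)$-action displayed there, is an object of $(\mathcal{L}_{g,n}(H)\text{-}\mathrm{Mod})_H$. This is the module \eqref{AlekseevRepFromExternalProduct} obtained as the braided external product of the equivariant $\mathcal{L}_{0,1}(H)$-modules $X_j$ from Lemma \ref{lemmaEquivarianceHModules} and of the equivariant $\mathcal{L}_{1,0}(H)$-module structures on $H^{\circ}$ from Lemma \ref{lemmaHEquivRepL10}, and Lemma \ref{lemBraidedProductHequivModules} guarantees that the braided external product of $H$-equivariant modules over $H$-module-algebras is itself a $H$-equivariant module over the braided tensor product algebra.

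Next, I would simply specialize Theorem \ref{thmDerivedRepGeneral} with $A = \mathcal{L}_{g,n}(H)$ (so that $A^{\mathrm{inv}} = \mathcal{L}_{g,n}^{\mathrm{inv}}(H)$ by Definition \ref{defModuliAlg}) and with this $M$. The theorem then produces a functor
\[ \mathcal{V}^m_X : \bigl( \mathcal{L}_{g,n}(H)\text{-}\mathrm{Mod} \bigr)_H \to \mathcal{L}_{g,n}^{\mathrm{inv}}(H)\text{-}\mathrm{Mod} \]
sending $M$ to $\mathrm{Ext}^m_H(X,M)$. By definition \eqref{derivedStateSpaces}, this Ext space is exactly $V^m(\Sigma^X_{g,X_1,\ldots,X_n})$. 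The action of $a \in \mathcal{L}_{g,n}^{\mathrm{inv}}(H)$ on a cohomology class provided by Theorem \ref{thmDerivedRepGeneral} is $a \cdot [f] = [a \cdot f]$, where $a \cdot f$ means the pushforward \eqref{pushforwardAction} of $f$ through the endomorphism of $M$ given by the $\mathcal{L}_{g,n}^{\mathrm{inv}}(H)$-action on the target; this matches \eqref{pushfrowardRepModuliAlg}, giving exactly the formula in the statement.

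There is essentially no obstacle here: the only points requiring verification, namely that $a \cdot f$ is again a cocycle, that the construction is independent of the chosen projective resolution of $X$, and that the pushforward defines an algebra action commuting with the differential, are precisely the content of Theorem \ref{thmDerivedRepGeneral}, whose hypotheses are met thanks to the $H$-equivariance recorded in Proposition \ref{coroEquivariantRepsOfLgn}. The only subtlety worth flagging is that $X$ is implicitly turned into a right $H$-module via $x \smallsquare h = S(h) \cdot x$ (consistently with the comment following \eqref{defLyubashenkoSpaces}) so that $\mathrm{Hom}_H$ and the projective resolution take place in the category $\mathrm{Mod}\text{-}H$ used throughout Section 2.
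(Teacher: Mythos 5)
Your proposal is correct and matches the paper's own argument, which literally reads ``Particular case of Theorem \ref{thmDerivedRepGeneral}'': you specialize the general derived-representation result to $A = \mathcal{L}_{g,n}(H)$ and the $H$-equivariant module $(H^{\circ})^{\otimes g} \otimes X_1 \otimes \ldots \otimes X_n$ furnished by Proposition \ref{coroEquivariantRepsOfLgn}. The extra remarks you flag (the right-module convention on $X$ and the role of the braided external product) are consistent with the surrounding text and require no further justification.
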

\begin{proof}
Particular case of Theorem \ref{thmDerivedRepGeneral}.
\end{proof}

\begin{remark}
Let $\mathcal{L}_{g,n}^{\mathrm{der}}(H) = \bigoplus_{m}\mathrm{Ext}^m_H\bigl(\Bbbk, \mathcal{L}_{g,n}(H) \bigr)$ and $V^\bullet(\Sigma_{g,X_1,\ldots,X_m}^X) = \bigoplus_m V^m(\Sigma_{g,X_1,\ldots,X_m}^X)$. Then by Remark \ref{rmkAder}, {\em $\mathcal{L}_{g,n}^{\mathrm{der}}(H)$ is a graded algebra which acts on the graded space $V^\bullet(\Sigma_{g,X_1,\ldots,X_m}^X)$.} Theorem \ref{thmDerivedRepModuli} is the restriction to the degree 0 subalgebra $\mathcal{L}_{g,n}^{\mathrm{inv}}(H) \subset \mathcal{L}_{g,n}^{\mathrm{der}}(H)$.
\end{remark}

\section{Topological applications}\label{sectionTopApp}
\noindent \textbf{Assumption:} $H = (H,\cdot,1,\Delta,\varepsilon,S,R,v)$ is a {\em ribbon} Hopf algebra with invertible antipode over a field $\Bbbk$. In the course of the text more assumptions on $H$ will be required (finite-dimensionality for Theorem \ref{thmClosingBoundary}(2) and moreover factorizability in \S\ref{subsectConstructMCGRep} and \S\ref{subsubsExplicitFormulas}) .

\smallskip

\indent We denote the ribbon element of $H$ by $v$. Recall from e.g. \cite[\S 4.2.C]{CP}, \cite[\S XIV.6]{kassel} that it is a central and invertible element such that
\[ \Delta(v) = (R'R)^{-1} (v \otimes v), \quad \varepsilon(v) = 1, \quad S(v) = v, \quad v^2 = u S(u) \]
where $u$ is the Drinfeld element \eqref{elementDrinfeld}. The {\em pivotal element} of $H$ is
\begin{equation}\label{pivotalElement}
g = uv^{-1}.
\end{equation}
It satisfies
\begin{equation}\label{proprietePivot}
\Delta(g) = g \otimes g, \quad \text{ and } \quad \forall \, h \in H, \:\: S^2(h) = ghg^{-1}.
\end{equation}

\indent The category $H\text{-}\mathrm{mod}$ of {\em finite-dimensional left} $H$-modules is ribbon, with braiding given by $c_{X,Y}(x \otimes y) = R_{[2]} \cdot y \otimes R_{[1]} \cdot x$ and twist $\theta_X : X \to X$ given by $\theta_X(x) = v^{-1}\cdot x$. We recall that for any $X \in H\text{-}\mathrm{mod}$ the $H$-module structure on $X^*$ is given by $(h \cdot \sigma)(x) = \sigma\bigl( S(h) \cdot x \bigr)$ for all $h \in H$, $\sigma \in X^*$, $x \in X$.

\subsection{Knot maps and boundary closure}\label{sectionKnotsMaps}
Let $\Sigma_{g,n}$ be the oriented surface of genus $g$ with $n$ punctures and $\Sigma_{g,n}^{\circ}$ be the surface obtained by cutting an open disk from $\Sigma_{g,n}$. Recall that the notation $\Sigma^X_{g,X_1,\ldots,X_n}$ means the boundary circle is colored by $X$ while the punctures are colored by $X_1,\ldots,X_n$. It is natural to expect that the choice $X=\Bbbk$ (the trivial $H$-module) has something to do with the surface where this boundary component has been closed. In this section we prove a precise statement supporting this idea. Namely, to each isotopy class of knot $K \subset \Sigma_{g,n}^{\circ} \times [0,1]$, we associate an element $\mathbb{W}^{\varphi}(K) \in \mathcal{L}_{g,n}^{\mathrm{inv}}(H)$ and we show that the representation of $\mathbb{W}^{\varphi}(K)$ on the space $V^m(\Sigma^{\Bbbk}_{g,X_1,\ldots,X_n})$ defined in Theorem \ref{thmDerivedRepModuli} only depends of the isotopy class of $K$ viewed in $\Sigma_{g,n} \times [0,1]$. The superscript $\varphi$ is any element in $\mathcal{L}_{0,1}^{\mathrm{inv}}(H)$

\indent The elements $\mathbb{W}^{\varphi}(K)$ were first defined in \cite{BuR2} under the name {\em Wilson loops}. They were re-defined in \cite{BFK} (in a different formalism) through an axiomatic system of rules. Finally in \cite{FaitgHol} they were recovered as a particular case of a holonomy operation for tangles in $\Sigma_{g,n}^{\circ} \times [0,1]$.

\indent It was proved in \cite{BFR} that there is an isomorphism between $\mathcal{L}_{g,n}(H)$ and a so-called {\em stated skein algebra} $\mathcal{S}_H^{\mathrm{st}}(\Sigma_{g,n}^{\circ,\bullet})$. This allows for an even more natural construction of $\mathbb{W}^{\varphi}$ which is presented here. The point is to first define {\em knot maps} $\mathfrak{i}_K : \mathcal{L}_{0,1}^{\mathrm{inv}}(H) \to \mathcal{S}_H^{\mathrm{st}}(\Sigma_{g,n}^{\circ,\bullet}) \cong \mathcal{L}_{g,n}(H)$ and then to let $\mathbb{W}^{\varphi}(K) = \mathfrak{i}_K(\varphi)$. This approach also greatly simplifies the construction of mapping class group representations in \S\ref{sectionDerivedRepMCG}.

\subsubsection{Stated skein algebras}\label{sectionKnotMaps}
\indent Let $\Sigma_{g,n}^{\circ,\bullet}$ be the surface obtained by removing a point on the circle $\partial (\Sigma_{g,n}^{\circ})$. The extra puncture $\bullet$ is relevant only when considering tangles whose endpoints lie in $\partial(\Sigma_{g,n}^{\circ,\bullet}) \times [0,1]$, as we will shortly recall. As a result {\em we indicate the puncture $\bullet$ only when it is relevant to do so}; for instance there is no reason to write $\pi_1(\Sigma_{g,n}^{\circ,\bullet})$ instead of $\pi_1(\Sigma_{g,n}^{\circ})$.

\smallskip

\indent Below is a picture of the surface $\Sigma_{g,n}^{\circ,\bullet}$. We fix some basepoint on $\partial(\Sigma_{g,n}^{\circ,\bullet})$, not to be confused with the puncture $\bullet$, for the fundamental group $\pi_1(\Sigma_{g,n}^{\circ})$ and fix generators $b_1$, $a_1$, $\ldots$, $b_g$, $a_g$, $m_{g+1}$, $\ldots$, $m_{g+n}$ of this (free) group:
\begin{center}
\begingroup%
  \makeatletter%
  \providecommand\color[2][]{%
    \errmessage{(Inkscape) Color is used for the text in Inkscape, but the package 'color.sty' is not loaded}%
    \renewcommand\color[2][]{}%
  }%
  \providecommand\transparent[1]{%
    \errmessage{(Inkscape) Transparency is used (non-zero) for the text in Inkscape, but the package 'transparent.sty' is not loaded}%
    \renewcommand\transparent[1]{}%
  }%
  \providecommand\rotatebox[2]{#2}%
  \newcommand*\fsize{\dimexpr\f@size pt\relax}%
  \newcommand*\lineheight[1]{\fontsize{\fsize}{#1\fsize}\selectfont}%
  \ifx\svgwidth\undefined%
    \setlength{\unitlength}{445.82021583bp}%
    \ifx\svgscale\undefined%
      \relax%
    \else%
      \setlength{\unitlength}{\unitlength * \real{\svgscale}}%
    \fi%
  \else%
    \setlength{\unitlength}{\svgwidth}%
  \fi%
  \global\let\svgwidth\undefined%
  \global\let\svgscale\undefined%
  \makeatother%
  \begin{picture}(1,0.16841748)%
    \lineheight{1}%
    \setlength\tabcolsep{0pt}%
    \put(0,0){\includegraphics[width=\unitlength,page=1]{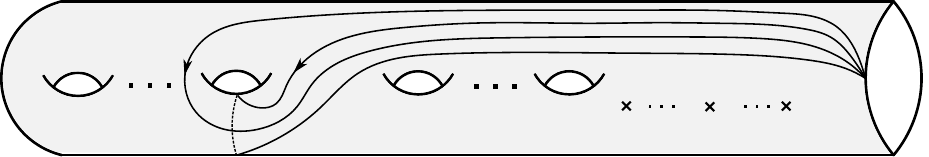}}%
    \put(0.19570071,0.13417388){\color[rgb]{0,0,0}\makebox(0,0)[lt]{\lineheight{1.25}\smash{\begin{tabular}[t]{l}$b_i$\end{tabular}}}}%
    \put(0.31152702,0.11857411){\color[rgb]{0,0,0}\makebox(0,0)[lt]{\lineheight{1.25}\smash{\begin{tabular}[t]{l}$a_i$\end{tabular}}}}%
    \put(0.25356238,0.10217177){\color[rgb]{0,0,0}\makebox(0,0)[lt]{\lineheight{1.25}\smash{\begin{tabular}[t]{l}$_i$\end{tabular}}}}%
    \put(0.43621776,0.04919769){\color[rgb]{0,0,0}\makebox(0,0)[lt]{\lineheight{1.25}\smash{\begin{tabular}[t]{l}$_{i+1}$\end{tabular}}}}%
    \put(0.6096414,0.0532248){\color[rgb]{0,0,0}\makebox(0,0)[lt]{\lineheight{1.25}\smash{\begin{tabular}[t]{l}$_g$\end{tabular}}}}%
    \put(0,0){\includegraphics[width=\unitlength,page=2]{surfaceGNAvecCourbes.pdf}}%
    \put(0.66946422,0.03312209){\color[rgb]{0,0,0}\makebox(0,0)[lt]{\lineheight{1.25}\smash{\begin{tabular}[t]{l}$_1$\end{tabular}}}}%
    \put(0.84020954,0.03427143){\color[rgb]{0,0,0}\makebox(0,0)[lt]{\lineheight{1.25}\smash{\begin{tabular}[t]{l}$_n$\end{tabular}}}}%
    \put(0.69353846,0.07571947){\color[rgb]{0,0,0}\makebox(0,0)[lt]{\lineheight{1.25}\smash{\begin{tabular}[t]{l}$m_{g+j}$\end{tabular}}}}%
    \put(0.75450454,0.02495957){\color[rgb]{0,0,0}\makebox(0,0)[lt]{\lineheight{1.25}\smash{\begin{tabular}[t]{l}$_j$\end{tabular}}}}%
    \put(0.07797178,0.04615758){\color[rgb]{0,0,0}\makebox(0,0)[lt]{\lineheight{1.25}\smash{\begin{tabular}[t]{l}$_1$\end{tabular}}}}%
    \put(0,0){\includegraphics[width=\unitlength,page=3]{surfaceGNAvecCourbes.pdf}}%
    \put(0.88059364,0.03515804){\color[rgb]{0,0,0}\makebox(0,0)[lt]{\lineheight{1.25}\smash{\begin{tabular}[t]{l}$\partial_{g,n}$\end{tabular}}}}%
  \end{picture}%
\endgroup%

\end{center}
We also put a preferred direction on the boundary arc $\partial(\Sigma_{g,n}^{\circ,\bullet})$ and introduce a simple loop $\partial_{g,n} \in \pi_1(\Sigma_{g,n}^{\circ})$. Note that the orientation of $\partial_{g,n}$ is opposed to the preferred direction on $\partial(\Sigma_{g,n}^{\circ,\bullet})$. By retracting $\partial(\Sigma_{g,n}^{\circ,\bullet})$ to a tubular neighborhood of the loops $b_i$, $a_i$, $m_{g+j}$ we get another picture for $\Sigma_{g,n}^{\circ,\bullet}$, which will be more convenient in the sequel:
\begin{equation}\label{surfaceEnRuban}
\begingroup%
  \makeatletter%
  \providecommand\color[2][]{%
    \errmessage{(Inkscape) Color is used for the text in Inkscape, but the package 'color.sty' is not loaded}%
    \renewcommand\color[2][]{}%
  }%
  \providecommand\transparent[1]{%
    \errmessage{(Inkscape) Transparency is used (non-zero) for the text in Inkscape, but the package 'transparent.sty' is not loaded}%
    \renewcommand\transparent[1]{}%
  }%
  \providecommand\rotatebox[2]{#2}%
  \newcommand*\fsize{\dimexpr\f@size pt\relax}%
  \newcommand*\lineheight[1]{\fontsize{\fsize}{#1\fsize}\selectfont}%
  \ifx\svgwidth\undefined%
    \setlength{\unitlength}{431.17678376bp}%
    \ifx\svgscale\undefined%
      \relax%
    \else%
      \setlength{\unitlength}{\unitlength * \real{\svgscale}}%
    \fi%
  \else%
    \setlength{\unitlength}{\svgwidth}%
  \fi%
  \global\let\svgwidth\undefined%
  \global\let\svgscale\undefined%
  \makeatother%
  \begin{picture}(1,0.19445583)%
    \lineheight{1}%
    \setlength\tabcolsep{0pt}%
    \put(0,0){\includegraphics[width=\unitlength,page=1]{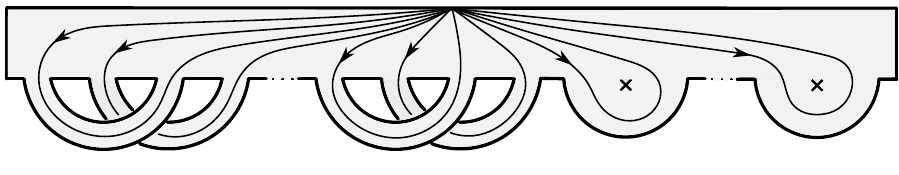}}%
    \put(0.10340588,0.00295581){\color[rgb]{0,0,0}\makebox(0,0)[lt]{\lineheight{1.25}\smash{\begin{tabular}[t]{l}$b_1$\end{tabular}}}}%
    \put(0.17851026,0.00881603){\color[rgb]{0,0,0}\makebox(0,0)[lt]{\lineheight{1.25}\smash{\begin{tabular}[t]{l}$a_1$\end{tabular}}}}%
    \put(0.42858166,0.00430299){\color[rgb]{0,0,0}\makebox(0,0)[lt]{\lineheight{1.25}\smash{\begin{tabular}[t]{l}$b_g$\end{tabular}}}}%
    \put(0.50899171,0.00761587){\color[rgb]{0,0,0}\makebox(0,0)[lt]{\lineheight{1.25}\smash{\begin{tabular}[t]{l}$a_g$\end{tabular}}}}%
    \put(0.67955496,0.02233767){\color[rgb]{0,0,0}\makebox(0,0)[lt]{\lineheight{1.25}\smash{\begin{tabular}[t]{l}$m_{g+1}$\end{tabular}}}}%
    \put(0.88696832,0.02280187){\color[rgb]{0,0,0}\makebox(0,0)[lt]{\lineheight{1.25}\smash{\begin{tabular}[t]{l}$m_{g+n}$\end{tabular}}}}%
    \put(0,0){\includegraphics[width=\unitlength,page=2]{surface2.pdf}}%
  \end{picture}%
\endgroup%

\end{equation}
With this view it is immediate that
\begin{equation}\label{boundaryLoop}
\partial_{g,n} = b_1a_1^{-1}b_1^{-1}a_1 b_2a_2^{-1}b_2^{-1}a_2 \ldots b_g a_g^{-1}b_g^{-1}a_g m_{g+1} m_{g+2} \ldots m_{g+n}.
\end{equation}

\indent Let $\mathcal{S}_H^{\mathrm{st}}(\Sigma_{g,n}^{\circ,\bullet})$ the {\em stated skein algebra} of the bordered punctured surface $\Sigma_{g,n}^{\circ,\bullet}$. The definition of this algebra is explained in great detail in \cite[\S 6.1]{BFR}.\footnote{F. Costantino, J. Korinman and T. L\^e are working on the definition and properties of stated skein algebras for surfaces with arbitrary many boundary components and arbitrary many punctures on each of these components.} Let us just recall that one considers the $\Bbbk$-vector space spanned by isotopy classes of $H$-colored oriented stated ribbon graphs $G \subset \Sigma_{g,n}^{\circ,\bullet} \times [0,1]$ such that $\partial G \subset (\partial \Sigma_{g,n}^{\circ,\bullet}) \times [0,1]$ and the points of $\partial G$ have strictly increasing heights when one follows the preferred direction on $\partial(\Sigma_{g,n}^{\circ,\bullet})$ fixed above. A {\em state} is a label of each end-point of the graph $G$ by a vector in the $H$-module coloring the strand to which that point belong (or in its dual depending on the orientation). Then $\mathcal{S}_H^{\mathrm{st}}(\Sigma_{g,n}^{\circ,\bullet})$ is the quotient of this vector space by {\em skein relations} obtained by applying the Reshetikhin--Turaev functor \cite{RT} near the boundary. Examples of important stated skein relations are given in Figure \ref{statedSkeinRels}. Hopefully the reader can grasp the definition from these examples; otherwise look at \cite[\S 6.1]{BFR}.

\begin{figure}[h]
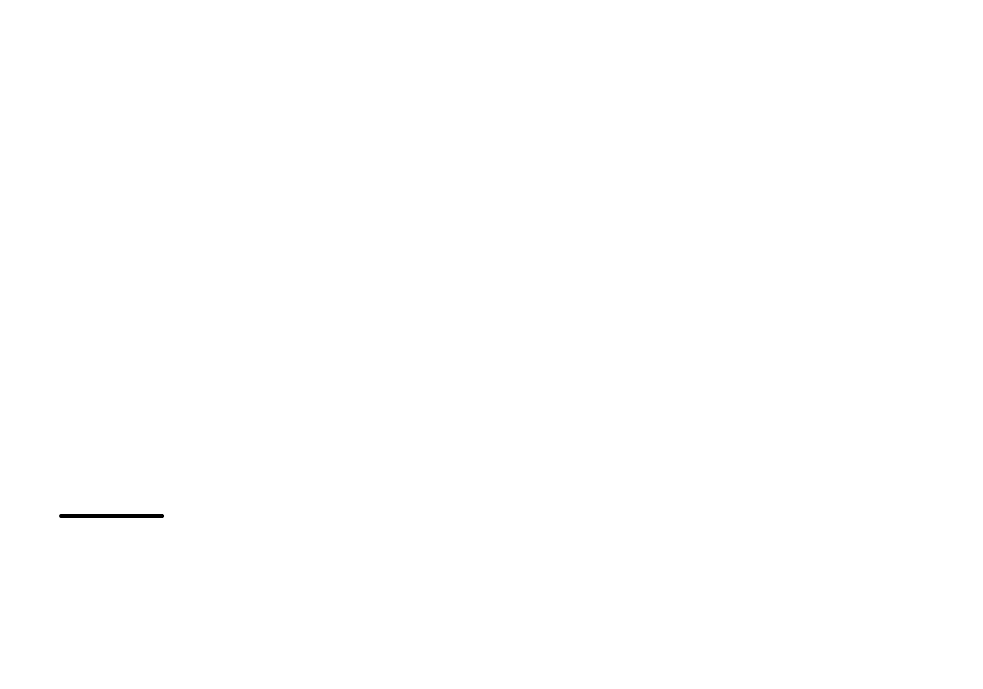
\centering
\caption{Examples of stated skein relations. $X$ and $Y$ are finite-dimensional left $H$-modules, $x \in X$, $\varphi \in X^*$, $y \in Y$, $\psi \in Y^*$, $f \in \Hom_H(X,Y)$. $(x_i)$ is a basis of $X$, $(x^i)$ is its dual basis. In (i) and (ii), $f^*$ means the transpose of $f$.  In (vii) and (viii) we use implicit summation on the index $i$.}
\label{statedSkeinRels}
\end{figure}

\indent The product $G_1 \ast G_2$ of two stated ribbon graphs $G_1, G_2$ is obtained by putting $G_1$ (resp. $G_2$) in $\Sigma_{g,n}^{\circ,\bullet} \times [0,\frac{1}{2}[$ (resp. in $\Sigma_{g,n}^{\circ,\bullet} \times ]\frac{1}{2},1]$) by isotopy without changing the states and then taking their union. This turns $\mathcal{S}_H^{\mathrm{st}}(\Sigma_{g,n}^{\circ,\bullet})$ into an algebra.

\smallskip

\indent There is moreover a right $H$-module-algebra structure on $\mathcal{S}_H^{\mathrm{st}}(\Sigma_{g,n}^{\circ,\bullet})$ obtained by acting on states: let $G$ be a stated ribbon graph and $p_1,\ldots, p_l \in \partial(\Sigma_{g,n}^{\circ,\bullet}) \times [0,1]$ be its boundary points, ordered by increasing height. Let $s_i$ be the state at $p_i$ and $X_i \in H\text{-}\mathrm{mod}$ be the color of the strand in $G$ which contains $p_i$. For all $h \in H$, we define $G \smallsquare h$ to be $G$ with the new states $s_1 \smallsquare h_{(1)} \otimes \ldots \otimes s_l \smallsquare h_{(l)}$, where
\begin{equation}\label{HmodStructStatedSkein}
s_i \smallsquare h_{(i)} = \begin{cases}
S(h_{(i)}) \cdot s_i &\text{if } s_i \in X_i \text{ ({\it i.e.} the strand goes upwards at }p_i \text{)}\\
s_i(h_{(i)} \cdot ?) &\text{if } s_i \in X_i^* \text{ ({\it i.e.} the strand goes downwards at }p_i \text{)}
\end{cases}
\end{equation}
with $s_i(h_{(i)} \cdot ?) \in X_i^*$ the linear form defined by $x_i \mapsto s_i(h_{(i)}\cdot x_i)$. Here is an example with three boundary points labelled by states $s_1 \in X_1^*$, $s_2 \in X_2$, $s_3 \in X_3$:
\begin{equation}\label{exampleHmodStructStatedSkein}
\begingroup%
  \makeatletter%
  \providecommand\color[2][]{%
    \errmessage{(Inkscape) Color is used for the text in Inkscape, but the package 'color.sty' is not loaded}%
    \renewcommand\color[2][]{}%
  }%
  \providecommand\transparent[1]{%
    \errmessage{(Inkscape) Transparency is used (non-zero) for the text in Inkscape, but the package 'transparent.sty' is not loaded}%
    \renewcommand\transparent[1]{}%
  }%
  \providecommand\rotatebox[2]{#2}%
  \newcommand*\fsize{\dimexpr\f@size pt\relax}%
  \newcommand*\lineheight[1]{\fontsize{\fsize}{#1\fsize}\selectfont}%
  \ifx\svgwidth\undefined%
    \setlength{\unitlength}{330.25075258bp}%
    \ifx\svgscale\undefined%
      \relax%
    \else%
      \setlength{\unitlength}{\unitlength * \real{\svgscale}}%
    \fi%
  \else%
    \setlength{\unitlength}{\svgwidth}%
  \fi%
  \global\let\svgwidth\undefined%
  \global\let\svgscale\undefined%
  \makeatother%
  \begin{picture}(1,0.09640301)%
    \lineheight{1}%
    \setlength\tabcolsep{0pt}%
    \put(0,0){\includegraphics[width=\unitlength,page=1]{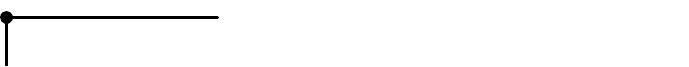}}%
    \put(0.06363442,0.08203521){\color[rgb]{0,0,0}\makebox(0,0)[lt]{\lineheight{1.25}\smash{\begin{tabular}[t]{l}$s_1$\end{tabular}}}}%
    \put(0.15249045,0.08226846){\color[rgb]{0,0,0}\makebox(0,0)[lt]{\lineheight{1.25}\smash{\begin{tabular}[t]{l}$s_2$\end{tabular}}}}%
    \put(0.24333051,0.08226846){\color[rgb]{0,0,0}\makebox(0,0)[lt]{\lineheight{1.25}\smash{\begin{tabular}[t]{l}$s_3$\end{tabular}}}}%
    \put(0,0){\includegraphics[width=\unitlength,page=2]{actionHStated.pdf}}%
    \put(0.02940936,0.01378555){\color[rgb]{0,0,0}\makebox(0,0)[lt]{\lineheight{1.25}\smash{\begin{tabular}[t]{l}$_{X_1}$\end{tabular}}}}%
    \put(0,0){\includegraphics[width=\unitlength,page=3]{actionHStated.pdf}}%
    \put(0.12014949,0.01511905){\color[rgb]{0,0,0}\makebox(0,0)[lt]{\lineheight{1.25}\smash{\begin{tabular}[t]{l}$_{X_2}$\end{tabular}}}}%
    \put(0,0){\includegraphics[width=\unitlength,page=4]{actionHStated.pdf}}%
    \put(0.21098954,0.01511905){\color[rgb]{0,0,0}\makebox(0,0)[lt]{\lineheight{1.25}\smash{\begin{tabular}[t]{l}$_{X_3}$\end{tabular}}}}%
    \put(0,0){\includegraphics[width=\unitlength,page=5]{actionHStated.pdf}}%
    \put(0.45820009,0.08402482){\color[rgb]{0,0,0}\makebox(0,0)[lt]{\lineheight{1.25}\smash{\begin{tabular}[t]{l}$s_1(h_{(1)}\cdot ?)$\end{tabular}}}}%
    \put(0.6126676,0.08475546){\color[rgb]{0,0,0}\makebox(0,0)[lt]{\lineheight{1.25}\smash{\begin{tabular}[t]{l}$S(h_{(2)})\cdot s_2$\end{tabular}}}}%
    \put(0.79955063,0.08376069){\color[rgb]{0,0,0}\makebox(0,0)[lt]{\lineheight{1.25}\smash{\begin{tabular}[t]{l}$S(h_{(3)})\cdot s_3$\end{tabular}}}}%
    \put(0,0){\includegraphics[width=\unitlength,page=6]{actionHStated.pdf}}%
    \put(0.49496464,0.01378554){\color[rgb]{0,0,0}\makebox(0,0)[lt]{\lineheight{1.25}\smash{\begin{tabular}[t]{l}$_{X_1}$\end{tabular}}}}%
    \put(0,0){\includegraphics[width=\unitlength,page=7]{actionHStated.pdf}}%
    \put(0.65383481,0.01511903){\color[rgb]{0,0,0}\makebox(0,0)[lt]{\lineheight{1.25}\smash{\begin{tabular}[t]{l}$_{X_2}$\end{tabular}}}}%
    \put(0,0){\includegraphics[width=\unitlength,page=8]{actionHStated.pdf}}%
    \put(0.82415995,0.01511903){\color[rgb]{0,0,0}\makebox(0,0)[lt]{\lineheight{1.25}\smash{\begin{tabular}[t]{l}$_{X_3}$\end{tabular}}}}%
    \put(0,0){\includegraphics[width=\unitlength,page=9]{actionHStated.pdf}}%
    \put(0.32592792,0.01738699){\color[rgb]{0,0,0}\makebox(0,0)[lt]{\lineheight{1.25}\smash{\begin{tabular}[t]{l}$\smallsquare h = $\end{tabular}}}}%
    \put(0,0){\includegraphics[width=\unitlength,page=10]{actionHStated.pdf}}%
  \end{picture}%
\endgroup%

\end{equation}
The arrow on the boundary in \eqref{exampleHmodStructStatedSkein} indicates that the heights of the boundary points labelled by $s_1, s_2, s_3$ form a strictly increasing sequence in $[0,1]$ along this direction.

\smallskip

\indent We now introduce the ingredients which will later enter in the definition of {\em knot maps}.

\begin{definition}\label{defBasedKnot}
1. A knot $K \subset \Sigma_{g,n}^{\circ} \times [0,1]$ is an oriented and framed embedding of the circle $S^1$, up to isotopy.
\\2. A based knot $\mathbf{K} \subset \Sigma_{g,n}^{\circ,\bullet} \times [0,1]$ is an oriented and framed embedding of the circle $S^1$ such that $\mathbf{K} \cap \bigl( \partial(\Sigma_{g,n}^{\circ,\bullet}) \times [0,1] \bigr)$ is reduced to a point (the basepoint of the knot). We consider these knots up to isotopies such that the basepoint remains in $\partial(\Sigma_{g,n}^{\circ,\bullet}) \times [0,1]$.
\\3. We say that a based knot is positively oriented if its orientation near its basepoint looks like 
\begingroup%
  \makeatletter%
  \providecommand\color[2][]{%
    \errmessage{(Inkscape) Color is used for the text in Inkscape, but the package 'color.sty' is not loaded}%
    \renewcommand\color[2][]{}%
  }%
  \providecommand\transparent[1]{%
    \errmessage{(Inkscape) Transparency is used (non-zero) for the text in Inkscape, but the package 'transparent.sty' is not loaded}%
    \renewcommand\transparent[1]{}%
  }%
  \providecommand\rotatebox[2]{#2}%
  \newcommand*\fsize{\dimexpr\f@size pt\relax}%
  \newcommand*\lineheight[1]{\fontsize{\fsize}{#1\fsize}\selectfont}%
  \ifx\svgwidth\undefined%
    \setlength{\unitlength}{60.00000094bp}%
    \ifx\svgscale\undefined%
      \relax%
    \else%
      \setlength{\unitlength}{\unitlength * \real{\svgscale}}%
    \fi%
  \else%
    \setlength{\unitlength}{\svgwidth}%
  \fi%
  \global\let\svgwidth\undefined%
  \global\let\svgscale\undefined%
  \makeatother%
  \begin{picture}(1,0.32785626)%
    \lineheight{1}%
    \setlength\tabcolsep{0pt}%
    \put(0,0){\includegraphics[width=\unitlength,page=1]{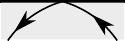}}%
  \end{picture}%
\endgroup%
 when we draw it on the surface \eqref{surfaceEnRuban}; otherwise we say it is negatively oriented.
\\4. From a based knot one can obtain a knot, by forgetting the conditions regarding the basepoint.
\end{definition}

\noindent Note that a (based) knot is {\em not} allowed to contain coupons. Here is an example of a positively oriented and a negatively oriented based knot in $\Sigma_{1,0}^{\circ,\bullet} \times [0,1]$:
\begin{equation}\label{exampleBasedKnots}
\begingroup%
  \makeatletter%
  \providecommand\color[2][]{%
    \errmessage{(Inkscape) Color is used for the text in Inkscape, but the package 'color.sty' is not loaded}%
    \renewcommand\color[2][]{}%
  }%
  \providecommand\transparent[1]{%
    \errmessage{(Inkscape) Transparency is used (non-zero) for the text in Inkscape, but the package 'transparent.sty' is not loaded}%
    \renewcommand\transparent[1]{}%
  }%
  \providecommand\rotatebox[2]{#2}%
  \newcommand*\fsize{\dimexpr\f@size pt\relax}%
  \newcommand*\lineheight[1]{\fontsize{\fsize}{#1\fsize}\selectfont}%
  \ifx\svgwidth\undefined%
    \setlength{\unitlength}{300.35891374bp}%
    \ifx\svgscale\undefined%
      \relax%
    \else%
      \setlength{\unitlength}{\unitlength * \real{\svgscale}}%
    \fi%
  \else%
    \setlength{\unitlength}{\svgwidth}%
  \fi%
  \global\let\svgwidth\undefined%
  \global\let\svgscale\undefined%
  \makeatother%
  \begin{picture}(1,0.16530705)%
    \lineheight{1}%
    \setlength\tabcolsep{0pt}%
    \put(0,0){\includegraphics[width=\unitlength,page=1]{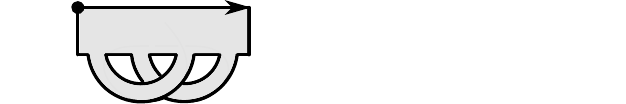}}%
    \put(-0.00149346,0.08856042){\color[rgb]{0,0,0}\makebox(0,0)[lt]{\lineheight{1.25}\smash{\begin{tabular}[t]{l}$\mathbf{K}_{ba}=$\end{tabular}}}}%
    \put(0.61001348,0.08823492){\color[rgb]{0,0,0}\makebox(0,0)[lt]{\lineheight{1.25}\smash{\begin{tabular}[t]{l}$\mathbf{L}_{ba}=$\end{tabular}}}}%
    \put(0,0){\includegraphics[width=\unitlength,page=2]{exampleBasedKnots.pdf}}%
  \end{picture}%
\endgroup%

\end{equation}
When consider as usual knots, $\mathbf{K}_{ba}$ and $\mathbf{L}_{ba}$ become isotopic up to orientation. Simple (\textit{i.e.} non-self intersecting) loops in $\pi_1(\Sigma_{g,n}^{\circ})$ have a basepoint in $\partial(\Sigma_{g,n}^{\circ})$ by definition and can be embedded in $\Sigma_{g,n}^{\circ,\bullet} \times \{t\}$ for an arbitrary $t \in [0,1]$; they are thus particular cases of based knots. This is illustrated in the example above: the based knot $\mathbf{L}_{ba}$ is the simple loop $(ba)^{-1}$.

\smallskip

\indent For a based knot $\mathbf{K} \subset \Sigma_{g,n}^{\circ,\bullet} \times [0,1]$, $X \in H\text{-}\mathrm{mod}$, $x \in X$ and $\sigma \in X^*$ let $_X\mathbf{K}^{\sigma}_x \in \mathcal{S}_H^{\mathrm{st}}(\Sigma_{g,n}^{\circ,\bullet})$ be defined by:
\begin{itemize}[itemsep=0em, topsep=.2em]
\item If $\mathbf{K}$ is positively oriented then
\begin{equation}\label{defJGamma}
\begingroup%
  \makeatletter%
  \providecommand\color[2][]{%
    \errmessage{(Inkscape) Color is used for the text in Inkscape, but the package 'color.sty' is not loaded}%
    \renewcommand\color[2][]{}%
  }%
  \providecommand\transparent[1]{%
    \errmessage{(Inkscape) Transparency is used (non-zero) for the text in Inkscape, but the package 'transparent.sty' is not loaded}%
    \renewcommand\transparent[1]{}%
  }%
  \providecommand\rotatebox[2]{#2}%
  \newcommand*\fsize{\dimexpr\f@size pt\relax}%
  \newcommand*\lineheight[1]{\fontsize{\fsize}{#1\fsize}\selectfont}%
  \ifx\svgwidth\undefined%
    \setlength{\unitlength}{346.53653419bp}%
    \ifx\svgscale\undefined%
      \relax%
    \else%
      \setlength{\unitlength}{\unitlength * \real{\svgscale}}%
    \fi%
  \else%
    \setlength{\unitlength}{\svgwidth}%
  \fi%
  \global\let\svgwidth\undefined%
  \global\let\svgscale\undefined%
  \makeatother%
  \begin{picture}(1,0.15896145)%
    \lineheight{1}%
    \setlength\tabcolsep{0pt}%
    \put(0,0){\includegraphics[width=\unitlength,page=1]{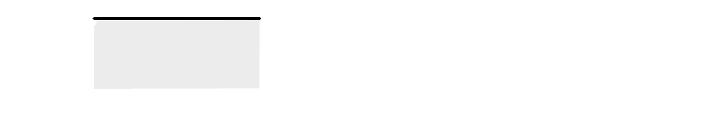}}%
    \put(0.18360025,0.14277878){\color[rgb]{0,0,0}\makebox(0,0)[lt]{\lineheight{1.25}\smash{\begin{tabular}[t]{l}$\sigma$\end{tabular}}}}%
    \put(0.29193681,0.14051742){\color[rgb]{0,0,0}\makebox(0,0)[lt]{\lineheight{1.25}\smash{\begin{tabular}[t]{l}$x$\end{tabular}}}}%
    \put(0,0){\includegraphics[width=\unitlength,page=2]{JGamma.pdf}}%
    \put(0.14007827,0.05018432){\color[rgb]{0,0,0}\makebox(0,0)[lt]{\lineheight{1.25}\smash{\begin{tabular}[t]{l}$_X$\end{tabular}}}}%
    \put(0.14854277,0.00401678){\color[rgb]{0,0,0}\makebox(0,0)[lt]{\lineheight{1.25}\smash{\begin{tabular}[t]{l}$\mathbf{K}$\end{tabular}}}}%
    \put(0,0){\includegraphics[width=\unitlength,page=3]{JGamma.pdf}}%
    \put(-0.00030129,0.065066){\color[rgb]{0,0,0}\makebox(0,0)[lt]{\lineheight{1.25}\smash{\begin{tabular}[t]{l}$_X\mathbf{K}^{\sigma}_x=$\end{tabular}}}}%
    \put(0.31494032,0.00340428){\color[rgb]{0,0,0}\makebox(0,0)[lt]{\lineheight{1.25}\smash{\begin{tabular}[t]{l}$\mathbf{K}$\end{tabular}}}}%
    \put(0,0){\includegraphics[width=\unitlength,page=4]{JGamma.pdf}}%
    \put(0.58066353,0.07353216){\color[rgb]{0,0,0}\makebox(0,0)[lt]{\lineheight{1.25}\smash{\begin{tabular}[t]{l}$_X(\mathbf{K}_{ba})^{\sigma}_x=$\end{tabular}}}}%
    \put(0.82825725,0.14235001){\color[rgb]{0,0,0}\makebox(0,0)[lt]{\lineheight{1.25}\smash{\begin{tabular}[t]{l}$\sigma$\end{tabular}}}}%
    \put(0.91138494,0.14343408){\color[rgb]{0,0,0}\makebox(0,0)[lt]{\lineheight{1.25}\smash{\begin{tabular}[t]{l}$x$\end{tabular}}}}%
    \put(0.53325149,0.12619612){\color[rgb]{0,0,0}\makebox(0,0)[lt]{\lineheight{1.25}\smash{\begin{tabular}[t]{l}for instance:\end{tabular}}}}%
  \end{picture}%
\endgroup%

\end{equation}
which means that the stated skein follows exactly $\mathbf{K}$ in $\Sigma_{g,n}^{\circ,\bullet} \times [0,1]$ according to its orientation, except in this small neighbourhood of the basepoint where $\mathbf{K}$ is represented in dashed line.
\item If $\mathbf{K}$ is negatively oriented then
\begin{equation}\label{defJGammaInverse}
\begingroup%
  \makeatletter%
  \providecommand\color[2][]{%
    \errmessage{(Inkscape) Color is used for the text in Inkscape, but the package 'color.sty' is not loaded}%
    \renewcommand\color[2][]{}%
  }%
  \providecommand\transparent[1]{%
    \errmessage{(Inkscape) Transparency is used (non-zero) for the text in Inkscape, but the package 'transparent.sty' is not loaded}%
    \renewcommand\transparent[1]{}%
  }%
  \providecommand\rotatebox[2]{#2}%
  \newcommand*\fsize{\dimexpr\f@size pt\relax}%
  \newcommand*\lineheight[1]{\fontsize{\fsize}{#1\fsize}\selectfont}%
  \ifx\svgwidth\undefined%
    \setlength{\unitlength}{341.70017912bp}%
    \ifx\svgscale\undefined%
      \relax%
    \else%
      \setlength{\unitlength}{\unitlength * \real{\svgscale}}%
    \fi%
  \else%
    \setlength{\unitlength}{\svgwidth}%
  \fi%
  \global\let\svgwidth\undefined%
  \global\let\svgscale\undefined%
  \makeatother%
  \begin{picture}(1,0.19786604)%
    \lineheight{1}%
    \setlength\tabcolsep{0pt}%
    \put(0.16395457,0.1801613){\color[rgb]{0,0,0}\makebox(0,0)[lt]{\lineheight{1.25}\smash{\begin{tabular}[t]{l}$\sigma$\end{tabular}}}}%
    \put(0.27660932,0.17955749){\color[rgb]{0,0,0}\makebox(0,0)[lt]{\lineheight{1.25}\smash{\begin{tabular}[t]{l}$x$\end{tabular}}}}%
    \put(-0.00030556,0.09562531){\color[rgb]{0,0,0}\makebox(0,0)[lt]{\lineheight{1.25}\smash{\begin{tabular}[t]{l}$_X\mathbf{K}^{\sigma}_x=$\end{tabular}}}}%
    \put(0,0){\includegraphics[width=\unitlength,page=1]{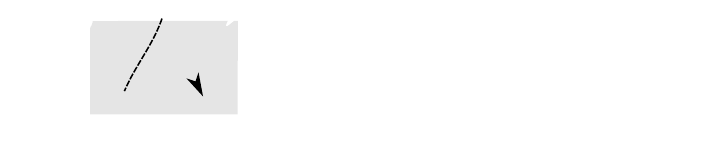}}%
    \put(0.15271818,0.00289773){\color[rgb]{0,0,0}\makebox(0,0)[lt]{\lineheight{1.25}\smash{\begin{tabular}[t]{l}$\mathbf{K}$\end{tabular}}}}%
    \put(0,0){\includegraphics[width=\unitlength,page=2]{JGamma_Inverse.pdf}}%
    \put(0.28898709,0.08203581){\color[rgb]{0,0,0}\makebox(0,0)[lt]{\lineheight{1.25}\smash{\begin{tabular}[t]{l}$_X$\end{tabular}}}}%
    \put(0.2828862,0.00241642){\color[rgb]{0,0,0}\makebox(0,0)[lt]{\lineheight{1.25}\smash{\begin{tabular}[t]{l}$\mathbf{K}$\end{tabular}}}}%
    \put(0,0){\includegraphics[width=\unitlength,page=3]{JGamma_Inverse.pdf}}%
    \put(0.5714682,0.08795448){\color[rgb]{0,0,0}\makebox(0,0)[lt]{\lineheight{1.25}\smash{\begin{tabular}[t]{l}$_X\bigl(\mathbf{L}_{ba}\bigr)^{\sigma}_x=$\end{tabular}}}}%
    \put(0.8094935,0.18062565){\color[rgb]{0,0,0}\makebox(0,0)[lt]{\lineheight{1.25}\smash{\begin{tabular}[t]{l}$\sigma$\end{tabular}}}}%
    \put(0.90994497,0.1821189){\color[rgb]{0,0,0}\makebox(0,0)[lt]{\lineheight{1.25}\smash{\begin{tabular}[t]{l}$x$\end{tabular}}}}%
    \put(0.52842869,0.1536096){\color[rgb]{0,0,0}\makebox(0,0)[lt]{\lineheight{1.25}\smash{\begin{tabular}[t]{l}for instance:\end{tabular}}}}%
    \put(0,0){\includegraphics[width=\unitlength,page=4]{JGamma_Inverse.pdf}}%
  \end{picture}%
\endgroup%

\end{equation}
where the picture has the same meaning as in the previous item.
\end{itemize}

\noindent We see from (i) and (ii) in Fig.~\ref{statedSkeinRels} that these elements obey dinaturality:
\begin{equation}\label{dinatStatedKnot}
\forall\, f \in \Hom_H(X,Y), \:\: \forall \, x \in X, \:\: \forall \, \tau \in Y^*, \quad {_Y\mathbf{K}^{\tau}_{f(x)}} = {_X\mathbf{K}^{f^*(\tau)}_x}
\end{equation}
where $f^*$ is the transpose of $f$, because the coupon colored by $f$ as in Fig.\,\ref{statedSkeinRels} can slide along $\mathbf{K}$. Moreover, by definition \eqref{HmodStructStatedSkein} of the right $H$-action $\smallsquare$ on $\mathcal{S}_H^{\mathrm{st}}(\Sigma_{g,n}^{\circ,\bullet})$ we have
\begin{equation}\label{actionOnBasedKnotsElmts}
\forall \, h \in H, \quad _X\mathbf{K}^\sigma_x \smallsquare h = {_X\mathbf{K}^{\sigma(h_{(1)} \cdot ?)}_{S(h_{(2)}) \cdot x}}.
\end{equation}
\indent If $\mathbf{K}$ is positively oriented then
\begin{equation}\label{relsCoeffsMatSkein}
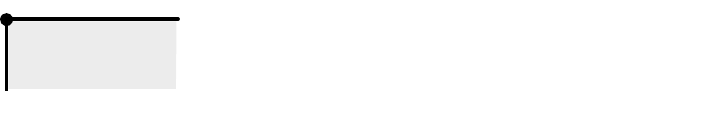
\end{equation}
where in the first picture $\mathbf{K}^{-1}$ is $\mathbf{K}$ with the opposite orientation and $\langle-,x\rangle : X^* \to \Bbbk$ is defined by $\sigma \mapsto \sigma(x)$, while in the second picture the strands colored by $X$ and $Y$ are parallel and follow $K$ closely. These facts are proved as in  \cite[Proof of Lem.\,6.4]{BFR}. Note that if $\mathbf{K}$ is positively oriented then $\mathbf{K}^{-1}$ is negatively oriented and (xi) in Fig.~\ref{statedSkeinRels}, the 1st equality in \eqref{relsCoeffsMatSkein} and \eqref{pivotalElement} yield
\begin{equation}\label{knotMapFormulaNegOriented}
 _X(\mathbf{K}^{-1})^{\sigma}_x = {_{X^*}\mathbf{K}^{\langle -, S(R_{[1]})\cdot x \rangle}_{\sigma(R_{[2]}u^{-1} \cdot ?)}}.
\end{equation}

\subsubsection{Knot maps for stated skein algebras}\label{subsubKnotMaps}
We denote by $H\text{-}\mathrm{mod}$ the category of finite-dimensional left $H$-modules. Recall from \S\ref{sectionDerivedRep} that $H^\circ \subset H^*$ is the subspace spanned by matrix coefficients $_X\phi^{\sigma}_x : h \mapsto \sigma(h \cdot x)$, for all $X \in H\text{-}\mathrm{mod}$ and $\sigma \in X^*$, $x \in X$. Also recall that, to prevent confusion, left actions are denoted by $\cdot$ while right actions are denoted by $\smallsquare$.

For any $X \in H\text{-}\mathrm{mod}$ consider the map
\[ \mathsf{d}_X : X^* \otimes X \to H^\circ, \quad \sigma \otimes x \mapsto {_X\phi^\sigma_x}. \]
If we define a right $H$-action on $X^* \otimes X$ by
\begin{equation}\label{rightActionOnTensors}
(\sigma \otimes x) \smallsquare h = \sigma(h_{(1)} \,\cdot \,?) \otimes S(h_{(2)}) \cdot x
\end{equation}
then $\mathsf{d}_X$ becomes a right $H$-linear map $\bigl( X^* \otimes X, \smallsquare \bigr) \to \bigl( H^\circ, \mathrm{coad}^r \bigr)$, with $\mathrm{coad}^r$ from \eqref{defCoadL01}. 

\smallskip

\indent The following fact was noted in \cite[\S 3.3]{lyu} and \cite[\S 6.1]{BZBJ}; we give a proof within our conventions for convenience. This property holds for any Hopf $\Bbbk$-algebra $H$:
\begin{lemma}\label{lemmaFactoDinat}
Let $(V, \smallsquare)$ be any right $H$-module and $\bigl( j_X : X^* \otimes X \to V \bigr)_{X \in H\text{-}\mathrm{mod}}$ be a family of $H$-linear maps (with respect to the right action \eqref{rightActionOnTensors}) which satisfies dinaturality:
\[ \forall \, f \in \Hom_H(X,Y), \quad j_X \circ (f^* \otimes \mathrm{id}_X) = j_Y \circ (\mathrm{id}_{Y^*} \otimes f). \]
Then there exists a unique $H$-linear map $\omega : (H^\circ, \mathrm{coad}^r) \to (V,\smallsquare)$ such that $j_X = \omega \circ \mathsf{d}_X$ for all $X \in H\text{-}\mathrm{mod}$.
\end{lemma}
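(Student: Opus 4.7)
The plan is to recognize the statement as the universal property of $H^\circ$ as the coend $\int^{X \in H\text{-}\mathrm{mod}} X^* \otimes X$ in $\mathrm{Mod}\text{-}H$, with universal dinatural transformation $\mathsf{d} = (\mathsf{d}_X)$. I would proceed in three stages: uniqueness, existence, and $H$-linearity.

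Uniqueness is immediate: by the very definition of $H^\circ$, every $\varphi \in H^\circ$ is a finite linear combination of matrix coefficients ${_X\phi^\sigma_x} = \mathsf{d}_X(\sigma \otimes x)$, so the relation $\omega \circ \mathsf{d}_X = j_X$ forces the value of $\omega$ on a spanning set.

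For existence, I would define $\omega$ on the spanning set by $\omega({_X\phi^\sigma_x}) := j_X(\sigma \otimes x)$ and extend linearly. The crux is well-definedness. Suppose $\sum_i {_{X_i}\phi^{\sigma_i}_{x_i}} = 0$; I must show $\sum_i j_{X_i}(\sigma_i \otimes x_i) = 0$. First reduce to a common module by setting $X = \bigoplus_i X_i$ with $H$-linear inclusions $\iota_i : X_i \to X$ and projections $\pi_i : X \to X_i$: dinaturality applied to $\iota_i$ yields $j_{X_i}(\sigma_i \otimes x_i) = j_X(\sigma_i \circ \pi_i \otimes \iota_i(x_i))$, and analogously ${_{X_i}\phi^{\sigma_i}_{x_i}} = {_X\phi^{\sigma_i \circ \pi_i}_{\iota_i(x_i)}}$. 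It therefore suffices to prove: for $u \in X^* \otimes X$ with $\mathsf{d}_X(u) = 0$, one has $j_X(u) = 0$. Under the canonical isomorphism $X^* \otimes X \cong \mathrm{End}_\Bbbk(X)$, $\ker \mathsf{d}_X$ is the trace-annihilator of $\rho_X(H) \subset \mathrm{End}_\Bbbk(X)$; the standard coend-in-$\mathrm{Mod}\text{-}H$ argument (compare \cite[\S 3.3]{lyu}, \cite[\S 6.1]{BZBJ}) shows that this kernel is generated by dinaturality defects $\sigma \otimes f(y) - f^*(\sigma) \otimes y$ for $H$-linear maps $f$ between finite-dimensional modules, and is therefore annihilated by the dinatural family $(j_X)$.

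Finally, $H$-linearity of $\omega$ is a direct check: from \eqref{defCoadL01} and \eqref{coregOnMatCoeff} one obtains $\mathrm{coad}^r(h)({_X\phi^\sigma_x}) = {_X\phi^{\sigma(h_{(1)} \cdot ?)}_{S(h_{(2)}) \cdot x}} = \mathsf{d}_X\bigl((\sigma \otimes x) \smallsquare h\bigr)$ with $\smallsquare$ as in \eqref{rightActionOnTensors}, and $H$-linearity of $j_X$ then gives $\omega(\mathrm{coad}^r(h)\varphi) = \omega(\varphi) \smallsquare h$. The main obstacle is the well-definedness step, i.e.\ the identification of $\ker \mathsf{d}_X$ with the dinatural relations inside $X^* \otimes X$; the cleanest route is to invoke the coend description from the cited references rather than to redo the linear-algebra argument here.
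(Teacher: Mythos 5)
Your architecture (uniqueness from the spanning property, existence via well-definedness on the spanning set, then $H$-linearity) is reasonable, and the uniqueness and $H$-linearity parts are correct and essentially identical to the paper's. The gap is exactly where you locate ``the main obstacle'': the well-definedness step is the entire content of the lemma, and your treatment of it does not close. The claim that $\ker\mathsf{d}_X\subset X^*\otimes X$ is spanned by dinaturality defects is false if the defects are required to lie in $X^*\otimes X$, i.e.\ to come from $f\in\mathrm{End}_H(X)$: under $X^*\otimes X\cong\mathrm{End}_\Bbbk(X)$ those defects are the commutators $[f,A]$ and span $(B'')^{\perp}$, where $B''$ is the double centralizer of $B=\rho_X(H)$ with respect to the trace pairing, whereas $\ker\mathsf{d}_X=B^{\perp}$; these coincide only when $B=B''$, which can fail. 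For instance, take $H=U(\mathfrak{b})$ for the nonabelian two-dimensional Lie algebra acting on $X=\Bbbk^2$ through the upper-triangular matrices: then $\ker\mathsf{d}_X$ is one-dimensional (spanned by $e^2\otimes e_1$) while $\mathrm{End}_H(X)=\Bbbk\,\mathrm{id}_X$ produces no defects at all. One genuinely needs $H$-linear maps into or out of \emph{other} modules (in that example, the inclusion of the submodule $\Bbbk e_1$ is what forces $j_X(e^2\otimes e_1)=0$); but for such $f:Z\to X$ the defect $f^*(\tau)\otimes z-\tau\otimes f(z)$ no longer lives in $X^*\otimes X$, so ``generated by defects'' must be read inside $\bigoplus_Z Z^*\otimes Z$ — and at that point the assertion is precisely the statement that $(H^\circ,\mathsf{d})$ is the coend, i.e.\ the lemma you are proving. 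Citing \cite{lyu} and \cite{BZBJ} for it turns the argument into an appeal to the result rather than a proof (the paper cites the same sources but explicitly supplies its own proof).

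The paper sidesteps well-definedness altogether by writing a formula for $\omega$ that depends only on $\varphi$ and not on a chosen presentation of $\varphi$ as a sum of matrix coefficients: $\omega(\varphi)=j_{H\triangleright\varphi}\bigl(\langle ?,1_H\rangle\otimes\varphi\bigr)$, where $H\triangleright\varphi$ is the finite-dimensional submodule of $(H^\circ,\triangleright)$ generated by $\varphi$ and $\langle ?,1_H\rangle$ is evaluation at $1_H$. Linearity of this $\omega$, and the identity $\omega\circ\mathsf{d}_X=j_X$, are then extracted from dinaturality applied to inclusions of submodules and to the $H$-linear map $\pi_\sigma:X\to(F_\sigma,\triangleright)$, $x\mapsto{}_X\phi^\sigma_x$. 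If you want a self-contained argument, this is the device to adopt; otherwise you must actually prove the coend property of $(H^\circ,\mathsf{d})$ rather than invoke it.
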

\begin{proof}
As in \eqref{defCoregActions}, denote by $\triangleright$ the left coregular action of $H$ on $H^\circ$. Given $\varphi \in H^\circ$, let $H \triangleright \varphi$ be the $H$-submodule generated by $\varphi$ in $(H^\circ, \triangleright$). Observe that $H \triangleright \varphi$ is finite-dimensional because $\varphi$ is a finite sum of matrix coefficients and the subspace $C(X) \subset H^\circ$ of matrix coefficients of any $X \in H\text{-}\mathrm{mod}$ is finite-dimensional and stable under $\triangleright$. Let $\langle ?, 1_H \rangle$ be the evaluation at $1_H$ on $H^\circ$, which by restriction gives an element in $(H \triangleright \varphi)^*$. Define
\[ \omega : H^\circ \to V, \quad \varphi \mapsto j_{H \,\triangleright\, \varphi}\bigl( \langle ?, 1_H \rangle \otimes \varphi \bigr). \]
We first claim that $\omega$ is $\Bbbk$-linear. To see this note that, for any inclusion $X \subset Y$ of left $H$-modules, dinaturality gives $j_Y(\sigma \otimes \iota(x)) = j_X(\sigma_{|X} \otimes x)$ for all $x \in X$ and $\sigma \in Y^*$ where $\iota : X \hookrightarrow Y$ is the inclusion map; we write this more briefly as $j_Y(\sigma \otimes x) = j_X(\sigma \otimes x)$. Then, for all $\varphi, \psi \in H^\circ$ and $\lambda \in \Bbbk$,
\begin{align*}
\omega(\varphi + \lambda\psi) &= j_{H \,\triangleright\, (\varphi \,+\, \lambda\psi)}\bigl( \langle ?, 1_H \rangle \otimes \varphi + \lambda\psi \bigr)\\
&= j_{H \,\triangleright\, (\varphi \,+\, \lambda\psi)}\bigl( \langle ?, 1_H \rangle \otimes \varphi \bigr) + \lambda j_{H \,\triangleright\, (\varphi \,+\, \lambda\psi)}\bigl( \langle ?, 1_H \rangle \otimes \psi \bigr)\\
&=j_{H \,\triangleright\, \varphi}\bigl( \langle ?, 1_H \rangle \otimes \varphi \bigr) + \lambda j_{H \,\triangleright\, \psi}\bigl( \langle ?, 1_H \rangle \otimes \psi \bigr) = \omega(\varphi) + \lambda\, \omega(\psi)
\end{align*}
where the third equality uses the inclusion trick explained just above.

\noindent Now let us prove that $\omega \circ \mathsf{d}_X = j_X$. Fix $\sigma \in X^*$ and consider the finite-dimensional subspace $F_\sigma = \bigl\{ \mathsf{d}_X(\sigma \otimes x) \, \big|\, x \in X \bigr\} \subset H^\circ$. Note that $F_\sigma$ is a $H$-submodule of $(H^\circ,\triangleright)$ because
\[ h \triangleright \mathsf{d}_X(\sigma \otimes x) = h \triangleright {_X\phi^\sigma_x} = h \triangleright \sigma(? \cdot x) = \sigma(?h \cdot x) = \mathsf{d}_X\bigl(\sigma \otimes (h\cdot x) \bigr). \]
This yields the left $H$-linear map $\pi_{\sigma} : X \to (F_\sigma,\triangleright)$, $x \mapsto \mathsf{d}_X(\sigma \otimes x)$. Hence for all $x \in X$, 
\begin{align*}
&\omega \circ \mathsf{d}_X(\sigma \otimes x) = j_{H \,\triangleright\, \mathsf{d}_X(\sigma \otimes x)}\bigl( \langle ?, 1_H \rangle \otimes \mathsf{d}_X(\sigma \otimes x) \bigr) = j_{F_\sigma}\bigl( \langle ?, 1_H \rangle \otimes \mathsf{d}_X(\sigma \otimes x) \bigr)\\
=\:\,& j_{F_\sigma}\bigl( \langle ?, 1_H \rangle \otimes \pi_\sigma(x) \bigr) = j_X\bigl( \langle \pi_\sigma(?), 1_H \rangle \otimes x \bigr) = j_X(\sigma \otimes x)
\end{align*}
where the first equality is by definition of $\omega$, the second is by the inclusion trick explained above, the third is by definition of $\pi_\sigma$, the fourth is by dinaturality and for the fifth note that $\langle \pi_\sigma(?), 1_H \rangle(x') = \langle \pi_\sigma(x'), 1_H \rangle = \sigma(1_H\cdot x') = \sigma(x')$ for all $x' \in X$.

\noindent To prove $H$-linearity of $\omega$, note that by definition of $H^\circ$ any $\varphi \in H^\circ$ is a finite sum of $\mathsf{d}_X(\sigma \otimes x)$ for certain $X,\sigma,x$. By $\Bbbk$-linearity of $\omega$, it suffices to check $H$-linearity of $\omega$ on such elements:
\begin{align*}
\omega\bigl( \mathrm{coad}^r(h)\bigl(\mathsf{d}_X(\sigma \otimes x) \bigr) \bigr) &= \omega\bigl( \mathsf{d}_X\bigl((\sigma \otimes x) \smallsquare h \bigr) \bigr)\\
& = j_X\bigl((\sigma \otimes x) \smallsquare h \bigr) = j_X(\sigma \otimes x) \smallsquare h = \omega\bigl( \mathsf{d}_X(\sigma \otimes x) \bigr) \smallsquare h
\end{align*}
where we used $H$-linearity of $\mathsf{d}_X$ and $j_X$. Uniqueness of $\omega$ follows from the same argument: if there is $\omega' : H^\circ \to V$ such that $j_X = \omega' \circ \mathsf{d}_X$ for all $X$ then $\omega$ and $\omega'$ agree on all elements $\mathsf{d}_X(\sigma \otimes x)$, whence on the whole $H^\circ$ by definition.
\end{proof}

\indent Let $\mathbf{K} \subset \Sigma_{g,n}^{\circ,\bullet} \times [0,1]$ be a based knot (Def.\,\ref{defBasedKnot}). Given $X \in H\text{-}\mathrm{mod}$, the based knot elements ${_X\mathbf{K}^\sigma_x}$ defined in \eqref{defJGamma}--\eqref{defJGammaInverse} assemble into a map
\[ j_{\mathbf{K},X} : X^* \otimes X \to \mathcal{S}_H^{\mathrm{st}}(\Sigma_{g,n}^{\circ,\bullet}), \quad \sigma \otimes x \mapsto {_X\mathbf{K}^\sigma_x}. \]
By \eqref{dinatStatedKnot} and \eqref{actionOnBasedKnotsElmts}, the family $(j_{\mathbf{K},X})_{X \in H\text{-}\mathrm{mod}}$ satisfies the assumptions of Lemma \ref{lemmaFactoDinat}. As a result we have a factorisation
\[ \xymatrix{
X^* \otimes X \ar[dr]_-{j_{\mathbf{K},X}} \ar[r]^-{\mathsf{d}_X} & H^\circ \ar[d]^-{\exists !\, \mathfrak{j}_{\mathbf{K}}}\\
& \mathcal{S}_H^{\mathrm{st}}(\Sigma_{g,n}^{\circ,\bullet})
} \]
for all $X$. Recall from Definition \ref{defL01} that $\mathcal{L}_{0,1}(H)$ is $H^\circ$ as a $\Bbbk$-vector space, so the following definition makes sense:

\begin{definition}\label{defKnotMap}
We call $\mathfrak{j}_{\mathbf{K}} : \mathcal{L}_{0,1}(H) \to \mathcal{S}_H^{\mathrm{st}}(\Sigma_{g,n}^{\circ,\bullet})$ the based knot map associated to $\mathbf{K}$. It is given by $\mathfrak{j}_{\mathbf{K}}({_X\phi^\sigma_x}) = {_X\mathbf{K}^\sigma_x}$, where ${_X\phi^\sigma_x} : h \mapsto \sigma(h \cdot x)$ is any matrix coefficient of any finite-dimensional $H$-module $X$.
\end{definition}

\indent Here are the first basic properties of the based knot maps $\mathfrak{j}_{\mathbf{K}}$:

\begin{lemma}\label{lemmaChangeOrientationKnotMap}
1.  For all based knot $\mathbf{K}$ the map $\mathfrak{j}_{\mathbf{K}}$ is $H$-linear with respect to the right actions $\mathrm{coad}^r$ on $\mathcal{L}_{0,1}(H)$ from \eqref{defCoadL01} and $\smallsquare$ on $\mathcal{S}_H^{\mathrm{st}}(\Sigma_{g,n}^{\circ,\bullet})$ from \eqref{HmodStructStatedSkein}.
\\2. Let $\mathbf{K}$ be a positively oriented based knot and $\mathbf{K}^{-1}$ be $\mathbf{K}$ with the opposite orientation (thus negatively oriented). Then
\[ \forall \, \varphi \in \mathcal{L}_{0,1}(H), \quad \mathfrak{j}_{\mathbf{K}^{-1}}(\varphi) = \mathfrak{j}_{\mathbf{K}}\bigl( S_{\mathcal{L}_{0,1}}(\varphi) \bigr) \]
where $S_{\mathcal{L}_{0,1}}$ is defined in \eqref{antipodeL01}.
\end{lemma}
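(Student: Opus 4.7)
My approach is to treat the two items separately, using the defining property of $\mathfrak{j}_{\mathbf{K}}$ as the unique dinatural factorisation given by Lemma \ref{lemmaFactoDinat}.

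For item 1, the strategy is to observe that the family $(j_{\mathbf{K},X})_{X \in H\text{-}\mathrm{mod}}$ that was used to construct $\mathfrak{j}_{\mathbf{K}}$ is already $H$-linear with respect to the action \eqref{rightActionOnTensors} on $X^* \otimes X$ and the action $\smallsquare$ on $\mathcal{S}_H^{\mathrm{st}}(\Sigma_{g,n}^{\circ,\bullet})$. Indeed this is exactly the content of \eqref{actionOnBasedKnotsElmts}:
\[ j_{\mathbf{K},X}(\sigma \otimes x) \smallsquare h = {_X\mathbf{K}^{\sigma(h_{(1)} \cdot ?)}_{S(h_{(2)}) \cdot x}} = j_{\mathbf{K},X}\bigl( (\sigma \otimes x) \smallsquare h \bigr). \]
Hence Lemma \ref{lemmaFactoDinat} applies with $V = \mathcal{S}_H^{\mathrm{st}}(\Sigma_{g,n}^{\circ,\bullet})$ and produces an $H$-linear map $\omega$ that agrees with $\mathfrak{j}_{\mathbf{K}}$ on all $\mathsf{d}_X(\sigma \otimes x) = {_X\phi^\sigma_x}$, whence $\omega = \mathfrak{j}_{\mathbf{K}}$. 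This gives item 1 at once.

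For item 2, I would evaluate both sides on an arbitrary matrix coefficient $\varphi = {_X\phi^\sigma_x}$ and check that they coincide; by $\Bbbk$-linearity of $\mathfrak{j}_{\mathbf{K}}$, $\mathfrak{j}_{\mathbf{K}^{-1}}$ and $S_{\mathcal{L}_{0,1}}$ (plus density of matrix coefficients in $H^\circ$) this suffices. The left-hand side equals $_X(\mathbf{K}^{-1})^\sigma_x$ by Definition \ref{defKnotMap}, and identity \eqref{knotMapFormulaNegOriented} rewrites it as
\[ {_X(\mathbf{K}^{-1})^\sigma_x} = {_{X^*}\mathbf{K}^{\langle -,\, S(R_{[1]}) \cdot x \rangle}_{\sigma(R_{[2]} u^{-1} \cdot ?)}}. \]
For the right-hand side, I would compute $S_{\mathcal{L}_{0,1}}({_X\phi^\sigma_x})$ from \eqref{antipodeL01}: it is the linear form $h \mapsto \sigma\bigl(R_{[2]} u^{-1} S(h) S(R_{[1]}) \cdot x\bigr)$. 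The key manipulation is to recognise this as a matrix coefficient of the dual module $X^*$. Setting $y = \sigma(R_{[2]} u^{-1} \cdot ?) \in X^*$ and $\tilde\sigma = \langle -, S(R_{[1]}) \cdot x \rangle \in X^{**}$, the left $H$-action on $X^*$, namely $(h \cdot y)(z) = y(S(h) \cdot z)$, gives
\[ {_{X^*}\phi^{\tilde\sigma}_y}(h) = \tilde\sigma(h \cdot y) = (h \cdot y)\bigl( S(R_{[1]}) \cdot x \bigr) = \sigma\bigl(R_{[2]} u^{-1} S(h) S(R_{[1]}) \cdot x\bigr). \]
Therefore $S_{\mathcal{L}_{0,1}}({_X\phi^\sigma_x}) = {_{X^*}\phi^{\langle -,\, S(R_{[1]})\cdot x\rangle}_{\sigma(R_{[2]} u^{-1} \cdot ?)}}$, and applying $\mathfrak{j}_{\mathbf{K}}$ (which takes matrix coefficients to based-knot elements by its very definition) reproduces exactly the expression for $_X(\mathbf{K}^{-1})^\sigma_x$ obtained above.

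The only delicate point is the bookkeeping in item 2, namely the identification of $S_{\mathcal{L}_{0,1}}$ applied to a matrix coefficient of $X$ as a matrix coefficient of $X^*$. Once this is done, both sides coincide on matrix coefficients and the equality extends to all of $\mathcal{L}_{0,1}(H)$ by linearity.
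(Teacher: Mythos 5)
Your proposal is correct and follows essentially the same route as the paper: item 1 is exactly the factorisation argument via Lemma \ref{lemmaFactoDinat} (whose output $\omega$ is $H$-linear by construction), and item 2 is the same computation the paper performs, namely combining \eqref{knotMapFormulaNegOriented} with the identification of $S_{\mathcal{L}_{0,1}}({_X\phi^\sigma_x})$ as the matrix coefficient ${_{X^*}\phi^{\langle -,\, S(R_{[1]})\cdot x\rangle}_{\sigma(R_{[2]}u^{-1}\cdot ?)}}$ of the dual module. (Minor remark: matrix coefficients \emph{span} $H^\circ$, so plain linearity suffices; no density argument is needed.)
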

\begin{proof}
1. By construction of $\mathfrak{j}_{\mathbf{K}}$ thanks to Lemma \ref{lemmaFactoDinat}.
\\2. It is a consequence of \eqref{knotMapFormulaNegOriented}:
\[ \mathfrak{j}_{\mathbf{K}^{-1}}(_X\phi^{\sigma}_x) = {_{X^*}\mathbf{K}^{\langle -, S(R_{[1]})\cdot x \rangle}_{\sigma(R_{[2]}u^{-1} \cdot ?)}} = \mathfrak{j}_{\mathbf{K}}\!\left( {_{X^*}\phi^{\langle -, S(R_{[1]})\cdot x \rangle}_{\sigma(R_{[2]}u^{-1} \cdot ?)}} \right). \]
Now observe that for all $h \in H$,
\begin{align*}
&\left\langle {_{X^*}\phi^{\langle -, S(R_{[1]})\cdot x \rangle}_{\sigma(R_{[2]}u^{-1} \cdot ?)}}, h \right\rangle = \left\langle h \cdot \sigma(R_{[2]}u^{-1} \cdot ?), S(R_{[1]})\cdot x \right\rangle= \left\langle \sigma(R_{[2]}u^{-1} \cdot ?), S(h)S(R_{[1]})\cdot x \right\rangle\\
=\:&\sigma\bigl( R_{[2]}u^{-1}S(h) S(R_{[1]})\cdot x \bigr) = \left\langle S_{H^{\circ}}\bigl( S(R_{[1]}) \triangleright {_X\phi^{\sigma}_x} \triangleleft R_{[2]}u^{-1} \bigr), h \right\rangle = \bigl\langle S_{\mathcal{L}_{0,1}}({_X\phi^{\sigma}_x}), h  \bigr\rangle.\qedhere
\end{align*}
\end{proof}

\indent From a based knot $\mathbf{K} \subset \Sigma_{g,n}^{\circ,\bullet} \times [0,1]$ one obtains a knot $K$ by forgetting that the basepoint of $\mathbf{K}$ is constrained to be attached to $\partial(\Sigma_{g,n}^{\circ,\bullet}) \times [0,1]$ (item 4 in Def.\,\ref{defBasedKnot}). Recall that $\mathcal{L}_{0,1}^{\mathrm{inv}}(H)$ is the subspace of invariant elements in $\mathcal{L}_{0,1}(H)$ under the right $H$-action $\mathrm{coad}^r$; see \eqref{L01Inv}.

\begin{proposition}\label{propUnbasedKnotMap}
Let $\mathbf{K}_1$ and $\mathbf{K}_2$ be based knots in $\Sigma_{g,n}^{\circ,\bullet} \times [0,1]$ which reduce to the same knot when we forget that they are based. If $\varphi \in \mathcal{L}_{0,1}^{\mathrm{inv}}(H)$ then $\mathfrak{j}_{\mathbf{K}_{\scriptstyle 1}}(\varphi) = \mathfrak{j}_{\mathbf{K}_{\scriptstyle 2}}(\varphi)$.
\\As a result there is a well-defined linear map $\mathfrak{j}_K : \mathcal{L}_{0,1}^{\mathrm{inv}}(H) \to \mathcal{S}_H^{\mathrm{st}}(\Sigma_{g,n}^{\circ,\bullet})$ for all knot $K$.
\end{proposition}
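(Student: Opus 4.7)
The idea is that two based knots $\mathbf{K}_1,\mathbf{K}_2$ with the same underlying knot differ, up to based isotopy, by finitely many elementary moves which slide the basepoint across the puncture $\bullet$. Based isotopies leave each $\mathfrak{j}_{\mathbf{K}_i}$ literally unchanged, since $\mathcal{S}^{\mathrm{st}}_H(\Sigma_{g,n}^{\circ,\bullet})$ is defined modulo isotopy of stated ribbon graphs. Hence it suffices to treat a single basepoint-crossing move.

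First I would analyze such a move via the stated skein relations of Figure \ref{statedSkeinRels}. Suppose $\mathbf{K}_2$ is obtained from $\mathbf{K}_1$ by pushing the basepoint across $\bullet$ in the preferred direction of $\partial(\Sigma_{g,n}^{\circ,\bullet})$. The two endpoints of the dashed arc representing the based knot element carry heights that are reordered by the crossing; moving the higher endpoint past $\bullet$ forces it back to the lowest position and requires a ``full rotation'' near $\bullet$. Applying the cup/cap relations (xi)--(xii) in Figure \ref{statedSkeinRels}, which involve the pivotal element $g = uv^{-1}$ of \eqref{pivotalElement}, yields the transformation formula
\[
{_X\mathbf{K}_2^\sigma_x} \;=\; {_X\mathbf{K}_1^{\sigma(g^{-1}\cdot\,?)}_{\,g\cdot x}}
\]
(with $g$ or $g^{-1}$ depending on the direction of the crossing; either sign will do). This is the main computational step, and the only real obstacle in the proof.

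Then I would exploit invariance of $\varphi$. Writing $\varphi = {_X\phi^\sigma_x}$ for some $X \in H\text{-}\mathrm{mod}$ and $\sigma \in X^*$, $x \in X$, the characterization \eqref{L01Inv} (specialized to $y=1$) gives $\varphi(h) = \varphi(S^2(h))$ for all $h \in H$, \emph{i.e.}\ $\varphi \circ S^{-2} = \varphi$. Since $S^2$ is implemented on the $H$-module $X$ by conjugation by $g$ (cf.\ \eqref{proprietePivot}), we compute
\[
{_X\phi^{\sigma(g^{-1}\cdot\,?)}_{\,g\cdot x}}(h) \;=\; \sigma(g^{-1}hg\cdot x) \;=\; {_X\phi^\sigma_x}\bigl(S^{-2}(h)\bigr) \;=\; \varphi(h),
\]
so the two data $(\sigma,x)$ and $(\sigma(g^{-1}\cdot ?),\,g\cdot x)$ define the \emph{same} element of $H^\circ = \mathcal{L}_{0,1}(H)$. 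Since $\mathfrak{j}_{\mathbf{K}_1}$ is well-defined on $H^\circ$ by Definition~\ref{defKnotMap} (relying on Lemma~\ref{lemmaFactoDinat}), we conclude
\[
\mathfrak{j}_{\mathbf{K}_2}(\varphi) \;=\; {_X\mathbf{K}_2^\sigma_x} \;=\; {_X\mathbf{K}_1^{\sigma(g^{-1}\cdot\,?)}_{\,g\cdot x}} \;=\; \mathfrak{j}_{\mathbf{K}_1}\!\bigl({_X\phi^{\sigma(g^{-1}\cdot\,?)}_{\,g\cdot x}}\bigr) \;=\; \mathfrak{j}_{\mathbf{K}_1}(\varphi).
\]
This establishes the first assertion, and the existence of a well-defined map $\mathfrak{j}_K : \mathcal{L}_{0,1}^{\mathrm{inv}}(H) \to \mathcal{S}^{\mathrm{st}}_H(\Sigma_{g,n}^{\circ,\bullet})$ follows immediately by setting $\mathfrak{j}_K(\varphi) := \mathfrak{j}_\mathbf{K}(\varphi)$ for any based knot $\mathbf{K}$ lifting $K$.
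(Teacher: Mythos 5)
There is a genuine gap at the very first step: the reduction of the problem to the single elementary move ``slide the basepoint across $\bullet$'' is false. That move only changes a based knot by conjugation by the boundary loop $\partial_{g,n}$, whereas two based knots reducing to the same knot can differ much more drastically, namely by a relocation of the basepoint to a completely different point of the knot (equivalently, by dragging the basepoint through the interior of the surface along an arbitrary arc). Concretely, in $\Sigma_{1,0}^{\circ,\bullet}$ the based simple loops $a_1$ and $b_1a_1b_1^{-1}$ reduce to the same knot, but they are not related by based isotopies and $\bullet$-crossings: since the basepoint is confined to the contractible set $\partial(\Sigma_{1,0}^{\circ,\bullet})\times[0,1]$, the class in $\pi_1(\Sigma_{1,0}^{\circ})=\langle a_1,b_1\rangle$ is a based-isotopy invariant, and $b_1a_1b_1^{-1}$ is not of the form $\partial_{1,0}^{\,k}\,a_1\,\partial_{1,0}^{-k}$ in the free group. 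So your list of moves misses the essential one, and your argument does not treat it.

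A symptom of this is that you only use the consequence $\varphi\circ S^{2}=\varphi$ of invariance (the case $y=1$ of \eqref{L01Inv}), which is strictly weaker than the full identity $\varphi(xy)=\varphi\bigl(yS^{2}(x)\bigr)$; the proposition fails under that weaker hypothesis. The paper's proof instead arranges by isotopy the two candidate basepoints $p_1,p_2$ to sit on adjacent strands next to the boundary, writes $\varphi$ as a single matrix coefficient ${_{H/I}\phi^{\varphi}_{\bar 1}}$ of a finite-dimensional quotient $H/I$ (Lemma \ref{lemmaRestrictedDualCodim}), inserts a resolution of the identity via relation (viii) of Fig.\,\ref{statedSkeinRels}, and slides a coupon colored by right multiplication $r_{\bar g b_i}$ all the way around the knot; the resulting exchange of tensor factors is absorbed precisely by the two-variable identity $\varphi(xgy)=\varphi(ygx)$. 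Your local computation for the $\bullet$-crossing (absorbing a $g^{\pm1}$-conjugation) is correct as far as it goes, but it covers only a small part of what must be proved.
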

\begin{proof}
The first claim is proven in Appendix~\ref{appendixUnbasedMaps}. For the second claim, choose any based knot $\mathbf{K}$ such that $\mathbf{K}$ reduces to $K$ when we forget its basepoint. Then it suffices to put $\mathfrak{j}_K(\varphi) = \mathfrak{j}_{\mathbf{K}}(\varphi)$ and the first claim ensures that this value is independent of the choice of $\mathbf{K}$.
\end{proof}

\indent We recall that a loop $\gamma \in \pi_1(\Sigma_{g,n}^{\circ})$ is called {\em simple} if it can be represented by a curve without self-intersections. This is a particular case of a based knot, so we have the map $\mathfrak{j}_{\gamma}$. Recall the algebra structure on $\mathcal{L}_{0,1}(H)$ from Definition \ref{defL01}. Also recall the product $\ast$ on $\mathcal{S}^{\mathrm{st}}_H(\Sigma_{g,n}^{\circ,\bullet})$, which is given by stacking.

\begin{proposition}\label{propLoopMapMorphism} Let $\gamma  \in \pi_1(\Sigma_{g,n}^{\circ})$ be a simple loop.
\\1.~If $\gamma$ is positively oriented then $\mathfrak{j}_{\gamma}$ is a morphism of right $H$-module-algebras $\mathcal{L}_{0,1}(H) \to \mathcal{S}^{\mathrm{st}}_H(\Sigma_{g,n}^{\circ,\bullet})$.
\\2.~If $\gamma$ is negatively oriented then $\mathfrak{j}_{\gamma}$ is a morphism of right $H$-modules $\mathcal{L}_{0,1}(H) \to \mathcal{S}^{\mathrm{st}}_H(\Sigma_{g,n}^{\circ,\bullet})$ which satisfies
\[ \forall \, \varphi, \psi \in \mathcal{L}_{0,1}(H), \quad \mathfrak{j}_{\gamma}(\varphi \psi) = \mathfrak{j}_{\gamma}\bigl( \mathrm{coad}^r(R_{[1]})(\psi) \bigr) \ast \mathfrak{j}_{\gamma}\bigl( \mathrm{coad}^r(R_{[2]})(\varphi) \bigr). \]
\end{proposition}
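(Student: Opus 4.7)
\textbf{Plan for Proposition \ref{propLoopMapMorphism}.} The $H$-linearity in both items is already given by Lemma \ref{lemmaChangeOrientationKnotMap}(1), so I only need to check unit preservation and the appropriate (twisted) multiplicativity.

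\textbf{Step 1: unit.} The unit of $\mathcal{L}_{0,1}(H)$ is $\varepsilon = {_\Bbbk\phi^1_1}$, the matrix coefficient of the trivial $H$-module. Hence $\mathfrak{j}_\gamma(\varepsilon) = {_\Bbbk\gamma^1_1}$ is a strand colored by the unit object of $H\text{-}\mathrm{mod}$, which is trivial in any skein-theoretic quotient (the Reshetikhin--Turaev functor sends the identity of $\boldsymbol{1}$ to the identity of $\Bbbk$, so such a strand is isotopic to the empty diagram). Thus $\mathfrak{j}_\gamma(\varepsilon) = 1_{\mathcal{S}}$.

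\textbf{Step 2: multiplicativity for positively oriented $\gamma$.} Take $\varphi = {_X\phi^\sigma_x}$ and $\psi = {_Y\phi^\tau_y}$, and compare $\mathfrak{j}_\gamma(\varphi) \ast \mathfrak{j}_\gamma(\psi)$ with $\mathfrak{j}_\gamma(\varphi\psi)$. Stacking places the $X$-colored copy of $\gamma$ (states $\sigma,x$) below the $Y$-colored copy (states $\tau,y$), producing two parallel oriented $\gamma$-strands with four boundary endpoints that are ordered vertically as dictated by the stacking. Using the definitions \eqref{defJGamma} and the skein relations (iii)--(iv) and (vii) of Fig.\,\ref{statedSkeinRels}, I will first rearrange the four boundary endpoints into the height ordering required for applying the fusion relation (vii)/(viii), and then fuse the two parallel strands into a single strand colored by $X \otimes Y$. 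Each rearrangement step near the boundary produces the $R$-matrix action on states prescribed by (iii)/(iv). A careful bookkeeping of these $R$-corrections will show that the resulting element is ${_{X \otimes Y}\gamma^{\sigma'\otimes\tau'}_{x'\otimes y'}}$ where the primed states are obtained from $\sigma,x,\tau,y$ precisely by the formula giving $\varphi\psi$ in \eqref{produitL01}, namely $(R^2_{[2]}S(R^1_{[2]}) \triangleright \varphi)\star(R^2_{[1]} \triangleright \psi \triangleleft R^1_{[1]})$ (recall $\varphi \star \psi = {_{X \otimes Y}\phi^{\sigma\otimes\tau}_{x \otimes y}}$ from \eqref{usualProdHDual}). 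This yields $\mathfrak{j}_\gamma(\varphi)\ast\mathfrak{j}_\gamma(\psi) = \mathfrak{j}_\gamma(\varphi\psi)$ on matrix coefficients; by $\Bbbk$-linearity and the fact that matrix coefficients span $\mathcal{L}_{0,1}(H)$, this extends to all of $\mathcal{L}_{0,1}(H)$.

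\textbf{Step 3: negatively oriented case via reversal.} If $\gamma$ is negatively oriented then $\gamma^{-1}$ is positively oriented, and Lemma \ref{lemmaChangeOrientationKnotMap}(2) (applied with $\mathbf{K} = \gamma^{-1}$) gives $\mathfrak{j}_\gamma(\varphi) = \mathfrak{j}_{\gamma^{-1}}\bigl(S_{\mathcal{L}_{0,1}}(\varphi)\bigr)$. By Step~2, $\mathfrak{j}_{\gamma^{-1}}$ is an algebra morphism, so combining with the braided anti-multiplicativity \eqref{antipodeBraidedAntiMorphism} of $S_{\mathcal{L}_{0,1}}$ yields
\begin{align*}
\mathfrak{j}_\gamma(\varphi\psi) &= \mathfrak{j}_{\gamma^{-1}}\!\bigl(S_{\mathcal{L}_{0,1}}(\varphi\psi)\bigr) = \mathfrak{j}_{\gamma^{-1}}\!\Bigl( S_{\mathcal{L}_{0,1}}\bigl(\mathrm{coad}^r(R_{[1]})(\psi)\bigr)\, S_{\mathcal{L}_{0,1}}\bigl(\mathrm{coad}^r(R_{[2]})(\varphi)\bigr)\Bigr)\\
&= \mathfrak{j}_\gamma\!\bigl(\mathrm{coad}^r(R_{[1]})(\psi)\bigr) \ast \mathfrak{j}_\gamma\!\bigl(\mathrm{coad}^r(R_{[2]})(\varphi)\bigr),
\end{align*}
which is the announced formula.

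\textbf{Expected obstacle.} The only delicate point is the careful tracking in Step 2 of the order in which the endpoints must be shuffled near the boundary and which $R$-matrix factor is produced at each swap, so as to match exactly the four-$R$-matrix expression in \eqref{produitL01}. Once the convention for the heights at the basepoint is fixed (as in \eqref{defJGamma}), this is a finite skein computation and should match the product in $\mathcal{L}_{0,1}(H)$ on the nose.
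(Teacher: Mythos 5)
Your proposal follows essentially the same route as the paper: the positively oriented case is exactly the paper's picture computation (stack two parallel copies of $\gamma$, reorder the boundary endpoints via the $R$-matrix exchange relations (x)/(xii) of Fig.\,\ref{statedSkeinRels}, fuse into an $X \otimes Y$-colored strand via the second identity in \eqref{relsCoeffsMatSkein}, and match with \eqref{produitL01}), and the negatively oriented case is deduced from it via Lemma \ref{lemmaChangeOrientationKnotMap}(2) and \eqref{antipodeBraidedAntiMorphism} exactly as in the paper. The only thing missing is that your Step 2 defers the actual bookkeeping of the $R$-factors, which is precisely the content of the paper's diagrammatic computation; carrying it out as you describe does land on \eqref{produitL01} on the nose.
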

\begin{proof}
The $H$-linearity was already proven for any based knot in item 1 of Lemma \ref{lemmaChangeOrientationKnotMap}.
\\1. We check the algebra morphism property as follows:
\begin{center}
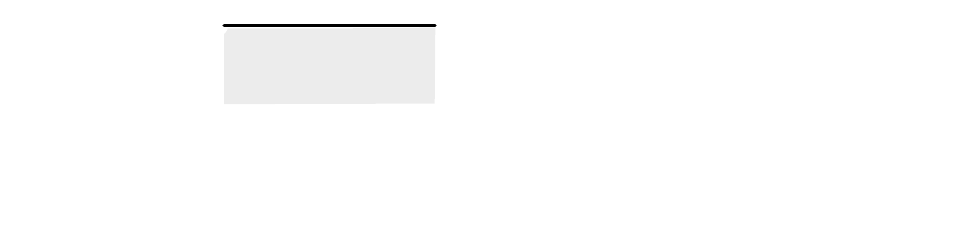
\end{center}
In the pictures $\gamma$ is drawn in dashed line and we only represent a neighborhood of its basepoint. Outside of this, the strands colored by $X$ and $Y$ are parallel to $\gamma$ and all three stay in a same layer $\Sigma_{g,n}^{\circ,\bullet} \times \{t\}$. For the second equality we used (x) and (xii) in Fig.~\ref{statedSkeinRels}, for the third we used the 2nd formula in \eqref{relsCoeffsMatSkein}, for the fourth we used the definition of $\star$ in \eqref{usualProdHDual}, for the fifth we used \eqref{coregOnMatCoeff} and the last is \eqref{produitL01}.
\\2. Follows immediately from item 2 in Lemma \ref{lemmaChangeOrientationKnotMap}, \eqref{antipodeBraidedAntiMorphism} and the previous item.
\end{proof}

\indent To conclude this subsection, we note that there is an obvious action of the mapping class group on the stated skein algebra. Indeed, let $\mathrm{MCG}(\Sigma_{g,n}^{\circ})$ be the {\em mapping class group of $\Sigma_{g,n}^{\circ}$}, \textit{i.e.} the group of isotopy classes of diffeomorphisms $\Sigma_{g,n}^{\circ} \to \Sigma_{g,n}^{\circ}$ which fix pointwise both the punctures and the boundary. A mapping class $f \in \mathrm{MCG}(\Sigma_{g,n}^{\circ})$ acts on an isotopy class of stated ribbon graph $G \subset \Sigma_{g,n}^{\circ,\bullet} \times [0,1]$ simply by $f(G) = (f \times \mathrm{id}_{[0,1]})(G)$ and without changing the states. Let us linearly extend this formula to formal linear combinations of stated ribbon graphs. Then the skein relations are compatible with the resulting action, since they occur only a small neighborhood of $\partial(\Sigma_{g,n}^{\circ,\bullet}) \times [0,1]$. As a result, for all $f \in \mathrm{MCG}(\Sigma_{g,n}^{\circ})$ we get a linear map
\begin{equation}\label{actionMCGAutAlg}
\widetilde{f} : \mathcal{S}^{\mathrm{st}}_H(\Sigma_{g,n}^{\circ,\bullet}) \to \mathcal{S}^{\mathrm{st}}_H(\Sigma_{g,n}^{\circ,\bullet}), \quad G \mapsto (f \times \mathrm{id}_{[0,1]})(G)
\end{equation}
whose inverse is $\widetilde{f^{-1}}$. It is clear that $\widetilde{f}$ is an algebra automorphism, because the product in $\mathcal{S}^{\mathrm{st}}_H(\Sigma_{g,n}^{\circ,\bullet})$ is given by stacking in the height component $[0,1]$ which is not modified by $\widetilde{f}$. From these remarks we get  a group morphism
\begin{equation*}
\mathrm{MCG}(\Sigma_{g,n}^{\circ}) \to \mathrm{Aut}_{\mathrm{alg}}\bigl( \mathcal{S}^{\mathrm{st}}_H(\Sigma_{g,n}^{\circ,\bullet}) \bigr), \quad f \mapsto \widetilde{f}.
\end{equation*}
We note as a side remark that $\widetilde{f}$ is also compatible with the right $H$-module structure on $\mathcal{S}^{\mathrm{st}}_H(\Sigma_{g,n}^{\circ,\bullet})$ from \eqref{HmodStructStatedSkein}, and is thus an automorphism of $H$-module-algebra.

\indent An element $f \in \mathrm{MCG}(\Sigma_{g,n}^{\circ})$ transforms a based knot $\mathbf{K}$ into the based knot $f(\mathbf{K}) = (f \times \mathrm{id}_{[0,1]})(\mathbf{K})$ because $f$ fixes the boundary. Moreover $f$ preserves the orientation of $\mathbf{K}$ and hence it is immediate from the definitions that
\begin{equation}\label{tildaFJGamma}
\forall \, f \in \mathrm{MCG}(\Sigma_{g,n}^{\circ}), \quad \widetilde{f} \circ \mathfrak{j}_{\mathbf{K}} = \mathfrak{j}_{f(\mathbf{K})}.
\end{equation}

\subsubsection{Knots maps for $\mathcal{L}_{g,n}(H)$ and boundary closure}\label{subsubKnotsMapsLgn}
In \cite[\S 6.2]{BFR} a morphism of algebras
\[ \mathrm{hol}^{\mathrm{st}} : \mathcal{S}_H^{\mathrm{st}}(\Sigma_{g,n}^{\circ,\bullet}) \overset{\sim}{\longrightarrow} \mathcal{L}_{g,n}(H) \]
was defined by building upon results of \cite{FaitgHol}, and proved to be an {\em isomorphism}; the notation stands for ``stated holonomy''.

\begin{remark}
In \cite[eq.\,(88)]{BFR} certain assumptions on $H$ were added (besides being ribbon). The {\em only} reason for adding these assumptions was a concern regarding the well-definedness of a map $\xi_{g,n} : \mathcal{L}_{g,n}(H) \to \mathcal{S}_H^{\mathrm{st}}(\Sigma_{g,n}^{\circ,\bullet})$. This map was then proved to be the inverse of $\mathrm{hol}^{\mathrm{st}}$ without using the extra assumptions. But using Lemma \ref{lemmaFactoDinat} above, one can easily show that $\xi_{g,n}$ is always well-defined, thus the concern disappears and {\em $\mathrm{hol}^{\mathrm{st}}$ is an isomorphism for any ribbon Hopf algebra $H$}.
\end{remark}

\indent We give a property which characterizes $\mathrm{hol}^{\mathrm{st}}$ and is enough for our purposes. Recall that a simple loop in $\pi_1(\Sigma_{g,n}^{\circ})$ can be seen as a based knot in $\Sigma_{g,n}^{\circ,\bullet} \times [0,1]$ (Def.\,\ref{defBasedKnot}). Consider the resulting knot map $\mathfrak{j}_s : \mathcal{L}_{0,1}(H) \to \mathcal{S}_H^{\mathrm{st}}(\Sigma_{g,n}^{\circ,\bullet})$ for any generator $s \in \{b_1,a_1,\ldots,b_g,a_g,m_{g+1},\ldots,m_{g+n} \}$ of $\pi_1(\Sigma_{g,n}^{\circ})$ fixed in \eqref{surfaceEnRuban}. Also recall the canonical embeddings $\mathfrak{i}_s : \mathcal{L}_{0,1}(H) \to \mathcal{L}_{g,n}(H)$ from \eqref{embeddingsLgn}. Then for each such $s$ there is a commutative diagram
\begin{equation}\label{characterizationHolSt}
\xymatrix{
\mathcal{L}_{0,1}(H) \ar[dr]_{\mathfrak{i}_s} \ar[r]^{\mathfrak{j}_s} & \mathcal{S}_H^{\mathrm{st}}(\Sigma_{g,n}^{\circ,\bullet}) \ar[d]^{\mathrm{hol}^{\mathrm{st}}}\\
 & \mathcal{L}_{g,n}(H)
}\end{equation}
In particular we deduce that $\mathrm{hol}^{\mathrm{st}}$ is an isomorphism of $H$-module-algebras when $\mathcal{S}_H^{\mathrm{st}}(\Sigma_{g,n}^{\circ,\bullet})$ is endowed with the right $H$-action in \eqref{HmodStructStatedSkein} and $\mathcal{L}_{g,n}(H)$ is endowed as usual with $\mathrm{coad}^r$ in \eqref{defCoadLgn}, because $\mathfrak{i}_s$ and $\mathfrak{j}_s$ are $H$-linear.

\smallskip

\indent For any based knot $\mathbf{K} \subset \Sigma_{g,n}^{\circ,\bullet} \times [0,1]$ define a linear map
\begin{equation}\label{defIGamma}
\mathfrak{i}_{\mathbf{K}}  = \mathrm{hol}^{\mathrm{st}} \circ \mathfrak{j}_{\mathbf{K}} : \mathcal{L}_{0,1}(H) \to \mathcal{L}_{g,n}(H)
\end{equation}
where $\mathfrak{j}_{\mathbf{K}}$ is the based map knot associated to $\mathbf{K}$ (Def.\,\ref{defKnotMap}). Obviously:

\begin{corollary}\label{coroUnbasedImap}
The statements of Lemma \ref{lemmaChangeOrientationKnotMap} and Propositions \ref{propUnbasedKnotMap} and \ref{propLoopMapMorphism} remain true if we replace everywhere $\mathfrak{j}$ by $\mathfrak{i}$.
\end{corollary}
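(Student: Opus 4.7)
The plan is to deduce all the assertions directly from the definition $\mathfrak{i}_{\mathbf{K}} = \mathrm{hol}^{\mathrm{st}} \circ \mathfrak{j}_{\mathbf{K}}$, using that $\mathrm{hol}^{\mathrm{st}} : \mathcal{S}^{\mathrm{st}}_H(\Sigma_{g,n}^{\circ,\bullet}) \to \mathcal{L}_{g,n}(H)$ is an isomorphism of right $H$-module-algebras (as noted just after diagram \eqref{characterizationHolSt}). Since $\mathrm{hol}^{\mathrm{st}}$ intertwines the right $H$-action $\smallsquare$ on $\mathcal{S}^{\mathrm{st}}_H(\Sigma_{g,n}^{\circ,\bullet})$ with $\mathrm{coad}^r$ on $\mathcal{L}_{g,n}(H)$, and intertwines the product $\ast$ with the product in $\mathcal{L}_{g,n}(H)$, each property of $\mathfrak{j}_{\mathbf{K}}$ transfers verbatim to $\mathfrak{i}_{\mathbf{K}}$.

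More precisely: for item 1 of Lemma~\ref{lemmaChangeOrientationKnotMap}, the $H$-linearity of $\mathfrak{i}_{\mathbf{K}}$ is the composition of the $H$-linearity of $\mathfrak{j}_{\mathbf{K}}$ with the $H$-linearity of $\mathrm{hol}^{\mathrm{st}}$. For item 2, one simply applies $\mathrm{hol}^{\mathrm{st}}$ to the equality $\mathfrak{j}_{\mathbf{K}^{-1}}(\varphi) = \mathfrak{j}_{\mathbf{K}}(S_{\mathcal{L}_{0,1}}(\varphi))$. For Proposition~\ref{propUnbasedKnotMap}, if $\mathbf{K}_1, \mathbf{K}_2$ are two basings of the same knot and $\varphi \in \mathcal{L}_{0,1}^{\mathrm{inv}}(H)$, then $\mathfrak{i}_{\mathbf{K}_1}(\varphi) = \mathrm{hol}^{\mathrm{st}}(\mathfrak{j}_{\mathbf{K}_1}(\varphi)) = \mathrm{hol}^{\mathrm{st}}(\mathfrak{j}_{\mathbf{K}_2}(\varphi)) = \mathfrak{i}_{\mathbf{K}_2}(\varphi)$, so the map $\mathfrak{i}_K : \mathcal{L}_{0,1}^{\mathrm{inv}}(H) \to \mathcal{L}_{g,n}(H)$ is well-defined.

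Finally, for Proposition~\ref{propLoopMapMorphism}, if $\gamma$ is a positively oriented simple loop then $\mathfrak{j}_\gamma$ is a morphism of $H$-module-algebras and hence so is $\mathfrak{i}_\gamma = \mathrm{hol}^{\mathrm{st}} \circ \mathfrak{j}_\gamma$. For $\gamma$ negatively oriented, applying $\mathrm{hol}^{\mathrm{st}}$ (which is an algebra morphism) to the identity
\[ \mathfrak{j}_{\gamma}(\varphi \psi) = \mathfrak{j}_{\gamma}\bigl( \mathrm{coad}^r(R_{[1]})(\psi) \bigr) \ast \mathfrak{j}_{\gamma}\bigl( \mathrm{coad}^r(R_{[2]})(\varphi) \bigr) \]
directly yields the analogous identity for $\mathfrak{i}_\gamma$ inside $\mathcal{L}_{g,n}(H)$. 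There is no real obstacle here, since the whole content of the corollary is that composition with the algebraic isomorphism $\mathrm{hol}^{\mathrm{st}}$ preserves all the structural features used in the statements; the work has already been done in the previous lemma and propositions.
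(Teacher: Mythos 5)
Your proof is correct and is exactly the argument the paper has in mind: the paper simply labels the corollary as "obvious", the implicit reason being precisely what you spell out, namely that $\mathfrak{i}_{\mathbf{K}} = \mathrm{hol}^{\mathrm{st}} \circ \mathfrak{j}_{\mathbf{K}}$ with $\mathrm{hol}^{\mathrm{st}}$ an isomorphism of $H$-module-algebras, so every structural property transfers by composition. Nothing to add.
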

\noindent In particular, by Proposition \ref{propUnbasedKnotMap}, for all (unbased) knot $K \subset \Sigma_{g,n}^{\circ} \times [0,1]$ there is a knot map
\begin{equation}\label{unbasedKnotMapLgn}
\mathfrak{i}_K : \mathcal{L}_{0,1}^{\mathrm{inv}}(H) \to \mathcal{L}_{g,n}^{\mathrm{inv}}(H), \quad \varphi \mapsto \mathfrak{i}_{\mathbf{K}}(\varphi)
\end{equation}
where $\mathbf{K}$ is any based knot which is isotopic to $K$ when we consider it as an unbased knot.
\begin{example}
In $\Sigma_{0,2}^{\circ} \times [0,1]$ let
\begin{center}
\begingroup%
  \makeatletter%
  \providecommand\color[2][]{%
    \errmessage{(Inkscape) Color is used for the text in Inkscape, but the package 'color.sty' is not loaded}%
    \renewcommand\color[2][]{}%
  }%
  \providecommand\transparent[1]{%
    \errmessage{(Inkscape) Transparency is used (non-zero) for the text in Inkscape, but the package 'transparent.sty' is not loaded}%
    \renewcommand\transparent[1]{}%
  }%
  \providecommand\rotatebox[2]{#2}%
  \newcommand*\fsize{\dimexpr\f@size pt\relax}%
  \newcommand*\lineheight[1]{\fontsize{\fsize}{#1\fsize}\selectfont}%
  \ifx\svgwidth\undefined%
    \setlength{\unitlength}{332.99149641bp}%
    \ifx\svgscale\undefined%
      \relax%
    \else%
      \setlength{\unitlength}{\unitlength * \real{\svgscale}}%
    \fi%
  \else%
    \setlength{\unitlength}{\svgwidth}%
  \fi%
  \global\let\svgwidth\undefined%
  \global\let\svgscale\undefined%
  \makeatother%
  \begin{picture}(1,0.17340303)%
    \lineheight{1}%
    \setlength\tabcolsep{0pt}%
    \put(0,0){\includegraphics[width=\unitlength,page=1]{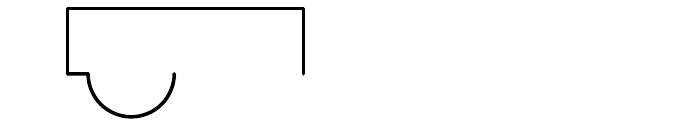}}%
    \put(0.0065969,0.1026294){\color[rgb]{0,0,0}\makebox(0,0)[lt]{\lineheight{1.25}\smash{\begin{tabular}[t]{l}$K=$\end{tabular}}}}%
    \put(0,0){\includegraphics[width=\unitlength,page=2]{exampleWilson.pdf}}%
    \put(0.55992025,0.1026294){\color[rgb]{0,0,0}\makebox(0,0)[lt]{\lineheight{1.25}\smash{\begin{tabular}[t]{l}$\mathbf{K}=$\end{tabular}}}}%
    \put(0,0){\includegraphics[width=\unitlength,page=3]{exampleWilson.pdf}}%
  \end{picture}%
\endgroup%

\end{center}
(the puncture on the boundary is relevant only when working with based knots). Then $\mathbf{K}$ reduces to $K$ when considered as an unbased knot and we have
\begin{center}
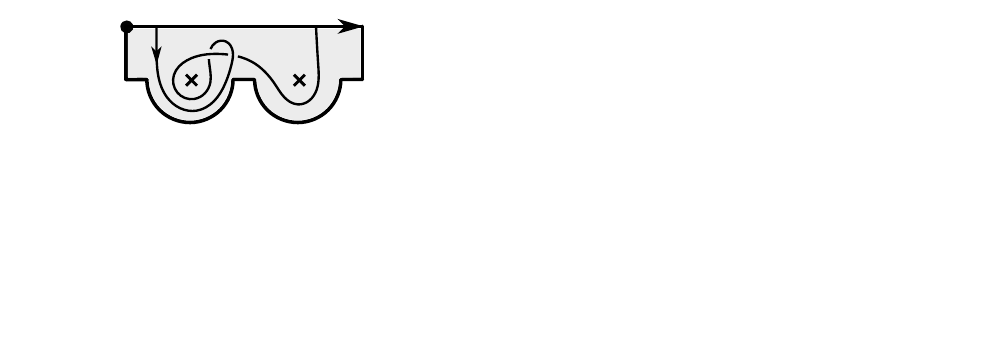
\end{center}
The first equality is by definition of $\mathfrak{j}_{\mathbf{K}}$ from \eqref{defJGamma} and Def.\,\ref{defKnotMap}, the second equality uses (vii), (xi) and (xii) in Fig.\,\ref{statedSkeinRels}, the third equality uses the first identity \eqref{relsCoeffsMatSkein}and the last equality uses the second identity in \eqref{relsCoeffsMatSkein}. To obtain a simpler expression, let us evaluate this on an element $h \otimes k \in H \otimes H$:
\begin{align*}
&\sigma\bigl( h_{(1)} S(R^1_{[1]})g \cdot x_i \bigr) \Bigl\langle  h_{(2)} \cdot x^i\bigl( R^2_{[2]}g^{-1}\cdot ? \bigr), S(R^2_{[1]})R^1_{[2]}\cdot x_j \Bigr\rangle x^j(k\cdot x)\\
=\:&\sigma\Bigl( h_{(1)} S(R^1_{[1]})gR^2_{[2]}g^{-1}S(h_{(2)})S(R^2_{[1]})R^1_{[2]}k\cdot x \Bigr) = \sigma\Bigl( h_{(1)}S(R^2_{[2]}R^1_{[1]})S(h_{(2)})R^2_{[1]}R^1_{[2]}k \cdot x \Bigr)
\end{align*}
where the second equality uses \eqref{proprietePivot} and \eqref{SSR}. It follows that for all $\varphi \in \mathcal{L}_{0,1}(H)$, the element $\mathfrak{i}_{\mathbf{K}}(\varphi) \in \mathcal{L}_{0,2}(H) = H^{\circ} \otimes H^{\circ} \subset (H \otimes H)^*$ is characterized by
\[ \forall \, h,k \in H, \quad \bigl\langle \mathfrak{i}_{\mathbf{K}}(\varphi), h \otimes k \bigr\rangle = \varphi\bigl( h_{(1)}S(R^2_{[2]}R^1_{[1]})S(h_{(2)})R^2_{[1]}R^1_{[2]}k \bigr). \]
For more complicated knots/surfaces it is faster to use the technique {\em \`a la} Hennings to evaluate $\mathfrak{i}_{\mathbf{K}}(\varphi)$ \cite[\S 4.3]{FaitgHol}. The following picture illustrates this technique:
\begin{center}
\begingroup%
  \makeatletter%
  \providecommand\color[2][]{%
    \errmessage{(Inkscape) Color is used for the text in Inkscape, but the package 'color.sty' is not loaded}%
    \renewcommand\color[2][]{}%
  }%
  \providecommand\transparent[1]{%
    \errmessage{(Inkscape) Transparency is used (non-zero) for the text in Inkscape, but the package 'transparent.sty' is not loaded}%
    \renewcommand\transparent[1]{}%
  }%
  \providecommand\rotatebox[2]{#2}%
  \newcommand*\fsize{\dimexpr\f@size pt\relax}%
  \newcommand*\lineheight[1]{\fontsize{\fsize}{#1\fsize}\selectfont}%
  \ifx\svgwidth\undefined%
    \setlength{\unitlength}{455.69038527bp}%
    \ifx\svgscale\undefined%
      \relax%
    \else%
      \setlength{\unitlength}{\unitlength * \real{\svgscale}}%
    \fi%
  \else%
    \setlength{\unitlength}{\svgwidth}%
  \fi%
  \global\let\svgwidth\undefined%
  \global\let\svgscale\undefined%
  \makeatother%
  \begin{picture}(1,0.22737591)%
    \lineheight{1}%
    \setlength\tabcolsep{0pt}%
    \put(0,0){\includegraphics[width=\unitlength,page=1]{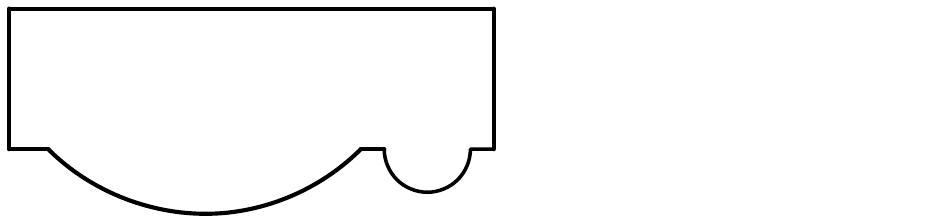}}%
    \put(0.58664226,0.19106179){\makebox(0,0)[lt]{\lineheight{1.25}\smash{\begin{tabular}[t]{l}{\small $\mathrm{hen}_{\mathbf{K}}(h \otimes k)$}\end{tabular}}}}%
    \put(0.56111107,0.15903221){\makebox(0,0)[lt]{\lineheight{1.25}\smash{\begin{tabular}[t]{l}{\small $= h_{(1)} S(R^1_{[1]})gR^2_{[2]}g^{-1}S(h_{(2)})S(R^2_{[1]})R^1_{[2]}k$}\end{tabular}}}}%
    \put(0.56603332,0.09645571){\makebox(0,0)[lt]{\lineheight{1.25}\smash{\begin{tabular}[t]{l}{\small $\bigl\langle \mathfrak{i}_{\mathbf{K}}(\varphi), h \otimes k \bigr\rangle = \varphi\bigl( \mathrm{hen}_{\mathbf{K}}(h \otimes k) \bigr)$}\end{tabular}}}}%
    \put(0,0){\includegraphics[width=\unitlength,page=2]{eval_hennings.pdf}}%
    \put(0.01693696,0.08287858){\makebox(0,0)[lt]{\lineheight{1.25}\smash{\begin{tabular}[t]{l}{\footnotesize $h_{(1)}$}\end{tabular}}}}%
    \put(0.0954306,0.06478984){\makebox(0,0)[lt]{\lineheight{1.25}\smash{\begin{tabular}[t]{l}{\footnotesize $g^{-1}S(h_{(2)})$}\end{tabular}}}}%
    \put(0.28425805,0.07214923){\makebox(0,0)[lt]{\lineheight{1.25}\smash{\begin{tabular}[t]{l}{\footnotesize $R^2_{[2]}$}\end{tabular}}}}%
    \put(0.27965669,0.10725057){\makebox(0,0)[lt]{\lineheight{1.25}\smash{\begin{tabular}[t]{l}{\footnotesize $S(R^2_{[1]})$}\end{tabular}}}}%
    \put(0.18891504,0.1372759){\makebox(0,0)[lt]{\lineheight{1.25}\smash{\begin{tabular}[t]{l}{\footnotesize $g$}\end{tabular}}}}%
    \put(0.26020314,0.17684225){\makebox(0,0)[lt]{\lineheight{1.25}\smash{\begin{tabular}[t]{l}{\footnotesize $S(R^1_{[1]})$}\end{tabular}}}}%
    \put(0.35076928,0.1706736){\makebox(0,0)[lt]{\lineheight{1.25}\smash{\begin{tabular}[t]{l}{\footnotesize $R^1_{[2]}$}\end{tabular}}}}%
    \put(0,0){\includegraphics[width=\unitlength,page=3]{eval_hennings.pdf}}%
    \put(0.43580589,0.08459931){\makebox(0,0)[lt]{\lineheight{1.25}\smash{\begin{tabular}[t]{l}{\footnotesize $k$}\end{tabular}}}}%
    \put(0,0){\includegraphics[width=\unitlength,page=4]{eval_hennings.pdf}}%
  \end{picture}%
\endgroup%

\end{center}
for all $h,k \in H$. In the general case of a based knot $\mathbf{K}$ drawn on the surface $\Sigma_{g,n}^{\circ}$, this procedure defines a linear map $\mathrm{hen}_{\mathbf{K}} : H^{\otimes (2g+n)} \to H$ which is dual to $\mathfrak{i}_{\mathbf{K}} : H^\circ \to (H^\circ)^{\otimes (2g+n)}$. Note that in \cite[\S 4.3]{FaitgHol} there was an extra pivotal element $g$ in $\mathrm{hen}_{\mathbf{K}}$ because we used symmetric linear forms (\textit{i.e.} $\varphi(xy) = \varphi(yx)$) instead of elements of $\mathcal{L}_{0,1}^{\mathrm{inv}}(H)$, see \eqref{L01Inv} and \eqref{proprietePivot}.
\end{example}

\indent Now let us change our point of view on the knot map \eqref{unbasedKnotMapLgn}: we fix $\varphi$ while we allow $K$ to vary. More precisely if $\Sigma$ is an oriented surface denote by $\mathrm{Knots}(\Sigma)$ the set of isotopy classes of (unbased) oriented framed knots in $\Sigma \times [0,1]$. Then for any $\varphi \in \mathcal{L}_{0,1}^{\mathrm{inv}}(H)$   we have a map
\begin{equation}\label{WilsonKnotMap}
\mathbb{W}^{\varphi} : \mathrm{Knots}(\Sigma_{g,n}^{\circ}) \to  \mathcal{L}_{g,n}^{\mathrm{inv}}(H), \quad K \mapsto \mathfrak{i}_K(\varphi).
\end{equation}
The inclusion $\Sigma_{g,n}^{\circ} \hookrightarrow \Sigma_{g,n}$ induces a surjection 
\[ \pi : \mathrm{Knots}(\Sigma_{g,n}^{\circ}) \twoheadrightarrow \mathrm{Knots}(\Sigma_{g,n}). \]
Recall from \S\ref{sectionSomeEquivModules} the spaces $V(\Sigma^X_{g,X_1,\ldots,X_n})$ for all $X,X_1,\ldots,X_n \in H\text{-}\mathrm{Mod}$, their derived version $V^m(\Sigma^X_{g,X_1,\ldots,X_n})$ and the representations of $\mathcal{L}_{g,n}^{\mathrm{inv}}(H)$ on these spaces. In the next result we take $X = \Bbbk$, the ground field on which $H$ acts through the counit $\varepsilon : H \to \Bbbk$.

\begin{theorem}\label{thmClosingBoundary}
Recall that $H$ is a ribbon Hopf algebra over a field $\Bbbk$ and let $\varphi \in \mathcal{L}_{0,1}^{\mathrm{inv}}(H)$.
\\1. We have a commutative diagram
\[  \xymatrix{
\mathrm{Knots}(\Sigma_{g,n}^{\circ}) \ar@{->>}[d]_-{\pi} \ar[r]^{\mathbb{W}^{\varphi}} & \mathcal{L}_{g,n}^{\mathrm{inv}}(H) \ar[d]^-{\text{\em rep. from \eqref{pushfrowardRepModuliAlg}}}\\
\mathrm{Knots}(\Sigma_{g,n}) \ar[r]_-{\exists!}  & \mathrm{End}_{\Bbbk}\bigl( V(\Sigma^{\Bbbk}_{g,X_1,\ldots,X_n}) \bigr)
} \]
2. Assume that $H$ and $X_1,\ldots,X_n$ are all finite-dimensional.
We have a commutative diagram
\[  \xymatrix{
\mathrm{Knots}(\Sigma_{g,n}^{\circ}) \ar@{->>}[d]_-{\pi} \ar[r]^{\mathbb{W}^{\varphi}} & \mathcal{L}_{g,n}^{\mathrm{inv}}(H) \ar[d]^-{\text{\em rep. from Thm.\,\ref{thmDerivedRepModuli}}}\\
\mathrm{Knots}(\Sigma_{g,n}) \ar[r]_-{\exists!}  & \mathrm{End}_{\Bbbk}\bigl( V^m(\Sigma^{\Bbbk}_{g,X_1,\ldots,X_n}) \bigr)
} \]
\end{theorem}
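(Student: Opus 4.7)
The plan is to prove both parts together, reducing everything to understanding what happens when $K_1$ and $K_2$ differ by a strand being pushed across the filled-in disk.

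\textbf{Step 1 (Topological reduction).} By a standard general position argument, an isotopy between $K_1$ and $K_2$ in $\Sigma_{g,n} \times [0,1]$ can be decomposed into a finite sequence of moves of two types: (a) isotopies entirely contained in $\Sigma_{g,n}^\circ \times [0,1]$, and (b) ``boundary crossings'', where at an isolated time the knot passes transversally through the disk $D \times [0,1]$ that we glue to $\Sigma_{g,n}^\circ \times [0,1]$ to obtain $\Sigma_{g,n} \times [0,1]$. For moves of type (a), Proposition \ref{propUnbasedKnotMap} together with Corollary \ref{coroUnbasedImap} already give $\mathbb{W}^\varphi(K_1) = \mathbb{W}^\varphi(K_2)$ in $\mathcal{L}_{g,n}^{\mathrm{inv}}(H)$. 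So we are reduced to the case of a single boundary crossing.

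\textbf{Step 2 (Algebraic translation of a boundary crossing).} For a single boundary crossing, choose based representatives $\mathbf{K}_1$ and $\mathbf{K}_2$ lying in the same layer $\Sigma_{g,n}^{\circ,\bullet} \times \{t\}$ near the crossing locus. The two are related, at the level of stated skein graphs, by sliding a strand around the boundary arc $\partial(\Sigma_{g,n}^{\circ,\bullet})$, i.e. by inserting (or deleting) a small copy of the boundary loop $\partial_{g,n}$ of \eqref{boundaryLoop}. Using the morphism property from Proposition \ref{propLoopMapMorphism} (which applies here because $\partial_{g,n}$ is a simple loop) and the stacking product in $\mathcal{S}^{\mathrm{st}}_H(\Sigma_{g,n}^{\circ,\bullet})$, we obtain an identity of the form
\[ \mathfrak{i}_{\mathbf{K}_2}(\varphi) = \mathfrak{i}_{\mathbf{K}_1}(\varphi_{(1)}) \cdot \mathfrak{i}_{\partial_{g,n}}(\varphi_{(2)}) \]
in $\mathcal{L}_{g,n}(H)$, where $\varphi_{(1)} \otimes \varphi_{(2)}$ is the coproduct (or a suitable variant twisted by $R$-matrices) in $\mathcal{L}_{0,1}(H)$. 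For $\varphi \in \mathcal{L}_{0,1}^{\mathrm{inv}}(H)$, the resulting identity in $\mathcal{L}_{g,n}^{\mathrm{inv}}(H)$ has a controlled form.

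\textbf{Step 3 (Triviality of the boundary loop on $\Bbbk$-colored invariants).} The key algebraic lemma is that for every $\psi \in \mathcal{L}_{0,1}^{\mathrm{inv}}(H)$ the element $\mathfrak{i}_{\partial_{g,n}}(\psi) \in \mathcal{L}_{g,n}^{\mathrm{inv}}(H)$ acts on $V(\Sigma^{\Bbbk}_{g,X_1,\ldots,X_n})$ as multiplication by the scalar $\psi(1_H)$. This comes from the explicit presentation \eqref{boundaryLoop} of $\partial_{g,n}$, the formulas of Proposition \ref{coroEquivariantRepsOfLgn} and Remark \ref{remarkActionL01Inv}, and the fact that an $H$-invariant $f \in V(\Sigma^{\Bbbk}_{g,X_1,\ldots,X_n})$ is annihilated by $h - \varepsilon(h)$ for all $h \in H$ in its target; the Drinfeld/pivotal elements appearing in the holonomy of $\partial_{g,n}$ then collapse to the identity. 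Combined with Step 2 and the normalization of the coproduct, this yields the equality of actions in $\End_\Bbbk\bigl(V(\Sigma^\Bbbk_{g,X_1,\ldots,X_n})\bigr)$, establishing part 1.

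\textbf{Step 4 (Derived extension).} For part 2, the action of $\mathbb{W}^\varphi(K)$ on $V^m(\Sigma^\Bbbk_{g,X_1,\ldots,X_n})$ is by pushforward $\rho_M(\mathbb{W}^\varphi(K))_*$ applied to cocycles $f : P_m \to M$ from a projective resolution $P_\bullet \to \Bbbk$. The claim is that the induced maps of $\mathbb{W}^\varphi(K_1)$ and $\mathbb{W}^\varphi(K_2)$ on cohomology agree. The strategy is to upgrade Step~3: show that the $H$-linear endomorphism
\[ \Delta := \rho_M\bigl(\mathbb{W}^\varphi(K_1) - \mathbb{W}^\varphi(K_2)\bigr) \colon M \to M \]
factors as $\Delta = \rho_M\bigl(\mathfrak{i}_{\partial_{g,n}}(\psi)\bigr) \circ \rho_M(a) - \rho_M(a')$ for appropriate $\psi, a, a'$ in such a way that the composition with any cocycle $P_m \to M$ out of a projective resolution of $\Bbbk$ is a coboundary. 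Equivalently, the naturality of the construction in the target $M$, combined with the scalar action on $V(\Sigma^\Bbbk_{g,X_1,\ldots,X_n})$ applied to the induced modules $V(\Sigma^{P_m}_{g,X_1,\ldots,X_n})$ (which are computed using the $H$-equivariant structure on $M$ pulled back along $P_m \to \Bbbk$), allows us to conclude that the difference induces zero in cohomology.

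\textbf{Main obstacle.} The delicate point is Step 4: ensuring that the scalar-action argument of Step 3, which holds on $V^0 = \Hom_H(\Bbbk, M)$, propagates correctly to $V^m$ for $m \geq 1$. The cleanest resolution is to use that the action on the whole complex $V(\Sigma^{P_\bullet}_{g,X_1,\ldots,X_n})$ depends only on $M$ and is natural in $P_\bullet$, together with the fact that $\mathfrak{i}_{\partial_{g,n}}(\psi)$ acts via the image of $\psi(1_H) \in \Bbbk$ under the augmentation-induced map between cohomologies computed at the level of the resolution.
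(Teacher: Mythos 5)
Your Steps 1--3 essentially reproduce the paper's reduction: decompose the isotopy into moves inside $\Sigma_{g,n}^{\circ}\times[0,1]$ plus boundary crossings, translate a boundary crossing into the identity $\mathfrak{i}_{\mathbf{K}_{\partial}}(\varphi)=\mathfrak{i}_{\mathbf{K}}(\varphi_{(1)})\,\mathfrak{i}_{\partial_{g,n}}\bigl(S_{\mathcal{L}_{0,1}}(\varphi_{(2)})\bigr)$ (your formula is missing the antipode, but you hedge on the exact twist), and use that $\mathfrak{i}_{\partial_{g,n}}(\psi)$ acts on the target by $w\mapsto w\smallsquare S^{-1}(\Phi_{0,1}(\psi))$, hence by the scalar $\psi(1_H)$ on invariant vectors. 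This correctly proves part~1.

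However, Step 4 has a genuine gap, and the mechanism you sketch to close it would not work. The scalar-action argument does \emph{not} propagate from $V^0$ to $V^m$: a cocycle $f\in\Hom_H(P_m,M)$ is $H$-linear but its values $f(p)$ are not $H$-invariant, so $\mathfrak{i}_{\partial_{g,n}}(\psi)$ does not act by a scalar on $\Hom_H(P_m,M)$ for $m\geq 1$. What actually happens is that the boundary-loop action gets transferred to the \emph{source}: $\mathfrak{i}_{\mathbf{K}_{\partial}}(\varphi)\cdot f(p)=s_i\cdot f\bigl(p\smallsquare S^{-1}(t_i)\bigr)$ where $s_i\otimes t_i=\mathfrak{i}_{\mathbf{K}}(\varphi_{(1)})\otimes\Phi_{0,1}(S_{\mathcal{L}_{0,1}}(\varphi_{(2)}))$. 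The two operators on $\Hom_H(P_m,M)$ are therefore genuinely different for each $m$; they only become equal after passing to cohomology, so one must exhibit an explicit chain homotopy. The paper does this by (i) using finite-dimensionality of $M=(H^{\circ})^{\otimes g}\otimes X_1\otimes\ldots\otimes X_n$ to identify $\Hom_H(P,M)\cong\Hom_H(M^*\otimes P,\Bbbk)$, (ii) observing that $M^*\otimes P_{\bullet}$ is a projective resolution of $M^*$, (iii) packaging the two actions as $H$-linear chain endomorphisms $T^{\partial}_{\bullet}$ and $T_{\bullet}$ of that resolution --- where the $H$-linearity of $T^{\partial}_P$ is a nontrivial computation relying on the equivariance $\mathrm{coad}^r(h)(s_i)\otimes t_i=s_i\otimes\mathrm{ad}^r(S(h))(t_i)$, which itself uses $\varphi\in\mathcal{L}_{0,1}^{\mathrm{inv}}(H)$ --- and (iv) checking they agree in degree $-1$ and invoking the comparison theorem to get a chain homotopy. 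None of these ingredients appears in your proposal; in particular you never explain why finite-dimensionality is needed in part~2 but not in part~1, which is precisely because of the duality step (i). Your ``Main obstacle'' paragraph correctly identifies where the difficulty lies, but the proposed resolution via ``naturality'' and an ``augmentation-induced map'' is not a proof and, read literally, asserts a scalar action on the cochain spaces that is false.
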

\noindent In item 1, $V(\Sigma^{\Bbbk}_{g,X_1,\ldots,X_n})$ is just the subspace of invariant elements in $(H^\circ)^{\otimes g} \otimes X_1 \otimes \ldots \otimes X_n$ for the right action of $H$ in  \eqref{HActionRepsOfLgn}.

\smallskip

\indent In order to prove the theorem, note that two knots $K_1, K_2$ satisfy $\pi(K_1) = \pi(K_2)$ if and only if they are related by a finite sequence of the following transformation taking place in a small neighborhood of the boundary annulus $\partial(\Sigma_{g,n}^{\circ}) \times [0,1]$:
\begin{equation}\label{boundaryMove}
\begingroup%
  \makeatletter%
  \providecommand\color[2][]{%
    \errmessage{(Inkscape) Color is used for the text in Inkscape, but the package 'color.sty' is not loaded}%
    \renewcommand\color[2][]{}%
  }%
  \providecommand\transparent[1]{%
    \errmessage{(Inkscape) Transparency is used (non-zero) for the text in Inkscape, but the package 'transparent.sty' is not loaded}%
    \renewcommand\transparent[1]{}%
  }%
  \providecommand\rotatebox[2]{#2}%
  \newcommand*\fsize{\dimexpr\f@size pt\relax}%
  \newcommand*\lineheight[1]{\fontsize{\fsize}{#1\fsize}\selectfont}%
  \ifx\svgwidth\undefined%
    \setlength{\unitlength}{157.77130072bp}%
    \ifx\svgscale\undefined%
      \relax%
    \else%
      \setlength{\unitlength}{\unitlength * \real{\svgscale}}%
    \fi%
  \else%
    \setlength{\unitlength}{\svgwidth}%
  \fi%
  \global\let\svgwidth\undefined%
  \global\let\svgscale\undefined%
  \makeatother%
  \begin{picture}(1,0.26530094)%
    \lineheight{1}%
    \setlength\tabcolsep{0pt}%
    \put(0,0){\includegraphics[width=\unitlength,page=1]{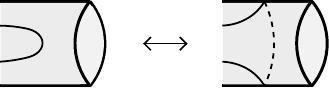}}%
  \end{picture}%
\endgroup%

\end{equation}
On the left picture the strand represent any portion of a knot which is brought near the boundary by isotopy. We need to find how the transformation \eqref{boundaryMove} interacts with the knot maps $\mathfrak{i}_K$. Consider two based knots $\mathbf{K}$ and $\mathbf{K}_\partial$ which look as follows in a neighborhood of $\partial(\Sigma_{g,n}^{\circ,\bullet}) \times [0,1]$:
\begin{equation}\label{basedBoundaryMove}
\begingroup%
  \makeatletter%
  \providecommand\color[2][]{%
    \errmessage{(Inkscape) Color is used for the text in Inkscape, but the package 'color.sty' is not loaded}%
    \renewcommand\color[2][]{}%
  }%
  \providecommand\transparent[1]{%
    \errmessage{(Inkscape) Transparency is used (non-zero) for the text in Inkscape, but the package 'transparent.sty' is not loaded}%
    \renewcommand\transparent[1]{}%
  }%
  \providecommand\rotatebox[2]{#2}%
  \newcommand*\fsize{\dimexpr\f@size pt\relax}%
  \newcommand*\lineheight[1]{\fontsize{\fsize}{#1\fsize}\selectfont}%
  \ifx\svgwidth\undefined%
    \setlength{\unitlength}{210.97314811bp}%
    \ifx\svgscale\undefined%
      \relax%
    \else%
      \setlength{\unitlength}{\unitlength * \real{\svgscale}}%
    \fi%
  \else%
    \setlength{\unitlength}{\svgwidth}%
  \fi%
  \global\let\svgwidth\undefined%
  \global\let\svgscale\undefined%
  \makeatother%
  \begin{picture}(1,0.24564195)%
    \lineheight{1}%
    \setlength\tabcolsep{0pt}%
    \put(0,0){\includegraphics[width=\unitlength,page=1]{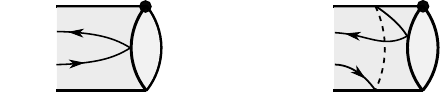}}%
    \put(-0.00069959,0.12763786){\color[rgb]{0,0,0}\makebox(0,0)[lt]{\lineheight{1.25}\smash{\begin{tabular}[t]{l}$\mathbf{K} =$\end{tabular}}}}%
    \put(0.60548976,0.13003588){\color[rgb]{0,0,0}\makebox(0,0)[lt]{\lineheight{1.25}\smash{\begin{tabular}[t]{l}$\mathbf{K} _{\partial}=$\end{tabular}}}}%
    \put(0,0){\includegraphics[width=\unitlength,page=2]{basedBoundaryMove.pdf}}%
  \end{picture}%
\endgroup%

\end{equation}
Thanks to isotopy we can always assume that the unbased knots in \eqref{boundaryMove} have this orientation in the neighborhood of the boundary.

\begin{lemma}\label{lemmaBoundaryMoveAndKnotMaps}
For all $\varphi \in \mathcal{L}_{0,1}(H)$,
\[ \mathfrak{i}_{\mathbf{K}_{\scriptstyle \partial}}(\varphi) = \mathfrak{i}_{\mathbf{K}}(\varphi_{(1)}) \, \mathfrak{i}_{\partial_{\scriptstyle g,n}}\!\bigl( S_{\mathcal{L}_{0,1}}(\varphi_{(2)}) \bigr) \]
with the coproduct \eqref{coproduitDualRestreint}, the antipode \eqref{antipodeL01} and $\partial_{g,n}$ is the boundary loop \eqref{boundaryLoop}.
\end{lemma}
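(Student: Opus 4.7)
Since $\mathrm{hol}^{\mathrm{st}} : \mathcal{S}^{\mathrm{st}}_H(\Sigma_{g,n}^{\circ,\bullet}) \to \mathcal{L}_{g,n}(H)$ is an isomorphism of algebras, the formula to be proven is equivalent to
\[ \mathfrak{j}_{\mathbf{K}_\partial}(\varphi) = \mathfrak{j}_{\mathbf{K}}(\varphi_{(1)}) \ast \mathfrak{j}_{\partial_{g,n}}\!\bigl( S_{\mathcal{L}_{0,1}}(\varphi_{(2)}) \bigr) \]
in $\mathcal{S}^{\mathrm{st}}_H(\Sigma_{g,n}^{\circ,\bullet})$, where $\ast$ is the stacking product. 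By the definition of $H^\circ$ and $\Bbbk$-linearity, it is enough to check this on an arbitrary matrix coefficient $\varphi = {_X\phi^\sigma_x}$.

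The plan is then to compute $\mathfrak{j}_{\mathbf{K}_\partial}({_X\phi^\sigma_x}) = {_X(\mathbf{K}_\partial)^\sigma_x}$ graphically. By comparing \eqref{basedBoundaryMove}, the skein representing $\mathbf{K}_\partial$ coincides with that of $\mathbf{K}$ everywhere except in a neighborhood of the basepoint, where an additional strand runs once around the whole boundary annulus before reconnecting to $\mathbf{K}$. On this extra piece of strand choose a point lying inside $\partial(\Sigma_{g,n}^{\circ,\bullet}) \times [0,1]$ but above all the boundary points of $\mathbf{K}$, and apply the ``resolution of identity'' stated skein relation (vii) of Fig.~\ref{statedSkeinRels}: this inserts $\mathrm{id}_X = \sum_i x^i \otimes x_i$ on the strand and cleaves the diagram into two stated skeins stacked along the height direction.

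The upper factor is precisely the original $\mathbf{K}$ with the new state $x_i$ in place of $x$ at the bottom, namely $_X\mathbf{K}^\sigma_{x_i} = \mathfrak{j}_{\mathbf{K}}({_X\phi^\sigma_{x_i}})$. The lower factor is a simple loop which follows the boundary once, with states $x^i$ (top) and $x$ (bottom). Because the preferred direction on $\partial(\Sigma_{g,n}^{\circ,\bullet})$ is opposite to that of $\partial_{g,n}$ (see \eqref{surfaceEnRuban}), this loop is isotopic to $\partial_{g,n}$ equipped with the reversed orientation. Hence, by item 2 of Lemma \ref{lemmaChangeOrientationKnotMap}, it equals $\mathfrak{j}_{\partial_{g,n}}\!\bigl( S_{\mathcal{L}_{0,1}}({_X\phi^{x^i}_x}) \bigr)$. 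Summing on $i$ and recognising the coproduct formula $\Delta({_X\phi^\sigma_x}) = {_X\phi^\sigma_{x_i}} \otimes {_X\phi^{x^i}_x}$ of \eqref{coproduitDualRestreint} yields the announced identity.

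The main obstacle is to carry out the identity-insertion step rigorously: one must verify that, after applying (vii), the two resulting pieces can be isotoped into two genuine layers $\Sigma^{\circ,\bullet}_{g,n} \times [0, \tfrac12]$ and $\Sigma^{\circ,\bullet}_{g,n} \times [\tfrac12,1]$ without creating any additional crossing, twist or pivotal element; this requires that the cut be made on a segment of the strand that is parallel to $\partial_{g,n}$ and lies above the entire knot $\mathbf{K}$, which is possible because the extra winding of $\mathbf{K}_\partial$ around the boundary can be pushed to an arbitrary height in $\partial(\Sigma_{g,n}^{\circ,\bullet}) \times [0,1]$ without modifying the isotopy class. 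The orientation computation in the second factor is the delicate point, as it is the origin of the antipode $S_{\mathcal{L}_{0,1}}$ in the formula.
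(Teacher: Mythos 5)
Your proposal is correct and follows essentially the same route as the paper's proof: starting from the definition of $\mathfrak{j}_{\mathbf{K}_\partial}$ on a matrix coefficient ${_X\phi^\sigma_x}$, the paper likewise isotopes the extra boundary winding (invoking naturality of the twist), cuts it with relation (vii) of Fig.~\ref{statedSkeinRels} to obtain $\mathfrak{j}_{\mathbf{K}}({_X\phi^\sigma_{x_i}})\,\mathfrak{j}_{\partial_{g,n}^{-1}}({_X\phi^{x^i}_x})$, and concludes via the coproduct formula \eqref{coproduitDualRestreint}, item 2 of Lemma \ref{lemmaChangeOrientationKnotMap} and an application of $\mathrm{hol}^{\mathrm{st}}$. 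The only detail to watch is the stacking order: with the convention that $G_1 \ast G_2$ places $G_1$ below $G_2$, the factor $\mathfrak{j}_{\mathbf{K}}$ must land in the lower layer and the boundary loop in the upper one, which is the opposite of the upper/lower labels in your sketch (the states and the final formula you write are nonetheless the correct ones).
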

\begin{proof}
It is a simple computation in $\mathcal{S}_H^{\mathrm{st}}(\Sigma_{g,n}^{\circ,\bullet})$. Since $\mathbf{K}_{\partial}$ is negatively oriented,
\begin{center}
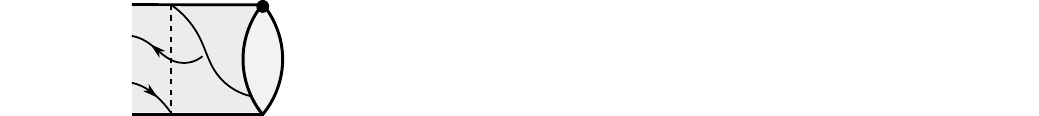
\end{center}
The first equality is from the definition of $\mathfrak{j}_{\mathbf{K}_{\partial}}$ in \eqref{defJGammaInverse}, the second equality uses isotopy and naturality of the twist, the third equality uses (vii) in Fig.~\ref{statedSkeinRels} and the last equality is by the definition of $\mathfrak{j}_{\mathbf{K}}$, $\mathfrak{j}_{\partial_{\scriptstyle g,n}}$ in \eqref{defJGamma}, \eqref{defJGammaInverse} and the definition of the product in $\mathcal{S}^{\mathrm{st}}_H(\Sigma_{g,n}^{\circ,\bullet})$. The result now follows from the formula \eqref{coproduitDualRestreint} which gives the coproduct on matrix coefficients, item 2 in Lemma \ref{lemmaChangeOrientationKnotMap} and finally by applying $\mathrm{hol}^{\mathrm{st}}$, recall the definition of $\mathrm{i}_{\mathbf{K}}$ in \eqref{defIGamma}.
\end{proof}

\noindent The use of the resolution \eqref{resolutionVDual} and of the comparison theorem in the proof below were inspired by \cite[p.\,50]{LMSS}, although they are used to prove a different result there.

\begin{proof}[Proof of Theorem \ref{thmClosingBoundary}.]
In both items, for $K_1, K_2 \in \mathrm{Knots}(\Sigma_{g,n}^{\circ})$ such that $\pi(K_1) = \pi(K_2)$, we want to show that the representations of $\mathfrak{i}_{K_1}(\varphi)$ and $\mathfrak{i}_{K_2}(\varphi)$ are equal. It suffices to show this for $\mathfrak{i}_{\mathbf{K}}(\varphi)$ and $\mathfrak{i}_{\mathbf{K}_{\scriptstyle \partial}}(\varphi)$, where $\mathbf{K}$ and $\mathbf{K}_{\partial}$ are based knots which look like \eqref{basedBoundaryMove} in a neighborhood of the boundary. Write
\[ M = (H^\circ)^{\otimes g} \otimes X_1 \otimes \ldots \otimes X_n \]
equipped with the $H$-equivariant structure from Prop.\,\ref{coroEquivariantRepsOfLgn}, so that $V(\Sigma^{\Bbbk}_{g,X_1,\ldots,X_n}) = \Hom_H(\Bbbk,M) \cong M^{\mathrm{inv}}$ and $V^m(\Sigma^{\Bbbk}_{g,X_1,\ldots,X_n}) = \Ext^m_H(\Bbbk,M)$. Let $\varphi \in \mathcal{L}_{0,1}^{\mathrm{inv}}(H)$ and consider the element
\[ s_i \otimes t_i = \mathfrak{i}_{\mathbf{K}}(\varphi_{(1)}) \otimes \Phi_{0,1}\bigl( S_{\mathcal{L}_{0,1}}(\varphi_{(2)}) \bigr) \in \mathcal{L}_{g,n}(H) \otimes H \]
with implicit summation on $i$, where $\Phi_{0,1}$ has been introduced in \eqref{RSD}. According to Lemma \ref{lemmaBoundaryMoveAndKnotMaps} and Proposition \ref{actionBoundaryOnEquivReps} in App.\,\ref{appendixQMMLgn} we have
\begin{equation}\label{actionInTermOfEquivTensor}
\mathfrak{i}_{\mathbf{K}_{\scriptstyle \partial}}(\varphi) \cdot w = \mathfrak{i}_{\mathbf{K}}(\varphi_{(1)}) \, \mathfrak{i}_{\partial_{\scriptstyle g,n}}\!\bigl( S_{\mathcal{L}_{0,1}}(\varphi_{(2)}) \bigr) \cdot w = s_i \cdot \bigl( w \smallsquare S^{-1}(t_i) \bigr)
\end{equation}
for all $w \in M$. Also note that
\begin{equation}\label{cancellationEquivTensor}
s_i\,\varepsilon(t_i) = \mathfrak{i}_{\mathbf{K}}(\varphi)
\end{equation}
because $\varepsilon_H \circ \Phi_{0,1} = \varepsilon_{H^*}$ and $\varepsilon_{H^*} \circ S_{\mathcal{L}_{0,1}} = \varepsilon_{H^*}$.
\\{\em Proof of item 1.} If $w \in M^{\mathrm{inv}}$ then by \eqref{actionInTermOfEquivTensor} and \eqref{cancellationEquivTensor}, $\mathfrak{i}_{\mathbf{K}_{\scriptstyle \partial}}(\varphi) \cdot w = s_i \cdot \bigl( w \, \varepsilon(S^{-1}(t_i)) \bigr) = \mathfrak{i}_{\mathbf{K}}(\varphi) \cdot w$.

\noindent {\em Proof of item 2.} Since $\varphi \in \mathcal{L}_{0,1}^{\mathrm{inv}}(H)$, it is easily seen from \eqref{usualCoprodHDual} and \eqref{L01Inv} that
\[ \forall \, h \in H, \:\: \mathrm{coad}^r(h)(\varphi_{(1)}) \otimes \varphi_{(2)} =  \varphi_{(1)} \otimes \mathrm{coad}^r\bigl(S(h)\bigr)(\varphi_{(2)}). \]
By $H$-linearity of the maps $\mathfrak{i}_{\mathbf{K}}$, $S_{\mathcal{L}_{0,1}}$ and $\Phi_{0,1}$ we deduce that
\begin{equation}\label{equivariantTensor}
\forall \, h \in H, \quad \mathrm{coad}^r(h)(s_i) \otimes t_i = s_i \otimes \mathrm{ad}^r\bigl( S(h) \bigr)(t_i)
\end{equation}
where $\mathrm{ad}^r$ is the right adjoint action \eqref{adrH}.
Recall that for all $P \in \mathrm{mod}\text{-}H$ the representation \eqref{pushfrowardRepModuliAlg} of $\mathcal{L}_{g,n}^{\mathrm{inv}}(H)$ on $\Hom_H(P,M)$ is given by acting on the target. Using Lemma \ref{lemmaBoundaryMoveAndKnotMaps} and Proposition \ref{actionBoundaryOnEquivReps} we get
\begin{equation}\label{actionEquivTensorQMM}
\textstyle \bigl\langle \mathfrak{i}_{\mathbf{K}_{\scriptstyle \partial}}(\varphi) \cdot f, p \bigr\rangle = \mathfrak{i}_{\mathbf{K}}(\varphi) \cdot f(p) \overset{\eqref{actionInTermOfEquivTensor}}{=} s_i \cdot \bigl( f(p) \smallsquare S^{-1}(t_i) \bigr) = s_i \cdot f\bigl( p \smallsquare S^{-1}(t_i) \bigr)
\end{equation}
for all $f \in \Hom_H(P,M)$ and $p \in P$. Endow the dual vector space $M^*$ with the right $H$-action defined by
\begin{equation}\label{HmodStructVdualProof}
\forall \, \xi \in M^*, \:\: \forall \, h \in H, \:\: \forall \, w \in M, \quad \langle \xi \smallsquare h, w \rangle = \xi\bigl( w \smallsquare S^{-1}(h) \bigr).
\end{equation}
By finite-dimensionality of $M$ (due to the assumption), there is an isomorphism
\[ J_P : \Hom_H(P,M) \overset{\sim}{\longrightarrow} \Hom_H(M^* \otimes P, \Bbbk), \quad \langle J_P(f), \xi \otimes p \rangle = \xi(f(p)) \]
and the family $J = (J_P)_{P \in \mathrm{mod}\text{-}H}$ is a natural isomorphism.
Through $J_P$ we can transport the representation of $\mathcal{L}_{g,n}^{\mathrm{inv}}(H)$ on $\Hom_H(P,M)$ to a representation on $\Hom_H(M^* \otimes P,\Bbbk)$. For $a \in \mathcal{L}_{g,n}^{\mathrm{inv}}(H)$ and $F \in \Hom_H(M^* \otimes P, \Bbbk)$ it is defined by 
\begin{equation}\label{actionAinvHomHVstarP}
\forall \, \xi \otimes p \in M^* \otimes P, \quad \langle a \cdot F, \, \xi \otimes p \rangle = F\bigl( \xi(a \cdot ?) \otimes p  \bigr)
\end{equation}
where $\xi(a \cdot ?)$ is the linear form $w \mapsto \xi(a\cdot w)$. It follows from \eqref{actionEquivTensorQMM} and \eqref{actionAinvHomHVstarP} that
\begin{equation}\label{actionEquivTensorViaMapsT}
\forall \, F \in \Hom_H(M^* \otimes P, \Bbbk), \qquad \mathfrak{i}_{\mathbf{K}_{\partial}}(\varphi) \cdot F = F \circ T^{\partial}_P \quad \text{and} \quad \mathfrak{i}_{\mathbf{K}}(\varphi) \cdot F = F \circ T_P.
\end{equation}
where $T^{\partial}_P$ and $T_P$ are the endomorphisms of $M^* \otimes P$ defined by
\[ T^{\partial}_P(\xi \otimes p) = \xi(s_i \cdot ?) \otimes \bigl( p \smallsquare S^{-1}(t_i) \bigr), \quad T_P(\xi \otimes p) = \xi\bigl( \mathfrak{i}_{\mathbf{K}}(\varphi) \cdot ? \bigr) \otimes p. \]
A key property is that these endomorphisms are $H$-linear. Indeed,
\begin{align*}
T^{\partial}_P(\xi \otimes p) \smallsquare h &\overset{\eqref{HmodStructVdualProof}}{=} \xi\bigl( s_i \cdot (? \smallsquare S^{-1}(h_{(1)}) \bigr) \otimes \bigl( p \smallsquare S^{-1}(t_i)h_{(2)} \bigr)\\
&= \xi\bigl( s_i \cdot (? \smallsquare S^{-1}(h_{(1)}) \bigr) \otimes \bigl( p \smallsquare h_{(3)}S^{-1}\bigl( \mathrm{ad}^r(h_{(2)})(t_i) \bigr) \bigr)\\
&\overset{\eqref{equivariantTensor}}{=} \xi\bigl( \mathrm{coad}^r(S^{-1}(h_{(2)}))(s_i) \cdot (? \smallsquare S^{-1}(h_{(1)}) \bigr) \otimes \bigl( p \smallsquare h_{(3)}S^{-1}(t_i) \bigr)\\
&\overset{\eqref{conditionModuleInModH}}{=} \xi\bigl( (s_i \cdot ?) \smallsquare S^{-1}(h_{(1)}) \bigr) \otimes \bigl( p \smallsquare h_{(2)}S^{-1}(t_i) \bigr) = T^{\partial}_P\bigl( (\xi \otimes p) \smallsquare h \bigr)
\end{align*}
while $H$-linearity of $T_P$ is clear because $\mathfrak{i}_{\mathbf{K}}(\varphi) \in \mathcal{L}_{g,n}^{\mathrm{inv}}(H)$ by $H$-linearity of $\mathfrak{i}_{\mathbf{K}}$ (and hence the action of $\mathfrak{i}_{\mathbf{K}}(\varphi)$ on $M$ commutes with $\smallsquare$ by item 1 in Lemma \ref{lemmaObviousRemarksInvSubalgebra}). The following naturality property is readily seen: for all $d \in \Hom_H(P,Q)$ we have commutative diagrams
\begin{equation}\label{naturalityEquivTensorAction}
\xymatrix@C=4em@R=1.5em{
M^* \otimes Q \ar[d]_{T^{\partial}_Q} & \ar[l]_{\mathrm{id}_{M^*} \otimes d} M^* \otimes P \ar[d]^{T^{\partial}_P}\\
M^* \otimes Q & \ar[l]^{\mathrm{id}_{M^*} \otimes d} M^* \otimes P
} \qquad \qquad \xymatrix@C=4em@R=1.5em{
M^* \otimes Q \ar[d]_{T_Q} & \ar[l]_{\mathrm{id}_{M^*} \otimes d} M^* \otimes P \ar[d]^{T_P}\\
M^* \otimes Q & \ar[l]^{\mathrm{id}_{M^*} \otimes d} M^* \otimes P
} \end{equation}
\indent Now let $0 \longleftarrow \Bbbk \xleftarrow{\:\, d_0 \:\,} P_0 \xleftarrow{\:\,d_1\:\,} P_1 \xleftarrow{\:\,d_2\:\,} \ldots$ be a projective resolution of $\Bbbk$ in $\mathrm{mod}\text{-}H$. For $P,X \in \mathrm{mod}\text{-}H$, it is well-known that if $P$ is projective then so is $X \otimes P$. Moreover the functor $M^* \otimes -$ is clearly exact (because it is exact in $\mathrm{Vect}_{\Bbbk}$). Thus
\begin{equation}\label{resolutionVDual}
0 \longleftarrow M^* \xleftarrow{\:\, \mathrm{id}_{M^*} \otimes d_0 \:\,} M^* \otimes P_0 \xleftarrow{\:\, \mathrm{id}_{M^*} \otimes d_1 \:\,} M^* \otimes P_1 \xleftarrow{\:\, \mathrm{id}_{M^*} \otimes d_2 \:\,} \ldots
\end{equation}
is a projective resolution of $M^*$ in $\mathrm{mod}\text{-}H$. This observation together with the naturality of $J$ yields the isomorphism
\begin{equation}\label{isoDualityExt}
\overline{J}{^m} : \Ext^m_H(\Bbbk,M) \overset{\sim}{\longrightarrow} \Ext^m_H(M^*, \Bbbk), \quad [f] \mapsto \bigl[ J_{P_m}(f) \bigr]
\end{equation}
for all $m \geq 0$. Through $\overline{J}{^m}$, the representation from Th.\,\ref{thmDerivedRepModuli} translates of course to $a \cdot [F] = [a \cdot F]$ for all $a \in \mathcal{L}_{g,n}^{\mathrm{inv}}(H)$ with $F \in \Hom_H(M^* \otimes P_m,\Bbbk)$ and $a \cdot F$ defined in \eqref{actionAinvHomHVstarP}.

\indent We are finally ready to prove item 2. By \eqref{naturalityEquivTensorAction}, the collections of $H$-morphisms $T^{\partial}_{\bullet} = \bigl( T^{\partial}_{\Bbbk}, T^{\partial}_{P_0}, T^{\partial}_{P_1}, \ldots \bigr)$ and $T_{\bullet} = \bigl( T_{\Bbbk}, T_{P_0}, T_{P_1}, \ldots \bigr)$ both are endomorphisms of the complex \eqref{resolutionVDual}. Note from \eqref{cancellationEquivTensor} that
\[ \textstyle T^{\partial}_{\Bbbk}(\xi \otimes 1) = \xi(s_i \cdot ?) \otimes \bigl( 1 \smallsquare S^{-1}(t_i) \bigr) = \xi(s_i \cdot ?) \otimes  \varepsilon\bigl( S^{-1}(t_i) \bigr) = T_{\Bbbk}(\xi \otimes 1) \]
for all $\xi \in M^*$, so that $T^{\partial}_{\Bbbk} = T_{\Bbbk}$. Hence the endomorphisms of complexes $T^{\partial}_{\bullet}$ and $T_{\bullet}$ both lift $T_{\Bbbk}$ along the projective resolution \eqref{resolutionVDual}. By the comparison theorem, see e.g. \cite[Th.\,6.1]{macLane}, it follows that $T^{\partial}_{\bullet}$ and $T_{\bullet}$ are chain-homotopic. Applying the pullback functor $\Hom_H(-,\Bbbk)$ to $T^{\partial}_{\bullet}$, $T_{\bullet}$ and to the homotopy between them, we conclude that the endomorphisms of complexes
\[ \Hom_H(M^* \otimes P_{\bullet},\,\Bbbk) \to \Hom_H(M^* \otimes P_{\bullet},\,\Bbbk), \quad F \mapsto F \circ T^{\partial}_{P_{\bullet}} \quad \text{and} \quad F \mapsto F \circ T_{P_{\bullet}} \]
are chain-homotopic as well. They thus yield the same maps on cohomology, \textit{i.e.} on $\Ext_H^{\bullet}(M^*,\Bbbk)$. Hence for all cohomology class $[F] \in \Ext^m_H(M^*, \Bbbk)$ coming from a cocycle $F \in \Hom_H(M^* \otimes P_m,\Bbbk)$ we have
\[ \mathfrak{i}_{\mathbf{K}_{\scriptstyle \partial}}(\varphi) \cdot [F] = [\mathfrak{i}_{\mathbf{K}_{\scriptstyle \partial}}(\varphi) \cdot F ] \overset{\eqref{actionEquivTensorViaMapsT}}{=} [F \circ T^{\partial}_{P_m}] = [F \circ T_{P_m}] \overset{\eqref{actionEquivTensorViaMapsT}}{=} [\mathfrak{i}_{\mathbf{K}}(\varphi) \cdot F] = \mathfrak{i}_{\mathbf{K}}(\varphi) \cdot [F] \]
which is the desired result through the isomorphism \eqref{isoDualityExt}.
\end{proof}

\begin{remark}\label{remarkGeneralizationW}
The maps $\mathbb{W}^{\varphi}$ defined in \eqref{WilsonKnotMap} can be generalized to a map $\mathbb{W}$ on the set of framed and oriented {\em links} in $\Sigma_{g,n}^{\circ} \times [0,1]$, which are moreover colored by elements in $\mathcal{L}_{0,1}^{\mathrm{inv}}(H)$ \textit{i.e.} each connected component of the link is labelled by such an element; see \cite[Def.\,4.13]{FaitgHol}\footnote{In \cite{FaitgHol} the links are colored by symmetric linear forms, \textit{i.e.} $\varphi(xy) = \varphi(yx)$. These are in bijection with $\mathcal{L}_{0,1}^{\mathrm{inv}}(H)$ \textit{via} $\varphi \mapsto \varphi \triangleleft g$ where $g$ is the pivotal element, because of \eqref{L01Inv} and  \eqref{proprietePivot}.}. It is a generalization in the sense that $\mathbb{W}^{\varphi}(K) = \mathbb{W}(K^{\varphi})$ where $K^{\varphi}$ is $K$ colored by $\varphi$. It is possible to define $\mathbb{W}$ by means of stated skein algebras as we did for knots. It is very likely that Theorem \ref{thmClosingBoundary} remains true when generalized to these $\mathcal{L}_{0,1}^{\mathrm{inv}}(H)$-colored links, with $\mathbb{W}^{\varphi}$ replaced by $\mathbb{W}$. We have restricted ourselves to knots to avoid technicalities induced by having multiple basepoints (one for each connected component of the link).
\end{remark}

\subsection{Projective representations of mapping class groups}\label{sectionDerivedRepMCG}
\indent The {\em mapping class group} of an oriented surface $\Sigma$ is the group of orientation-preserving homeomorphisms of $\Sigma$ considered up to isotopy; we denote it by $\mathrm{MCG}(\Sigma)$. A reference is \cite{FM}.

\indent Recall that $\Sigma_{g,0}$ denotes the closed oriented surface of genus $g$ and $\Sigma_{g,0}^{\circ}$ is $\Sigma_{g,0}$ with one boundary circle; in this section we mainly work with such surfaces. Our goal is to obtain projective representations of the mapping class group $\mathrm{MCG}(\Sigma_{g,0}^{\circ})$ on the derived spaces $V^m(\Sigma^X_{g})$ as a corollary of the representations of $\mathcal{L}_{g,0}^{\mathrm{inv}}(H)$ on these spaces (Thm.\,\ref{thmDerivedRepModuli}). Moreover we prove as a corollary of Theorem~\ref{thmClosingBoundary} that the choice $X = \Bbbk$ ``closes the boundary'' and gives projective representations of $\mathrm{MCG}(\Sigma_{g,0})$. These representations are shown to be equivalent to those from \cite{LMSS} and generalise our previous work \cite{FaitgMCG} where we constructed a projective representation of $\mathrm{MCG}(\Sigma_{g,0}^{\circ})$ on $(H^*)^{\otimes g} \cong V^0(\Sigma^H_g)$ and of $\mathrm{MCG}(\Sigma_{g,0})$ on $\mathrm{Inv}\bigl( (H^*)^{\otimes g} \bigr) \cong V^0(\Sigma^{\Bbbk}_g)$. These results require to assume that the ribbon Hopf algebra $H$ is finite-dimensional and factorizable.

\indent The starting point of the construction is an action of $\mathrm{MCG}(\Sigma_{g,n}^{\circ})$ on $\mathcal{L}_{g,n}(H)$ by algebra automorphisms. While in previous papers \cite{AS,FaitgMCG} this action was rather painful to define (see Remark \ref{remarkComparisonPreviousPapers}), the isomorphism $\mathcal{L}_{g,n}(H) \cong \mathcal{S}_H^{\mathrm{st}}(\Sigma_{g,n}^{\circ,\bullet})$ makes its existence obvious.

\subsubsection{Mapping class group action on $\mathcal{L}_{g,n}(H)$}\label{MCGactionLgn}
\indent Here $H$ is any ribbon Hopf algebra over a field $\Bbbk$. Recall from \eqref{actionMCGAutAlg} the action of the group $\mathrm{MCG}(\Sigma_{g,n}^{\circ})$ on $\mathcal{S}_H^{\mathrm{st}}(\Sigma_{g,n}^{\circ,\bullet})$ by algebra automorphisms. Thanks to the isomorphism $\mathcal{L}_{g,n}(H) \cong \mathcal{S}^{\mathrm{st}}_H(\Sigma_{g,n}^{\circ,\bullet})$ we obtain a group morphism
\begin{equation}\label{morphismeMCGAutAlgLgn}
\mathrm{MCG}(\Sigma_{g,n}^{\circ}) \to \mathrm{Aut}_{\mathrm{alg}}\bigl( \mathcal{L}_{g,n}(H) \bigr), \quad f \mapsto \widetilde{f}
\end{equation}
which actually takes values in the subgroup of $H$-module-algebra automorphisms, as already noted below \eqref{actionMCGAutAlg}. Recall the canonical embeddings $\mathfrak{i}_{b_{\scriptstyle i}}, \mathfrak{i}_{a_{\scriptstyle i}}, \mathfrak{i}_{m_{\scriptstyle g+j}} : \mathcal{L}_{0,1}(H) \to \mathcal{L}_{g,n}(H)$ from \eqref{embeddingsLgn}; they are particular cases of the based knots maps \eqref{defIGamma}. Using the compatibility \eqref{tildaFJGamma} between the mapping class group action and the based knots maps, we see that $\widetilde{f}$ is described as follows:
\begin{corollary}\label{coroLiftMappingClass}
Let $f \in \mathrm{MCG}(\Sigma_{g,n}^{\circ})$. Then for any based knot $\mathbf{K}$ we have $\widetilde{f} \circ \mathfrak{i}_{\mathbf{K}} = \mathfrak{i}_{f(\mathbf{K})}$. In particular
\[ \forall \, \varphi \in \mathcal{L}_{0,1}(H), \quad \widetilde{f}\bigl( \mathfrak{i}_{b_i}(\varphi) \bigr) = \mathfrak{i}_{f(b_i)}(\varphi), \quad \widetilde{f}\bigl( \mathfrak{i}_{a_i}(\varphi) \bigr) = \mathfrak{i}_{f(a_i)}(\varphi), \quad \widetilde{f}\bigl( \mathfrak{i}_{m_{g+j}}(\varphi) \bigr) = \mathfrak{i}_{f(m_{g+j})}(\varphi) \]
for all $1 \leq i \leq g$, $1 \leq j \leq n$ and these values entirely define $\widetilde{f}$ by item 2 in Prop.\,\ref{propDescriptionProductLgn}.
\end{corollary}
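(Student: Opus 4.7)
The plan is to reduce the statement to the already-established compatibility \eqref{tildaFJGamma} between the action of $\mathrm{MCG}(\Sigma_{g,n}^\circ)$ on the stated skein algebra and the based knot maps $\mathfrak{j}_{\mathbf{K}}$. By construction of the morphism \eqref{morphismeMCGAutAlgLgn}, the automorphism $\widetilde{f}$ on $\mathcal{L}_{g,n}(H)$ is the conjugate of the automorphism on $\mathcal{S}^{\mathrm{st}}_H(\Sigma_{g,n}^{\circ,\bullet})$ by the isomorphism $\mathrm{hol}^{\mathrm{st}}$. Combining this with the definition $\mathfrak{i}_{\mathbf{K}} = \mathrm{hol}^{\mathrm{st}} \circ \mathfrak{j}_{\mathbf{K}}$ from \eqref{defIGamma}, the composite $\widetilde{f} \circ \mathfrak{i}_{\mathbf{K}}$ telescopes:
\[ \widetilde{f} \circ \mathfrak{i}_{\mathbf{K}} = \mathrm{hol}^{\mathrm{st}} \circ \widetilde{f} \circ (\mathrm{hol}^{\mathrm{st}})^{-1} \circ \mathrm{hol}^{\mathrm{st}} \circ \mathfrak{j}_{\mathbf{K}} = \mathrm{hol}^{\mathrm{st}} \circ \mathfrak{j}_{f(\mathbf{K})} = \mathfrak{i}_{f(\mathbf{K})}, \]
where the middle equality is precisely \eqref{tildaFJGamma}.

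For the particular cases, I would note that each generator $s \in \{b_1,a_1,\ldots,b_g,a_g,m_{g+1},\ldots,m_{g+n}\}$ of $\pi_1(\Sigma_{g,n}^\circ)$ is a simple loop, hence may be viewed as a based knot in $\Sigma_{g,n}^{\circ,\bullet} \times [0,1]$. The characterizing diagram \eqref{characterizationHolSt} identifies the canonical algebra embedding $\mathfrak{i}_s : \mathcal{L}_{0,1}(H) \to \mathcal{L}_{g,n}(H)$ with the knot map $\mathrm{hol}^{\mathrm{st}} \circ \mathfrak{j}_s$, which by definition \eqref{defIGamma} is exactly the knot map $\mathfrak{i}_s$ associated to the based knot $s$. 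Applying the general identity $\widetilde{f} \circ \mathfrak{i}_{\mathbf{K}} = \mathfrak{i}_{f(\mathbf{K})}$ then yields the three stated formulas with $\mathbf{K} = b_i, a_i, m_{g+j}$.

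Finally, the claim that these values determine $\widetilde{f}$ is immediate from item 2 of Proposition \ref{propDescriptionProductLgn}: every element of $\mathcal{L}_{g,n}(H)$ is a linear combination of ordered monomials $\mathfrak{i}_{b_1}(\beta_1)\mathfrak{i}_{a_1}(\alpha_1)\cdots \mathfrak{i}_{m_{g+n}}(\gamma_n)$, and since $\widetilde{f}$ is an algebra morphism, its values on such monomials — and hence on all of $\mathcal{L}_{g,n}(H)$ — are forced by its values on the images of the embeddings $\mathfrak{i}_{b_i}, \mathfrak{i}_{a_i}, \mathfrak{i}_{m_{g+j}}$. Since every piece of this argument is a direct invocation of results already proved (namely \eqref{tildaFJGamma}, \eqref{defIGamma}, \eqref{characterizationHolSt}, and Prop. \ref{propDescriptionProductLgn}), I do not anticipate any real obstacle; the corollary is essentially bookkeeping, and the conceptual content has already been absorbed into the topological definition of the mapping class group action on the stated skein algebra.
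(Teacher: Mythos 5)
Your proof is correct and follows exactly the route the paper intends: the corollary is stated immediately after the remark that it follows from the compatibility \eqref{tildaFJGamma} together with the definition of $\widetilde{f}$ on $\mathcal{L}_{g,n}(H)$ via conjugation by $\mathrm{hol}^{\mathrm{st}}$, which is precisely your telescoping computation, and the identification of the canonical embeddings $\mathfrak{i}_s$ with the based knot maps of the generators via \eqref{characterizationHolSt} is the same bookkeeping the paper relies on. No gaps.
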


In order to have explicit formulas for $\widetilde{f}$, one would like to express each $\mathfrak{i}_{f(b_i)}$, $\mathfrak{i}_{f(a_i)}$ and $\mathfrak{i}_{f(m_{g+j})}$ in terms of the ``standard'' embeddings $\mathfrak{i}_{b_1}, \mathfrak{i}_{a_1}, \ldots, \mathfrak{i}_{b_g}, \mathfrak{i}_{a_g}, \mathfrak{i}_{m_{g+1}}, \ldots, \mathfrak{i}_{m_{g+n}}$ used to describe $\mathcal{L}_{g,n}(H)$ in Prop.\,\ref{propDescriptionProductLgn}. This relies on a remarkably simple formula:

\begin{proposition}\label{thmExpressionLoopMaps}
Let $\gamma \in \pi_1(\Sigma_{g,n}^{\circ})$ be a simple loop and $\gamma = \gamma_1\ldots \gamma_l$ be its decomposition in terms of the generators $a_i^{\pm 1}$, $b_i^{\pm 1}$, $m_{g+j}^{\pm 1}$ fixed in \eqref{surfaceEnRuban}. Then
\[ \forall \, \varphi \in \mathcal{L}_{0,1}(H), \quad \mathfrak{i}_{\gamma}(\varphi) = \varphi_{(1)}\bigl(v^{N(\gamma)}\bigr) \,\mathfrak{i}_{\gamma_1}(\varphi_{(2)}) \mathfrak{i}_{\gamma_2}(\varphi_{(3)}) \ldots \mathfrak{i}_{\gamma_l}(\varphi_{(l+1)}) \]
where $\varphi_{(1)} \otimes \ldots \otimes \varphi_{(l+1)}$ is iteration of the coproduct \eqref{coproduitDualRestreint}, $v \in H$ is the ribbon element and $N(\gamma) \in \mathbb{Z}$ is defined in Def.\,\ref{defNormalizationLoops} of Appendix \ref{appendixLoopMaps}.
\end{proposition}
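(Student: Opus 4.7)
The plan is to reduce the identity to a skein-theoretic computation. Since $\mathrm{hol}^{\mathrm{st}}$ is an algebra isomorphism and $\mathfrak{i}_s = \mathrm{hol}^{\mathrm{st}} \circ \mathfrak{j}_s$ for any simple based loop $s$, it suffices to prove the corresponding identity in $\mathcal{S}^{\mathrm{st}}_H(\Sigma_{g,n}^{\circ,\bullet})$, namely
\[\mathfrak{j}_{\gamma}(\varphi) = \varphi_{(1)}(v^{N(\gamma)})\,\mathfrak{j}_{\gamma_1}(\varphi_{(2)}) \ast \cdots \ast \mathfrak{j}_{\gamma_l}(\varphi_{(l+1)}).\]

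First I would evaluate both sides on a matrix coefficient $\varphi = {_X\phi^{\sigma}_x}$. By Definition \ref{defKnotMap} the left hand side equals $_X\gamma^{\sigma}_x$, a single stated ribbon graph following $\gamma$. For the right hand side, the iterated coproduct \eqref{coproduitDualRestreint} gives $\Delta^{(l)}(_X\phi^\sigma_x) = \sum {_X\phi^\sigma_{x_{i_1}}} \otimes {_X\phi^{x^{i_1}}_{x_{i_2}}} \otimes \cdots \otimes {_X\phi^{x^{i_{l-1}}}_x}$, so the stacked product reads
\[\sum_{i_1,\ldots,i_{l-1}} {_X\gamma_1^{\sigma}_{x_{i_1}}} \ast {_X\gamma_2^{x^{i_1}}_{x_{i_2}}} \ast \cdots \ast {_X\gamma_l^{x^{i_{l-1}}}_x},\]
which is a ribbon graph consisting of $l$ strands following $\gamma_1,\ldots,\gamma_l$ respectively, arranged by increasing height near the basepoint with matched basis/dual-basis states at the $l-1$ junctions.

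Second I would apply relation (vii) of Figure \ref{statedSkeinRels} at each of these junctions: the summation $\sum_i x_i \otimes x^i$ allows one to merge two consecutive strands into a single strand passing near the basepoint. After the $l-1$ successive mergers, the result is a single closed stated ribbon graph following $\gamma = \gamma_1\gamma_2\cdots\gamma_l$ carrying states $\sigma,x$ at its endpoints. As a based knot in $\Sigma_{g,n}^{\circ,\bullet}\times[0,1]$ this graph is isotopic to the one defining $_X\gamma^\sigma_x$, but the two may carry different framings: each local merger at the basepoint introduces, depending on how the consecutive generators $\gamma_k,\gamma_{k+1}$ enter and leave the basepoint in the diagram \eqref{surfaceEnRuban}, an integral number of full twists. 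The sum of these local contributions is precisely the integer $N(\gamma)$ defined in Appendix~\ref{appendixLoopMaps}. Using the ribbon structure $\theta_X = v^{-1}\cdot$ together with relation (v) in Figure \ref{statedSkeinRels}, each full twist can be slid to the basepoint and absorbed into the state; summed against the coproduct, this yields the scalar factor $\varphi_{(1)}(v^{N(\gamma)})$.

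The main obstacle is the bookkeeping in the third step. One must inspect each consecutive pair $(\gamma_k,\gamma_{k+1})$ locally in the ribbon picture \eqref{surfaceEnRuban}, determine the framing correction produced by the corresponding cap merger (with appropriate signs according to whether $\gamma_k,\gamma_{k+1}$ are positively or negatively oriented generators and how they are arranged along the boundary), and check that the total matches the definition of $N(\gamma)$. Because the relevant data is local to the basepoint, this reduces to a finite case analysis, best handled by setting up $N(\gamma)$ in Appendix \ref{appendixLoopMaps} precisely so that the mergers and the twists are accounted for with matching conventions; once that is done, the remainder of the argument is a direct application of the stated skein relations.
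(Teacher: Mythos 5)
Your overall strategy is the right one and is the same in spirit as the paper's: pass to $\mathcal{S}^{\mathrm{st}}_H(\Sigma_{g,n}^{\circ,\bullet})$ via $\mathrm{hol}^{\mathrm{st}}$, recognize the product $\mathfrak{j}_{\gamma_1}(\varphi_{(2)})\ast\cdots\ast\mathfrak{j}_{\gamma_l}(\varphi_{(l+1)})$ as a stack of strands joined by the $\sum_i x_i\otimes x^i$ cups of relation (vii), merge them into a single strand following $\gamma$, and absorb the resulting kinks with the ribbon element. But the step you flag as ``bookkeeping'' is in fact the entire content of the proposition, and you assert rather than prove it: nothing in your argument verifies that the accumulated twist equals the specific integer $N(\gamma)$ of Definition~\ref{defNormalizationLoops}, which is defined by counting left- and right-oriented caps, cups and half-circles in a planar drawing $D_\gamma$. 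Moreover, your proposed localization of the correction to the consecutive pair $(\gamma_k,\gamma_{k+1})$ is not quite right: when one concatenates two arcs at the basepoint, the twist produced depends on the configuration of \emph{all four} strand-ends meeting there (hence on the orientations of both sub-loops, i.e.\ on $\gamma_1,\gamma_k,\gamma_{k+1},\gamma_l$ simultaneously), not just on the two letters at the junction. A one-generator-at-a-time merger also passes through intermediate curves that are not simple loops, so the invariant $N$ is not even defined for them, and the induction has no statement to run on.

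The paper resolves exactly this by a different organization of the same idea: it only ever splits a simple loop as $\gamma=\gamma'\gamma''$ with $\gamma',\gamma''$ two \emph{non-intersecting simple loops}, classifies the twelve possible local configurations of the four strand-ends at the basepoint, and defines a correction $P(\gamma',\gamma'')\in\{-1,0,1\}$ for each. It then proves two matching recursions — the purely combinatorial $N(\gamma)=N(\gamma')+N(\gamma'')+P(\gamma',\gamma'')$ (Lemma~\ref{lemmaRecursionNGamma}, using the cap/cup balance \eqref{oneMoreCup}) and the skein identity $\mathfrak{i}_{\gamma}(\varphi)=\varphi_{(1)}(v^{P(\gamma',\gamma'')})\,\mathfrak{i}_{\gamma'}(\varphi_{(2)})\,\mathfrak{i}_{\gamma''}(\varphi_{(3)})$ (Prop.~\ref{propRecusionIGamma}, a case-by-case application of relation (vii) and the twist) — and concludes by induction on the length of the reduced word, using centrality of $v$ to collect the scalar factors. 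To complete your proof you would need to supply an analogue of this pair of recursions, or some other mechanism that actually computes the total framing defect and identifies it with the cap/cup count defining $N(\gamma)$.
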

\begin{proof}
The proof is contained in Appendix \ref{appendixLoopMaps}. Here we just note that $\pi_1(\Sigma_{g,n}^{\circ})$ is a free group generated by the loops $a_i$, $b_i$, $m_{g+j}$. Hence the decomposition $\gamma = \gamma_1\ldots \gamma_l$ is almost unique: the only possible variations are of the form $ss^{-1}$ and $s^{-1}s$ for some generator $s$. They can be cancelled without changing the claimed formula for $\mathfrak{i}_{\gamma}$ because by item 2 in Lemma \ref{lemmaChangeOrientationKnotMap} and item 1 in Proposition \ref{propLoopMapMorphism} we have $\mathfrak{i}_s(\varphi_{(1)}) \mathfrak{i}_{s^{-1}}(\varphi_{(2)}) = \mathfrak{i}_s\bigl( \varphi_{(1)} S_{\mathcal{L}_{0,1}}(\varphi_{(2)}) \bigr) = \mathfrak{i}_s(\varepsilon) = 1_{\mathcal{S}^{\mathrm{st}}_H(\Sigma_{g,n}^{\circ,\bullet})}$ and similarly for $s^{-1}s$.
\end{proof}

\indent This example shows how to use Cor.\,\ref{coroLiftMappingClass} and Prop.\,\ref{thmExpressionLoopMaps} to describe explicitly $\widetilde{f}$:
\begin{example}\label{exampleTildaTauA1}
Let $\tau_{a_1}$ be the Dehn twist about the of the loop $a_1 \in \pi_1(\Sigma_{g,n}^{\circ})$ \cite[\S 3.1]{FM}. Precisely, one must consider the free homotopy class of $a_1$, \textit{i.e.} forget that it is attached to the basepoint. Then we have $\tau_{a_1}(a_i) = a_i$ for all $i$, $\tau_{a_1}(b_1) = b_1a_1$, $\tau_{a_1}(b_j) = b_j$ if $j > 1$ and $\tau_{a_1}(m_{g+k}) = m_{g+k}$ for all $k$. Hence $\widetilde{\tau_{a_1}} : \mathcal{L}_{g,n}(H) \to \mathcal{L}_{g,n}(H)$ is given by
\begin{align*}
&\widetilde{\tau_{a_1}}\bigl( \mathfrak{i}_{b_1}(\varphi) \bigr) = \mathfrak{i}_{b_1a_1}(\varphi) = \varphi_{(1)}(v^{-1}) \, \mathfrak{i}_{b_1}(\varphi_{(2)}) \, \mathfrak{i}_{a_1}(\varphi_{(3)}), \quad \widetilde{\tau_{a_1}}\bigl( \mathfrak{i}_{a_1}(\varphi) \bigr) = \mathfrak{i}_{a_1}(\varphi),\\
&\widetilde{\tau_{a_1}}\bigl( \mathfrak{i}_{b_{\scriptstyle j}}(\varphi) \bigr) = \mathfrak{i}_{b_{\scriptstyle j}}(\varphi), \quad \widetilde{\tau_{a_1}}\bigl( \mathfrak{i}_{a_{\scriptstyle j}}(\varphi) \bigr) = \mathfrak{i}_{a_{\scriptstyle j}}(\varphi), \quad \widetilde{\tau_{a_1}}\bigl( \mathfrak{i}_{m_{\scriptstyle g+k}}(\varphi) \bigr) = \mathfrak{i}_{m_{\scriptstyle g+k}}(\varphi)
\end{align*}
for all $\varphi \in H^{\circ}$, $2 \leq j \leq g$ and $1 \leq k \leq n$.
\end{example}

\begin{remark}\label{remarkComparisonPreviousPapers}
Proposition \ref{thmExpressionLoopMaps} and Corollary \ref{coroLiftMappingClass} were {\em definitions} of $\mathfrak{i}_{\gamma}$ and $\widetilde{f}$ in \cite[\S 5.3.3]{FaitgThesis} and \cite[\S 5.2]{FaitgMCG}, building upon the ideas of \cite[\S 9.1]{AS}. Actually in my papers this was only for surfaces $\Sigma_{g,0}^{\circ}$. The automorphisms $\widetilde{f}$ were first defined for a generating set of $\mathrm{MCG}(\Sigma_{g,0}^{\circ})$; then long computations were required to check that they were indeed morphisms of algebras and that \eqref{morphismeMCGAutAlgLgn} was indeed a group morphism. This was then used to prove that $\mathfrak{i}_{\gamma}$ is a morphism of algebras, by transforming $\gamma$ into $a_1$ thanks to the mapping class group. Clearly the approach based on stated skein algebras presented here is much more natural. Note that these papers use a matrix formalism which makes their formulas look different from the ones here; the relation with the present notations is explained in \cite[\S 3.1, \S 4.1]{BFR}.
\end{remark}

\subsubsection{Construction of the representation}\label{subsectConstructMCGRep}
\indent Recall that a quasitriangular Hopf algebra $H$ is called {\em factorizable} if the morphism $\Phi_{0,1}$ from \eqref{RSD} is bijective \cite{RS}.

\smallskip

\noindent \textbf{Assumption:} From now on, we assume that $H$ is {\em finite-dimensional, ribbon and factorizable}. We recall that $\Bbbk$ denotes the ground field of $H$.
\smallskip

\indent It is proven in Proposition \ref{propAlekseevMorphism} that, under these assumptions on $H$, there is an isomorphism of algebras
\[ \mathcal{L}_{g,n}(H) \cong \mathrm{End}_{\Bbbk}(H^*)^{\otimes g} \otimes H^{\otimes n}, \quad \text{and in particular } \mathcal{L}_{g,0}(H) \cong \mathrm{End}_{\Bbbk}\bigl( (H^*)^{\otimes g} \bigr). \]
Hence $\mathcal{L}_{g,0}(H)$ is isomorphic to a matrix algebra. By the Skolem--Noether theorem, any automorphism of $\mathcal{L}_{g,0}(H)$ is the conjugation by some invertible element unique up to scalar. In other words there is an isomorphism of groups
\begin{equation}\label{SkolemNoetherLgn}
\mathrm{Aut}_{\mathrm{Alg}}\bigl( \mathcal{L}_{g,0}(H) \bigr) \cong \mathcal{L}_{g,0}(H)^{\times}/\Bbbk^{\times}  \end{equation}
where the superscript $^{\times}$ means subset of invertible elements.

\indent Recall the action of $\mathrm{MCG}(\Sigma_{g,n}^{\circ})$ on $\mathcal{L}_{g,n}(H)$ by algebra automorphisms, described in Corollary \ref{coroLiftMappingClass} and which comes from the isomorphism between $\mathcal{L}_{g,n}(H)$ and the stated skein algebra. Combined with \eqref{SkolemNoetherLgn}, it gives a map
\begin{equation}\label{groupMorphMCG}
\begin{array}{ccccc}
\mathrm{MCG}(\Sigma_{g,0}^{\circ}) \!\!\! & \longrightarrow & \!\!\!\mathrm{Aut}_{\mathrm{Alg}}\bigl( \mathcal{L}_{g,0}(H) \bigr)\!\!\! & \overset{\sim}{\longrightarrow} & \!\!\!\mathcal{L}_{g,0}(H)^{\times}/\Bbbk^{\times}\\[.3em]
f & \longmapsto & \widetilde{f} & \longmapsto & \widehat{f}
\end{array}
\end{equation}
where the {\em unique up to scalar} invertible element $\widehat{f}$ is defined by
\begin{equation}\label{defWidehatF}
\forall \, x \in \mathcal{L}_{g,0}(H), \quad \widetilde{f}(x) = \widehat{f} \, x \, \widehat{f}^{-1}.
\end{equation}

\begin{lemma}\label{lemmaHatFInv}
For all $f \in \mathrm{MCG}(\Sigma_{g,0}^{\circ})$ we have $\widehat{f} \in \mathcal{L}_{g,0}^{\mathrm{inv}}(H)$, \textit{i.e.} the element $\widehat{f}$ is invariant for the $H$-module structure $\mathrm{coad}^r$ from \eqref{defCoadLgn}.
\end{lemma}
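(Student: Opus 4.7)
\noindent\textit{Proof plan.} The strategy is to identify $\mathcal{L}_{g,0}^{\mathrm{inv}}(H)$ as the centralizer of a quantum moment map image and then use the fact that mapping classes fix the boundary pointwise.

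By Appendix \ref{appendixQMMLgn}, the $H$-module-algebra $\mathcal{L}_{g,0}(H)$ carries a quantum moment map $\mu : H \to \mathcal{L}_{g,0}(H)$ in the sense of Definition \ref{defQMM}. The characterization \eqref{invariantsAreCentralizerQMM} then identifies $\mathcal{L}_{g,0}^{\mathrm{inv}}(H)$ with the centralizer of $\mu(H)$. Hence to prove that $\widehat{f} \in \mathcal{L}_{g,0}^{\mathrm{inv}}(H)$ it suffices to check that $\widehat{f}$ commutes with every $\mu(h)$, $h \in H$. Since $\widehat{f}$ is invertible and conjugation by $\widehat{f}$ is precisely $\widetilde{f}$ by \eqref{defWidehatF}, this commutation is equivalent to the statement
\[
\widetilde{f}(\mu(h)) = \mu(h) \qquad \text{for all } h \in H,
\]
\textit{i.e.} the automorphism $\widetilde{f}$ fixes the image of $\mu$ pointwise.

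The key topological input, also established in Appendix \ref{appendixQMMLgn}, is that $\mu$ has a description in terms of the boundary loop: its image is contained in the image of the knot map $\mathfrak{i}_{\partial_{g,0}} : \mathcal{L}_{0,1}(H) \to \mathcal{L}_{g,0}(H)$ associated to $\partial_{g,0}$ (see \eqref{boundaryLoop}). On the other hand, by the very definition of $\mathrm{MCG}(\Sigma_{g,0}^{\circ})$, any mapping class $f$ fixes $\partial(\Sigma_{g,0}^{\circ})$ pointwise; in particular it fixes the based loop $\partial_{g,0}$, so $f(\partial_{g,0}) = \partial_{g,0}$ in $\pi_1(\Sigma_{g,0}^{\circ})$. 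Corollary \ref{coroLiftMappingClass} then yields
\[
\widetilde{f} \circ \mathfrak{i}_{\partial_{g,0}} \;=\; \mathfrak{i}_{f(\partial_{g,0})} \;=\; \mathfrak{i}_{\partial_{g,0}},
\]
so $\widetilde{f}$ fixes the image of $\mathfrak{i}_{\partial_{g,0}}$ pointwise, and in particular fixes $\mu(h)$ for every $h \in H$, as required.

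The only non-trivial ingredient is the topological form of the quantum moment map on $\mathcal{L}_{g,0}(H)$, \textit{i.e.} the containment $\mu(H) \subseteq \mathrm{Im}(\mathfrak{i}_{\partial_{g,0}})$, whose proof is relegated to Appendix \ref{appendixQMMLgn}. Granted this, the lemma becomes a direct consequence of the boundary-fixing property of mapping classes combined with the naturality statement of Corollary \ref{coroLiftMappingClass}; no explicit manipulation of the relations defining $\mathcal{L}_{g,0}(H)$ is needed.
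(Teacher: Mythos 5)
Your proposal is correct and follows essentially the same route as the paper: the paper's proof also identifies $\mathcal{L}_{g,0}^{\mathrm{inv}}(H)$ with the centralizer of the quantum moment map $\mu_{g,0} = \mathfrak{i}_{\partial_{g,0}} \circ \Phi_{0,1}^{-1}$ via \eqref{invariantsAreCentralizerQMM}, and then concludes from $\widetilde{f} \circ \mathfrak{i}_{\partial_{g,0}} = \mathfrak{i}_{f(\partial_{g,0})} = \mathfrak{i}_{\partial_{g,0}}$ (Corollary \ref{coroLiftMappingClass} plus the boundary-fixing property of $f$). The only cosmetic difference is that the paper writes $\mu_{g,0}$ explicitly as the composite $\mathfrak{i}_{\partial_{g,0}} \circ \Phi_{0,1}^{-1}$ rather than phrasing it as a containment of images, which makes the factorizability hypothesis on $H$ visible.
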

\begin{proof}
Since $H$ is factorizable we have the quantum moment map $\mu_{g,0} = \mathfrak{i}_{\partial_{\scriptstyle g,0}} \circ \Phi_{0,1}^{-1}: H \to \mathcal{L}_{g,0}(H)$ where $\partial_{g,0}$ is the boundary loop \eqref{boundaryLoop}, see Appendix \ref{appendixQMMLgn}. Thus by \eqref{invariantsAreCentralizerQMM} it suffices to check that $\widehat{f}$ commutes with $\mu_{g,0}(h)$ for all $h \in H$:
\[ \widehat{f} \, \mu_{g,0}(h) = \widetilde{f}\bigl( \mu_{g,0}(h) \bigr) \, \widehat{f} = \widetilde{f} \circ \mathfrak{i}_{\partial_{g,0}}\bigl( \Phi_{0,1}^{-1}(h) \bigr) \, \widehat{f} = \mathfrak{i}_{f(\partial_{g,0})}\bigl( \Phi_{0,1}^{-1}(h) \bigr) \, \widehat{f} = \mathfrak{i}_{\partial_{g,0}}\bigl( \Phi_{0,1}^{-1}(h) \bigr) \, \widehat{f} = \mu_{g,0}(h) \, \widehat{f} \]
where the first equality uses the defining property of $\widehat{f}$, the third uses Corollary \ref{coroLiftMappingClass} and the fourth uses that the mapping class group fixes pointwise $\partial_{\scriptstyle g,0}$ by definition.
\end{proof}

Lemma \ref{lemmaHatFInv} allows us to combine the group morphism \eqref{groupMorphMCG} with the representations of $\mathcal{L}_{g,0}^{\mathrm{inv}}(H)$ on $V(\Sigma^X_{g}) = \Hom_H\bigl( X, (H^*)^{\otimes g} \bigr)$ and $V^m(\Sigma^X_{g}) = \Ext^m_H\bigl( X, (H^*)^{\otimes g} \bigr)$ from \S\ref{sectionSomeEquivModules}. Hence, for all $X \in \mathrm{mod}\text{-}H$, we get projective representations:
\begin{align}
\begin{split}\label{defProjRepMCG}
&\mathcal{R}_X : \mathrm{MCG}(\Sigma_{g,0}^{\circ}) \longrightarrow \mathrm{Aut}_{\mathrm{Alg}}\bigl( \mathcal{L}_{g,0}(H) \bigr) \overset{\sim}{\longrightarrow} \mathcal{L}_{g,0}(H)^{\times}/\Bbbk^{\times} \xrightarrow{\,\:\eqref{pushfrowardRepModuliAlg}\:\,} \mathrm{PGL}\bigl( V(\Sigma^X_g) \bigr),\\
&\mathcal{R}_X^m : \mathrm{MCG}(\Sigma_{g,0}^{\circ}) \longrightarrow \mathrm{Aut}_{\mathrm{Alg}}\bigl( \mathcal{L}_{g,0}(H) \bigr) \overset{\sim}{\longrightarrow} \mathcal{L}_{g,0}(H)^{\times}/\Bbbk^{\times} \xrightarrow{\,\:\text{Th.}\,\ref{thmDerivedRepModuli}\:\,} \mathrm{PGL}\bigl( V^m(\Sigma^X_g) \bigr).
\end{split}
\end{align}
More explicitly
\begin{equation}\label{defProjRepUsualLyu}
\forall \, f \in \mathrm{MCG}(\Sigma_{g,0}^{\circ}), \:\: \forall \, s \in V(\Sigma^X_g), \quad \mathcal{R}_X(f)(s)  = \widehat{f} \cdot s
\end{equation}
where $\widehat{f} \cdot s$ is given by acting on the target of $s$, \textit{i.e.} on $(H^*)^{\otimes g}$, thanks to the representation of $\mathcal{L}_{g,0}(H)$ on $(H^*)^{\otimes g}$ in Lemma \ref{lemBraidedProductHequivModules}. If $0 \leftarrow X \leftarrow P_0 \leftarrow P_1 \leftarrow \ldots$ is a projective resolution in $\mathrm{mod}\text{-}H$, then for any cohomology class $[s] \in V^m(\Sigma^X_g)$ of a cocycle $s \in V(\Sigma^{P_m}_g)$ we have
\begin{equation}\label{defProjRepDerivedLyu}
\forall \, f \in \mathrm{MCG}(\Sigma_{g,0}^{\circ}), \quad \mathcal{R}_X^m(f)\bigl( [s] \bigr) = \bigl[ \mathcal{R}_{P_m}(f)(s) \bigr] = \bigl[ \widehat{f} \cdot s \bigr].
\end{equation}
Note that $V^0(\Sigma^X_g) \cong V(\Sigma^X_g)$ and the projective representation $\mathcal{R}^0_X$ is equivalent to $\mathcal{R}_X$.

\smallskip

\indent The natural group morphism $\mathrm{MCG}(\Sigma_{g,0}^{\circ}) \to \mathrm{MCG}(\Sigma_{g,0})$ is surjective \cite[\S 4.2.5]{FM} and thus $\mathrm{MCG}(\Sigma_{g,0})$ is a quotient of $\mathrm{MCG}(\Sigma_{g,0}^{\circ})$. For a special choice of $X$ in \eqref{defProjRepMCG} we obtain a projective representation of this quotient:

\begin{theorem}\label{thmRepDeriveesMCG}
Let $\Bbbk$ be the trivial $H$-module. Then $\mathcal{R}^m_{\Bbbk}$ descends to a projective representation of $\mathrm{MCG}(\Sigma_{g,0})$ on $V^m(\Sigma^{\Bbbk}_g)$, for all $m \geq 0$. In particular this is true for the projective representation $\mathcal{R}_{\Bbbk}$ on $V(\Sigma^{\Bbbk}_g)$.
\end{theorem}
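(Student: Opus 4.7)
The plan is to exploit the capping short exact sequence
\begin{equation*}
1 \longrightarrow \langle \tau_{\partial_{g,0}} \rangle \longrightarrow \mathrm{MCG}(\Sigma_{g,0}^\circ) \longrightarrow \mathrm{MCG}(\Sigma_{g,0}) \longrightarrow 1,
\end{equation*}
whose kernel is (for $g \geq 1$) the infinite cyclic group generated by the Dehn twist along the boundary curve $\partial_{g,0}$ (see \cite{FM}). Descent of $\mathcal{R}^m_{\Bbbk}$ to $\mathrm{MCG}(\Sigma_{g,0})$ is then equivalent to $\mathcal{R}^m_{\Bbbk}(\tau_{\partial_{g,0}}) = [\mathrm{id}] \in \mathrm{PGL}(V^m(\Sigma^{\Bbbk}_g))$, i.e.\ to proving that the lift $\widehat{\tau_{\partial_{g,0}}} \in \mathcal{L}_{g,0}^{\mathrm{inv}}(H)$ acts on $V^m(\Sigma^{\Bbbk}_g)$ (via Theorem \ref{thmDerivedRepModuli}) as a scalar multiple of the identity.

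First I would invoke Proposition \ref{explicitConjugatingElmt} (announced in \S\ref{subsecSummaryRes} and to be proved in the main text) which presents $\widehat{\tau_c}$, for any simple closed curve $c$, as an explicit image $\mathfrak{i}_c(\varphi_c)$ of the knot map associated to $c$, with $\varphi_c \in \mathcal{L}_{0,1}^{\mathrm{inv}}(H)$ built from the ribbon element $v$. Specialising $c = \partial_{g,0}$ reduces the question to the analysis of $\mathfrak{i}_{\partial_{g,0}}(\varphi_{\partial_{g,0}}) = \mathbb{W}^{\varphi_{\partial_{g,0}}}(\partial_{g,0})$ acting on $V^m(\Sigma^{\Bbbk}_g)$.

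The key move is then Theorem \ref{thmClosingBoundary}(2). In $\Sigma_{g,0} \times [0,1]$ the framed knot $\partial_{g,0}$ bounds the disk whose interior was removed to form $\Sigma_{g,0}^\circ$, so with its blackboard framing (which has self-linking zero) it is isotopic to a small framed unknot $U$ supported in an embedded ball disjoint from all the handles. Theorem \ref{thmClosingBoundary}(2) therefore identifies the endomorphisms of $V^m(\Sigma^{\Bbbk}_g)$ induced by $\mathbb{W}^{\varphi_{\partial_{g,0}}}(\partial_{g,0})$ and by $\mathbb{W}^{\varphi_{\partial_{g,0}}}(U) = \mathfrak{i}_U(\varphi_{\partial_{g,0}})$. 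Finally I would evaluate $\mathfrak{j}_{\mathbf{U}}(\varphi_{\partial_{g,0}}) \in \mathcal{S}^{\mathrm{st}}_H(\Sigma_{g,0}^{\circ,\bullet})$ on a convenient based representative $\mathbf{U}$ of $U$: a short stated-skein calculation using the cup-cap relations in Figure~\ref{statedSkeinRels} (items (vii)--(viii) and (xi)--(xii)) collapses $\mathbf{U}$ to a scalar multiple of the empty diagram, so that $\mathfrak{i}_U(\varphi_{\partial_{g,0}}) \in \Bbbk \cdot 1_{\mathcal{L}_{g,0}(H)}$ through $\mathrm{hol}^{\mathrm{st}}$. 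Its action is then manifestly scalar, giving the theorem for every $m \geq 0$; the $m=0$ case is the special instance $V^0 \cong V$, alternatively obtained from Theorem \ref{thmClosingBoundary}(1) by the same argument.

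The hard part is really Proposition \ref{explicitConjugatingElmt} itself: producing a canonical element of $\mathcal{L}_{g,0}^{\mathrm{inv}}(H)$ of the form $\mathfrak{i}_c(\varphi_c)$ whose conjugation action on $\mathcal{L}_{g,0}(H)$ genuinely realises $\widetilde{\tau_c}$, taking into account that $\widehat{\tau_c}$ is defined only up to scalar. Once that formula is available, the rest is essentially topological: the reduction to the boundary Dehn twist is classical, the transition to an unknot is handed to us by Theorem \ref{thmClosingBoundary}, and the final skein evaluation is a local computation where the precise framing of $U$ relative to $\partial_{g,0}$ only alters the resulting scalar—which is immaterial projectively.
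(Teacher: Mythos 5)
Your reduction rests on the claim that the kernel of $\mathrm{MCG}(\Sigma_{g,0}^{\circ}) \to \mathrm{MCG}(\Sigma_{g,0})$ is the infinite cyclic group generated by $\tau_{\partial_{g,0}}$. This is true only for $g=1$. For $g \geq 2$ the capping homomorphism fits into the exact sequence $1 \to \pi_1\bigl(UT(\Sigma_{g,0})\bigr) \to \mathrm{MCG}(\Sigma_{g,0}^{\circ}) \to \mathrm{MCG}(\Sigma_{g,0}) \to 1$, where $UT(\Sigma_{g,0})$ is the unit tangent bundle; the kernel is a central extension of $\pi_1(\Sigma_{g,0})$ by $\mathbb{Z}\langle\tau_{\partial_{g,0}}\rangle$ (combine the capping sequence of \cite[\S 4.2.5]{FM} with the Birman exact sequence). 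Since $\tau_{\partial_{g,0}}$ is central in $\mathrm{MCG}(\Sigma_{g,0}^{\circ})$, the normal closure of $\langle\tau_{\partial_{g,0}}\rangle$ is $\langle\tau_{\partial_{g,0}}\rangle$ itself, a proper subgroup of the kernel; hence showing that $\mathcal{R}^m_{\Bbbk}(\tau_{\partial_{g,0}})$ is projectively trivial does not give descent. This is precisely why the paper instead goes through Wajnryb's presentation (Lemma \ref{lemmaWajnryb}): $\mathrm{MCG}(\Sigma_{g,0})$ is obtained from $\mathrm{MCG}(\Sigma_{g,0}^{\circ})$ by imposing the single relation $f\tau_{\delta}f^{-1} = \tau_{\delta}$, so it suffices to check $\mathcal{R}^m_{\Bbbk}(\tau_{\alpha_{g}}) = \mathcal{R}^m_{\Bbbk}(\tau_{\delta})$ for the two non-separating curves $\alpha_g = f(\delta)$ and $\delta$, which become isotopic in $\Sigma_{g,0}$; Proposition \ref{explicitConjugatingElmt} and Theorem \ref{thmClosingBoundary}(2) then finish the argument.

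There is a second gap: even granting your reduction, you cannot invoke Proposition \ref{explicitConjugatingElmt} for $c = \partial_{g,0}$. That proposition is stated, and proved via the change-of-coordinates principle carrying $\alpha_1$ to $c$, only for \emph{non-separating} simple closed curves, whereas a boundary-parallel curve is separating. So the identity $\widehat{\tau_{\partial_{g,0}}} = \mathfrak{i}_{\partial_{g,0}}(v\triangleright\lambda)$ is not available, and $\widehat{\tau_{\partial_{g,0}}}$ would have to be computed by other means (it is essentially an image of the quantum moment map $\mu_{g,0}$, cf. Appendix \ref{appendixQMMLgn}). The last step of your argument, namely that $\partial_{g,0}$ with its surface framing becomes a $0$-framed unknot in $\Sigma_{g,0}\times[0,1]$ and that the knot map of an unknot lands in $\Bbbk\cdot 1_{\mathcal{L}_{g,0}(H)}$, is fine in itself, but it rests on the two unsupported steps above.
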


The rest of this section is dedicated to the proof of this theorem.

\begin{lemma}\label{lemmaWajnryb}
Let $\rho : \mathrm{MCG}(\Sigma_{g,0}^{\circ}) \to G$ be a group morphism. Consider the curves $\alpha_g$ and $\delta$ defined in Fig.\,\ref{courbesCanoniques} on p.\pageref{courbesCanoniques} below.  If $\rho(\tau_{\alpha_{\scriptstyle g}}) = \rho(\tau_{\delta})$ then $\rho$ factors through $\mathrm{MCG}(\Sigma_{g,0})$.
\end{lemma}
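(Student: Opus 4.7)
The plan is to reduce the statement to the presentation theorem of Wajnryb \cite{wajnryb}, which is explicitly cited earlier in the paper as the mechanism by which $\mathrm{MCG}(\Sigma_{g,0})$ is realised as a quotient of $\mathrm{MCG}(\Sigma_{g,0}^{\circ})$. The first step is to recall the capping short exact sequence
\[ 1 \longrightarrow \langle \tau_{\partial_{g,0}} \rangle \longrightarrow \mathrm{MCG}(\Sigma_{g,0}^{\circ}) \longrightarrow \mathrm{MCG}(\Sigma_{g,0}) \longrightarrow 1 \]
(see \cite[Prop.\,3.19]{FM}), so that $\rho$ factors through $\mathrm{MCG}(\Sigma_{g,0})$ if and only if $\rho(\tau_{\partial_{g,0}}) = 1_G$. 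Hence the whole question is: does the assumption $\rho(\tau_{\alpha_g}) = \rho(\tau_\delta)$ force $\rho$ to kill the boundary twist?

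The second step is to invoke the content of Wajnryb's theorem: by construction of his presentation, adding the single relation $\tau_{\alpha_g} = \tau_\delta$ to $\mathrm{MCG}(\Sigma_{g,0}^{\circ})$ produces exactly $\mathrm{MCG}(\Sigma_{g,0})$. Equivalently, the normal closure of $\tau_{\alpha_g}\tau_\delta^{-1}$ inside $\mathrm{MCG}(\Sigma_{g,0}^{\circ})$ coincides with the kernel $\langle \tau_{\partial_{g,0}} \rangle$ of the capping map. With this in hand the conclusion is immediate: the hypothesis $\rho(\tau_{\alpha_g}) = \rho(\tau_\delta)$ gives $\rho(\tau_{\alpha_g}\tau_\delta^{-1}) = 1_G$, so $\rho$ vanishes on the whole normal closure, \textit{i.e.} on $\langle \tau_{\partial_{g,0}} \rangle$, and the universal property of the quotient delivers the desired factorisation.

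There is really no obstacle here beyond bookkeeping; the only part that requires some care is matching the curves $\alpha_g$ and $\delta$ fixed in Figure~\ref{courbesCanoniques} with those appearing in the statement of Wajnryb's theorem, and verifying that the orientation/twist conventions used in our definition of $\tau_c$ agree with his. Once the dictionary of conventions is set up, the proof of the lemma is a single line.
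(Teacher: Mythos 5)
Your strategy is the same as the paper's: everything is reduced to Wajnryb's presentation, and the surrounding group theory (universal property of the quotient by a normal closure) is sound. The capping exact sequence in your first step is correct but superfluous once you have the presentation statement; note also that your reformulation ``the normal closure of $\tau_{\alpha_g}\tau_\delta^{-1}$ equals $\langle\tau_{\partial_{g,0}}\rangle$'' amounts to saying that the central element $\tau_{\alpha_g}\tau_\delta^{-1}$ is a \emph{generator} of the infinite cyclic kernel rather than a proper power of $\tau_{\partial_{g,0}}$ --- not something one can see by soft arguments, so all the weight rests on the presentation claim.

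And that claim, as you state it, is not what Wajnryb's theorem says. In \cite{wajnryb} (see also \cite[Th.\,5.3]{FM}) the presentation of $\mathrm{MCG}(\Sigma_{g,0})$ is obtained from that of $\mathrm{MCG}(\Sigma_{g,0}^{\circ})$ by adding the commutation relation $f \tau_{\delta} f^{-1} = \tau_{\delta}$, where
\[ f = (\tau_{\beta_{g}} \tau_{\gamma_{g-1}}) \ldots (\tau_{\beta_{1}} \tau_{\gamma_{0}}) (\tau_{\gamma_{0}} \tau_{\beta_{1}}) \ldots (\tau_{\gamma_{g-1}} \tau_{\beta_{g}}), \qquad \gamma_0 := \alpha_1, \]
and \emph{not} by adding $\tau_{\alpha_g} = \tau_\delta$. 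Converting the former into the latter requires the geometric input $f(\delta) = \alpha_g$ together with the conjugation formula $f\tau_\delta f^{-1} = \tau_{f(\delta)}$; this verification, carried out for the specific curves of Fig.\,\ref{courbesCanoniques}, is exactly the content of the paper's proof, and your remark about matching the curves with those in Wajnryb's statement points at this step without supplying it. Once $f(\delta)=\alpha_g$ is in hand the lemma is indeed immediate, as you say: $\rho(f)\rho(\tau_\delta)\rho(f)^{-1} = \rho(\tau_{f(\delta)}) = \rho(\tau_{\alpha_g}) = \rho(\tau_\delta)$, so $\rho$ respects the extra relation and factors.
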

\begin{proof}
This follows from Wajnryb's presentation of $\mathrm{MCG}(\Sigma_{g,0}^{\circ})$ and $\mathrm{MCG}(\Sigma_{g,0})$ (\cite{wajnryb}, also given in \cite[Th.\,5.3]{FM}). In this presentation one uses the Humphries generators, which are the Dehn twists about the curves $\alpha_1, \alpha_2$, $\beta_i$ ($1 \leq i \leq g$) and $\gamma_j$ ($1 \leq j \leq g-1$) in Fig.\,\ref{courbesCanoniques} below. For commodity write $\gamma_0 = \alpha_1$ and let
\[ f = (\tau_{\beta_{\scriptstyle g}} \tau_{\gamma_{\scriptstyle g-1}}) (\tau_{\beta_{\scriptstyle g-1}} \tau_{\gamma_{\scriptstyle g-2}}) \ldots (\tau_{\beta_{\scriptstyle 1}} \tau_{\gamma_{\scriptstyle 0}}) (\tau_{\gamma_{\scriptstyle 0}} \tau_{\beta_{\scriptstyle 1}}) (\tau_{\gamma_{\scriptstyle 1}} \tau_{\beta_{\scriptstyle 2}}) \ldots (\tau_{\gamma_{\scriptstyle g-1}} \tau_{\beta_{\scriptstyle g}}) \]
where $\tau_c$ means the Dehn twist about the curve $\gamma$. Then the presentation for $\mathrm{MCG}(\Sigma_{g,0})$ is obtained by adding the relation $f \tau_{\delta}f^{-1} = \tau_{\delta}$ to the presentation of $\mathrm{MCG}(\Sigma_{g,0}^{\circ})$.\footnote{Actually to have a presentation in the formal sense one must express $\tau_{\delta}$ in terms of the Humphries generators, see \cite[p.\,127]{FM}.} Making many drawings, it is not hard to see that $f(\delta) = \alpha_g$ and thus
\[ \rho(f) \rho(\tau_{\delta}) \rho(f)^{-1} = \rho\bigl(   f \tau_{\delta} f^{-1} \bigr) = \rho(\tau_{f(\delta)}) = \rho(\tau_{\alpha_g}) \]
where the second equality is a classical fact on Dehn twists \cite[\S 3.3]{FM}. Hence by assumption on $\rho$ we have $\rho(f) \rho(\tau_{\delta}) \rho(f)^{-1} = \rho(\tau_{\delta})$, which shows that $\rho$ is compatible with the extra relation.
\end{proof}

\indent We want to apply Lemma \ref{lemmaWajnryb} with $\rho = \mathcal{R}^m_{\Bbbk}$. This is based on an auxilliary result (Prop.\,\ref{explicitConjugatingElmt}) whose statement and proof require some preliminaries.

\indent Recall that a {\em left integral} for $H$ is a non-zero linear form $\lambda \in H^*$ such that
\begin{equation}\label{axiomLeftIntegral}
\forall \, h \in H, \quad \lambda(h_{(2)})h_{(1)} = \lambda(h)1_H.
\end{equation}
Such a $\lambda$ is known to be unique up to non-zero scalar and satisfies
\begin{equation}\label{integralQCharacter}
\forall \, x,y \in H, \quad \lambda(xy) = \lambda\bigl( yS^2(x) \bigr), \qquad \text{\it i.e. }\lambda \in \mathcal{L}_{0,1}^{\mathrm{inv}}(H)
\end{equation}
see e.g. \cite{radford}. A consequence of the factorizability assumption on $H$ is the following formula
\begin{equation}\label{unibalanced}
\lambda \circ S^{-1} = \lambda \triangleleft g^{-2}
\end{equation}
where $g$ is the pivotal element \eqref{pivotalElement}; for a proof see e.g. \cite[Lem.\,5.9]{FaitgSL2Z}. 

\begin{lemma}\label{lemmaIntegralAntipode}
Let $S_{\mathcal{L}_{0,1}}$ be the antipode \eqref{antipodeL01} on $\mathcal{L}_{0,1}(H)$. We have
\[ S_{\mathcal{L}_{0,1}}(\lambda) = \lambda, \qquad S_{\mathcal{L}_{0,1}}(v \triangleright \lambda) = v \triangleright \lambda. \]
\end{lemma}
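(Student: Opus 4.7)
The plan is to reduce the second identity to the first using centrality and $S$-invariance of the ribbon element, and to prove the first identity by a direct calculation that combines the cyclicity of $\lambda$ (from \eqref{integralQCharacter}), the $R$-matrix identity \eqref{SSR}, and the unibalancedness \eqref{unibalanced}.

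First I would check that the rescaling $\varphi \mapsto v \triangleright \varphi$ commutes with $S_{\mathcal{L}_{0,1}}$. Using the explicit formula \eqref{antipodeL01}, centrality of $v$, and $S(v)=v$, one immediately computes
\[ S_{\mathcal{L}_{0,1}}(v \triangleright \varphi)(h) = \varphi\bigl( R_{[2]}u^{-1}S(h)S(R_{[1]})v \bigr) = \varphi\bigl( R_{[2]}u^{-1}S(hv)S(R_{[1]}) \bigr) = \bigl( v \triangleright S_{\mathcal{L}_{0,1}}(\varphi)\bigr)(h). \]
So once the first identity is shown, the second follows as $S_{\mathcal{L}_{0,1}}(v \triangleright \lambda) = v \triangleright S_{\mathcal{L}_{0,1}}(\lambda) = v \triangleright \lambda$.

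For the first identity, starting from $S_{\mathcal{L}_{0,1}}(\lambda)(h) = \lambda\bigl( R_{[2]}u^{-1}S(h)S(R_{[1]}) \bigr)$, I would cycle $R_{[2]}u^{-1}$ to the right using \eqref{integralQCharacter} and $S^2(u)=u$, obtaining $\lambda\bigl( S(h)\, S(R_{[1]})S^2(R_{[2]})\, u^{-1} \bigr)$. The crucial intermediate identity is $S(R_{[1]})S^2(R_{[2]}) = R_{[1]}S(R_{[2]}) = S(u)$: the first equality is obtained by applying $\mathrm{id}\otimes S$ to \eqref{SSR} and then multiplying the two tensor factors, while the second is just $S(u)=S(S(R_{[2]})R_{[1]}) = S(R_{[1]})S^2(R_{[2]})$. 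This reduces the expression to $\lambda\bigl( S(h)\, S(u)u^{-1} \bigr)$. From the ribbon axiom $v^2 = uS(u)$, the centrality of $v$, and $g = uv^{-1}$, one gets $g^{-2} = S(u)u^{-1}$, so we arrive at $\lambda(S(h)g^{-2})$. Finally, since $S^2(g)=g$, cyclicity gives $\lambda(kg^{-2}) = \lambda(g^{-2}k)$, and \eqref{unibalanced} rewrites this as $\lambda(S^{-1}(k))$; taking $k = S(h)$ yields $\lambda(h)$.

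The main obstacle is the bookkeeping around the identity $S(R_{[1]})S^2(R_{[2]}) = S(u)$: although it is a direct consequence of \eqref{SSR}, one must be careful to distinguish the statement as an element of $H \otimes H$ from its image under the multiplication map $m: H \otimes H \to H$, since the individual tensor factors are not equal summand by summand. Once this identity is in hand, and one notices the slick identification $g^{-2} = S(u)u^{-1}$ using $u$ and $S(u)$ commuting (both being equal to $v^2/u$), the argument becomes a short manipulation of the integral using cyclicity and unibalancedness. Factorizability enters only through \eqref{unibalanced}, which is the only place where we need something stronger than unimodularity.
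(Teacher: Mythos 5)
Your proof is correct and follows essentially the same route as the paper's: reduce everything to $\lambda(g^{-2}S(h))$ via the cyclicity \eqref{integralQCharacter} and the identification of the contracted $R$-matrix with the Drinfeld element, then conclude with \eqref{unibalanced}, with the second identity following from centrality and $S$-invariance of $v$. The only cosmetic difference is that the paper cycles $S(R_{[1]})$ to the left and uses $S^{-1}(R_{[1]})R_{[2]} = S^{-1}(u)$ directly, which saves your extra cyclicity step and the detour through \eqref{SSR}.
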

\begin{proof}
For all $h \in H$,
\[ \langle S_{\mathcal{L}_{0,1}}(\lambda), h \rangle = \lambda\bigl( R_{[2]}u^{-1} S(h) S(R_{[1]}) \bigr) \overset{\eqref{integralQCharacter}}{=} \lambda\bigl( S^{-1}(R_{[1]}) R_{[2]} u^{-1} S(h) \bigr). \]
Note that
\[ S^{-1}(R_{[1]}) R_{[2]} u^{-1} = S^{-1}\bigl( S(R_{[2]}) R_{[1]} \bigr)u^{-1} \overset{\eqref{elementDrinfeld}}{=} S^{-1}(u)u^{-1} \overset{\eqref{pivotalElement}}{=} S^{-1}(gv)g^{-1}v^{-1} = g^{-2} \]
because $g$ is grouplike (whence $S(g) = g^{-1}$) and $v$ is central and satisfies $S(v) = v$ by definition. We conclude that $\langle S_{\mathcal{L}_{0,1}}(\lambda), h \rangle = \lambda\bigl( g^{-2}S(h) \bigr) = \lambda(h)$ thanks to \eqref{unibalanced}. For the second claim, simply note that $S_{\mathcal{L}_{0,1}}(v \triangleright \lambda) = v \triangleright S_{\mathcal{L}_{0,1}}(\lambda)$ because $v$ is central and satisfies $S(v) = v$.
\end{proof}

\indent Let $c \subset \Sigma_{g,0}^{\circ}$ be a closed curve considered up to homotopy. As opposed to elements in $\pi_1(\Sigma_{g,0}^{\circ})$, $c$ is not assumed to be attached to a basepoint or to have an orientation. The closed curve $c$ is called {\em simple} if its homotopy class contains a representative without self-intersections. In this case we have the {\em Dehn twist} $\tau_c \in \mathrm{MCG}(\Sigma_{g,0}^{\circ})$, see e.g. \cite[\S 3.1]{FM}.

\indent We now give a formula for the element $\widehat{\tau_c} \in \mathcal{L}_{g,n}^{\mathrm{inv}}(H)$ defined by \eqref{defWidehatF}. Note that a simple closed curve $c$ endowed with an orientation can be seen as a knot in $\Sigma_{g,0}^{\circ} \times [0,1]$ and thus by \eqref{unbasedKnotMapLgn} we have the knot map $\mathfrak{i}_c : \mathcal{L}_{0,1}^{\mathrm{inv}}(H) \to \mathcal{L}_{g,n}^{\mathrm{inv}}(H)$. Also recall that $c$ is called {\em non-separating} if $\Sigma_{g,0}^{\circ} \!\setminus\! c$ is connected.

\begin{proposition}\label{explicitConjugatingElmt}
Let $c \subset \Sigma_{g,0}^{\circ}$ be a non-separating simple closed curve endowed with an arbitrary orientation and denote by $\tau_c$ the Dehn twist about $c$. Then
\[ \widehat{\tau_c} = \mathfrak{i}_c(v \triangleright \lambda). \]
\end{proposition}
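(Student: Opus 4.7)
The plan is to reduce the statement to the case $c = a_1$ via a change-of-coordinates argument, and then verify the formula in that case using Example~\ref{exampleTildaTauA1} and the exchange relations of Proposition~\ref{propDescriptionProductLgn}. A preliminary observation is that the right-hand side $\mathfrak{i}_c(v \triangleright \lambda)$ is independent of the orientation of $c$: Corollary~\ref{coroUnbasedImap} (applied to item~2 of Lemma~\ref{lemmaChangeOrientationKnotMap}) gives $\mathfrak{i}_{c^{-1}}(\psi) = \mathfrak{i}_c\bigl(S_{\mathcal{L}_{0,1}}(\psi)\bigr)$, and Lemma~\ref{lemmaIntegralAntipode} gives $S_{\mathcal{L}_{0,1}}(v \triangleright \lambda) = v \triangleright \lambda$. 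Since $\widehat{\tau_c}$ also depends only on the unoriented curve, the statement is really one about unoriented simple closed curves.

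\emph{Reduction to $c = a_1$.} By the change-of-coordinates principle for non-separating simple closed curves (\cite[\S1.3]{FM}), there exists $\varphi \in \mathrm{MCG}(\Sigma_{g,0}^\circ)$ with $\varphi(a_1) = c$ as unoriented free homotopy classes. The classical identity $\tau_c = \varphi\,\tau_{a_1}\,\varphi^{-1}$ yields $\widetilde{\tau_c} = \widetilde{\varphi}\,\widetilde{\tau_{a_1}}\,\widetilde{\varphi}^{-1}$; together with the uniqueness of $\widehat{(\cdot)}$ modulo $\Bbbk^\times$ this forces $\widehat{\tau_c} = \widetilde{\varphi}(\widehat{\tau_{a_1}})$ in $\mathcal{L}_{g,0}(H)^\times/\Bbbk^\times$. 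On the other hand Corollary~\ref{coroLiftMappingClass} gives $\widetilde{\varphi}\bigl(\mathfrak{i}_{a_1}(v \triangleright \lambda)\bigr) = \mathfrak{i}_{\varphi(a_1)}(v \triangleright \lambda) = \mathfrak{i}_c(v \triangleright \lambda)$ (using orientation independence once more). Hence it suffices to establish the proposition for $c = a_1$.

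\emph{The case $c = a_1$.} Set $e := \mathfrak{i}_{a_1}(v \triangleright \lambda)$. A short check using \eqref{L01Inv}, \eqref{integralQCharacter} and the centrality of $v$ (with $S(v) = v$) shows $v \triangleright \lambda \in \mathcal{L}_{0,1}^{\mathrm{inv}}(H)$, so $e \in \mathcal{L}_{g,0}^{\mathrm{inv}}(H)$; moreover $e \neq 0$ since $\lambda \neq 0$, $v$ is invertible and the embedding $\mathfrak{i}_{a_1}$ is linearly injective. By the Skolem--Noether description \eqref{SkolemNoetherLgn} of $\widehat{\tau_{a_1}}$, it is then enough to prove the intertwining relation
\[ e\,x \;=\; \widetilde{\tau_{a_1}}(x)\,e \qquad \text{for all } x \in \mathcal{L}_{g,0}(H), \]
which combined with $e \neq 0$ forces $e \in \Bbbk^\times\,\widehat{\tau_{a_1}}$. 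By item~2 of Proposition~\ref{propDescriptionProductLgn}, it suffices to check this relation on the generators $\mathfrak{i}_{b_i}(\psi), \mathfrak{i}_{a_i}(\psi)$. For indices $i \geq 2$, item~4 of Proposition~\ref{propDescriptionProductLgn} combined with the $\mathrm{coad}^r$-invariance of $v \triangleright \lambda$ and \eqref{epsilonR} reduces the relation to plain commutation, matching the trivial action of $\widetilde{\tau_{a_1}}$ on these generators (Example~\ref{exampleTildaTauA1}). For $x = \mathfrak{i}_{a_1}(\psi)$, the algebra-morphism property of $\mathfrak{i}_{a_1}$ reduces the identity to centrality of $v \triangleright \lambda$ in $\mathcal{L}_{0,1}(H)$, which is Lemma~\ref{lemmaL01InvZL01}.

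\emph{Main obstacle: the case $x = \mathfrak{i}_{b_1}(\psi)$.} Here one expands $e\,\mathfrak{i}_{b_1}(\psi) = \mathfrak{i}_{a_1}(v \triangleright \lambda)\,\mathfrak{i}_{b_1}(\psi)$ via the exchange relation of item~3 of Proposition~\ref{propDescriptionProductLgn}, producing four copies of the $R$-matrix, and compares with the right-hand side $\widetilde{\tau_{a_1}}(\mathfrak{i}_{b_1}(\psi))\,e = \psi_{(1)}(v^{-1})\,\mathfrak{i}_{b_1}(\psi_{(2)})\,\mathfrak{i}_{a_1}\bigl(\psi_{(3)} \star (v \triangleright \lambda)\bigr)$ predicted by Example~\ref{exampleTildaTauA1}, where Lemma~\ref{lemmaL01InvZL01} has been used to merge the two $\mathfrak{i}_{a_1}$ factors into a single one with the usual product $\star$ of $H^\circ$. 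The simplification has three ingredients: the $\mathrm{coad}^r$-invariance of $\lambda$, which collapses the action of one pair of $R$-matrices on the argument $v \triangleright \lambda$ to counit factors; the centrality of $v$ with $S(v) = v$, used to pull $v$ through the remaining $R$-matrices; and the ribbon identity $\Delta(v^{-1}) = (R'R)(v^{-1} \otimes v^{-1})$, which produces the scalar $\psi_{(1)}(v^{-1})$ and recombines the leftover $R$-factors with the coproduct of $\psi$ into $\psi_{(2)}$ and $\psi_{(3)} \star (v \triangleright \lambda)$. The principal technical difficulty is this $R$-matrix bookkeeping: each simplification must be applied in the correct order for the four $R$-matrices to telescope precisely into the target expression.
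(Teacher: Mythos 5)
Your proposal is correct and follows essentially the same route as the paper: orientation-independence of $\mathfrak{i}_c(v \triangleright \lambda)$ via Lemma \ref{lemmaIntegralAntipode}, explicit verification of the intertwining relations for $c = \alpha_1$ on the generators $\mathfrak{i}_{b_i}, \mathfrak{i}_{a_i}$ using Example \ref{exampleTildaTauA1}, Proposition \ref{propDescriptionProductLgn} and Lemma \ref{lemmaL01InvZL01}, and transport to a general non-separating curve by the change-of-coordinates principle together with Corollary \ref{coroLiftMappingClass} (the paper merely does the base case before the reduction rather than after). The only place where you are thinner than the paper is the $\mathfrak{i}_{b_1}$ exchange computation, which you outline by its ingredients (the relation \eqref{productShiftedIntegral}, quasi-(co)commutativity, $S(v)=v$ and $\Delta(v) = (R'R)^{-1}(v \otimes v)$) rather than carrying out the four-$R$-matrix telescoping in full; the ingredients you list are exactly those the paper uses.
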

\noindent A related formula was given in \cite[\S 9.2]{AS} for semisimple $H$; it was generalized to any $H$ in \cite[Lem.\,5.3]{FaitgMCG} and proved through the isomorphism between $\mathcal{L}_{1,0}(H)$ and the Heisenberg double (see App.\,\ref{appendixComparisonReps}). Here is a proof entirely in $\mathcal{L}_{g,0}(H)$ and within the current notations:
\begin{proof}
Note that the element $\mathfrak{i}_c(v \triangleright \lambda)$ is independent of the orientation on $c$, thanks to item 2 in Lemma \ref{lemmaChangeOrientationKnotMap} and Lemma \ref{lemmaIntegralAntipode}. We first prove the result in the case where  $c$ is the simple closed curve $\alpha_1$  in Fig.\,\ref{courbesCanoniques} below. The generator $a_1 \in \pi_1(\Sigma_{g,0}^{\circ})$ defined in \eqref{surfaceEnRuban} can be choosen as a based knot representing $\alpha_1$, so that by definition \eqref{unbasedKnotMapLgn} we have $\mathfrak{i}_{\alpha_1}(v \triangleright \lambda) = \mathfrak{i}_{a_1}(v \triangleright \lambda)$. According to Example \ref{exampleTildaTauA1} we must show that for all $\varphi \in \mathcal{L}_{0,1}(H)$:
\begin{align}
&\mathfrak{i}_{a_1}(v \triangleright \lambda) \mathfrak{i}_{a_1}(\varphi) = \mathfrak{i}_{a_1}(\varphi) \mathfrak{i}_{a_1}(v \triangleright \lambda),\label{firstLineHatTauA1}\\
&\mathfrak{i}_{a_1}(v \triangleright \lambda) \mathfrak{i}_{b_1}(\varphi) = \varphi_{(1)}(v^{-1}) \, \mathfrak{i}_{b_1}(\varphi_{(2)}) \mathfrak{i}_{a_1}(\varphi_{(3)})\mathfrak{i}_{a_1}(v \triangleright \lambda), \label{secondLineHatTauA1}\\
&\mathfrak{i}_{a_1}(v \triangleright \lambda) \mathfrak{i}_{a_j}(\varphi) = \mathfrak{i}_{a_j}(\varphi) \mathfrak{i}_{a_1}(v \triangleright \lambda), \quad \mathfrak{i}_{a_1}(v \triangleright \lambda) \mathfrak{i}_{b_j}(\varphi) = \mathfrak{i}_{b_j}(\varphi) \mathfrak{i}_{a_1}(v \triangleright \lambda) \quad \text{for } j \geq 2.\label{thirdLineHatTauA1}
\end{align}
\eqref{firstLineHatTauA1} is immediate from Lemma \ref{lemmaL01InvZL01} and \eqref{integralQCharacter}. The two equalities in \eqref{thirdLineHatTauA1} are also immediate from item 4 in Prop.\,\ref{propDescriptionProductLgn} and \eqref{integralQCharacter}. For \eqref{secondLineHatTauA1} note that $\varphi(v \triangleright \lambda) = \varphi \star (v \triangleright \lambda)$ for all $\varphi \in \mathcal{L}_{0,1}(H)$, due to \eqref{integralQCharacter} and Lemma \ref{lemmaL01InvZL01}. But
\begin{equation}\label{productShiftedIntegral}
\langle \varphi\star (v \triangleright \lambda), h \rangle = \varphi(h_{(1)}) \lambda(vh_{(2)})= \varphi\bigl( S(v_{(1)})v_{(2)} h_{(1)} \bigr) \lambda\bigl( v_{(3)}h_{(2)} \bigr)\overset{\eqref{axiomLeftIntegral}}{=} \bigl\langle \varphi\bigl( S(v_{(1)}) \bigr) (\lambda \triangleleft v_{(2)}), h \bigr\rangle.
\end{equation}
Using this fact we get that the right-hand side of \eqref{secondLineHatTauA1} equals
\begin{align*}
\varphi_{(1)}(v^{-1}) \, \mathfrak{i}_{b_1}(\varphi_{(2)}) \mathfrak{i}_{a_1}\bigl( \varphi_{(3)}(v \triangleright \lambda) \bigr) &\overset{\eqref{productShiftedIntegral}}{=} \varphi_{(1)}(v^{-1}) \, \mathfrak{i}_{b_1}(\varphi_{(2)}) \varphi_{(3)}\bigl( S(v_{(1)})  \bigr) \mathfrak{i}_{a_1}(\lambda \triangleleft v_{(2)})\\
&\overset{\eqref{usualCoprodHDual}}{=} \mathfrak{i}_{b_1}\bigl( v^{-1}S(v_{(1)}) \triangleright \varphi \bigr) \mathfrak{i}_{a_1}(\lambda \triangleleft v_{(2)})
\end{align*}
where we also used the centrality of $v$. The exchange relation in item 3 of Prop.\,\ref{propDescriptionProductLgn} allows us to check that this result agrees with the left-hand side of \eqref{secondLineHatTauA1}:
\begin{align*}
&\qquad\mathfrak{i}_{a_1}(v \triangleright \lambda) \mathfrak{i}_{b_1}(\varphi) = \mathfrak{i}_{b_1}\bigl( R_{[2]}^4 R_{[1]}^3 \triangleright \varphi \triangleleft R_{[1]}^1 R_{[1]}^2 \bigr) \, \mathfrak{i}_{a_1}\bigl( vR_{[2]}^3 S(R_{[2]}^1) \triangleright \lambda \triangleleft R_{[2]}^2 R_{[1]}^4  \bigr)\\
&\overset{\eqref{integralQCharacter}}{=} \mathfrak{i}_{b_1}\bigl( R_{[2]}^4 R_{[1]}^3 \triangleright \varphi \triangleleft R_{[1]}^1 R_{[1]}^2 \bigr) \, \mathfrak{i}_{a_1}\bigl( \lambda \triangleleft vS^{-2}(R_{[2]}^3) S^{-1}(R_{[2]}^1)R_{[2]}^2 R_{[1]}^4  \bigr)\\
&\overset{\eqref{inverseR}}{=} \mathfrak{i}_{b_1}\bigl( R_{[2]}^4 R_{[1]}^3 \triangleright \varphi \bigr) \, \mathfrak{i}_{a_1}\bigl( \lambda \triangleleft vS^{-2}(R_{[2]}^3) R_{[1]}^4  \bigr)\\
&\overset{\eqref{SSR}}{=} \mathfrak{i}_{b_1}\bigl( S\bigl( S(R_{[1]}^3) S^{-1}(R_{[2]}^4) \bigr) \triangleright \varphi \bigr) \, \mathfrak{i}_{a_1}\bigl( \lambda \triangleleft vR_{[2]}^3 R_{[1]}^4  \bigr) = \mathfrak{i}_{b_1}\bigl( v^{-1}S(v_{(1)}) \triangleright \varphi \bigr) \mathfrak{i}_{a_1}(\lambda \triangleleft v_{(2)})
\end{align*}
where for the last equality we used \eqref{inverseR} and the axiom $\Delta(v) = (R'R)^{-1} (v \otimes v)$ of the ribbon element, also recall \eqref{inverseR} for $R^{-1}$. Now let $c$ be as in the statement. By the change of coordinate principle \cite[\S 1.3.1]{FM}, there exists $f \in \mathrm{MCG}(\Sigma_{g,0}^{\circ})$ such that $f(\alpha_1) = c$. Then $\tau_c = f \circ \tau_{\alpha_1} \circ f^{-1}$ \cite[Fact\,3.7]{FM} and $\mathfrak{i}_c = \widetilde{f} \circ \mathfrak{i}_{\alpha_1}$ (Cor.\,\ref{coroLiftMappingClass}). Hence, using the result for $\alpha_1$ just established, we find
\[ \widehat{\tau_c} = \widehat{f} \, \widehat{\tau_{\alpha_1}} \, \widehat{f}^{-1} \overset{\eqref{defWidehatF}}{=} \widetilde{f}\bigl( \widehat{\tau_{\alpha_1}} \bigr) = \widetilde{f}\bigl( \mathfrak{i}_{\alpha_1}(v \triangleright \lambda) \bigr) = \mathfrak{i}_c(v \triangleright \lambda). \qedhere \]
\end{proof}

\begin{proof}[Proof of Theorem \ref{thmRepDeriveesMCG}.]
By Lemma \ref{lemmaWajnryb} it suffices to show that $\mathcal{R}^m_{\Bbbk}(\tau_{\alpha_{\scriptstyle g}}) = \mathcal{R}^m_{\Bbbk}(\tau_{\delta})$. Recall that simple closed curves in $\Sigma$ can be seen as knots in $\Sigma \times [0,1]$. Note that the simple closed curves $\alpha_g$ and $\delta$ become isotopic under the injection $\Sigma_{g,0}^{\circ} \hookrightarrow \Sigma_{g,0}$, thus the same is true if we look at them as knots in $\Sigma_{g,0}^{\circ} \times [0,1]$. As a result Thm.~\ref{thmClosingBoundary} applies to the associated knot maps $\mathfrak{i}_{\alpha_{\scriptstyle g}}, \mathfrak{i}_{\delta} : \mathcal{L}_{0,1}^{\mathrm{inv}}(H) \to \mathcal{L}_{g,0}^{\mathrm{inv}}(H)$ and we get
\[ \mathcal{R}^m_{\Bbbk}(\tau_{\alpha_{\scriptstyle g}})\bigl( [s] \bigr) \overset{\eqref{defProjRepDerivedLyu}}{=} \widehat{\tau_{\alpha_{\scriptstyle g}}} \cdot [s] = \mathfrak{i}_{\alpha_{\scriptstyle g}}(v \triangleright \lambda) \cdot [s] = \mathfrak{i}_{\delta}(v \triangleright \lambda) \cdot [s] = \widehat{\tau_{\delta}} \cdot [s] \overset{\eqref{defProjRepDerivedLyu}}{=} \mathcal{R}^m_{\Bbbk}(\tau_{\delta})\bigl( [s] \bigr). \]
for all $[s] \in V^m(\Sigma^{\Bbbk}_g)$, where the second and fourth equalities use Prop.~\ref{explicitConjugatingElmt} while the third equality uses Thm.~\ref{thmClosingBoundary}.
\end{proof}

\subsubsection{Explicit formulas and equivalence with the LMSS representations}\label{subsubsExplicitFormulas}
We continue with the assumptions of \S\ref{subsectConstructMCGRep} on $H$.

Note that $V(\Sigma^H_{g}) = \Hom_H\bigl( H, (H^*)^{\otimes g} \bigr) \cong (H^*)^{\otimes g}$, where $H$ is considered as a module over itself (regular representation). Through this identification, $\mathcal{R}_H$ from \eqref{defProjRepMCG} becomes a projective representation on $(H^*)^{\otimes g}$ which we simply denote by $\mathcal{R}$:
\[ \forall \, f \in \mathrm{MCG}(\Sigma_{g,0}^{\circ}), \:\: \forall \, \Psi \in (H^*)^{\otimes g}, \quad \mathcal{R}(f)(\Psi) = \widehat{f} \cdot \Psi \]
where $\cdot$ is the representation of $\mathcal{L}_{g,0}(H)$ on $(H^*)^{\otimes g}$ (\S\ref{sectionSomeEquivModules}) and $\widehat{f}$ is defined in \eqref{defWidehatF}. As a side remark, $\mathcal{R}(f)$ commutes with the right action \eqref{HActionRepsOfLgn} of $H$ on $(H^*)^{\otimes g}$; this is due to Lemma \ref{lemmaHatFInv} and item 1 in Lemma \ref{lemmaObviousRemarksInvSubalgebra}. The representation $\mathcal{R}$ encapsulates the action of $\mathrm{MCG}(\Sigma_{g,0}^{\circ})$ on the spaces $V(\Sigma^X_g)$ and their derived version $V^m(\Sigma^X_g)$ for all $X \in \mathrm{mod}\text{-}H$. Indeed \eqref{defProjRepUsualLyu} is rewritten as
\begin{equation}\label{defRepCalR}
\forall \, f \in \mathrm{MCG}(\Sigma_{g,0}^{\circ}), \:\: \forall \, s \in V(\Sigma^X_g), \quad \mathcal{R}_X(f)(s) = \mathcal{R}(f) \circ s.
\end{equation}
Similarly \eqref{defProjRepDerivedLyu} is rewritten as
\begin{equation}\label{defRepCalRDer}
\forall \, f \in \mathrm{MCG}(\Sigma_{g,0}^{\circ}), \:\: \forall \, [s] \in V^m(\Sigma^X_g), \quad \mathcal{R}^m_X(f)\bigl( [s] \bigr) = \bigl[ \mathcal{R}(f) \circ s \bigr].
\end{equation}
Let us then give the value of $\mathcal{R}$ on the {\em Lickorish generators} of $\mathrm{MCG}(\Sigma_{g,0}^{\circ})$, which are the Dehn twists about the curves $\alpha_i$, $\beta_i$, $\gamma_j$ represented in Fig.\,\ref{courbesCanoniques}. 
\begin{figure}[h!]
\centering
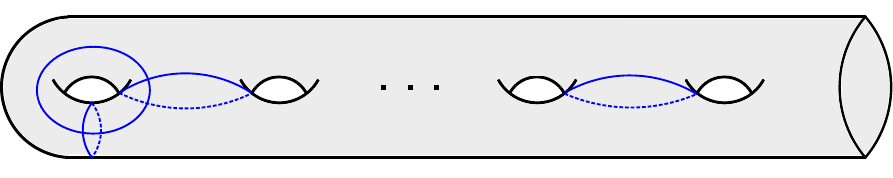
\caption{Some simple closed curves on the surface $\Sigma_{g,0}^{\circ}$.}
\label{courbesCanoniques}
\end{figure}

\begin{proposition}\label{propExplicitFormulas}
The representations of the Lickorish generators through $\mathcal{R}$ are given by
\begin{align*}
\mathcal{R}(\tau_{\alpha_{\scriptstyle i}})(\psi_1 \otimes \ldots \otimes \psi_g) &= \psi_1 \otimes \ldots \otimes \psi_{i-1} \otimes (v^{-1} \triangleright \psi_i) \otimes \psi_{i+1} \otimes \ldots \otimes \psi_g,\\
\mathcal{R}(\tau_{\beta_{\scriptstyle i}})(\psi_1 \otimes \ldots \otimes \psi_g) &= \psi_i\bigl( S(v_{(1)}) \bigr)\, \psi_1 \otimes \ldots \otimes \psi_{i-1} \otimes (\lambda \triangleleft v_{(2)}) \otimes \psi_{i+1} \otimes \ldots \otimes \psi_g,\\
\mathcal{R}(\tau_{\gamma_{\scriptstyle j}})(\psi_1 \otimes \ldots \otimes \psi_g) &= \psi_1 \otimes \ldots \otimes \psi_{j-1} \otimes \bigl( S(v_{(1)}^{-1}) \triangleright \psi_j \bigr) \otimes \bigl( \psi_{j+1} \triangleleft v_{(2)}^{-1}  \bigr) \otimes \psi_{j+2} \otimes \ldots \otimes \psi_g
\end{align*}
for all $\psi_1 \otimes \ldots \otimes \psi_g \in (H^*)^{\otimes g}$, where $v$ is the ribbon element and $\lambda$ is the left integral of $H$ normalized by $\lambda(v)=1$.
\end{proposition}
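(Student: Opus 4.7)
The general strategy is to apply Proposition~\ref{explicitConjugatingElmt}, which identifies $\widehat{\tau_c} = \mathfrak{i}_c(v \triangleright \lambda)$ for any non-separating simple closed curve $c$, together with the explicit action of $\mathcal{L}_{g,0}(H)$ on $(H^*)^{\otimes g}$ provided by Proposition~\ref{coroEquivariantRepsOfLgn}. A preliminary observation simplifies everything: $v \triangleright \lambda \in \mathcal{L}_{0,1}^{\mathrm{inv}}(H)$ (which follows from \eqref{integralQCharacter}, the centrality of $v$ and $S(v)=v$). Hence the much shorter formulas of Remark~\ref{remarkActionL01Inv} apply for each $\mathfrak{i}_s(v \triangleright \lambda)$ we meet.

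\textbf{Cases $\alpha_i$ and $\beta_i$.} The Lickorish curves $\alpha_i$ and $\beta_i$ in Fig.~\ref{courbesCanoniques} are respectively isotopic to the loops $a_i$ and $b_i \in \pi_1(\Sigma_{g,0}^\circ)$ fixed in \eqref{surfaceEnRuban}, so that $\widehat{\tau_{\alpha_i}} = \mathfrak{i}_{a_i}(v \triangleright \lambda)$ and $\widehat{\tau_{\beta_i}} = \mathfrak{i}_{b_i}(v \triangleright \lambda)$. For $\beta_i$, Remark~\ref{remarkActionL01Inv} immediately yields that the action modifies only the $i$-th factor, replacing $\psi_i$ by the $H^\circ$-product $\psi_i \star (v \triangleright \lambda)$; but this product was already evaluated in \eqref{productShiftedIntegral} to be $\psi_i\bigl(S(v_{(1)})\bigr) (\lambda \triangleleft v_{(2)})$, matching the announced formula. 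For $\alpha_i$, Remark~\ref{remarkActionL01Inv} reduces the statement to the auxiliary identity
\[
\Phi_{0,1}(v \triangleright \lambda) = v^{-1},
\]
which I would prove as a separate lemma using the integral axiom \eqref{axiomLeftIntegral}, the ribbon identity $\Delta(v) = (R'R)^{-1}(v \otimes v)$, the quasitriangularity relations \eqref{quasitriang1}--\eqref{quasitriang2}, the normalization $\lambda(v) = 1$ and the factorizability of $H$ (which guarantees $\Phi_{0,1}$ is bijective). Roughly, starting from $\Phi_{0,1}(v \triangleright \lambda) = \lambda(R^1_{[1]}R^2_{[2]} v)\,R^1_{[2]}R^2_{[1]}$ and applying the integral axiom to collapse the $\lambda$-evaluation will give $v^{-1}$.

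\textbf{Case $\gamma_j$.} This is the genuinely non-trivial case: $\gamma_j$ is not a standard generator of $\pi_1(\Sigma_{g,0}^\circ)$, but it is a simple loop supported in a neighbourhood of the $j$-th and $(j+1)$-th handles, so its homotopy class can be written as a word $\gamma_j = s_1 \cdots s_l$ involving only $a_j^{\pm 1}, b_j^{\pm 1}, a_{j+1}^{\pm 1}, b_{j+1}^{\pm 1}$ (read off from Fig.~\ref{courbesCanoniques}). Proposition~\ref{thmExpressionLoopMaps} then expresses
\[
\mathfrak{i}_{\gamma_j}(v \triangleright \lambda) = (v \triangleright \lambda)_{(1)}\bigl(v^{N(\gamma_j)}\bigr)\,\mathfrak{i}_{s_1}\bigl((v \triangleright \lambda)_{(2)}\bigr)\cdots \mathfrak{i}_{s_l}\bigl((v \triangleright \lambda)_{(l+1)}\bigr),
\]
and Proposition~\ref{coroEquivariantRepsOfLgn} gives the action of this product on the tensor. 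The integral property \eqref{axiomLeftIntegral} applied repeatedly (exactly as in the computation \eqref{productShiftedIntegral}) will collapse successive coproduct factors, so that the only surviving contribution is a single application of $\Delta(v^{-1}) = (v^{-1} \otimes v^{-1}) R'R$: one copy of $v^{-1}$ produces $S(v^{-1}_{(1)}) \triangleright \psi_j$ on the $j$-th factor (via Lemma~\ref{lemmaIntegralAntipode} and the auxiliary identity $\Phi_{0,1}(v \triangleright \lambda) = v^{-1}$), the other produces $\psi_{j+1} \triangleleft v^{-1}_{(2)}$ on the $(j+1)$-th factor, and the residual $R$-matrices inside $\Delta(v^{-1})$ account for exactly the braiding factors introduced by Proposition~\ref{propDescriptionProductLgn}.4 when commuting the $\mathfrak{i}_{s_k}$'s into normal order.

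\textbf{Main obstacle.} The hardest step is the $\gamma_j$ computation: bookkeeping through the sequence of $R$-matrix exchanges prescribed by items 3 and 4 of Proposition~\ref{propDescriptionProductLgn} while simultaneously tracking the iterated coproduct of $v \triangleright \lambda$, identifying the winding integer $N(\gamma_j)$ from the chosen word, and verifying that all $R$-matrix contributions cancel against those produced by $\Delta(v^{-1})$. The supporting identity $\Phi_{0,1}(v \triangleright \lambda) = v^{-1}$ — the main technical input, on which both the $\alpha_i$ formula and the final collapse of the $\gamma_j$ computation depend — is essentially one side of the factorizability bijection applied to the ribbon-shifted integral, and should be provable by a direct manipulation of the integral axiom against the $R$-matrix.
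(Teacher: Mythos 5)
Your handling of $\alpha_i$ and $\beta_i$ is sound. For $\beta_i$ it coincides with the paper's own argument (Prop.\,\ref{explicitConjugatingElmt}, Remark \ref{remarkActionL01Inv} and the computation \eqref{productShiftedIntegral}); for $\alpha_i$ your route via the identity $\Phi_{0,1}(v \triangleright \lambda) = v^{-1}$ is precisely the alternative direct proof that the paper records in the remark immediately following the proposition, and that identity does hold (it follows from $\Delta(v) = (R'R)^{-1}(v\otimes v)$).

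The gap is in the $\gamma_j$ case, which is also where you diverge from the paper. The paper does \emph{not} attempt the direct computation: it imports the formulas already established for the Alekseev representation $\mathcal{R}'$ in the earlier work \cite{FaitgMCG} and transfers them to $\mathcal{R}$ through the explicit intertwiner $F^{\otimes g}$ of \eqref{intertwinerNewAlekseev2}, with $F(\psi) = R_{[2]} \triangleright \psi \triangleleft R_{[1]}$; the $\gamma_j$ formula then drops out using $S(v)=v$. The paper explicitly warns that the direct route for $\gamma_j$ is ``much harder'' — and your sketch confirms this by naming the bookkeeping as the main obstacle without overcoming it. Two concrete problems: first, your preliminary observation that Remark \ref{remarkActionL01Inv} applies to ``each $\mathfrak{i}_s(v\triangleright\lambda)$ we meet'' fails here, because Prop.\,\ref{thmExpressionLoopMaps} produces factors $\mathfrak{i}_{s_k}\bigl((v\triangleright\lambda)_{(k+1)}\bigr)$ whose arguments are individual coproduct legs of $v\triangleright\lambda$, and these legs are \emph{not} $H$-invariant, so you are forced into the full $R$-matrix-decorated formulas of Prop.\,\ref{coroEquivariantRepsOfLgn} together with the exchange relations of Prop.\,\ref{propDescriptionProductLgn}; second, the asserted cancellation of all residual $R$-matrices against $\Delta(v^{-1}) = (v^{-1}\otimes v^{-1})R'R$, which is supposed to produce the mixed left/right actions $S(v^{-1}_{(1)})\triangleright\psi_j$ and $\psi_{j+1}\triangleleft v^{-1}_{(2)}$, is exactly the unverified step on which the whole case rests. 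Either carry that computation out in full, or reduce to the known $\mathcal{R}'$ formulas via the intertwiner $F^{\otimes g}$ as the paper does.
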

\begin{proof}
We already computed these formulas in \cite[Th.\,5.12]{FaitgMCG}, modulo the fact that there we used another representation of $\mathcal{L}_{g,0}(H)$ on $(H^*)^{\otimes g}$, namely the Alekseev representation explained in appendix \ref{appendixComparisonReps}. This gives another projective representation $\mathcal{R}'$ of $\mathrm{MCG}(\Sigma_{g,0}^{\circ})$ on $(H^*)^{\otimes g}$ defined by $\mathcal{R}'(f) = \Phi'_{g,0}(\widehat{f})$ with $\Phi'_{g,0} : \mathcal{L}_{g,0}(H) \to \mathrm{End}_{\Bbbk}\bigl((H^*)^{\otimes g}\bigr)$ from Prop.\,\ref{propAlekseevMorphism}. Within the present notations, the resulting formulas are\footnote{In \cite{FaitgThesis}, $\alpha_i$ was denoted by $a_j$, $\beta_j$ by $b_j$ and $\gamma_j$ by $d_{j+1}$. Note the shift in this last case.}
\begin{align*}
\mathcal{R}'(\tau_{\alpha_{\scriptstyle i}})(\psi_1 \otimes \ldots \otimes \psi_g) &= \psi_1 \otimes \ldots \otimes \psi_{i-1} \otimes (v^{-1} \triangleright \psi_i) \otimes \psi_{i+1} \otimes \ldots \otimes \psi_g,\\
\mathcal{R}'(\tau_{\beta_{\scriptstyle i}})(\psi_1 \otimes \ldots \otimes \psi_g) &= \psi_1 \otimes \ldots \otimes \psi_{i-1} \otimes \bigl[ v^{-1} \triangleright \bigl( (\lambda \triangleleft g^{-1}v) \star (v \triangleright \psi_i) \bigr) \bigr] \otimes \psi_{i+1} \otimes \ldots \otimes \psi_g,\\
\mathcal{R}'(\tau_{\gamma_{\scriptstyle j}})(\psi_1 \otimes \ldots \otimes \psi_g) &= \psi_1 \otimes \ldots \otimes \psi_{j-1} \otimes \left( R^1_{[2]}v_{(2)}^{-1}R^2_{[2]} \triangleright \psi_j \triangleleft S^{-1}(R^2_{[1]})R^1_{[1]} \right)\\
&\quad \otimes \left( R^3_{[2]}R^4_{[2]} \triangleright \psi_{j+1} \triangleleft S^{-1}(R^4_{[1]}) S^{-1}(v_{(1)}^{-1}) R^3_{[1]} \right) \otimes \psi_{j+2}  \otimes \ldots \otimes \psi_g
\end{align*}
From \eqref{intertwinerNewAlekseev2} below we have an isomorphism $F^{\otimes g} : (H^*)^{\otimes g} \to (H^*)^{\otimes g}$  from the representation of \S\ref{sectionSomeEquivModules} (used to build $\mathcal{R}$) to the Alekseev representation (used to build $\mathcal{R}'$), where $F(\psi) = R_{[2]} \triangleright \psi \triangleleft R_{[1]}$. Hence
\begin{equation*}
F^{\otimes g}\bigl( \mathcal{R}(f)(\psi_1 \otimes \ldots \otimes \psi_g) \bigr) = \mathcal{R}'(f)\bigl( F(\psi_1) \otimes \ldots \otimes F(\psi_g) \bigr).
\end{equation*}
The formula for $\mathcal{R}(\tau_{\alpha_{\scriptstyle i}})$ immediately follows from this equality by centrality of $v$. The formula for $\mathcal{R}(\tau_{\gamma_{\scriptstyle j}})$ is easily seen as well, using that $S(v) = v$. The formula for $\mathcal{R}(\tau_{\beta_{\scriptstyle i}})$ can also be obtained from the isomorphism $F^{\otimes g}$, but it is a non-trivial exercise which requires involved computations. It is much faster to prove it directly. Due to Proposition \ref{explicitConjugatingElmt}, the fact \eqref{integralQCharacter} that $v \triangleright \lambda \in \mathcal{L}_{0,1}^{\mathrm{inv}}(H)$ and Remark \ref{remarkActionL01Inv} we have
\[ \mathcal{R}(\tau_{\beta_{\scriptstyle i}})(\psi_1 \otimes \ldots \otimes \psi_g) = \mathfrak{i}_{b_{\scriptstyle i}}(v \triangleright \lambda) \cdot (\psi_1 \otimes \ldots \otimes \psi_g) = \psi_1 \otimes \ldots \otimes \bigl(\psi_i \star (v \triangleright \lambda) \bigr) \otimes \ldots \otimes \psi_g. \]
In \eqref{productShiftedIntegral} we already noted that $\psi \star (v \triangleright \lambda) = \psi\bigl( S(v_{(1)}) \bigr) \lambda \triangleleft v_{(2)}$, whence the result.
\end{proof}
\begin{remark}
Recall the morphism $\Phi_{0,1}$ from \eqref{RSD}. One can show that $\Phi_{0,1}(v \triangleright \lambda) = v^{-1}$ due to the property $\Delta(v) = (R'R)^{-1}(v \otimes v)$. This fact together with Proposition \ref{explicitConjugatingElmt} and Remark \ref{remarkActionL01Inv} gives a direct proof of the formula for $\mathcal{R}(\tau_{\alpha_{\scriptstyle i}})$, \textit{i.e.} without using $\mathcal{R}'$. The formula for $\mathcal{R}(\tau_{\gamma_{\scriptstyle j}})$ is much harder to obtain directly (as it was with $\mathcal{R}'$).
\end{remark}

We finish by discussing the relation of our construction with that of \cite{lyu} and \cite{LMSS}. As already recalled in \S \ref{contextIntro}, for any finite ribbon factorizable category $\mathcal{C}$, the Lyubashenko construction \cite{lyu} produces a projective representation $Z_X$ of $\mathrm{MCG}(\Sigma_{g,0}^{\circ})$ on the space $Z(\Sigma^X_g) = \Hom_{\mathcal{C}}(X,L^{\otimes g})$ for all $X \in \mathcal{C}$, where $L = \int^{X \in \mathcal{C}} X^* \otimes X$. It is given by pushforward:
\begin{equation}\label{defLyuPushforward}
\forall \, f \in \mathrm{MCG}(\Sigma_{g,0}^{\circ}), \:\: \forall\, s \in \Hom_{\mathcal{C}}(X,L^{\otimes g}), \quad Z_X(f)(s) = Z(f) \circ s
\end{equation}
where $Z(f) \in \mathrm{Aut}_{\mathcal{C}}(L^{\otimes g})$ is defined on the Lickorish generators by means of certain dinatural transformations which factorize through $L^{\otimes g}$.

\smallskip

Take $\mathcal{C} = H\text{-}\mathrm{mod}$. Then $L$ is $H^*$ endowed with the left coadjoint action: $\mathrm{coad}^l(h)(\varphi) = h_{(2)} \triangleright \varphi \triangleleft S(h_{(1)}) = \varphi\bigl( S(h_{(1)}) ? h_{(2)} \bigr)$ for all $h \in H$ and $\varphi \in H^*$. Recall that any right $H$-module $(X,\smallsquare)$ is automatically a left module (and conversely) through the formula $h \cdot x = x \smallsquare S^{-1}(h)$. For the vector space $(H^*)^{\otimes g}$ endowed with the right $H$-action  \eqref{HActionRepsOfLgn} this gives the left $H$-action
\[ h \cdot (\psi_1 \otimes \ldots \otimes \psi_g) = \bigl( h_{(2g-1)} \triangleright \psi_1 \triangleleft S^{-1}(h_{(2g)}) \bigr) \otimes \ldots \otimes \bigl( h_{(1)} \triangleright \psi_g \triangleleft S^{-1}(h_{(2)}) \bigr). \]
Denote by $L'_g$ this left $H$-module structure on $(H^*)^{\otimes g}$. Then
\[ V(\Sigma^X_g) \overset{\eqref{defLyubashenkoSpaces}}{=} \Hom_{\mathrm{mod}\text{-}H}\bigl( X, (H^*)^{\otimes g} \bigr) = \Hom_{H\text{-}\mathrm{mod}}(X,L'_g) \] for any $H$-module $X$. The map
\[ I : L^{\otimes g} \to L'_g, \quad \psi_1 \otimes \ldots \otimes \psi_g \mapsto (\psi_g \circ S) \otimes \ldots \otimes (\psi_2 \circ S) \otimes (\psi_1 \circ S) \]
is easily seen to be an isomorphism of $H$-modules, where $L^{\otimes g}$ is of course equipped with the diagonal action given by coproduct. As a result the pushforward $I_* : Z(\Sigma^X_g) \cong V(\Sigma^X_g)$ is an isomorphism of vector spaces for all $X$. We proved in \cite[\S 6.2]{FaitgMCG}\footnote{The intertwiner $I$ has a simpler expression here than the intertwiner used in \cite{FaitgMCG} because here we use $\mathcal{R}$ while in \cite{FaitgMCG} we used $\mathcal{R}'$, where $\mathcal{R}'$ was recalled in the proof of Prop.\,\ref{propExplicitFormulas}.} that
\[ \forall \, f \in \mathrm{MCG}(\Sigma_{g,0}^{\circ}), \quad I \circ Z(f) = \mathcal{R}(f) \circ I. \]
It immediately follows from \eqref{defRepCalR} and \eqref{defLyuPushforward} that $I_*$ is an intertwiner between Lyubashenko's projective representation $Z_X$ and the projective representation $\mathcal{R}_X$ defined in \eqref{defProjRepMCG}.

\smallskip

\indent We also explained in \S\ref{contextIntro} that in \cite{LMSS} the projective representation $Z_X^m$ of $\mathrm{MCG}(\Sigma_{g,0}^{\circ})$ on $Z^m(\Sigma^X_g) = \Ext_{H\text{-}\mathrm{mod}}^m(X,L^{\otimes g})$ is given by
\begin{equation}\label{defLyuPushforwardDer}
\forall \, f \in \mathrm{MCG}(\Sigma_{g,0}^{\circ}), \:\: \forall\, [s] \in \Ext_{H\text{-}\mathrm{mod}}^m(X,L^{\otimes g}), \quad Z_X^m(f)\bigl( [s] \bigr) = \bigl[ Z(f) \circ s \bigr]
\end{equation}
for all $m \geq 0$. It immediately follows from \eqref{defRepCalRDer} and \eqref{defLyuPushforwardDer} that the linear map $[s] \mapsto [I \circ s]$ is an intertwiner between the projective representation $Z_X^m$ of \cite{LMSS} and the projective representation $\mathcal{R}^m_X$ defined in \eqref{defProjRepMCG}.

\indent If $X$ is the trivial $H$-module $\Bbbk$ then they all descend to equivalent projective representations of $\mathrm{MCG}(\Sigma_{g,0})$.

\subsection{Derived representations of skein algebras}\label{sectionDerRepSkeinAlg}
We first explain how Theorem \ref{thmDerivedRepModuli} yields derived representations for skein algebras defined from any ribbon Hopf algebra $H$ by means of the Reshetikhin--Turaev tangle invariant for $H\text{-}\mathrm{mod}$. We then discuss the case of the Kauffman bracket skein algebra at roots of unity, which is different in nature since it is defined by specialization.

\subsubsection{General skein algebras}\label{subsectionSkeinRemarks}
\indent Let $H$ be a ribbon Hopf algebra over a field $\Bbbk$. The {\em skein algebra} $\mathcal{S}_H(\Sigma)$ of an oriented surface $\Sigma$ is the $\Bbbk$-vector space spanned by isotopy classes of $H$-colored framed and oriented links in $\Sigma \times [0,1]$, modulo so-called {\em skein relations} which are obtained by applying the Reshetikhin--Turaev invariant of tangles inside small cubes in $\Sigma \times [0,1]$; see e.g. \cite[\S 6.2]{BFR} for a complete definition.

\indent Recall the stated skein algebra $\mathcal{S}^{\mathrm{st}}_H(\Sigma_{g,n}^{\circ,\bullet})$ from \S\ref{sectionKnotMaps}. There is a natural map
\[ \bigl\{ \text{$H$-colored, framed, oriented links in } \Sigma_{g,n}^{\circ} \times [0,1] \bigr\} \to \mathcal{S}^{\mathrm{st}}_H(\Sigma_{g,n}^{\circ,\bullet}) \]
because a link in $\Sigma_{g,n}^{\circ} \times [0,1]$ can be seen as a ribbon graph in $\Sigma_{g,n}^{\circ,\bullet} \times [0,1]$ without boundary points. By very definition of the right action of $H$ on $\mathcal{S}^{\mathrm{st}}_H(\Sigma_{g,n}^{\circ,\bullet})$ in \eqref{HmodStructStatedSkein}, we see that this map actually takes values in the subalgebra of invariant elements $\mathcal{S}^{\mathrm{st}}_H(\Sigma_{g,n}^{\circ,\bullet})^{\mathrm{inv}}$. It is moreover compatible with the skein relations in $\mathcal{S}^{\mathrm{st}}_H(\Sigma_{g,n}^{\circ})$ \cite[Lem.\,6.7]{BFR} so it descends to an algebra morphism $\mathcal{S}_H(\Sigma_{g,n}^{\circ}) \to \mathcal{S}^{\mathrm{st}}_H(\Sigma_{g,n}^{\circ,\bullet})^{\mathrm{inv}}$. 
This morphism is not surjective when $H\text{-}\mathrm{mod}$ is not semisimple. More surprisingly, it might {\it a priori} be not injective; the point is that there are more skein relations in $\mathcal{S}^{\mathrm{st}}_H(\Sigma_{g,n}^{\circ,\bullet})$ than in $\mathcal{S}_H(\Sigma_{g,n}^{\circ})$ because of the possibility to interact with the boundary. On the other hand when $H\text{-}\mathrm{mod}$ is semisimple then $\mathcal{S}_H(\Sigma_{g,n}^{\circ}) \to \mathcal{S}^{\mathrm{st}}_H(\Sigma_{g,n}^{\circ,\bullet})^{\mathrm{inv}}$ is an isomorphism \cite[Cor.\,6.12]{BFR}.

\indent Recall from \S\ref{subsubKnotsMapsLgn} that for any ribbon Hopf algebra $H$, there is a (iso)morphism of $H$-module-algebras $\mathrm{hol}^{\mathrm{st}} : \mathcal{S}^{\mathrm{st}}_H(\Sigma_{g,n}^{\circ,\bullet}) \to \mathcal{L}_{g,n}(H)$. We thus have an algebra morphism
\[ W : \mathcal{S}_H(\Sigma_{g,n}^{\circ}) \to \mathcal{S}^{\mathrm{st}}_H(\Sigma_{g,n}^{\circ,\bullet})^{\mathrm{inv}} \xrightarrow{\mathrm{hol}^{\mathrm{st}}} \mathcal{L}_{g,n}^{\mathrm{inv}}(H) \]
called {\em Wilson loop map}. By Theorem \ref{thmDerivedRepModuli}, its existence implies:

\begin{corollary}\label{coroRepDerSkeinAlg}
For all $m \geq 0$ there is a representation of $\mathcal{S}_H(\Sigma_{g,n}^{\circ})$ on the derived spaces $V^m(\Sigma^X_{g,X_1,\ldots,X_n})$.
\end{corollary}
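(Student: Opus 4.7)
The plan is essentially to compose two morphisms that are already in hand: the Wilson loop map $W : \mathcal{S}_H(\Sigma_{g,n}^{\circ}) \to \mathcal{L}_{g,n}^{\mathrm{inv}}(H)$ constructed just above in \S\ref{subsectionSkeinRemarks}, together with the representation of $\mathcal{L}_{g,n}^{\mathrm{inv}}(H)$ on $V^m(\Sigma^X_{g,X_1,\ldots,X_n})$ supplied by Theorem \ref{thmDerivedRepModuli}. Since both arrows are morphisms of $\Bbbk$-algebras (the first into $\mathcal{L}_{g,n}^{\mathrm{inv}}(H)$ and the second into $\mathrm{End}_{\Bbbk}(V^m(\Sigma^X_{g,X_1,\ldots,X_n}))$ by the functoriality part of Theorem \ref{thmDerivedRepModuli}), their composite is automatically a representation of $\mathcal{S}_H(\Sigma_{g,n}^{\circ})$.

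More concretely, I would unwind the action as follows. Given a $H$-colored framed oriented link $L \subset \Sigma_{g,n}^{\circ} \times [0,1]$, view it as a stated ribbon graph with no boundary points inside $\Sigma_{g,n}^{\circ,\bullet} \times [0,1]$; its image under $\mathrm{hol}^{\mathrm{st}}$ lands in $\mathcal{L}_{g,n}^{\mathrm{inv}}(H)$ since $L$ is invariant under the right $H$-action on states (there are no states). Then for a cocycle $f \in V(\Sigma^{P_m}_{g,X_1,\ldots,X_n})$ representing a class $[f] \in V^m(\Sigma^X_{g,X_1,\ldots,X_n})$ obtained from a projective resolution $P_\bullet \to X$, the formula is simply
\[ L \cdot [f] = [W(L) \cdot f] \]
with $W(L) \cdot f$ defined by the target action \eqref{pushfrowardRepModuliAlg}. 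The compatibility with skein relations in $\mathcal{S}_H(\Sigma_{g,n}^{\circ})$ is inherited from the fact that $W$ is already a well-defined algebra morphism.

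I do not foresee any genuine obstacle here: once the factorisation through $\mathcal{L}_{g,n}^{\mathrm{inv}}(H)$ has been established (which is done in \S\ref{subsectionSkeinRemarks}), the derived representation is obtained for free from Theorem \ref{thmDerivedRepModuli}. The only thing worth emphasising in writing is that the construction is independent of the choice of projective resolution of $X$, but this is a standard consequence of the comparison theorem and is already built into the definition of $V^m$.
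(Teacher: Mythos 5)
Your proposal is correct and coincides with the paper's argument: Corollary \ref{coroRepDerSkeinAlg} is obtained exactly by composing the Wilson loop map $W : \mathcal{S}_H(\Sigma_{g,n}^{\circ}) \to \mathcal{L}_{g,n}^{\mathrm{inv}}(H)$ with the representation of Theorem \ref{thmDerivedRepModuli}. The explicit unwinding you give (including the observation that a link with no boundary points lands in the invariant subalgebra) matches what the paper establishes in \S\ref{subsectionSkeinRemarks} before stating the corollary.
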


\noindent The skein algebra $\mathcal{S}_H(\Sigma_{g,n})$ is the quotient of $\mathcal{S}_H(\Sigma_{g,n}^{\circ})$ by the relation \eqref{boundaryMove}. Because of Theorem \ref{thmClosingBoundary}, when $H$ is finite-dimensional, it is natural to expect that the choice $X = \Bbbk$ (trivial $H$-module) in Corollary \ref{coroRepDerSkeinAlg} closes the boundary component:
\[  \xymatrix{
\mathcal{S}_H(\Sigma_{g,n}^{\circ}) \ar@{->>}[d] \ar[r]^{W} & \mathcal{L}_{g,n}^{\mathrm{inv}}(H) \ar[d]\\
\mathcal{S}_H(\Sigma_{g,n}) \ar@{-->}[r]_-{\exists!}  & \mathrm{End}_{\Bbbk}\bigl( V^m(\Sigma^{\Bbbk}_{g,X_1,\ldots,X_n}) \bigr)
} \]
More precisely this would immediately follow from the generalization of Thm.\,\ref{thmClosingBoundary} suggested in Rmk.\,\ref{remarkGeneralizationW}, because a $H$-colored link is a particular case of a $\mathcal{L}_{0,1}^{\mathrm{inv}}(H)$-colored link, where all colors are of the form $\mathrm{qTr}_X$ with $X \in H\text{-}\mathrm{mod}$ (see Example \ref{exampleQTrace}). In particular this would give representations of the skein algebra of the closed surface $\Sigma_{g,0}$.

\subsubsection{Kauffman bracket skein algebras at roots of unity}
Let $q$ be a formal variable and $U_{q^2} = U_{q^2}(\mathfrak{sl}_2)$ be the quantum envelopping algebra of $\mathfrak{sl}_2(\mathbb{C})$, considered over the ground ring $\mathbb{C}[q^{\pm 1/2}]$.\footnote{We choose this ground ring because the stated skein algebra is defined over it \cite{Le,CL}, as we will recall later. It is even possible to work over $\mathbb{Z}[q^{\pm 1/2}]$.} It is generated by $E,F,K^{\pm 1}$ modulo the well-known relations, see e.g. in \cite[Def.\,VI.1.1]{kassel}. It has a Hopf structure which can be found e.g. in \cite[\S VII.1]{kassel}. For $H = U_{q^2}$, we use $\mathcal{O}_{q^2}(\mathrm{SL}_2)$ instead of the whole $H^{\circ}$.\footnote{$U_{q^2}(\mathfrak{sl}_2)^{\circ}$ is generated by the matrix coefficients of the irreducible highest modules $V^{\pm}_k$ defined e.g. in \cite[VI.3.5]{kassel}. For $\mathcal{O}_{q^2}(\mathrm{SL}_2)$ we only take matrix coefficients of the type 1 modules $V^+_k$.} Although the Hopf algebra $U_{q^2}(\mathfrak{sl}_2)$ is not quasitriangular in the usual sense, there is the well-known formula for $R$ whose action is well defined on $X \otimes Y$ for all finite-dimensional $U_{q^2}$-modules $X,Y$; see e.g. \cite[Prop\,6.4.8]{CP}. This is enough to define $\mathcal{L}_{g,n}( U_{q^2} )$ as the $\mathbb{C}[q^{\pm 1/2}]$-module $\mathcal{O}_{q^2}(\mathrm{SL}_2)^{\otimes (2g+n)}$ endowed with the product described in Prop.\,\ref{propDescriptionProductLgn}.

\indent Here is a more concrete description of $\mathcal{L}_{g,n}(U_{q^2})$. Let $\rho : U_{q^2} \to \mathrm{M}_2\bigl( \mathbb{C}[q^{\pm 1/2}] \bigr)$ be the 2-dimensional representation of $U_{q^2}$ given by
\begin{equation}\label{naturalRepUqsl2}
\rho(E) = \begin{pmatrix}
 0 & 1\\
 0 & 0
\end{pmatrix}, \:\: \rho(F) = \begin{pmatrix}
0 & 0\\
1 & 0
\end{pmatrix}, \:\: \rho(K) = \begin{pmatrix}
q^2 & 0\\
0 & q^{-2}
\end{pmatrix}
\end{equation}
and write $\rho(h) = \left(\begin{smallmatrix}
\phi^0_0(h) & \phi^0_1(h) \\
\phi^1_0(h) & \phi^1_1(h)
\end{smallmatrix}\right)$ for all $h \in U_{q^2}$. This defines the matrix coefficients $\phi^0_0, \phi^0_1, \phi^1_0, \phi^1_1 \in \mathcal{O}_{q^2}(\mathrm{SL}_2)$ of $\rho$. A more popular notation is $\phi^0_0 = a, \phi^0_1 = b, \phi^1_0 = c, \phi^1_1 = d$. Then $\mathcal{L}_{g,n}(U_{q^2})$ is the $\mathbb{C}[q^{\pm 1/2}]$-algebra generated by
\[ \mathfrak{i}_{b_{\scriptstyle i}}(\phi^s_t), \quad \mathfrak{i}_{a_{\scriptstyle i}}(\phi^s_t), \quad \mathfrak{i}_{m_{\scriptstyle g+j}}(\phi^s_t) \quad \text{with } 1 \leq i \leq g, \:\: 1 \leq j \leq n, \:\: 0 \leq s,t \leq 1 \]
modulo the relations that can be found e.g. in \cite[Lem.\,5.1]{FaitgHol}.\footnote{The change of notations with \cite{FaitgHol} is
$B(i) = \begin{pmatrix}
\mathfrak{i}_{b_{\scriptstyle i}}(\phi^0_0) & \mathfrak{i}_{b_{\scriptstyle i}}(\phi^0_1)\\
\mathfrak{i}_{b_{\scriptstyle i}}(\phi^1_0) & \mathfrak{i}_{b_{\scriptstyle i}}(\phi^1_1)
\end{pmatrix}$ and similarly for $A(i)$ and $M(g+j)$.} 

\indent The right action $\mathrm{coad}^r$ of $U_{q^2}$ on $\mathcal{L}_{0,1}(U_{q^2})$ is easily computed: thanks to the module-algebra property, it suffices to determine the action of the generators $E,F,K \in U_{q^2}$ on the generators $\phi^s_t \in \mathcal{L}_{0,1}(U_{q^2})$. Since the embeddings $\mathfrak{i}_{b_{\scriptstyle i}}$, $\mathfrak{i}_{a_{\scriptstyle i}}$, $\mathfrak{i}_{m_{\scriptstyle g+j}} : \mathcal{L}_{0,1}(U_{q^2}) \to \mathcal{L}_{g,n}(U_{q^2})$ are morphisms of module-algebras, this also entirely determines the action of $U_{q^2}$ on $\mathcal{L}_{g,n}(U_{q^2})$. For instance:
\[
\begin{pmatrix}
\mathrm{coad}^r(E)(\phi^0_0) & \mathrm{coad}^r(E)(\phi^0_1)\\
\mathrm{coad}^r(E)(\phi^1_0) & \mathrm{coad}^r(E)(\phi^1_1)
\end{pmatrix} = - \rho(?EK^{-1}) + \rho(E?K^{-1}) = \begin{pmatrix}
q^{-2}\phi^1_0 & q^2(\phi^1_1 - \phi^0_0) \\
0 & -q^2\phi^1_0
\end{pmatrix}. \]
Recall that we denote by $\mathcal{L}_{g,n}^{\mathrm{inv}}(U_{q^2})$ the subalgebra of invariant elements for this action.

\smallskip

\indent Denote by $\mathcal{S}_q^{\mathrm{st}}(\Sigma)$ the Kauffman bracket {\em stated} skein algebra of a punctured bordered oriented surface $\Sigma$. It is the $\mathbb{C}[q^{\pm 1/2}]$-module freely spanned by the isotopy classes of framed (unoriented, uncolored) tangles in $\Sigma \times [0,1]$ whose endpoints in $\partial(\Sigma) \times [0,1]$ are labelled by states $+$ or $-$, modulo the Kauffman bracket skein relations and additional boundary relations as defined in \cite{Le,CL}. The only difference with these papers here is that we use the opposite product.\footnote{In our convention, $L_1 \ast L_2$ means that put $L_1$ {\em below} $L_2$ in $\Sigma \times [0,1]$.}.

\begin{remark}
This particular definition of stated skein algebra is of course equivalent to the one given in \S\ref{sectionKnotMaps} if one takes $H = U_{q^2}(\mathfrak{sl}_2)$ there. The states $-$, $+$ have to be interpreted as the basis vectors  $(1,0)$, $(0,1)$ of the representation $\rho$ in \eqref{naturalRepUqsl2}. All strands can be colored by $\rho$ because every $U_{q^2}$-module appears as a direct summand of some tensor power $\rho^{\otimes k}$, see e.g. \cite[\S VII.7]{kassel}. Finally the orientation of the strands can be forgotten because $\rho$ is self-dual, modulo the choice of a non-standard pivotal element \cite[\S 5]{FaitgHol} (\textit{i.e.} we do not use $g = K$).
\end{remark}
Denote by $\mathcal{S}_q(\Sigma)$ the usual Kauffman bracket skein algebra of $\Sigma$, still over the ground ring $\mathbb{C}[q^{\pm 1/2}]$. In \cite[\S 5]{FaitgHol} we proved that there is a commutative diagram of $\mathbb{C}[q^{\pm 1/2}]$-algebras
\begin{equation}\label{diagramIsoStatedSkein}
\xymatrix{
\mathcal{S}_q^{\mathrm{st}}(\Sigma_{g,n}^{\circ,\bullet}) \ar[r]_{\mathrm{hol}^{\mathrm{st}}}^{\cong} & \mathcal{L}_{g,n}(U_{q^2})\\
\mathcal{S}_q(\Sigma_{g,n}^{\circ}) \ar@{^{(}->}[u]\ar[r]_W^{\cong} & \mathcal{L}_{g,n}^{\mathrm{inv}}(U_{q^2}) \ar@{^{(}->}[u]_{\text{(subalg.)}}\\
} \end{equation}
where $\mathcal{S}_q(\Sigma_{g,n}^{\circ})$ is sent to the subalgebra of links in $\mathcal{S}_q^{\mathrm{st}}(\Sigma_{g,n}^{\circ,\bullet})$. The isomorphism $W$ is explicit: if $\gamma$ is a simple closed curve in $\mathcal{S}_q(\Sigma_{g,n}^{\circ})$ we have $W(\gamma) = \mathfrak{i}_{\gamma}(\mathrm{qTr}_{\rho}) = -\mathfrak{i}_{\gamma}(q^2 \phi^0_0 + q^{-2}\phi^1_1)$, with $\mathrm{qTr}_{\rho}$ from Example \ref{exampleQTrace} and $\mathfrak{i}_{\gamma} : \mathcal{L}_{0,1}(U_{q^2}) \to \mathcal{L}_{g,n}(U_{q^2})$ can be computed through Proposition \ref{thmExpressionLoopMaps}. We note that J. Korinman generalized the diagram \eqref{diagramIsoStatedSkein} to surfaces with arbitrary many boundary circles and arbitrary many punctures on each boundary circle \cite{korinman}.

\smallskip

\indent Now let $\epsilon$ be a root of unity, say of order $4p$ with $p \geq 2$ to follow the choices in \cite[\S 7]{FaitgHol}. Being $\mathbb{C}[q^{\pm 1/2}]$-modules, the algebras above can be specialized to $q=\epsilon$ (with an arbitrary choice of $\epsilon^{1/2}$) and we denote them by $U_{\epsilon^2}$, $\mathcal{L}_{g,n}(U_{q^2})_{\epsilon^2}$, $\mathcal{L}_{g,n}^{\mathrm{inv}}(U_{q^2})_ {\epsilon^2}$ and $\mathcal{S}_{\epsilon}(\Sigma_{g,n}^{\circ})$; they are $\mathbb{C}$-algebras. We want to construct ``derived representations'' of $\mathcal{S}_{\epsilon}(\Sigma_{g,n}^{\circ})$. Note that Theorem \ref{thmDerivedRepModuli} cannot be applied to $\mathcal{L}_{g,n}(U_{q^2})_{\epsilon^2}$ because this algebra is not of the form $\mathcal{L}_{g,n}(H)$. Instead the idea is to use a finite-dimensional quotient which has this form.

\indent Recall that the {\em small quantum group} is the following $2p^3$-dimensional quotient of $U_ {\epsilon^2}$:
\[ \overline{U}_{\!\epsilon^2} = \overline{U}_{\epsilon^2}(\mathfrak{sl}_2) = U_{\epsilon}/\langle E^p, F^p, K^{2p} - 1 \rangle. \]
The Hopf algebra $\overline{U}_{\!\epsilon^2}$ is not quasitriangular; instead there is an $R$-matrix in the extension of $\overline{U}_{\!\epsilon^2}$ by a square root of $K$ \cite[\S 4.1]{FGST}. Despite this small technical issue one can still define the algebra $\mathcal{L}_{g,n}(\overline{U}_{\!\epsilon^2})$: it is $(\overline{U}{_{\!\epsilon^2}^*})^{\otimes (2g+n)}$ as a $\mathbb{C}$-vector space and is endowed with the product of \S\ref{subsectionDefLgnH} \cite[\S 6]{FaitgSL2Z}. Let $\overline{\rho} : \overline{U}_{\!\epsilon^2} \to \mathrm{M}_2(\mathbb{C})$ be the representation defined as $\rho$ in \eqref{naturalRepUqsl2} but with $q$ replaced by $\epsilon$ and consider the associated matrix coefficients $\overline{\phi}{^s_t}$ with $0 \leq s,t \leq 1$. Then $\mathcal{L}_{g,n}(\overline{U}_{\!\epsilon^2})$ is generated by the elements $\mathfrak{i}_{b_{\scriptstyle i}}(\overline{\phi}{^s_t})$, $\mathfrak{i}_{a_{\scriptstyle i}}(\overline{\phi}{^s_t})$ and $\mathfrak{i}_{m_{\scriptstyle g+j}}(\overline{\phi}{^s_t})$ with $1 \leq i \leq g$, $1 \leq j \leq n$, $0 \leq s,t \leq 1$ modulo the same relations as  $\mathcal{L}_{g,n}(U_q)_{\epsilon^2}$ together with the extra relations
\[ \mathfrak{i}_s\bigl(\overline{\phi}{^0_1}\bigr)^p = 0, \quad \mathfrak{i}_s\bigl(\overline{\phi}{^1_0}\bigr)^p = 0, \quad \mathfrak{i}_s\bigl(\overline{\phi}{^1_1}\bigr)^{2p} = 1 \]
for all $s \in \{b_1,a_1,\ldots,b_g,a_g,m_{g+1},\ldots,m_{g+n} \}$. As a result there are projections $\mathrm{pr} : U_{\epsilon^2} \twoheadrightarrow \overline{U}_{\!\epsilon^2}$ and $\mathrm{pr}_{g,n} : \mathcal{L}_{g,n}(U_q)_{\epsilon^2} \twoheadrightarrow \mathcal{L}_{g,n}(\overline{U}_{\!\epsilon^2})$ which readily satisfy
\[ \forall\, h \in U_{\epsilon^2}, \:\:\forall \, a \in \mathcal{L}_{g,n}(U_q)_{\epsilon^2}, \quad \mathrm{pr}_{g,n}\bigl( \mathrm{coad}^r(h)(a) \bigr) = \mathrm{coad}^r\bigl( \mathrm{pr}(h) \bigr)\bigl( \mathrm{pr}_{g,n}(a) \bigr). \]
It follows that $\mathrm{pr}_{g,n}$ restricts to a morphism $\mathcal{L}_{g,n}^{\mathrm{inv}}(U_q)_{\epsilon^2} \to \mathcal{L}^{\mathrm{inv}}_{g,n}(\overline{U}_{\!\epsilon^2})$, which as a side remark is no longer surjective. Composing with $W$ in \eqref{diagramIsoStatedSkein} specialized to $\epsilon$, we obtain a morphism of $\mathbb{C}$-algebras
\[ \overline{W} : \mathcal{S}_{\epsilon}(\Sigma_{g,n}^{\circ}) \xrightarrow{\:W_{\epsilon}\:} \mathcal{L}_{g,n}^{\mathrm{inv}}(U_q)_{\epsilon^2} \xrightarrow{\,\mathrm{pr}_{g,n}\,} \mathcal{L}^{\mathrm{inv}}_{g,n}(\overline{U}_{\! \epsilon^2}) \]
and Theorem \ref{thmDerivedRepModuli} gives:
\begin{corollary}\label{coroDerRepKauffman}
For any $\overline{U}_{\!\epsilon^2}$-modules $X,X_1,\ldots,X_n$ and all $m \geq 0$, there is a representation of the Kauffman bracket skein algebra $\mathcal{S}_{\epsilon}(\Sigma_{g,n}^{\circ})$ on $\Ext^m_{\overline{U}_{\!{\scriptstyle \epsilon}^2}}\bigl(X, (\overline{U}{_{\!{\scriptstyle \epsilon}^2}^*})^{\otimes g} \otimes X_1 \otimes \ldots \otimes X_n \bigr)$.
\end{corollary}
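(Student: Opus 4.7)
The plan is to assemble two pieces that are already available in the preceding text. First, the morphism of $\mathbb{C}$-algebras
\[ \overline{W} : \mathcal{S}_{\epsilon}(\Sigma_{g,n}^{\circ}) \longrightarrow \mathcal{L}^{\mathrm{inv}}_{g,n}(\overline{U}_{\!\epsilon^2}) \]
has just been constructed as the composition $\mathrm{pr}_{g,n} \circ W_\epsilon$ of the specialization of the isomorphism $W$ in \eqref{diagramIsoStatedSkein} with the projection induced by $\mathrm{pr} : U_{\epsilon^2} \twoheadrightarrow \overline{U}_{\!\epsilon^2}$. Second, Theorem \ref{thmDerivedRepModuli} produces, for any quasitriangular Hopf algebra $H$, any $H$-modules $X, X_1, \ldots, X_n$ and any $m \geq 0$, a representation of $\mathcal{L}^{\mathrm{inv}}_{g,n}(H)$ on $\mathrm{Ext}^m_H\bigl(X, (H^\circ)^{\otimes g} \otimes X_1 \otimes \ldots \otimes X_n\bigr)$.

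The strategy is then simply to apply Theorem \ref{thmDerivedRepModuli} with $H = \overline{U}_{\!\epsilon^2}$ and to pull back the resulting representation along $\overline{W}$. Since $\overline{U}_{\!\epsilon^2}$ is finite-dimensional, $H^\circ = H^* = \overline{U}{_{\!\epsilon^2}^*}$, and the target Ext space is exactly the one in the statement.

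The only point requiring a remark is that $\overline{U}_{\!\epsilon^2}$ is not literally quasitriangular: its $R$-matrix lives in the extension $\widetilde{U} = \overline{U}_{\!\epsilon^2}[K^{1/2}]$, exactly as was already the case for $U_{\epsilon^2}$ in the paragraph following \eqref{naturalRepUqsl2}. The plan to handle this is to observe, as in \cite[\S 6]{FaitgSL2Z}, that this $R$-matrix nevertheless acts on tensor products of $\overline{U}_{\!\epsilon^2}$-modules (because $K$ acts invertibly with well-defined square roots on any finite-dimensional module, via semisimplicity of its action), and this is the only use of $R$ in Sections \ref{sectionBraidedTensorProduct}--\ref{sectionDerivedRep}. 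Consequently $\mathcal{L}_{g,n}(\overline{U}_{\!\epsilon^2})$, the $H$-equivariant module $(\overline{U}{_{\!\epsilon^2}^*})^{\otimes g} \otimes X_1 \otimes \ldots \otimes X_n$ of Proposition \ref{coroEquivariantRepsOfLgn}, and the representation of $\mathcal{L}^{\mathrm{inv}}_{g,n}(\overline{U}_{\!\epsilon^2})$ on the Ext spaces of Theorem \ref{thmDerivedRepModuli} are all well-defined without leaving the category of $\overline{U}_{\!\epsilon^2}$-modules.

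Granting this, the proof is one line: define the representation of $\mathcal{S}_{\epsilon}(\Sigma_{g,n}^{\circ})$ on $\mathrm{Ext}^m_{\overline{U}_{\!\epsilon^2}}\bigl(X, (\overline{U}{_{\!\epsilon^2}^*})^{\otimes g} \otimes X_1 \otimes \ldots \otimes X_n\bigr)$ as the composition of $\overline{W}$ with the representation of $\mathcal{L}^{\mathrm{inv}}_{g,n}(\overline{U}_{\!\epsilon^2})$ given by Theorem \ref{thmDerivedRepModuli}. The anticipated obstacle, namely the failure of strict quasitriangularity of $\overline{U}_{\!\epsilon^2}$, is thus only apparent; once the mild extension by $K^{1/2}$ is invoked to justify the constructions of the previous sections, everything reduces to functoriality of pullback of representations along an algebra morphism.
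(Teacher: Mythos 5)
Your proposal is correct and is essentially the paper's own argument: the paper obtains the corollary by composing the algebra morphism $\overline{W} = \mathrm{pr}_{g,n} \circ W_{\epsilon}$ with the representation of $\mathcal{L}_{g,n}^{\mathrm{inv}}(\overline{U}_{\!\epsilon^2})$ from Theorem \ref{thmDerivedRepModuli}, exactly as you do. Your side remark on the $R$-matrix living only in the extension by $K^{1/2}$ matches the paper's treatment (which defers to \cite[\S 6]{FaitgSL2Z} for the well-definedness of $\mathcal{L}_{g,n}(\overline{U}_{\!\epsilon^2})$), so nothing is missing.
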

\noindent This is less abstract than what it might look. Take the example of $\Sigma_{1,0}^{\circ}$. It is known that $\mathcal{S}_{\epsilon}(\Sigma_{1,0}^{\circ})$ is generated by the simple loops $\alpha, \beta$ in Fig.\,\ref{courbesCanoniques} \cite{BP}. We have $\overline{W}(\alpha) = \mathfrak{i}_a\bigl(\mathrm{qTr}_{\overline{\rho}}\bigr)$ and $\overline{W}(\beta) = \mathfrak{i}_b\bigl( \mathrm{qTr}_{\overline{\rho}} \bigr)$, where $\mathrm{qTr}_{\overline{\rho}} = -\epsilon^2 \overline{\phi}{^0_0} - \epsilon^{-2} \overline{\phi}{^1_1} \in \mathcal{L}_{0,1}^{\mathrm{inv}}(\overline{U}_{\! \epsilon^2})$. A straightforward computation reveals that $\Phi_{0,1}(\mathrm{qTr}_{\overline{\rho}}) = \Omega$ where $\Omega = (\epsilon^2 - \epsilon^{-2})FE + \epsilon^2K + \epsilon^{-2}K^{-1} \in \mathcal{Z}(\overline{U}_{\! \epsilon^2})$ is the Casimir element. By Remark \ref{remarkActionL01Inv}, the action on any $\psi \in \overline{U}{_{\!\epsilon^2}^*}$ is
\[ \overline{W}(\alpha) \cdot \psi = \Omega \triangleright \psi, \quad \overline{W}(\beta) \cdot \psi = \psi \star \mathrm{qTr}_{\overline{\rho}}. \]
Endow $\overline{U}{_{\!\epsilon^2}^*}$ with the right coadjoint action \eqref{actionCoadL10}. Then for any $P \in \mathrm{mod}\text{-}\overline{U}_{\! \epsilon^2}$, the action of $L \in \mathcal{S}_{\epsilon}(\Sigma_{1,0}^{\circ})$ on $f \in \Hom_{\overline{U}_{\! \epsilon^2}}\bigl(P, \overline{U}{_{\!\epsilon^2}^*} \bigr)$ is by acting on the target $\overline{U}{_{\!\epsilon^2}^*}$, \textit{i.e.} $(L \cdot f)(x) = \overline{W}(L) \cdot f(x)$. Finally, for a projective resolution $0 \leftarrow X \leftarrow P_0 \leftarrow P_1 \leftarrow \ldots$ in $\mathrm{mod}\text{-}\overline{U}_{\! \epsilon^2}$, the action of $L$ on a cohomology class $[f] \in \Ext_{\overline{U}_{\! \epsilon^2}}^m\bigl(X, \overline{U}{_{\!\epsilon^2}^*} \bigr)$ represented by a cocycle $f \in \Hom_{\overline{U}_{\! \epsilon^2}}\bigl(P_m, \overline{U}{_{\!\epsilon^2}^*} \bigr)$ is given by $L \cdot [f] = \bigl[ L \cdot f \bigr]$. Projective resolutions of simple $\overline{U}_{\!\epsilon^2}$-modules are given in \cite[\S 3.1]{FGST2}.

\smallskip

\indent It is very likely that if $X$ is the trivial module $\mathbb{C}$ then Corollary \ref{coroDerRepKauffman} descends to a representation of $\mathcal{S}_{\epsilon}(\Sigma_{g,0})$. For $m=0$, \textit{i.e.} for the representation on $\Hom_{\overline{U}_{\! \epsilon^2}}\bigl(\mathbb{C}, (\overline{U}{_{\!{\scriptstyle \epsilon}^2}^*})^{\otimes g} \bigr)$, this is true and was proved in \cite[\S 7]{FaitgHol}.\footnote{Actually in that paper we considered a slightly different representation which is a right representation, but the computations can be easily reproduced in the present setting.}

\begin{remark} 1. In the forthcoming paper \cite{BFRprep}, building upon techniques in \cite{BR2}, we generalize some of the arguments above to any simple complex Lie algebra $\mathfrak{g}$, thus working with $U_q(\mathfrak{g})$ instead of $U_q(\mathfrak{sl}_2)$. More precisely, we show that there is an algebra surjection $\mathcal{L}_{g,n}\bigl(U_q(\mathfrak{g})\bigr)_{\zeta} \twoheadrightarrow \mathcal{L}_{g,n}\bigl(u_{\zeta}(\mathfrak{g})\bigr)$ which restricts to a morphism $\mathcal{L}_{g,n}^{\mathrm{inv}}\bigl(U_q(\mathfrak{g})\bigr)_{\zeta} \to \mathcal{L}_{g,n}^{\mathrm{inv}}\bigl(u_{\zeta}(\mathfrak{g})\bigr)$, where $\zeta \in \mathbb{C}$ is a root of a unity whose order has suitable arithmetic properties and $u_{\zeta}(\mathfrak{g})$ is a small quantum group, finite-dimensional quotient of $U_{\zeta}(\mathfrak{g})$.

\noindent 2. Let $\mathcal{S}_q(\Sigma,\mathfrak{g})$ be the skein algebra whose relations come from the Reshetikhin--Turaev tangle invariant for $U_q(\mathfrak{g})$. At least for $\mathfrak{g} = \mathfrak{sl}_N(\mathbb{C})$, there is a complete list of skein relations with coefficients in $\mathbb{C}[q^{\pm 1}]$ \cite{sikora,LS}, and hence a specialization $\mathcal{S}_{\zeta}(\Sigma,\mathfrak{g})$ of $\mathcal{S}_q(\Sigma,\mathfrak{g})$ at the root of unity $\zeta$ is possible. For $\Sigma = \Sigma_{g,n}^{\circ}$, the morphism $\mathcal{L}_{g,n}^{\mathrm{inv}}\bigl(U_q(\mathfrak{g})\bigr)_{\zeta} \to \mathcal{L}_{g,n}^{\mathrm{inv}}\bigl(u_{\zeta}(\mathfrak{g})\bigr)$ yields representations of $\mathcal{S}_{\zeta}(\Sigma,\mathfrak{g})$ on Ext spaces over $u_{\zeta}(\mathfrak{g})$ as in Corollary \ref{coroDerRepKauffman}.
\end{remark}

\appendix

\section{Alekseev morphism and representations of $\mathcal{L}_ {g,n}(H)$}\label{appendixComparisonReps}
\indent Let $H$ be a quasitriangular Hopf algebra over a field $\Bbbk$, with invertible antipode $S$ and $R$-matrix $R = R_{[1]} \otimes R_{[2]}$. In this appendix we provide a short construction of a morphism $\Phi'_{g,n} : \mathcal{L}_{g,n}(H) \to \mathrm{End}_{\Bbbk}(H^{\circ})^{\otimes g} \otimes H^{\otimes n}$ which is an adaptation of \cite{A} and was used in \cite{FaitgMCG}. An important property, which was a key-fact for the construction of mapping class group representations in \S\ref{subsectConstructMCGRep}, is that $\Phi'_{g,n}$ is an isomorphism when $H$ is finite-dimensional and factorizable. The morphism $\Phi'_{g,n}$ also yields representations of $\mathcal{L}_{g,n}(H)$ on $(H^{\circ})^{\otimes g} \otimes X_1 \otimes \ldots \otimes X_n$ for any $H$-modules $X_i$ and we will see that they are isomorphic to those of \S\ref{sectionSomeEquivModules}.

\smallskip

\indent Recall first that the {\em Heisenberg double} $\mathcal{H}(H^{\circ}) = H^{\circ} \,\#\, H$ is the vector space $H^{\circ}\otimes H$ endowed with the (associative) product
\[ (\alpha \,\#\,a)(\beta \,\#\,b) = \alpha \star (a_{(1)} \triangleright \beta) \,\#\, a_{(2)}b \]
where $\triangleright$ is the left coregular action \eqref{defCoregActions} and $\star$ is the product \eqref{usualProdHDual} which is dual to the coproduct $\Delta$ of $H$. Its unit element is $\varepsilon \,\#\, 1_H$ where $\varepsilon$ is the counit of $H$. It is customary to write $\#$ instead of $\otimes$ because $\mathcal{H}(H^{\circ})$ is a smash product \cite[\S 4.1.10]{Mon}. In \cite[Lem.\,3.6]{BFR} it is proved that $\mathcal{H}(H^{\circ})$ is a right $H$-module-algebra when endowed with the following action:
\[ (\alpha \,\#\, a) \smallsquare h = S(h_{(2)}) \triangleright \alpha \triangleleft h_{(3)} \,\#\, S(h_{(1)})ah_{(4)}. \]
There is a well-known representation of $\mathcal{H}(H^{\circ})$ on $H^{\circ}$ given by 
\begin{equation}\label{defRepHeisenberg}
\forall \, (\alpha \,\#\, a) \in \mathcal{H}(H^{\circ}), \:\: \forall \, \psi \in H^{\circ}, \quad (\alpha \,\#\, a) \cdot \psi = \alpha \star (a \triangleright \psi).
\end{equation}
We can make it $H$-equivariant (Def.~\ref{defEquivariantModule}) by letting
\begin{equation}\label{HeisenbergModuleEquivariant}
\forall \, \psi \in H^{\circ}, \:\: \forall \, h \in H, \quad \psi \smallsquare h = S(h_{(1)}) \triangleright \psi \triangleleft h_{(2)}.
\end{equation}
Now recall the morphisms $\mathfrak{i}_b,\mathfrak{i}_a : \mathcal{L}_{0,1}(H) \to \mathcal{L}_{1,0}(H)$ from Def.\,\ref{defL10} and consider the linear map
\begin{align}
\begin{split}\label{morphismePhi10}
\Phi_{1,0} : \mathcal{L}_{1,0}(H) &\to \mathcal{H}(H^{\circ})\\
\mathfrak{i}_b(\varphi) & \mapsto \bigl( R^1_{[2]}R^2_{[2]} \triangleright \varphi \triangleleft R^3_{[1]}R^1_{[1]} \bigr) \,\#\, R^3_{[2]}R^2_{[1]}\\
\mathfrak{i}_a(\varphi) &\mapsto \varepsilon \,\#\, \Phi_{0,1}(\varphi)
\end{split}
\end{align}
for all $\varphi \in \mathcal{L}_{0,1}(H)$, where $\Phi_{0,1} : \mathcal{L}_{0,1}(H) \to H$ is defined in \eqref{RSD} and $R^1$, $R^2$, $R^3$ are three copies of the $R$-matrix. It is known that $\Phi_{1,0}$ is a morphism of algebras \cite{A} which is moreover a $H$-linear map $\bigl( \mathcal{L}_{1,0}(H), \mathrm{coad}^r \bigr) \to \bigl( \mathcal{H}(H^{\circ}), \smallsquare \bigr)$ \cite[Prop.\,3.7]{BFR}. As a result, by pullback of \eqref{defRepHeisenberg} and \eqref{HeisenbergModuleEquivariant}, we get a structure of $H$-equivariant $\mathcal{L}_{1,0}(H)$-module on $H^{\circ}$:
\begin{equation}\label{RepAlekseevEquivariant}
\begin{array}{c} \mathfrak{i}_b(\varphi) \cdot \psi = \left( R^1_{[2]} R^2_{[2]} \triangleright \varphi \triangleleft R^3_{[1]} R^1_{[1]} \right) \star \left( R^3_{[2]}R^2_{[1]} \triangleright \psi \right),\quad  \mathfrak{i}_a(\varphi) \cdot \psi = \Phi_{0,1}(\varphi) \triangleright \psi,\\[.6em]
\psi \smallsquare h = S(h_{(1)}) \triangleright \psi \triangleleft h_{(2)}
\end{array}
\end{equation}
for all $\varphi \in \mathcal{L}_{0,1}(H)$, $\psi \in H^{\circ}$ and $h \in H$. Introduce the following notations for the corresponding representation morphisms:
\[ \begin{array}{c}
\rho_{1,0} : \mathcal{L}_{1,0}(H) \to \mathrm{End}_{\Bbbk}(H^{\circ}), \quad \rho_{1,0}(\beta \otimes \alpha)(\psi) = \mathfrak{i}_b(\beta)\mathfrak{i}_a(\alpha) \cdot \psi,\\[.5em]
\mathsf{D} : H \to \mathrm{End}_{\Bbbk}(H^{\circ})^{\mathrm{op}}, \quad \mathsf{D}(h)(\psi) = S(h_{(1)}) \triangleright \psi \triangleleft h_{(2)}.
\end{array} \]
Let $\mathfrak{e_1}, \ldots, \mathfrak{e}_g : \mathcal{L}_{1,0}(H) \to \mathcal{L}_{g,n}(H)$ and $\mathfrak{e}_{g+1}, \ldots, \mathfrak{e}_{g+n} : \mathcal{L}_{0,1}(H) \to \mathcal{L}_{g,n}(H)$ be the tensor-wise embeddings in $\mathcal{L}_{g,n}(H) = \mathcal{L}_{1,0}(H)^{\widetilde{\otimes}\,g} \,\widetilde{\otimes}\, \mathcal{L}_{0,1}(H)^{\widetilde{\otimes}\,n}$, defined as in \eqref{embeddingsBraidedProduct}. With these notations:
\begin{proposition}\label{propAlekseevMorphism}
1. There is a morphism of algebras
\[ \Phi'_{g,n} : \mathcal{L}_{g,n}(H) \to \mathrm{End}_{\Bbbk}(H^{\circ})^{\otimes g} \otimes H^{\otimes n} \]
(where the target is equipped with the usual tensor-wise product) defined by
\[ \Phi'_{g,n}\bigl( \mathfrak{e}_i(x) \bigr) = \mathsf{D}(R_{[1](1)}) \otimes \ldots \otimes \mathsf{D}(R_{[1](i-1)}) \otimes \rho_{1,0} \bigl( \mathrm{coad}^r(R_{[2]})(x) \bigr) \otimes \mathrm{id}_{H^{\circ}}^{\otimes (g-i)} \otimes 1_H^{\otimes n} \]
for all $1 \leq i \leq g$, $x \in \mathcal{L}_{1,0}(H)$, with $\mathrm{coad}^r$ from \eqref{actionCoadL10}, and
\begin{align*}
\Phi'_{g,n}\bigl( \mathfrak{e}_{g+j}(\varphi) \bigr) = &\:\mathsf{D}(R_{[1](1)}) \otimes \ldots \otimes \mathsf{D}(R_{[1](g)}) \otimes S(R_{[1](g+1)}) \otimes \ldots \otimes S(R_{[1](g+j-1)})\\
&\otimes \Phi_{0,1}\bigl( \mathrm{coad}^r(R_{[2]})(\varphi) \bigr) \otimes 1_H^{\otimes n-j}
\end{align*}
for all $1 \leq j \leq n$, $\varphi \in \mathcal{L}_{0,1}(H)$, with $\mathrm{coad}^r$ from \eqref{defCoadL01} and $\Phi_{0,1}$ from \eqref{RSD}.
\\2. If we endow $\mathrm{End}_{\Bbbk}(H^{\circ})^{\otimes g} \otimes H^{\otimes n}$ with the following structure of right $H$-module-algebra
\begin{align*}
&\bigl( E_1 \otimes \ldots \otimes E_g \otimes k_1 \otimes \ldots \otimes k_n \bigr) \smallsquare h\\
=\:&\bigl( \mathsf{D}(h_{(1)}) \circ E_1 \circ \mathsf{D}(S^{-1}(h_{(2g+2n)})) \bigr) \otimes \ldots \otimes \bigl( \mathsf{D}(h_{(g)}) \circ E_g \circ \mathsf{D}(S^{-1}(h_{(g+2n+1)})) \bigr)\\
&\otimes S(h_{(g+1)})k_1h_{(g+2n)} \otimes \ldots \otimes S(h_{(g+n)})k_nh_{(g+n+1)}
\end{align*}
then $\Phi'_{g,n}$ becomes a morphism of $H$-module-algebras.
\\3. If the Hopf algebra $H$ is finite-dimensional and factorizable, then $\Phi'_{g,n}$ is an isomorphism.
\end{proposition}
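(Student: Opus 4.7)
The plan is to recognise $\Phi'_{g,n}$ as the external-product representation morphism \eqref{repMorphisOnExternalProduct} for a carefully assembled family of $H$-equivariant modules, and then read off the three claims in turn. The building blocks will be: for the first $g$ tensor slots, the $H$-equivariant $\mathcal{L}_{1,0}(H)$-module $(H^\circ,\rho_{1,0},\mathsf{D})$ of \eqref{RepAlekseevEquivariant}; for the last $n$ slots, the left $H$-module $H$ (regular representation) endowed, via Lemma \ref{lemmaEquivarianceHModules} applied to $\Phi_{0,1}$, with the $H$-equivariant $\mathcal{L}_{0,1}(H)$-module structure $\varphi \cdot x = \Phi_{0,1}(\varphi)\,x$ and $x \smallsquare h = S(h)\,x$, whose representation morphism is $L_H \circ \Phi_{0,1}$, where $L_H : H \hookrightarrow \mathrm{End}_\Bbbk(H)$ denotes the injective algebra embedding by left multiplication.

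For Part 1, I would apply \eqref{repMorphisOnExternalProduct} to the braided external product of these $g+n$ modules to obtain an algebra morphism $\widetilde{\Phi} : \mathcal{L}_{g,n}(H) \to \mathrm{End}_\Bbbk(H^\circ)^{\otimes g} \otimes \mathrm{End}_\Bbbk(H)^{\otimes n}$, then compare with $(\mathrm{id}^{\otimes g} \otimes L_H^{\otimes n}) \circ \Phi'_{g,n}$ on the generators $\mathfrak{e}_i(x)$, $\mathfrak{e}_{g+j}(\varphi)$. The twist $\mathrm{coad}^r(R_{[2]})$ of the statement is nothing but the ``$\smallsquare\, R_{[2]}$'' in \eqref{repMorphisOnExternalProduct}, while the $S(R_{[1](\cdot)})$ entries on the $H$-slots come from $\mathsf{D}_{g+j}(h) = L_H(S(h))$; the two sides then match termwise. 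Since $L_H^{\otimes n}$ is an injective algebra morphism, $\Phi'_{g,n}$ inherits from $\widetilde{\Phi}$ being a well-defined algebra morphism into $\mathrm{End}_\Bbbk(H^\circ)^{\otimes g} \otimes H^{\otimes n}$.

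For Part 2, $\widetilde{\Phi}$ is automatically a morphism of $H$-module-algebras for the target action \eqref{actionOnTargetBraidedRepMorphism}. A one-line check using $\mathsf{D}_{g+j} = L_H \circ S$ shows that $L_H$ intertwines $\mathrm{ad}^r(h)(k) = S(h_{(1)})\,k\,h_{(2)}$ on $H$ with the conjugation action $E \mapsto \mathsf{D}_{g+j}(h_{(1)}) \circ E \circ \mathsf{D}_{g+j}(S^{-1}(h_{(2)}))$ on $\mathrm{End}_\Bbbk(H)$. Pulling \eqref{actionOnTargetBraidedRepMorphism} back through $L_H^{\otimes n}$ therefore produces exactly the adjoint-type action on $H^{\otimes n}$ prescribed in the statement, and $\Phi'_{g,n}$ inherits $H$-linearity from $\widetilde{\Phi}$.

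For Part 3, a dimension count gives $\dim \mathcal{L}_{g,n}(H) = (\dim H)^{2g+n} = \dim\bigl( \mathrm{End}_\Bbbk(H^\circ)^{\otimes g} \otimes H^{\otimes n} \bigr)$, so it suffices to prove injectivity of $\Phi'_{g,n}$, or equivalently of $\widetilde{\Phi}$. Lemma \ref{lemmeGeneralAlekseevSurj} reduces this to injectivity of the two constituent representation morphisms: $\rho_{1,0}$ is in fact bijective, because factorizability makes $\Phi_{1,0}$ of \eqref{morphismePhi10} an isomorphism onto the Heisenberg double $\mathcal{H}(H^\circ)$ (the $\mathfrak{i}_a$-block reduces to bijectivity of $\Phi_{0,1}$, the $\mathfrak{i}_b$-block is a dimension-preserving triangular change of coordinates) and because $\mathcal{H}(H^\circ)$ acts faithfully on $H^\circ$ classically; and $L_H \circ \Phi_{0,1}$ is injective since both factors are. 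The main obstacle will lie in Part 1: verifying by hand that the external-product recipe \eqref{repMorphisOnExternalProduct} reproduces the explicit formulas for $\Phi'_{g,n}$, especially on the mixed $H^\circ$/$H$ slots when $i > g$. This is a bookkeeping exercise, but it is where the $S(R_{[1](\cdot)})$ entries and the twist $\mathrm{coad}^r(R_{[2]})$ of the definition acquire their conceptual meaning.
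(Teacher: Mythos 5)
Your proposal is correct and follows essentially the same route as the paper: both realise $\Phi'_{g,n}$ as the external-product representation morphism \eqref{repMorphisOnExternalProduct} for $(H^{\circ})^{\widetilde{\boxtimes}\,g}\,\widetilde{\boxtimes}\,H^{\widetilde{\boxtimes}\,n}$, factor the last $n$ slots through left multiplication (the paper phrases this via $\mathrm{End}(H_H)\cong H$, you via the embedding $L_H$ — the same identification), and prove item 3 by combining Lemma \ref{lemmeGeneralAlekseevSurj} with faithfulness of the Heisenberg double representation, factorizability, and a dimension count.
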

\begin{proof}
1. By Lemma \ref{lemmaEquivarianceHModules} there is a $H$-equivariant representation of $\mathcal{L}_{0,1}(H)$ on $H$; the corresponding representation morphisms are
\begin{equation}\label{morphismRepL01OnH}
\begin{array}{c}
\rho_{0,1} : \mathcal{L}_{0,1}(H) \to \mathrm{End}_{\Bbbk}(H), \quad \rho_{0,1}(\varphi)(k) = \Phi_{0,1}(\varphi)k,\\[.5em]
\mathsf{D}' : H \to \mathrm{End}_{\Bbbk}(H)^{\mathrm{op}}, \quad \mathsf{D}'(h)(k) = S(h)k.
\end{array}
\end{equation}
Combining it with the $H$-equivariant representation \eqref{RepAlekseevEquivariant} of $\mathcal{L}_{1,0}(H)$ on $H^{\circ}$, item 2 in Lemma \ref{lemBraidedProductHequivModules} gives a representation of $\mathcal{L}_{g,n}(H) = \mathcal{L}_{1,0}(H)^{\widetilde{\otimes}\, g} \,\widetilde{\otimes}\, \mathcal{L}_{0,1}(H)^{\widetilde{\otimes}\, n}$ on $(H^{\circ})^{\widetilde{\boxtimes}\,g} \,\widetilde{\boxtimes}\, H^{\widetilde{\boxtimes}\,n}$.  By \eqref{repMorphisOnExternalProduct} the associated representation morphism $\rho_{g,n} : \mathcal{L}_{g,n}(H) \to \mathrm{End}_{\Bbbk}(H^{\circ})^{\otimes g} \otimes \mathrm{End}_{\Bbbk}(H)^{\otimes n}$ is given by
\[ \rho_{g,n}\bigl( \mathfrak{e}_i(x) \bigr) = \mathsf{D}(R_{[1](1)}) \otimes \ldots \otimes \mathsf{D}(R_{[1](i-1)}) \otimes \rho_{1,0}\bigl( \mathrm{coad}(R_{[2]})(x) \bigr) \otimes \mathrm{id}_{H^{\circ}}^{\otimes (g-i)} \otimes 1_H^{\otimes n} \]
for all $1 \leq i \leq g$ and
\begin{align*}
\rho_{g,n}\bigl( \mathfrak{e}_{g+j}(\varphi) \bigr) =\: &\mathsf{D}(R_{[1](1)}) \otimes \ldots \otimes \mathsf{D}(R_{[1](g)}) \otimes \mathsf{D}'\bigl(S(R_{[1](g+1)})\bigr) \otimes \ldots \otimes \mathsf{D}'\bigl(S(R_{[1](g+j-1)})\bigr)\\
&\otimes \rho_{0,1}\bigl( \mathrm{coad}^r(R_{[2]})(\varphi) \bigr) \otimes 1_H^{\otimes n-j}
\end{align*}
for all $1 \leq j \leq n$. Now note that because of the particular form of $\rho_{0,1}$ and $\mathsf{D}'$ (based on left multiplication in $H$), $\rho_{g,n}$ actually takes values in $\mathrm{End}_{\Bbbk}(H^{\circ})^{\otimes g} \otimes \mathrm{End}(H_H)^{\otimes n}$, where $\mathrm{End}(H_H)$ is the subspace of linear maps $f : H \to H$ such that $f(hk) = f(h)k$ for all $h,k \in H$. Using the isomorphism of algebras $\mathrm{End}(H_H) \overset{\sim}{\to} H$ given by $f \mapsto f(1_H)$, we define
\begin{equation}\label{defAlekseevFromRep}
\Phi'_{g,n} : \mathcal{L}_{g,n}(H) \xrightarrow{\:\rho_{g,n}\:} \mathrm{End}_{\Bbbk}(H^{\circ})^{\otimes g} \otimes \mathrm{End}(H_H)^{\otimes n} \overset{\sim}{\longrightarrow} \mathrm{End}_{\Bbbk}(H^{\circ})^{\otimes g} \otimes H^{\otimes n}
\end{equation}
which is a morphism of algebras as a composition of such. The claimed formulas for $\Phi'_{g,n}$ immediately follow.

\indent 2. Note from \eqref{morphismRepL01OnH} that if $f \in \mathrm{End}(H_H)$ then for all $h, h' \in H$ we have $\bigl(\mathsf{D}'(h) \circ f \circ \mathsf{D}'(S^{-1}(h'))\bigr)(1_H) = S(h)f(h') = S(h)f(1_H)h'$. Now the result follows from \eqref{actionOnTargetBraidedRepMorphism} and \eqref{defAlekseevFromRep}.

\indent 3. This is \cite[Prop.\,3.5]{FaitgMCG}. Since the notations used there look quite different, here are a few details. By assumption $\Phi_{0,1}$ is an isomorphism and thus $\rho_{0,1}$ is injective. Recall that $\rho_{1,0}$ is the composition $\mathcal{L}_{1,0}(H) \xrightarrow{\Phi_{1,0}} \mathcal{H}(H^*) \xrightarrow{\eqref{defRepHeisenberg}} \mathrm{End}_{\Bbbk}(H^*)$. It is not difficult to see that $\Phi_{1,0}$ is an isomorphism under our assumptions on $H$ \cite[Th.\,4.8]{FaitgSL2Z}. Moreover, it is a remarkable fact that the representation of $\mathcal{H}(H^{\circ})$ on $H^{\circ}$ is faithful \cite[Lem.\,9.4.2]{Mon} for all $H$. Hence, when $H$ is finite-dimensional, the arrow $\mathcal{H}(H^*) \to \mathrm{End}_{\Bbbk}(H^*)$ is an isomorphism by equality of dimensions. As a result $\rho_{1,0}$ is an isomorphism. It follows that $\rho_{g,n}$ is injective by Lemma \ref{lemmeGeneralAlekseevSurj} and thus $\Phi'_{g,n}$ is injective as well because of the factorization \eqref{defAlekseevFromRep}. To conclude, it suffices to note that the source and target of $\Phi'_{g,n}$ have the same dimension, namely $\dim(H)^{2g+n}$.
\end{proof}

\begin{remark}\label{remarkComparisonAlekseev}
1. In \cite{FaitgMCG} we used an isomorphism which has $\mathcal{H}(H^*)^{\otimes g} \otimes H^{\otimes n}$ for target. When $H$ is finite-dimensional we have $\mathcal{H}(H^*) \cong \mathrm{End}_{\Bbbk}(H^*)$ and the morphism $\Phi'_{g,n}$ above coincide exactly with the one in \cite{FaitgMCG} through this identification.

\noindent 2. Our definition of $\Phi'_{g,n}$ is especially suited for finite-dimensional $H$. See \cite[\S 5]{BFR} for the definition of a morphism $\Phi_{g,n}$ which is more suited for the infinite-dimensional case, because in this case its target is smaller. Also see \cite[Rmk.\,5.5]{BFR} for a comparison on the two different conventions to combine $\Phi_{0,1}$ and $\Phi_{1,0}$ in order to define $\Phi'_{g,n}$ and $\Phi_{g,n}$.
\end{remark}

\indent Recall the braided external product $\widetilde{\boxtimes}$ of $H$-equivariant modules from Lemma \ref{lemBraidedProductHequivModules}. Let $H^{\circ}_{\mathrm{old}}$ (resp. $H^{\circ}_{\mathrm{new}}$) be the representation of $\mathcal{L}_{1,0}(H)$ defined in \eqref{RepAlekseevEquivariant} (resp. in Lemma \ref{lemmaHEquivRepL10}) and let $X_1,\ldots,X_n$ be left $H$-modules. On the one hand, the morphism $\Phi'_{g,n}$ gives a representation of $\mathcal{L}_{g,n}(H)$ on the vector space $(H^{\circ})^{\otimes g} \otimes X_1 \otimes \ldots \otimes X_n$; by the proof of Proposition \ref{propAlekseevMorphism}, this representation is exactly $(H^{\circ}_{\mathrm{old}})^{\widetilde{\boxtimes}\, g} \,\widetilde{\boxtimes}\, X_1 \,\widetilde{\boxtimes}\, \ldots \,\widetilde{\boxtimes}\, X_n$. On the other hand, the representation of $\mathcal{L}_{g,n}(H)$ constructed in \S\ref{sectionSomeEquivModules} is $(H^{\circ}_{\mathrm{new}})^{\widetilde{\boxtimes}\, g} \,\widetilde{\boxtimes}\, X_1 \,\widetilde{\boxtimes}\, \ldots \,\widetilde{\boxtimes}\, X_n$ by definition. The linear map
\begin{equation}\label{intertwinerNewAlekseev}
F : H^{\circ}_{\mathrm{new}} \to H^{\circ}_{\mathrm{old}}, \quad \psi \mapsto R_{[2]} \triangleright \psi \triangleleft R_{[1]}
\end{equation}
is easily seen to be an isomorphism of $H$-equivariant $\mathcal{L}_{1,0}(H)$-modules. As a result
\begin{equation}\label{intertwinerNewAlekseev2}
F^{\otimes g} \otimes \mathrm{id}_{X_1} \otimes \ldots \otimes \mathrm{id}_{X_n} : (H^{\circ}_{\mathrm{new}})^{\widetilde{\boxtimes}\, g} \,\widetilde{\boxtimes}\, X_1 \,\widetilde{\boxtimes}\, \ldots \,\widetilde{\boxtimes}\, X_n \to (H^{\circ}_{\mathrm{old}})^{\widetilde{\boxtimes}\, g} \,\widetilde{\boxtimes}\, X_1 \,\widetilde{\boxtimes}\, \ldots \,\widetilde{\boxtimes}\, X_n
\end{equation}
is an isomorphism of $H$-equivariant $\mathcal{L}_{g,n}(H)$-modules.

\section{Proofs of results on based knot maps}
We recall that in these results $H$ is a ribbon Hopf algebra over a field $\Bbbk$.

\subsection{Proof of Prop.~\ref{propUnbasedKnotMap}}\label{appendixUnbasedMaps}

Recall that given a finite-dimensional $H$-module $X$ and $\sigma \in X^*$, $x \in X$, we denote by $_X\phi^\sigma_x : H \to \Bbbk$ the linear form $h \mapsto \sigma(h \cdot x)$ which is called a matrix coefficient of $X$.  By definition, $\mathcal{L}_{0,1}(H) = H^\circ$ is the $\Bbbk$-vector subspace of $H^*$ generated by matrix coefficients (\S \ref{sectionDerivedRep}). We start with the following well-known fact (see e.g. \cite[Lem.\,9.1.1]{Mon}):
\begin{lemma}\label{lemmaRestrictedDualCodim}
Let $\varphi \in H^\circ$. There is a two-sided ideal $I \subset H$ of finite codimension such that $\varphi$ vanishes on $I$. It follows that $\varphi$ can be seen as an element of $(H/I)^*$ and we have $\varphi = {_{H/I}\phi^\varphi_{\bar{1}}}$, where $\bar 1$ is the class of $1_H$ in $H/I$.
\end{lemma}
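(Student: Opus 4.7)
By definition of $H^\circ$, any $\varphi \in H^\circ$ is a finite sum of matrix coefficients, say $\varphi = \sum_{k=1}^N {_{X_k}\phi^{\sigma_k}_{x_k}}$, where each $X_k$ is a finite-dimensional left $H$-module with representation morphism $\rho_k : H \to \mathrm{End}_\Bbbk(X_k)$. The first step is to produce a candidate ideal $I$: since $\mathrm{End}_\Bbbk(X_k)$ is finite-dimensional, $\ker(\rho_k)$ is a two-sided ideal of $H$ of finite codimension, and I would set
\[
I = \bigcap_{k=1}^N \ker(\rho_k),
\]
which is a finite intersection of two-sided ideals of finite codimension, hence itself a two-sided ideal of finite codimension in $H$.

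Next I would verify that $\varphi$ vanishes on $I$. For any $h \in I$ we have $\rho_k(h) = 0$ for every $k$, whence ${_{X_k}\phi^{\sigma_k}_{x_k}}(h) = \sigma_k(h \cdot x_k) = \sigma_k(\rho_k(h)(x_k)) = 0$, and summing yields $\varphi(h) = 0$. Consequently $\varphi$ descends to a well-defined linear form $\bar\varphi \in (H/I)^*$ characterised by $\bar\varphi(\bar h) = \varphi(h)$.

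Finally I would identify $\bar\varphi$ as a matrix coefficient. Since $I$ is a left ideal, the quotient $H/I$ is a (finite-dimensional) left $H$-module via $h \cdot \bar k = \overline{hk}$, so $H/I \in H\text{-}\mathrm{mod}$. With $\bar\varphi \in (H/I)^*$ and $\bar 1 \in H/I$ as the class of $1_H$, the matrix coefficient ${_{H/I}\phi^{\bar\varphi}_{\bar 1}}$ evaluated on $h \in H$ gives $\bar\varphi(h \cdot \bar 1) = \bar\varphi(\bar h) = \varphi(h)$, so $\varphi = {_{H/I}\phi^{\bar\varphi}_{\bar 1}}$ (identifying $\bar\varphi$ with $\varphi$ as in the statement).

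There is no real obstacle here; the only point requiring a moment of care is to check that $I$ is a \emph{two-sided} ideal (so that the construction is canonical), which is automatic since each $\ker(\rho_k)$ is two-sided. The quotient $H/I$ is then an honest object of $H\text{-}\mathrm{mod}$, which is what the identification with ${_{H/I}\phi^\varphi_{\bar 1}}$ requires.
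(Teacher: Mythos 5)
Your proof is correct and follows essentially the same route as the paper: decompose $\varphi$ as a finite sum of matrix coefficients, take $I$ to be the intersection of the annihilators $\ker(\rho_k)$ (each a two-sided ideal of finite codimension), and identify $\varphi$ with the matrix coefficient of $\bar 1$ in the finite-dimensional module $H/I$. No gaps.
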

\begin{proof}
If $\varphi \in H^\circ$ then $\varphi = \textstyle \sum_i {_{X_i}\phi^{\sigma_i}_{x_i}}$ (finite sum). For all $i$, let $\rho_i : H \to \mathrm{End}_\Bbbk(X_i)$ be the representation morphism of the $H$-module $X_i$. Being an algebra morphism, its kernel
\[ \ker(\rho_i) = \mathrm{Ann}(X_i) = \bigl\{ h \in H \,\big|\, \forall \, x \in X_i, \:\: h \cdot x = 0 \bigr\} \]
is a two-sided ideal, which has finite codimension because $\rho_i$ descends to an injection of $H/\mathrm{Ann}(X_i)$ into the finite-dimensional space $\mathrm{End}_\Bbbk(X_i)$. Moreover ${_{X_i}\phi^{\sigma_i}_{x_i}}$ clearly vanishes on $\mathrm{Ann}(X_i)$. It simply remains to note that $\mathrm{codim}\bigl( \textstyle \bigcap_i \mathrm{Ann}(X_i) \bigr) \leq \sum_i \mathrm{codim}\bigl( \mathrm{Ann}(X_i) \bigr) < \infty$ and that $\varphi$ vanishes on the two-sided ideal $I = \bigcap_i \mathrm{Ann}(X_i)$, as desired. The last claim is obvious: ${_{H/I}\phi^\varphi_{\bar{1}}}(h) = \varphi(h \cdot \bar{1}) = \varphi(\bar{h}) = \varphi(h)$ for all $h \in H$.
\end{proof}
We are now ready to prove Proposition \ref{propUnbasedKnotMap}. Let $K$ be a knot in $\Sigma_{g,n}^{\circ} \times [0,1]$ and choose two arbitrary points $p_1,p_2 \in K$. By isotopy we can assume that these two points are very close to $\partial\Sigma_{g,n}^{\circ,\bullet} \times [0,1]$. Then by another small isotopy, we can put $p_1$ (resp. $p_2$) on the boundary and consider it as the basepoint of a based knot $\mathbf{K}_1$ (resp. $\mathbf{K}_2$). Using well-chosen isotopies, we can always arrange the following configuration:
\begin{center}
\begingroup%
  \makeatletter%
  \providecommand\color[2][]{%
    \errmessage{(Inkscape) Color is used for the text in Inkscape, but the package 'color.sty' is not loaded}%
    \renewcommand\color[2][]{}%
  }%
  \providecommand\transparent[1]{%
    \errmessage{(Inkscape) Transparency is used (non-zero) for the text in Inkscape, but the package 'transparent.sty' is not loaded}%
    \renewcommand\transparent[1]{}%
  }%
  \providecommand\rotatebox[2]{#2}%
  \newcommand*\fsize{\dimexpr\f@size pt\relax}%
  \newcommand*\lineheight[1]{\fontsize{\fsize}{#1\fsize}\selectfont}%
  \ifx\svgwidth\undefined%
    \setlength{\unitlength}{373.59143038bp}%
    \ifx\svgscale\undefined%
      \relax%
    \else%
      \setlength{\unitlength}{\unitlength * \real{\svgscale}}%
    \fi%
  \else%
    \setlength{\unitlength}{\svgwidth}%
  \fi%
  \global\let\svgwidth\undefined%
  \global\let\svgscale\undefined%
  \makeatother%
  \begin{picture}(1,0.11447847)%
    \lineheight{1}%
    \setlength\tabcolsep{0pt}%
    \put(-0.00080859,0.06455174){\color[rgb]{0,0,0}\makebox(0,0)[lt]{\lineheight{1.25}\smash{\begin{tabular}[t]{l}$K=$\end{tabular}}}}%
    \put(0,0){\includegraphics[width=\unitlength,page=1]{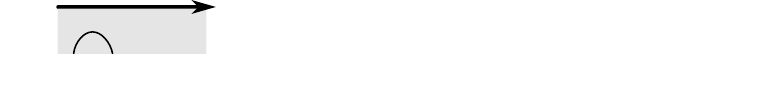}}%
    \put(0.11036342,0.08904155){\color[rgb]{0,0,0}\makebox(0,0)[lt]{\lineheight{1.25}\smash{\begin{tabular}[t]{l}$_{p_1}$\end{tabular}}}}%
    \put(0,0){\includegraphics[width=\unitlength,page=2]{possibiliesBasedKnots.pdf}}%
    \put(0.20070275,0.08904155){\color[rgb]{0,0,0}\makebox(0,0)[lt]{\lineheight{1.25}\smash{\begin{tabular}[t]{l}$_{p_2}$\end{tabular}}}}%
    \put(0,0){\includegraphics[width=\unitlength,page=3]{possibiliesBasedKnots.pdf}}%
    \put(0.34824608,0.06467921){\color[rgb]{0,0,0}\makebox(0,0)[lt]{\lineheight{1.25}\smash{\begin{tabular}[t]{l}$\mathbf{K}_1=$\end{tabular}}}}%
    \put(0,0){\includegraphics[width=\unitlength,page=4]{possibiliesBasedKnots.pdf}}%
    \put(0.71442509,0.06484835){\color[rgb]{0,0,0}\makebox(0,0)[lt]{\lineheight{1.25}\smash{\begin{tabular}[t]{l}$\mathbf{K}_2=$\end{tabular}}}}%
    \put(0,0){\includegraphics[width=\unitlength,page=5]{possibiliesBasedKnots.pdf}}%
  \end{picture}%
\endgroup%

\end{center}
The dashed lines schematize the knot $K$ outside of this small neighborhood of the boundary. Then Prop.\,\ref{propUnbasedKnotMap} will clearly follow from the following equality:
\[ \forall \, \varphi \in \mathcal{L}_{0,1}^{\mathrm{inv}}(H), \quad \mathfrak{j}_{\mathbf{K}_{\scriptstyle 1}}(\varphi) = \mathfrak{j}_{\mathbf{K}_{\scriptstyle 2}}(\varphi). \]
To prove it, write $\varphi = {_{H/I}\phi^\varphi_{\bar{1}}}$ as in Lemma \ref{lemmaRestrictedDualCodim}. Note that $H/I$ is of course a finite-dimensional left $H$-module but also an algebra (since $I$ is two-sided); this extra structure is used below. Denote by $\bar{h}$ the class of $h \in H$ modulo $I$ and by $(b_i)$ a basis of $H/I$, with dual basis $(b^i)$.

Assume first that $\mathbf{K}_1$ is positively oriented; then so is $\mathbf{K}_2$. We have
\begin{center}
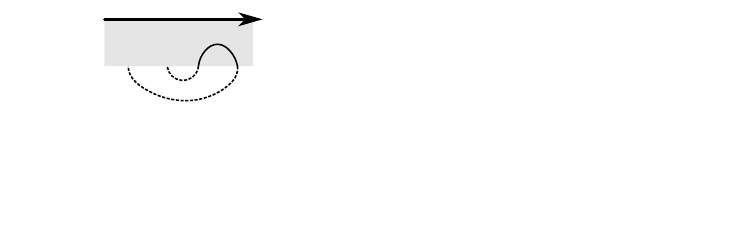
\end{center}
where the first equality is by Def.\,\ref{defKnotMap} and \eqref{defJGamma}, while the second uses (viii) in Fig.\,\ref{statedSkeinRels}. For the third equality, note that for all $a \in H/I$ the map $r_a : H/I \to H/I$, $x \mapsto xa$ is $H$-linear so we can use it to color a coupon; then we applied (i) in Fig.\,\ref{statedSkeinRels}. The fourth equality is by sliding the coupon along the strand, and the fifth is by (ii) in Fig.\,\ref{statedSkeinRels}. To continue the computation, note using \eqref{L01Inv} and the defining property \eqref{proprietePivot} of the pivotal element $g$ that for all $\bar{x},\bar{y} \in H/I$,
\[ \bigl\langle \varphi(?\,\bar{g}\,b_i) \otimes b^i, \bar{x} \otimes \bar{y} \bigr\rangle = \varphi(xgy) = \varphi\bigl(yS^2(x)g\bigr) = \varphi(ygx) = \bigl\langle b^i \otimes \varphi(?\,\bar{g}\,b_i), \bar{x} \otimes \bar{y} \bigr\rangle \]
where $\varphi$ is viewed both in $H^*$ and $(H/I)^*$ by means of $\varphi(\bar{x}) = \varphi(x)$. Hence
\begin{center}
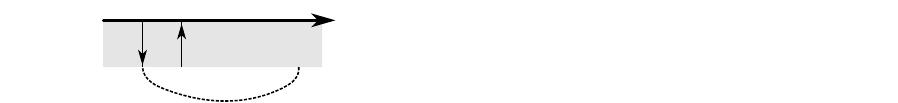
\end{center}
where the second equality is obtained as above by using a coupon colored by $r_{\bar{g}b_i}$ and the third equality is by (viii) in Fig.\,\ref{statedSkeinRels}. 

Now if $\mathbf{K}_1$ is negatively oriented then so is $\mathbf{K}_2$. Let $\mathbf{K}_1^{-1}$ and $\mathbf{K}_2^{-1}$ be the based knots with opposite orientations (then positively oriented). By item 2 in Lemma \ref{lemmaChangeOrientationKnotMap} and the previous computation,
\[ \mathfrak{j}_{\mathbf{K}_{\scriptstyle 1}}(\varphi) = \mathfrak{j}_{\mathbf{K}^{-1}_{\scriptstyle 1}}\bigl( S_{\mathcal{L}_{0,1}}(\varphi) \bigr) =  \mathfrak{j}_{\mathbf{K}^{-1}_{\scriptstyle 2}}\bigl( S_{\mathcal{L}_{0,1}}(\varphi) \bigr) = \mathfrak{j}_{\mathbf{K}_{\scriptstyle 2}}(\varphi) \]
for all $\varphi \in \mathcal{L}_{0,1}^{\mathrm{inv}}(H)$. We used that $S_{\mathcal{L}_{0,1}}$ is a $H$-linear endomorphism of $ \mathcal{L}_{0,1}(H)$ and thus $S_{\mathcal{L}_{0,1}}(\varphi) \in \mathcal{L}_{0,1}^{\mathrm{inv}}(H)$ so that the previous computation applies.

\subsection{Proof of Prop.~\ref{thmExpressionLoopMaps}}\label{appendixLoopMaps}
Let $\gamma \in \pi_1(\Sigma_{g,n}^{\circ})$. Fix a particular drawing $D_{\gamma}$ of $\gamma$ on the surface \eqref{surfaceEnRuban}. We allow only drawings which do not contain self-intersections and such a drawing is made of oriented caps ($\cap$), cups ($\cup$), two lines ending to the base point and half-circles in handles (the punctured half-disks in \eqref{surfaceEnRuban} being also considered as handles). Let:
\begin{itemize}[itemsep=-.2em,topsep=.2em]
\item $\overset{\rightarrow}{N}_{\cap}(D_\gamma)$ be the number of caps oriented from left to right in the picture $D_{\gamma}$.
\item $\overset{\leftarrow}{N}_{\cap}(D_\gamma)$ be the number of caps oriented from right to left in the picture $D_{\gamma}$.
\item $\overset{\rightarrow}{N}_{\cup}(D_\gamma)$ be the number of half-circles oriented from left to right in handles and cups oriented from left to right in the picture $D_{\gamma}$.
\item $\overset{\leftarrow}{N}_{\cup}(D_\gamma)$ be the number of half-circles oriented from right to left in handles and cups oriented from right to left in the picture $D_{\gamma}$.
\end{itemize}
Here is an example in $\Sigma_{1,1}^{\circ,\bullet}$, with a particular drawing $D_{\gamma}$ of the loop $\gamma = b_1a_1^{-1} = b_1a_1^{-1}m_2m_2^{-1}$
\begin{equation}\label{drawingGamma}
\begingroup%
  \makeatletter%
  \providecommand\color[2][]{%
    \errmessage{(Inkscape) Color is used for the text in Inkscape, but the package 'color.sty' is not loaded}%
    \renewcommand\color[2][]{}%
  }%
  \providecommand\transparent[1]{%
    \errmessage{(Inkscape) Transparency is used (non-zero) for the text in Inkscape, but the package 'transparent.sty' is not loaded}%
    \renewcommand\transparent[1]{}%
  }%
  \providecommand\rotatebox[2]{#2}%
  \newcommand*\fsize{\dimexpr\f@size pt\relax}%
  \newcommand*\lineheight[1]{\fontsize{\fsize}{#1\fsize}\selectfont}%
  \ifx\svgwidth\undefined%
    \setlength{\unitlength}{205.67942618bp}%
    \ifx\svgscale\undefined%
      \relax%
    \else%
      \setlength{\unitlength}{\unitlength * \real{\svgscale}}%
    \fi%
  \else%
    \setlength{\unitlength}{\svgwidth}%
  \fi%
  \global\let\svgwidth\undefined%
  \global\let\svgscale\undefined%
  \makeatother%
  \begin{picture}(1,0.40017134)%
    \lineheight{1}%
    \setlength\tabcolsep{0pt}%
    \put(0,0){\includegraphics[width=\unitlength,page=1]{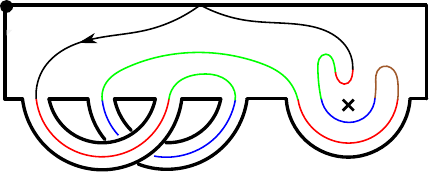}}%
  \end{picture}%
\endgroup%

\end{equation}
Then we have \textcolor{green}{$\overset{\rightarrow}{N}_{\cap}(D_\gamma) = 3$}, \textcolor{brown}{$\overset{\leftarrow}{N}_{\cap}(D_\gamma) = 1$}, \textcolor{red}{$\overset{\rightarrow}{N}_{\cup}(D_\gamma) = 3$} and \textcolor{blue}{$\overset{\leftarrow}{N}_{\cup}(D_\gamma) = 2$}.

\smallskip

\indent We have an important relation:
\begin{equation}\label{oneMoreCup}
\overset{\leftarrow}{N}_{\cup}(D_\gamma) + \overset{\rightarrow}{N}_{\cup}(D_\gamma) = \overset{\leftarrow}{N}_{\cap}(D_\gamma) + \overset{\rightarrow}{N}_{\cap}(D_\gamma) + 1.
\end{equation}
Indeed, when we follow $D_{\gamma}$ along its orientation starting from the basepoint, we necessarily first meet a cup or a half-circle in a handle. After we necessarily meet a cap, and then again a cup or a half-circle \textit{etc}, which gives a sequence $[\text{CUP}, \text{cap}], [\text{CUP}, \text{cap}], \ldots, [\text{CUP}, \text{cap}], \text{CUP}$ where CUP means cup or half-circle. The sequence necessarily ends by a CUP, in order to return to the basepoint. Thus any CUP is paired with a cap as indicated by square brackets in the sequence above, except for the last one, whence the ``+1'' term in the formula.

\smallskip

\indent Now define
\begin{equation}\label{defNarraows}
\overset{\leftarrow}{N}(\gamma) = \overset{\leftarrow}{N}_{\cup}(D_\gamma) - \overset{\leftarrow}{N}_{\cap}(D_\gamma), \qquad \overset{\rightarrow}{N}(\gamma) = \overset{\rightarrow}{N}_{\cup}(D_\gamma) - \overset{\rightarrow}{N}_{\cap}(D_\gamma)
\end{equation}
where $D_{\gamma}$ is some drawing of $\gamma$. These quantities are independent of the particular drawing, whence the notation. This is because the only possible variations in a drawing are zig-zags made of cups-caps and $cc^{-1}$ where $c$ is a half-circle in a handle, as in \eqref{drawingGamma}.

\begin{definition}\label{defNormalizationLoops}
Recall item 3 in Definition\,\ref{defBasedKnot} about positive orientation of based knots (which applies in particular to simple loops). For any simple loop $\gamma \in \pi_1(\Sigma_{g,n}^{\circ})$ we let
\[ N(\gamma) = \begin{cases}
\overset{\leftarrow}{N}(\gamma) & \text{if } \gamma \text{ is positively oriented,}\\
-\overset{\leftarrow}{N}(\gamma^{-1}) & \text{if } \gamma \text{ is negatively oriented.}
\end{cases} \]
\end{definition}

\begin{lemma}\label{lemmaNormNegOriented}
1. If $\gamma$ is a negatively oriented simple loop then $N(\gamma) = \overset{\leftarrow}{N}(\gamma) - 1$.
\\2. For any simple loop $\gamma$ we have $N(\gamma) = -N(\gamma^{-1})$.
\end{lemma}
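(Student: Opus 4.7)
The plan is to reduce both items to a single symmetry: reversing the orientation of a simple loop leaves its underlying drawing unchanged as a geometric picture but flips every arrow. In particular, along $\gamma^{-1}$ every cap that pointed left-to-right in $D_\gamma$ now points right-to-left, and likewise for cups and for the half-circles inside handles. Taking a common drawing $D_\gamma$ (and its reverse for $\gamma^{-1}$), this yields the four identities
\[ \overset{\rightarrow}{N}_{\cap}(D_{\gamma^{-1}}) = \overset{\leftarrow}{N}_{\cap}(D_\gamma), \quad \overset{\leftarrow}{N}_{\cap}(D_{\gamma^{-1}}) = \overset{\rightarrow}{N}_{\cap}(D_\gamma), \quad \overset{\rightarrow}{N}_{\cup}(D_{\gamma^{-1}}) = \overset{\leftarrow}{N}_{\cup}(D_\gamma), \quad \overset{\leftarrow}{N}_{\cup}(D_{\gamma^{-1}}) = \overset{\rightarrow}{N}_{\cup}(D_\gamma). \]
Plugging these into \eqref{defNarraows} gives the key identity $\overset{\leftarrow}{N}(\gamma^{-1}) = \overset{\rightarrow}{N}(\gamma)$.

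Next I will use the combinatorial constraint \eqref{oneMoreCup}, which upon subtracting the cap counts from the cup counts is precisely
\[ \overset{\leftarrow}{N}(\gamma) + \overset{\rightarrow}{N}(\gamma) = 1. \]
Combining this with $\overset{\leftarrow}{N}(\gamma^{-1}) = \overset{\rightarrow}{N}(\gamma)$ gives $\overset{\leftarrow}{N}(\gamma) + \overset{\leftarrow}{N}(\gamma^{-1}) = 1$. For item 1, if $\gamma$ is negatively oriented then by Definition \ref{defNormalizationLoops} $N(\gamma) = -\overset{\leftarrow}{N}(\gamma^{-1})$, and the identity above rewrites this as $\overset{\leftarrow}{N}(\gamma) - 1$, as claimed.

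Item 2 is then a case analysis on the orientation of $\gamma$. If $\gamma$ is positively oriented, then $\gamma^{-1}$ is negatively oriented and by item 1 (applied to $\gamma^{-1}$) together with the positive-orientation definition $N(\gamma) = \overset{\leftarrow}{N}(\gamma)$, we get $N(\gamma^{-1}) = \overset{\leftarrow}{N}(\gamma^{-1}) - 1 = -\overset{\leftarrow}{N}(\gamma) = -N(\gamma)$, using once more $\overset{\leftarrow}{N}(\gamma) + \overset{\leftarrow}{N}(\gamma^{-1}) = 1$. The case of negatively oriented $\gamma$ is dual and uses only the definitions.

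No real obstacle is expected; the only subtlety is checking that the pairing in \eqref{oneMoreCup} is the correct combinatorial translation of \eqref{defNarraows} (handling carefully the contributions of half-circles in handles, which the excerpt groups with cups). Once this is noted, everything reduces to bookkeeping on the four counts above.
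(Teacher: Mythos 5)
Your proof is correct and follows essentially the same route as the paper: both rest on the identity $\overset{\leftarrow}{N}(\gamma^{-1}) = \overset{\rightarrow}{N}(\gamma)$ together with the relation $\overset{\leftarrow}{N}(\gamma) + \overset{\rightarrow}{N}(\gamma) = 1$ extracted from \eqref{oneMoreCup}, and then conclude by the same chain of equalities. Your item 2 routes through item 1 in the positively oriented case where the paper just invokes the definition directly, but this is only a stylistic difference.
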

\begin{proof}
1. By \eqref{oneMoreCup} we have $\overset{\leftarrow}{N}(\gamma) + \overset{\rightarrow}{N}(\gamma) = 1$. Note also that $\overset{\leftarrow}{N}(\gamma^{-1}) = \overset{\rightarrow}{N}(\gamma)$ for all $\gamma$, which follows immediately from the definitions. Hence $N(\gamma) = -\overset{\leftarrow}{N}(\gamma^{-1}) = -\overset{\rightarrow}{N}(\gamma) = \overset{\leftarrow}{N}(\gamma) - 1$.
\\2. Immediate by definition.
\end{proof}

\indent We now give a recursion formula for $N(\gamma)$, based on a factorization $\gamma = \gamma'\gamma''$ where $\gamma'$, $\gamma''$ are {\em non-intersecting} simple loops. Such a factorization is obtained by bringing a cap of $\gamma$ near the basepoint without creating self-intersections and then turning the cap into a concatenation of two loops. Near the basepoint, there are 6 possible configurations for a positively oriented simple loop:
\begin{center}
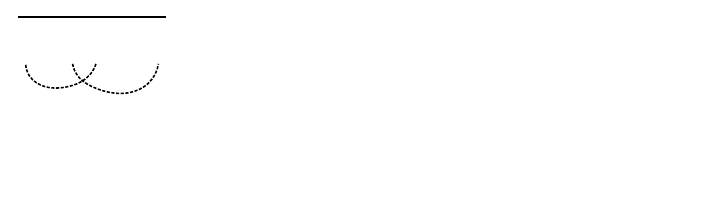
\end{center}
Of course this is schematic and the dashed parts of the loops $\gamma'$ and $\gamma''$ never intersect. Similarly there are 6 possible configurations for a negatively oriented simple loop:
\begin{center}
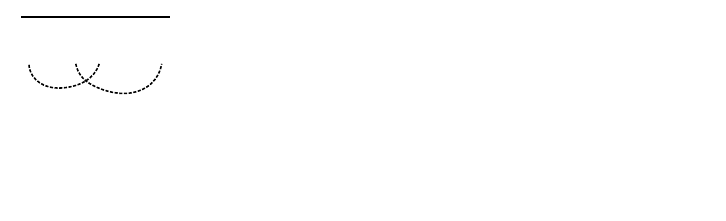
\end{center}
With these notations, define
\begin{equation}\label{defKappa}
P(\gamma',\gamma'') = \begin{cases}
- 1 & \text{ in cases } 4^+, \, 1^-,\, 5^-,\\
0 & \text{ in cases }2^+, \, 3^+, \, 6^+, \, 2^-, \, 3^-, \, 6^-,\\
 1 & \text{ in cases } 1^+, \, 5^+, \, 4^-.
\end{cases}
\end{equation}
\begin{lemma}\label{lemmaRecursionNGamma}
For any simple loop $\gamma \in \pi_1(\Sigma_{g,n}^{\circ})$ we have
\[ N(\gamma) = N(\gamma') + N(\gamma'') + P(\gamma',\gamma''). \]
\end{lemma}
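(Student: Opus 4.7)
The plan is to verify the formula case-by-case across the twelve local configurations displayed above the lemma. The key idea is that the decomposition $\gamma = \gamma' \gamma''$ is obtained by bringing one cap of $\gamma$ into the standard position near the basepoint shown in each picture and then cutting it. One can therefore choose a drawing $D_\gamma$ which locally looks like the case under consideration, and then the induced drawings $D_{\gamma'}$ and $D_{\gamma''}$ of $\gamma'$ and $\gamma''$ (obtained by the same cut at the basepoint) agree with $D_\gamma$ outside a small neighborhood of the basepoint. Consequently all four differences
$$\bigl(\overset{\leftarrow}{N}_{\square}(D_{\gamma'}) + \overset{\leftarrow}{N}_{\square}(D_{\gamma''})\bigr) - \overset{\leftarrow}{N}_{\square}(D_\gamma), \qquad \bigl(\overset{\rightarrow}{N}_{\square}(D_{\gamma'}) + \overset{\rightarrow}{N}_{\square}(D_{\gamma''})\bigr) - \overset{\rightarrow}{N}_{\square}(D_\gamma)$$
(with $\square \in \{\cup,\cap\}$) depend only on the local picture and can be read off directly from it.

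The computation in each case then proceeds in three steps. First, determine from the picture whether $\gamma'$ and $\gamma''$ are positively or negatively oriented (this is visible from how their initial and final strands meet the basepoint). Second, tabulate the local contributions to the four counts above: typically the cut replaces the single cap near the basepoint with the two ``basepoint lines'' of $\gamma'$ and the two of $\gamma''$, possibly adding a cup near the basepoint in one of the split loops depending on the geometry. Third, combine these with Definition \ref{defNormalizationLoops} and Lemma \ref{lemmaNormNegOriented}(1) to compute $N(\gamma), N(\gamma'), N(\gamma'')$, and check that $N(\gamma) - N(\gamma') - N(\gamma'') = P(\gamma', \gamma'')$.

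To cut down on the bookkeeping, I would exploit the symmetry $N(\gamma^{-1}) = -N(\gamma)$ from Lemma \ref{lemmaNormNegOriented}(2): reversing the orientation of $\gamma$ swaps $(\gamma', \gamma'') \leftrightarrow (\gamma''^{-1}, \gamma'^{-1})$ and identifies case $k^+$ with case $k'^-$ for a suitable pairing, so the twelve cases reduce to at most six independent checks. Another useful check is the relation \eqref{oneMoreCup}, which across $\gamma, \gamma', \gamma''$ gives a consistency constraint on the local count differences.

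The main obstacle is genuinely just the case bookkeeping: each individual case is a straightforward local count, but there are many of them and one must be careful about orientation conventions when reading the pictures. The fact that $P(\gamma', \gamma'')$ takes three distinct values $\{-1, 0, 1\}$ precisely reflects whether the two ``basepoint lines'' of $\gamma'$ and $\gamma''$ form, after the cut, an extra left-oriented cup (contributing $+1$), an extra right-oriented cup (contributing, after sign, $-1$), or balance out to zero — which explains conceptually why $P$ must equal the local correction term.
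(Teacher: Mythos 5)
Your proposal is correct and follows essentially the same route as the paper: a case-by-case local count near the basepoint (where the drawings of $\gamma$, $\gamma'$, $\gamma''$ differ only in a neighborhood of the cut), combined with Definition \ref{defNormalizationLoops} and Lemma \ref{lemmaNormNegOriented} to convert the cup/cap count differences into the correction term $P(\gamma',\gamma'')$, and the symmetry $N(\gamma^{-1})=-N(\gamma)$ to reduce the negatively oriented cases to the positive ones. The paper likewise carries out only two representative cases explicitly (noting in addition that the left-oriented cup count is exactly additive, so only the cap count picks up a local correction) and leaves the remaining cases to the reader.
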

\begin{proof}
Note that in any case we have $\overset{\leftarrow}{N}_{\cup}(\gamma) = \overset{\leftarrow}{N}_{\cup}(\gamma') + \overset{\leftarrow}{N}_{\cup}(\gamma'')$. The computation of $\overset{\leftarrow}{N}_{\cap}(\gamma)$ is a case-by-case analysis. Here are two examples, the others are left to the reader. In case $4^+$ it holds $\overset{\leftarrow}{N}_{\cap}(\gamma) = \overset{\leftarrow}{N}_{\cap}(\gamma') + \overset{\leftarrow}{N}_{\cap}(\gamma'') + 1$; hence since $\gamma'$ and $\gamma''$ are positively oriented we get $N(\gamma) = \overset{\leftarrow}{N}(\gamma) = \overset{\leftarrow}{N}(\gamma') + \overset{\leftarrow}{N}(\gamma'') - 1 = N(\gamma') + N(\gamma'') - 1$ by Def.\,\ref{defNormalizationLoops} and \eqref{defNarraows}. In case $2^+$ it holds $\overset{\leftarrow}{N}_{\cap}(\gamma) = \overset{\leftarrow}{N}_{\cap}(\gamma') + \overset{\leftarrow}{N}_{\cap}(\gamma'') + 1$; hence since $\gamma'$ is negatively oriented and $\gamma''$ is positively oriented we get $N(\gamma) = \overset{\leftarrow}{N}(\gamma) = \overset{\leftarrow}{N}(\gamma') + \overset{\leftarrow}{N}(\gamma'') - 1 = N(\gamma') + N(\gamma'')$ by Def.\,\ref{defNormalizationLoops}, \eqref{defNarraows} and item 1 in Lemma \ref{lemmaNormNegOriented}.
Also the result for negatively oriented loops can be deduced from oriented loops. For instance if $\gamma = \gamma'\gamma''$ is in case $1^-$ then $\gamma^{-1} = \gamma''^{-1}\gamma'^{-1}$ is in case $1^+$ and thus $N(\gamma) = -N(\gamma^{-1}) = - N(\gamma''^{-1}) - N(\gamma'^{-1}) - 1 = N(\gamma') + N(\gamma'') - 1$ by item 2 in Lemma \ref{lemmaNormNegOriented}.
\end{proof}

\indent Now we give a recursion formula for the loop maps $\mathfrak{i}_{\gamma} : \mathcal{L}_{0,1}(H) \to \mathcal{L}_{g,n}(H)$, which are particular cases of the based knot maps defined in \eqref{defIGamma}.

\begin{proposition}\label{propRecusionIGamma}
Let $\gamma \in \pi_1(\Sigma_{g,n}^{\circ})$ be a simple loop and take a decomposition $\gamma= \gamma'\gamma''$ where $\gamma'$, $\gamma''$ are two non-intersecting simple loops. Then
\[ \mathfrak{i}_{\gamma}(\varphi) = \varphi_{(1)}\bigl(v^{P(\gamma',\gamma'')} \bigr) \, \mathfrak{i}_{\gamma'}(\varphi_{(2)}) \, \mathfrak{i}_{\gamma''}(\varphi_{(3)}) \]
for all $\varphi \in \mathcal{L}_{0,1}(H)$, with $P(\gamma',\gamma'') \in \{-1,0,1\}$ from \eqref{defKappa} and $\varphi_{(1)} \otimes \varphi_{(2)} \otimes \varphi_{(3)}$ is iteration of the coproduct \eqref{usualCoprodHDual}.
\end{proposition}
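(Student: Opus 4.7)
The plan is to pass to the stated skein algebra via $\mathfrak{i}_\gamma=\mathrm{hol}^{\mathrm{st}}\circ \mathfrak{j}_\gamma$ from \eqref{defIGamma} and prove the equivalent identity
\[
\mathfrak{j}_\gamma(\varphi) \;=\; \varphi_{(1)}\bigl(v^{P(\gamma',\gamma'')}\bigr)\, \mathfrak{j}_{\gamma'}(\varphi_{(2)}) \ast \mathfrak{j}_{\gamma''}(\varphi_{(3)})
\]
in $\mathcal{S}_H^{\mathrm{st}}(\Sigma_{g,n}^{\circ,\bullet})$, where $\ast$ is the stacking product. Since $\mathrm{hol}^{\mathrm{st}}$ is an algebra morphism the original statement follows. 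By linearity and the definition of $H^\circ$, it suffices to check the formula on matrix coefficients $\varphi = {_X\phi^\sigma_x}$: the twice-iterated coproduct \eqref{coproduitDualRestreint} turns the right-hand side into $\sigma(v^{P(\gamma',\gamma'')}\cdot x_i)\, \mathfrak{j}_{\gamma'}({_X\phi^{x^i}_{x_j}})\ast \mathfrak{j}_{\gamma''}({_X\phi^{x^j}_{x}})$, with implicit summation on $i,j$.

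I would first reduce the twelve configurations to six. If $\gamma=\gamma'\gamma''$ is in case $k^-$ then $\gamma^{-1}=\gamma''^{-1}\gamma'^{-1}$ falls into a positively oriented case, and item~2 of Lemma \ref{lemmaChangeOrientationKnotMap} together with the braided anti-morphism property \eqref{antipodeBraidedAntiMorphism} of $S_{\mathcal{L}_{0,1}}$ converts the statement for $\gamma^{-1}$ into the statement for $\gamma$; consistency of the exponent $P$ under this flip is visible from the case-by-case definition \eqref{defKappa}, using that $S(v)=v$ so $v^{P}$ is unaffected.

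The heart of the proof is then the six positively oriented cases, treated directly in the stated skein algebra. In each case the picture of $_X\gamma^\sigma_x$ defined by \eqref{defJGamma} can be isotoped so that in a small neighbourhood of the basepoint there is a distinguished arc where $\gamma'$ hands off to $\gamma''$. Applying relation (vii) of Figure~\ref{statedSkeinRels} on that arc inserts a sum $\sum_i x_i\otimes x^i$ and severs the single $\gamma$-strand into one strand following $\gamma'$ (with states $\sigma,x_i$) and one strand following $\gamma''$ (with states $x^i,x$); using (i)--(ii) of Figure~\ref{statedSkeinRels} to slide the inserted caps/cups past any coupons and re-interleaving the boundary states by height, one recognises the stacked product $\mathfrak{j}_{\gamma'}({_X\phi^{?}_{?}}) \ast \mathfrak{j}_{\gamma''}({_X\phi^{?}_{?}})$. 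However, depending on the local picture near the basepoint, the freshly cut ends of the two sub-strands may be rotated by a full positive or negative twist relative to the standard picture used to define $\mathfrak{j}_{\gamma'}$ and $\mathfrak{j}_{\gamma''}$; using naturality of the ribbon twist $\theta_X$ of $H\text{-}\mathrm{mod}$ this discrepancy is absorbed by inserting a coupon colored by the appropriate power of $v$ on the $X$-strand before the cut, which after evaluation against the dual basis yields the scalar $\sigma(v^{P(\gamma',\gamma'')}\cdot x_i)$ with $P(\gamma',\gamma'')\in\{-1,0,1\}$ read off exactly as in \eqref{defKappa}.

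The main obstacle is this framing bookkeeping, which cannot be handled by a single uniform computation: for each of the six positive configurations one has to draw the local picture at the basepoint, identify the minimal isotopy that separates the two sub-strands into the prescribed standard form, and verify that the resulting framing defect agrees in sign and magnitude with the combinatorial value $P(\gamma',\gamma'')$ declared in \eqref{defKappa}. The cases $2^+,3^+,6^+$ (where $P=0$) are essentially clean cuts with no residual twist, whereas the cases $1^+,4^+,5^+$ (where $P=\pm 1$) require the careful twist accounting and are where the ribbon element genuinely enters; everything else in the proof is routine manipulation of coproducts and of relations (i)--(ii), (vii) of Figure~\ref{statedSkeinRels}.
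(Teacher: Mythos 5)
Your overall strategy --- transfer everything to $\mathcal{S}_H^{\mathrm{st}}(\Sigma_{g,n}^{\circ,\bullet})$ via $\mathfrak{i}_\gamma=\mathrm{hol}^{\mathrm{st}}\circ\mathfrak{j}_\gamma$, check the identity on matrix coefficients ${_X\phi^\sigma_x}$, cut the $\gamma$-strand near the basepoint with relation (vii) of Figure~\ref{statedSkeinRels}, and absorb the residual framing defect into a power of the ribbon element via naturality of the twist --- is exactly the paper's proof of the positively oriented configurations (the paper works out $2^+$ this way and leaves the others to the reader). For that part your proposal is correct and matches the paper.

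Where you diverge is the reduction of the six negative configurations $k^-$ to the positive ones via $\gamma^{-1}=\gamma''^{-1}\gamma'^{-1}$ and item~2 of Lemma~\ref{lemmaChangeOrientationKnotMap}, and as written this step has a gap. Substituting $S_{\mathcal{L}_{0,1}}(\varphi)$ into the already-proved formula for $\gamma^{-1}$ requires you to compute $\Delta_{H^\circ}\bigl(S_{\mathcal{L}_{0,1}}(\varphi)\bigr)$, i.e.\ the braided \emph{anti-coalgebra} property of $S_{\mathcal{L}_{0,1}}$; the paper only records the anti-algebra property \eqref{antipodeBraidedAntiMorphism}, and the coalgebra version introduces $R$-matrix factors that you do not account for. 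Moreover the formula you obtain has the factors in the order $\mathfrak{j}_{\gamma''^{-1}}(\cdot)\ast\mathfrak{j}_{\gamma'^{-1}}(\cdot)$, and reversing the stacking order of two stated skeins is not free --- it costs further $R$-matrix corrections via the exchange relations (ix)--(xii) --- so these must cancel against the previous ones. Finally, your remark that ``$S(v)=v$ so $v^P$ is unaffected'' conflicts with the table \eqref{defKappa}: the exponent genuinely flips sign between mirror cases (e.g.\ $P=+1$ in case $1^+$ but $P=-1$ in case $1^-$), and that sign flip has to emerge from the $u^{-1}=g^{-1}v^{-1}$ hidden in $S_{\mathcal{L}_{0,1}}$ rather than being a non-event. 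None of this is necessarily fatal, but it is a substantial computation you have not done. The paper sidesteps it entirely by running the same skein-cutting argument directly in the negative cases (see its treatment of $4^-$, where the extra twist is compensated by inserting $v$ into the dual-basis state); I would recommend doing the same, or else supplying the braided anti-coalgebra identity and the reordering computation in full.
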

\begin{proof}
The proof is a case-by-case verification for each possible configuration of $\gamma'$ and $\gamma''$ listed above. In each case, one first compute $\mathfrak{j}_{\gamma}$ in terms of $\mathfrak{j}_{\gamma'}$, $\mathfrak{j}_{\gamma''}$ thanks to the stated skein relations and gets the desired result by applying the morphism $\mathrm{hol}^{\mathrm{st}}$. We give details in two cases, the other cases are left to the reader.
\\In case $2^+$, $\gamma$ is positively oriented so $\mathfrak{j}_{\gamma}$ is defined by \eqref{defJGamma} and we have:
\begin{center}
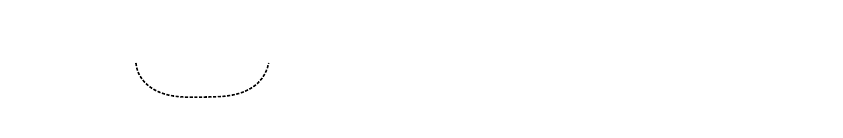
\end{center}
where the third equality uses naturality of the twist and (vii) in Fig.\,\ref{statedSkeinRels}. Note that $\gamma'$ is negatively oriented while $\gamma''$ is positively oriented, so $\mathfrak{j}_{\gamma'}$ is defined by \eqref{defJGammaInverse} while $\mathfrak{j}_{\gamma''}$ is defined by \eqref{defJGamma} and the last term is equal to $\mathfrak{j}_{\gamma'}(_X\phi^\sigma_{x_i})\mathfrak{j}_{\gamma''}(_X\phi^{x^i}_x)$ by definition of the product in $\mathcal{S}_H^{\mathrm{st}}(\Sigma_{g,n}^{\circ,\bullet})$. Applying $\mathrm{hol}^{\mathrm{st}}$ to this equality gives the result, because $\Delta(_X\phi^{\sigma}_x) = {_X\phi^{\sigma}_{x_i}} \otimes {_X\phi^{x^i}_x}$ by \eqref{coproduitDualRestreint}.
\\In case $4^-$, $\gamma$ is negatively oriented so $\mathfrak{j}_{\gamma}$ is defined by \eqref{defJGammaInverse} and we have:
\begin{center}
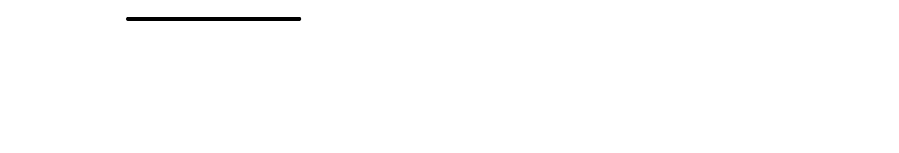
\end{center}
For the third equality the extra twist is compensated by the ribbon element $v$ inserted in $x^i$. Note that $\gamma'$, $\gamma''$ are negatively oriented so $\mathfrak{j}_{\gamma'}$, $\mathfrak{j}_{\gamma''}$ are defined by \eqref{defJGammaInverse} and the last term is equal to $\mathfrak{j}_{\gamma'}(_X\phi^\sigma_{x_i})\mathfrak{j}_{\gamma''}(_X\phi^{x^i(v\cdot ?)}_x)$ by definition of the product in $\mathcal{S}_H^{\mathrm{st}}(\Sigma_{g,n}^{\circ,\bullet})$. Applying $\mathrm{hol}^{\mathrm{st}}$ to this equality and using \eqref{coproduitDualRestreint} and \eqref{coregOnMatCoeff} we get $\mathfrak{i}_{\gamma}(\varphi) = \mathfrak{i}_{\gamma'}(\varphi_{(1)}) \mathfrak{i}_{\gamma''}(\varphi_{(2)} \triangleleft v) = \varphi_{(2)}(v)\, \mathfrak{i}_{\gamma'}(\varphi_{(1)}) \mathfrak{i}_{\gamma''}(\varphi_{(3)})$ for all $\varphi \in \mathcal{L}_{0,1}(H)$. Since $v$ is central it holds $\varphi_{(2)}(v)\varphi_{(1)} = \varphi_{(1)}(v)\varphi_{(2)}$ by \eqref{usualCoprodHDual} and we are done.
\end{proof}

\begin{proof}[Proof of Proposition \ref{thmExpressionLoopMaps}.]
Recall that $\pi_1(\Sigma_{g,n}^{\circ})$ is a free group on the generators $a_i$, $b_i$, $m_{g+j}$ with $1 \leq i \leq g$ and $1 \leq j \leq n$. Assume that the word $\gamma_1\ldots \gamma_l$ in these generators and their inverses which represents $\gamma$ is reduced, \textit{i.e.} does not contain factors of the form $ss^{-1}$. We will prove the theorem by induction on the length $l$ of the word $\gamma_1\ldots \gamma_l$. For length $1$ it is tautological. Assume that the formula is true for all simple loops which can be represented by a word of length $<l$ for some $l$. Take a factorization $\gamma = \gamma'\gamma''$ where $\gamma'$, $\gamma''$ are non-intersecting simple loops. Then $\gamma' = \gamma_1 \ldots \gamma_k$ and $\gamma'' = \gamma_{k+1}\ldots\gamma_l$ for some $1 < k < l$. Using Proposition \ref{propRecusionIGamma} and the induction hypothesis we get
\begin{align*}
&\mathfrak{i}_{\gamma}(\varphi) = \varphi_{(1)}\bigl(v^{P(\gamma',\gamma'')} \bigr) \, \mathfrak{i}_{\gamma'}(\varphi_{(2)}) \, \mathfrak{i}_{\gamma''}(\varphi_{(3)})\\
=\:&\varphi_{(1)}\bigl(v^{P(\gamma',\gamma'')} \bigr) \, \varphi_{(2)}\bigl( v^{N(\gamma')} \bigr) \, \mathfrak{i}_{\gamma_1}(\varphi_{(3)}) \ldots \mathfrak{i}_{\gamma_k}(\varphi_{(k+2)}) \, \varphi_{(k+3)}\bigl( v^{N(\gamma'')} \bigr) \,\mathfrak{i}_{\gamma_{k+1}}(\varphi_{(k+4)}) \ldots \mathfrak{i}_{\gamma_l}(\varphi_{(l+3)}).
\end{align*}
Since $v$ is central, \eqref{usualCoprodHDual} implies $\varphi_{(2)}(v)\varphi_{(1)} = \varphi_{(1)}(v)\varphi_{(2)}$. Hence
\[ \mathfrak{i}_{\gamma}(\varphi) = \varphi_{(1)}\bigl(v^{N(\gamma') + N(\gamma'') + P(\gamma',\gamma'')} \bigr) \, \mathfrak{i}_{\gamma_1}(\varphi_{(2)}) \ldots \mathfrak{i}_{\gamma_k}(\varphi_{(k+1)}) \, \mathfrak{i}_{\gamma_{k+1}}(\varphi_{(k+2)}) \ldots \mathfrak{i}_{\gamma_l}(\varphi_{(l+1)}) \]
and Lemma \ref{lemmaRecursionNGamma} finishes the proof.
\end{proof}

\section{On the quantum moment map for $\mathcal{L}_{g,n}(H)$}\label{appendixQMMLgn}
\indent Let $H$ be a ribbon Hopf algebra over a field $\Bbbk$. The main result of this appendix is Prop.~\ref{actionBoundaryOnEquivReps}, which is a key ingredient in the proof of Theorem \ref{thmClosingBoundary}. We also record that when $H$ is factorizable then $\mathcal{L}_{g,n}(H)$ has a quantum moment map $\mu_{g,n}$ and Prop.~\ref{actionBoundaryOnEquivReps} can be reformulated in terms of $\mu_{g,n}$ (Cor.~\ref{propActionQMMOnModules}). The QMM $\mu_{g,n}$ is a ``fusion'' of many copies of $\mu_{1,0}$ and $\mu_{0,1}$; such a fusion operation for QMMs is discussed in general in Remark \ref{remarkFusionQMM}.

\smallskip

\indent We will need the following ``braided coproduct'' on $H$
\begin{equation}\label{defBraidedCoproduct}
\fonc{\underline{\Delta}}{H}{H \otimes H}{h}{\mathrm{ad}^r(R_{[1]})(h_{(1)}) \otimes S(R_{[2]})h_{(2)}= R_{[1]}^1h_{(1)}R_{[1]}^2 \otimes S(R^2_{[2]})R^1_{[2]}h_{(2)}}
\end{equation}
where $\mathrm{ad}^r$ is the right adjoint action of $H$ on itself \eqref{adrH} and the equality follows from \eqref{quasitriang1} and \eqref{SSR}. Note using \eqref{quasitriang2} and \eqref{RDelta} that it can also be written as
\begin{equation}\label{otherFormulaBraidedCoproduct}
\underline{\Delta}(h) = h_{(2)}R_{[1]} \otimes \mathrm{ad}^r(R_{[2]})(h_{(1)}).
\end{equation}
It is known \cite[Th.\,7.4.2]{majidFoundations}\footnote{The only difference with Majid's book is that here we use the right adjoint action of $H$ on itself instead of the left one, which explains the difference in the formulas.} that the linear map $\underline{\Delta}$ is a morphism of $H$-module-algebras $(H, \mathrm{ad}^r) \to (H, \mathrm{ad}^r) \,\widetilde{\otimes}\, (H, \mathrm{ad}^r)$ and is coassociative: $(\underline{\Delta} \otimes \mathrm{id}_H) \circ \underline{\Delta} = (\mathrm{id}_H \otimes \underline{\Delta} ) \circ \underline{\Delta}$. For completeness, we note that the antipode $\widetilde{S} : (H, \mathrm{ad}^r) \to (H, \mathrm{ad}^r)$ is given by
\[ \widetilde{S}(h) =  S\bigl( \mathrm{ad}^r(R_{[1]})(h) \bigr) R_{[2]} = S\bigl( R_{[2]}u^{-1}hS(R_{[1]}) \bigr) \]
where the second equality follows from the properties of $R$-matrices and $u$ is the Drinfeld element \eqref{elementDrinfeld}. Then $\underline{H} = (H,\cdot, 1_H,\underline{\Delta},\varepsilon_H,\underline{S})$ is a Hopf algebra object in $\mathrm{Mod}\text{-}H$, \cite[Ex.\,.9.4.9]{majidFoundations}.

\indent Recall that $\mathcal{L}_{0,1}(H)$ is a Hopf algebra object in $\mathrm{Mod}\text{-}H$ with product \eqref{produitL01}, coproduct \eqref{coproduitDualRestreint} and antipode \eqref{antipodeL01}. Consider the map $\Phi_{0,1} : \mathcal{L}_{0,1}(H) \to \underline{H}$ defined by $\varphi \mapsto \varphi\bigl( R^1_{[1]} R^2_{[2]} \bigr) R^1_{[2]} R^2_{[1]}$ and already used in \S\ref{sectionSomeEquivModules}. Then $\Phi_{0,1}$ is a morphism of Hopf algebras in $\mathrm{Mod}\text{-}H$, meaning that
\begin{equation}\label{Phi01HopfMorphism}
\begin{array}{ll}
\Phi_{0,1} \circ \mathrm{coad}^r(h) = \mathrm{ad}^r(h) \circ \Phi_{0,1}, & \Phi_{0,1}(\varphi\psi) = \Phi_{0,1}(\varphi) \Phi_{0,1}(\psi),\\
(\Phi_{0,1} \otimes \Phi_{0,1}) \circ \Delta_{H^{\circ}} = \underline{\Delta} \circ \Phi_{0,1}, \qquad & \Phi_{0,1} \circ S_{\mathcal{L}_{0,1}} = \underline{S} \circ \Phi_{0,1}.
\end{array}
\end{equation}
All these equalities follow from the properties of $R$-matrices recalled in \S\ref{sectionBraidedTensorProduct}. 

\smallskip

\indent Recall the boundary loop $\partial_{g,n} = \partial(\Sigma_{g,n}^{\circ})$ defined in \eqref{boundaryLoop}, which is positively oriented. We have seen in Prop.\,\ref{propLoopMapMorphism} and Cor.\,\ref{coroUnbasedImap} that for any positively oriented simple loop $\gamma \in \pi_1(\Sigma_{g,n}^{\circ})$ we have a $H$-module-algebra morphism $\mathfrak{i}_{\gamma} : \mathcal{L}_{0,1}(H) \to \mathcal{L}_{g,n}(H)$. In particular, by Proposition \ref{thmExpressionLoopMaps}, the morphism $\mathfrak{i}_{\partial_{1,0}}$ is given by
\[ \mathfrak{i}_{\partial_{1,0}}(\varphi) = \varphi_{(1)}(v^2) \mathfrak{i}_b(\varphi_{(2)}) \mathfrak{i}_a\bigl( S_{\mathcal{L}_{0,1}}(\varphi_{(3)}) \bigr) \mathfrak{i}_b\bigl( S_{\mathcal{L}_{0,1}}(\varphi_{(4)}) \bigr) \mathfrak{i}_a(\varphi_{(5)}) \]
and the morphism $\mathfrak{i}_{\partial_{\scriptstyle g,n}}$ is given by
\begin{equation}\label{decompositionPreQMM}
\mathfrak{i}_{\partial_{\scriptstyle g,n}}(\varphi) =\mathfrak{i}_{\partial_{1,0}}(\varphi_{(1)}) \,\widetilde{\otimes}\, \ldots \,\widetilde{\otimes}\, \mathfrak{i}_{\partial_{1,0}}(\varphi_{(g)}) \,\widetilde{\otimes}\, \varphi_{(g+1)} \,\widetilde{\otimes}\, \ldots \,\widetilde{\otimes}\, \varphi_{(g+n)}.
\end{equation}
where we use that $\mathcal{L}_{g,n}(H) = \mathcal{L}_{1,0}(H)^{\widetilde{\otimes}\,g} \,\widetilde{\otimes}\, \mathcal{L}_{0,1}(H)^{\widetilde{\otimes}\,n}$. From \cite[Lem.\,7.13]{BFR} we have the following exchange relation
\[ \forall \, \varphi,\psi \in \mathcal{L}_{0,1}(H), \quad \mathfrak{i}_{\partial_{\scriptstyle g,n}}\bigl( R_{[2]}^1 \triangleright \varphi \triangleleft R_{[1]}^2 \bigr) \mathfrak{i}_s\bigl( \psi \triangleleft R_{[2]}^2R_{[1]}^1 \bigr) = \mathfrak{i}_s\bigl( R_{[2]}^2R_{[1]}^1 \triangleright \psi \bigr) \mathfrak{i}_{\partial_{\scriptstyle g,n}}\bigl( R_{[2]}^1 \triangleright \varphi \triangleleft R_{[1]}^2 \bigr). \] 
where $s\in \{b_1,a_1,\ldots,b_g,a_g,m_{g+1},\ldots,m_{g+n} \}$ is any generator of $\pi_1(\Sigma_{g,n}^{\circ})$ and $R^1$, $R^2$ are two copies of the $R$-matrix. The properties of the $R$-matrix (see \S\ref{sectionBraidedTensorProduct}) allow us to rewrite this as
\begin{equation}\label{exchangeRelPreQMM}
\forall \, \varphi,\psi \in \mathcal{L}_{0,1}(H), \quad  \mathfrak{i}_{\partial_{\scriptstyle g,n}}(\varphi) \mathfrak{i}_s(\psi) = \mathfrak{i}_s\bigl( \mathrm{coad}^r\bigl( S^{-1}(R^1_{[2]}R^2_{[1]}) \bigr)(\psi) \bigr) \mathfrak{i}_{\partial_{\scriptstyle g,n}}\bigl( R^2 _{[2]} \triangleright \varphi \triangleleft R^1_{[1]} \bigr).
\end{equation}

\begin{proposition}\label{actionBoundaryOnEquivReps}
Recall the $H$-equivariant $\mathcal{L}_{g,n}(H)$-module structure on $(H^{\circ})^{\otimes g} \otimes X_1 \otimes \ldots \otimes X_n$ from Prop.\,\ref{coroEquivariantRepsOfLgn} and in particular its right $H$-action $\smallsquare$. We have
\[ \mathfrak{i}_{\partial_{\scriptstyle g,n}}(\varphi) \cdot w = w \smallsquare S^{-1}\bigl( \Phi_{0,1}(\varphi) \bigr) \]
for all $\varphi \in \mathcal{L}_{0,1}(H)$ and $w \in (H^{\circ})^{\otimes g} \otimes X_1 \otimes \ldots \otimes X_n$, with $\Phi_{0,1}$ defined above \eqref{Phi01HopfMorphism}.
\end{proposition}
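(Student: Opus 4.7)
The overall strategy is to reduce the identity to the two atomic cases $(g,n)=(0,1)$ and $(g,n)=(1,0)$ by exploiting the decomposition \eqref{decompositionPreQMM}, which writes $\mathfrak{i}_{\partial_{g,n}}(\varphi)$ as the product $\mathfrak{e}_1(\mathfrak{i}_{\partial_{1,0}}(\varphi_{(1)})) \cdots \mathfrak{e}_g(\mathfrak{i}_{\partial_{1,0}}(\varphi_{(g)}))\,\mathfrak{e}_{g+1}(\varphi_{(g+1)}) \cdots \mathfrak{e}_{g+n}(\varphi_{(g+n)})$ in $\mathcal{L}_{1,0}(H)^{\widetilde{\otimes}\,g} \,\widetilde{\otimes}\, \mathcal{L}_{0,1}(H)^{\widetilde{\otimes}\,n}$. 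Applying formula \eqref{actionOnBraidedProductOfMultiplesModules} to compute its action on $w = \psi_1 \otimes \ldots \otimes \psi_g \otimes x_1 \otimes \ldots \otimes x_n$, each factor acts on its own slot by its base-case operator, modulated by $R$-matrix braidings between slots. The Hopf-algebra identity $(\Phi_{0,1}\otimes\Phi_{0,1})\Delta_{H^\circ}(\varphi) = \underline{\Delta}(\Phi_{0,1}(\varphi))$ from \eqref{Phi01HopfMorphism}, iterated and unfolded via the explicit form \eqref{defBraidedCoproduct} of $\underline{\Delta}$, then converts products of $\Phi_{0,1}(\varphi_{(i)})$ into the ordinary iterated coproduct of $\Phi_{0,1}(\varphi)$ decorated by exactly those $R$-matrices that also appear in \eqref{actionOnBraidedProductOfMultiplesModules}. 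These $R$-matrices cancel, and the remaining expression is precisely the diagonal right $H$-action \eqref{HActionOnBrTensProd} applied to $h = S^{-1}(\Phi_{0,1}(\varphi))$.

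The case $(g,n)=(0,1)$ is immediate: Lemma \ref{lemmaEquivarianceHModules} gives $\mathfrak{i}_{m_1}(\varphi)\cdot x = \Phi_{0,1}(\varphi)\cdot x$, while the right $H$-action on $X_1$ is $x \smallsquare h = S(h)\cdot x$, so $x \smallsquare S^{-1}(\Phi_{0,1}(\varphi)) = \Phi_{0,1}(\varphi)\cdot x$.

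The case $(g,n)=(1,0)$ is the crux. We must show $\mathfrak{i}_{\partial_{1,0}}(\varphi)\cdot \psi = \mathrm{coad}^r\bigl(S^{-1}(\Phi_{0,1}(\varphi))\bigr)(\psi)$ for all $\psi\in H^\circ$. Since the $\mathcal{L}_{1,0}(H)$-module $H^\circ$ is cyclic on $\varepsilon$ (proof of Lemma \ref{lemmaHEquivRepL10}), write $\psi = \mathfrak{i}_b(\psi)\cdot\varepsilon$ and apply the exchange relation \eqref{exchangeRelPreQMM} (specialised to $g=1$, $n=0$, $s=b_1$) to move $\mathfrak{i}_{\partial_{1,0}}(\varphi)$ past $\mathfrak{i}_b(\psi)$. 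This reduces the problem to (i) verifying the identity when $\psi=\varepsilon$, and (ii) checking that the $R$-matrix shift $R^2_{[2]}\triangleright (-) \triangleleft R^1_{[1]}$ appearing on the right-hand side of \eqref{exchangeRelPreQMM} matches the coproduct of $\Phi_{0,1}(\varphi)$ on the coadjoint side. For (i), both sides collapse to $\varphi(1_H)\,\varepsilon$: on the right using $\mathrm{coad}^r(h)(\varepsilon)=\varepsilon(h)\,\varepsilon$ together with $\varepsilon(\Phi_{0,1}(\varphi)) = \varphi(1_H)$, which follows from \eqref{epsilonR}; on the left by expanding $\mathfrak{i}_{\partial_{1,0}}(\varphi)$ via Proposition \ref{thmExpressionLoopMaps} as a product of four $\mathfrak{i}_{b_1}^{\pm 1}, \mathfrak{i}_{a_1}^{\pm 1}$-type embeddings and collapsing iteratively with the formula $\mathfrak{i}_b(\beta)\mathfrak{i}_a(\alpha)\cdot\varepsilon = \alpha(1_H)\,\beta$ from \eqref{repL10OnCounit}. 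For (ii), the identification uses the quasitriangularity axioms \eqref{quasitriang1}--\eqref{quasitriang2} to recognize the double $R$-matrix twist as arising from the ordinary coproduct of $\Phi_{0,1}(\varphi)$.

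The hard part will be the $R$-matrix bookkeeping in (ii) and in the reduction to the base cases; the cleanest formulation avoids ever writing $\Delta(\Phi_{0,1}(\varphi))$ explicitly and instead systematically uses the braided-Hopf-algebra compatibility \eqref{Phi01HopfMorphism} to replace sums over multiple copies of $R$ by iterated applications of $\underline{\Delta}$, after which all $R$-matrices organize into the diagonal $H$-action by a direct book-keeping using \eqref{otherFormulaBraidedCoproduct}.
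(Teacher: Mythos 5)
Your overall strategy coincides with the paper's: the case $(g,n)=(0,1)$ is read off from Lemma \ref{lemmaEquivarianceHModules}; the case $(g,n)=(1,0)$ is handled by cyclicity of $H^{\circ}$ on $\varepsilon$, the exchange relation \eqref{exchangeRelPreQMM}, and the evaluation $\mathfrak{i}_{\partial_{1,0}}(\varphi)\cdot\varepsilon=\varphi(1_H)\,\varepsilon$; and the general case follows from \eqref{decompositionPreQMM} together with $(\Phi_{0,1}\otimes\Phi_{0,1})\circ\Delta_{H^{\circ}}=\underline{\Delta}\circ\Phi_{0,1}$, the rewriting \eqref{otherFormulaBraidedCoproduct} and the cancellation \eqref{skewInverseR}. (The paper organizes the last step as a three-stage induction on $g$ and then on $n$ rather than one simultaneous computation, but that is cosmetic.)

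The one place where you genuinely diverge, and where your sketch is too thin, is the crux identity $\mathfrak{i}_{\partial_{1,0}}(\varphi)\cdot\varepsilon=\varphi(1_H)\,\varepsilon$. The paper proves this \emph{topologically}: it transports the $\mathcal{L}_{1,0}(H)$-action on $H^{\circ}$ to a stated-skein picture \eqref{actionL10Stated} and observes that the boundary loop acting on the empty diagram is isotopic to a trivially evaluated circle. You instead propose to expand $\mathfrak{i}_{\partial_{1,0}}(\varphi)=\varphi_{(1)}(v^2)\,\mathfrak{i}_b(\varphi_{(2)})\,\mathfrak{i}_a(S_{\mathcal{L}_{0,1}}(\varphi_{(3)}))\,\mathfrak{i}_b(S_{\mathcal{L}_{0,1}}(\varphi_{(4)}))\,\mathfrak{i}_a(\varphi_{(5)})$ via Prop.\,\ref{thmExpressionLoopMaps} and ``collapse iteratively'' with $\mathfrak{i}_b(\beta)\mathfrak{i}_a(\alpha)\cdot\varepsilon=\alpha(1_H)\,\beta$. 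As stated this does not go through: that formula only applies to a word in normal order $\mathfrak{i}_b(\cdot)\mathfrak{i}_a(\cdot)$ acting on $\varepsilon$, whereas after the first (rightmost) collapse you are acting with $\mathfrak{i}_b(\varphi_{(2)})\,\mathfrak{i}_a(S_{\mathcal{L}_{0,1}}(\varphi_{(3)}))$ on the non-trivial vector $S_{\mathcal{L}_{0,1}}(\varphi_{(4)})\in H^{\circ}$, which requires the full action formulas of Lemma \ref{lemmaHEquivRepL10} (or a prior normal-ordering through \eqref{echangeL10}). One must then check that the resulting $R$-matrices, the antipodes $S_{\mathcal{L}_{0,1}}$ and the ribbon prefactor $\varphi_{(1)}(v^2)$ all conspire to produce $\varphi(1_H)$. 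This is a genuine, somewhat lengthy Hopf-algebraic computation — it is precisely the content the paper's isotopy argument replaces — and it should be carried out (or replaced by the skein argument) rather than asserted. Once that identity is in hand, your step (ii) is immediate: applying it to $R^2_{[2]}\triangleright\varphi\triangleleft R^1_{[1]}$ yields the scalar $\varphi(R^1_{[1]}R^2_{[2]})$, which combines with $S^{-1}(R^1_{[2]}R^2_{[1]})$ into $S^{-1}(\Phi_{0,1}(\varphi))$ by the very definition of $\Phi_{0,1}$, with no further use of \eqref{quasitriang1}--\eqref{quasitriang2}.
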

\begin{proof}
\noindent \textbullet~{\em Case $(g,n) = (0,1)$:} We have $\mathfrak{i}_{\partial_{\scriptsize 0,1}}(\varphi) = \varphi$ and the result is just the definition of the representation of $\mathcal{L}_{0,1}(H)$ on a $H$-module in Lemma \ref{lemmaEquivarianceHModules}.

\noindent \textbullet~{\em Case $(g,n) = (1,0)$:} Here it is helpful to use stated skein algebras. We have $\mathcal{L}_{1,0}(H) \cong \mathcal{S}_H^{\mathrm{st}}(\Sigma^{\circ,\bullet}_{1,0})$ as algebras and $H^{\circ} = \mathcal{L}_{0,1}(H) \cong \mathcal{S}_H^{\mathrm{st}}(\Sigma_{0,1}^{\circ,\bullet})$ as vector spaces. Under these identifications, the representation of $\mathcal{L}_{1,0}(H)$ on $H^{\circ}$ in Lemma \ref{lemmaHEquivRepL10} becomes
\begin{equation}\label{actionL10Stated}
\begingroup%
  \makeatletter%
  \providecommand\color[2][]{%
    \errmessage{(Inkscape) Color is used for the text in Inkscape, but the package 'color.sty' is not loaded}%
    \renewcommand\color[2][]{}%
  }%
  \providecommand\transparent[1]{%
    \errmessage{(Inkscape) Transparency is used (non-zero) for the text in Inkscape, but the package 'transparent.sty' is not loaded}%
    \renewcommand\transparent[1]{}%
  }%
  \providecommand\rotatebox[2]{#2}%
  \newcommand*\fsize{\dimexpr\f@size pt\relax}%
  \newcommand*\lineheight[1]{\fontsize{\fsize}{#1\fsize}\selectfont}%
  \ifx\svgwidth\undefined%
    \setlength{\unitlength}{360.10297637bp}%
    \ifx\svgscale\undefined%
      \relax%
    \else%
      \setlength{\unitlength}{\unitlength * \real{\svgscale}}%
    \fi%
  \else%
    \setlength{\unitlength}{\svgwidth}%
  \fi%
  \global\let\svgwidth\undefined%
  \global\let\svgscale\undefined%
  \makeatother%
  \begin{picture}(1,0.26092521)%
    \lineheight{1}%
    \setlength\tabcolsep{0pt}%
    \put(0,0){\includegraphics[width=\unitlength,page=1]{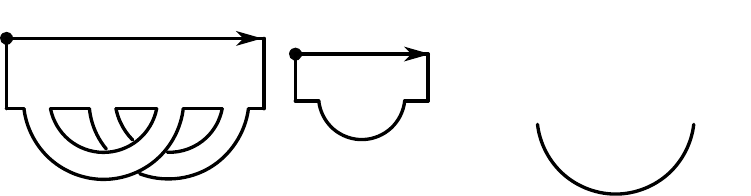}}%
    \put(0.08804058,0.2212816){\color[rgb]{0,0,0}\makebox(0,0)[lt]{\lineheight{1.25}\smash{\begin{tabular}[t]{l}$s_1$\end{tabular}}}}%
    \put(0.26454776,0.22022213){\color[rgb]{0,0,0}\makebox(0,0)[lt]{\lineheight{1.25}\smash{\begin{tabular}[t]{l}$s_k$\end{tabular}}}}%
    \put(0.44059185,0.19673332){\color[rgb]{0,0,0}\makebox(0,0)[lt]{\lineheight{1.25}\smash{\begin{tabular}[t]{l}$\tau$\end{tabular}}}}%
    \put(0.51250237,0.19937426){\color[rgb]{0,0,0}\makebox(0,0)[lt]{\lineheight{1.25}\smash{\begin{tabular}[t]{l}$y$\end{tabular}}}}%
    \put(0.36566889,0.14224098){\color[rgb]{0,0,0}\makebox(0,0)[lt]{\lineheight{1.25}\smash{\begin{tabular}[t]{l}$\cdot$\end{tabular}}}}%
    \put(0,0){\includegraphics[width=\unitlength,page=2]{actionL10Stated.pdf}}%
    \put(0.76244896,0.19269004){\color[rgb]{0,0,0}\makebox(0,0)[lt]{\lineheight{1.25}\smash{\begin{tabular}[t]{l}$G$\end{tabular}}}}%
    \put(0,0){\includegraphics[width=\unitlength,page=3]{actionL10Stated.pdf}}%
    \put(0.423454,0.14666249){\color[rgb]{0,0,0}\makebox(0,0)[lt]{\lineheight{1.25}\smash{\begin{tabular}[t]{l}$_Y$\end{tabular}}}}%
    \put(0,0){\includegraphics[width=\unitlength,page=4]{actionL10Stated.pdf}}%
    \put(0.16456533,0.15103529){\color[rgb]{0,0,0}\makebox(0,0)[lt]{\lineheight{1.25}\smash{\begin{tabular}[t]{l}$G$\end{tabular}}}}%
    \put(0,0){\includegraphics[width=\unitlength,page=5]{actionL10Stated.pdf}}%
    \put(0.59371097,0.14199837){\color[rgb]{0,0,0}\makebox(0,0)[lt]{\lineheight{1.25}\smash{\begin{tabular}[t]{l}$=$\end{tabular}}}}%
    \put(0,0){\includegraphics[width=\unitlength,page=6]{actionL10Stated.pdf}}%
    \put(0.90181117,0.24995981){\color[rgb]{0,0,0}\makebox(0,0)[lt]{\lineheight{1.25}\smash{\begin{tabular}[t]{l}$\tau$\end{tabular}}}}%
    \put(0.94419317,0.25269142){\color[rgb]{0,0,0}\makebox(0,0)[lt]{\lineheight{1.25}\smash{\begin{tabular}[t]{l}$y$\end{tabular}}}}%
    \put(0.69990662,0.25053494){\color[rgb]{0,0,0}\makebox(0,0)[lt]{\lineheight{1.25}\smash{\begin{tabular}[t]{l}$s_1$\end{tabular}}}}%
    \put(0.83480361,0.2501206){\color[rgb]{0,0,0}\makebox(0,0)[lt]{\lineheight{1.25}\smash{\begin{tabular}[t]{l}$s_k$\end{tabular}}}}%
    \put(0.95433411,0.19774551){\color[rgb]{0,0,0}\makebox(0,0)[lt]{\lineheight{1.25}\smash{\begin{tabular}[t]{l}$_Y$\end{tabular}}}}%
    \put(0,0){\includegraphics[width=\unitlength,page=7]{actionL10Stated.pdf}}%
  \end{picture}%
\endgroup%

\end{equation}
where $G$ is any ribbon graph and the two strands in the handles of $\Sigma_{1,0}^{\circ,\bullet}$ actually represent a bunch of parallel strands. Because of skein relations it is sufficient to prove this formula for elements of the form $\mathfrak{j}_b(_{X_1}\phi^{\sigma_1}_{x_1}) \mathfrak{j}_a(_{X_2}\phi^{\sigma_2}_{x_2})$ in $\mathcal{S}_H^{\mathrm{st}}(\Sigma^{\circ,\bullet}_{1,0})$, \textit{i.e.} when $G$ is the identity graph (4 parallel strands). For such elements it is easy to use the characterization of the isomorphism $\mathcal{L}_{1,0}(H) \cong \mathcal{S}_H^{\mathrm{st}}(\Sigma^{\circ,\bullet}_{1,0})$ from \eqref{characterizationHolSt} in order to identify the diagrams above with the formulas in Lemma \ref{lemmaHEquivRepL10}; details are left to the reader. In particular:
\begin{center}
\begingroup%
  \makeatletter%
  \providecommand\color[2][]{%
    \errmessage{(Inkscape) Color is used for the text in Inkscape, but the package 'color.sty' is not loaded}%
    \renewcommand\color[2][]{}%
  }%
  \providecommand\transparent[1]{%
    \errmessage{(Inkscape) Transparency is used (non-zero) for the text in Inkscape, but the package 'transparent.sty' is not loaded}%
    \renewcommand\transparent[1]{}%
  }%
  \providecommand\rotatebox[2]{#2}%
  \newcommand*\fsize{\dimexpr\f@size pt\relax}%
  \newcommand*\lineheight[1]{\fontsize{\fsize}{#1\fsize}\selectfont}%
  \ifx\svgwidth\undefined%
    \setlength{\unitlength}{450.17102488bp}%
    \ifx\svgscale\undefined%
      \relax%
    \else%
      \setlength{\unitlength}{\unitlength * \real{\svgscale}}%
    \fi%
  \else%
    \setlength{\unitlength}{\svgwidth}%
  \fi%
  \global\let\svgwidth\undefined%
  \global\let\svgscale\undefined%
  \makeatother%
  \begin{picture}(1,0.2159544)%
    \lineheight{1}%
    \setlength\tabcolsep{0pt}%
    \put(0,0){\includegraphics[width=\unitlength,page=1]{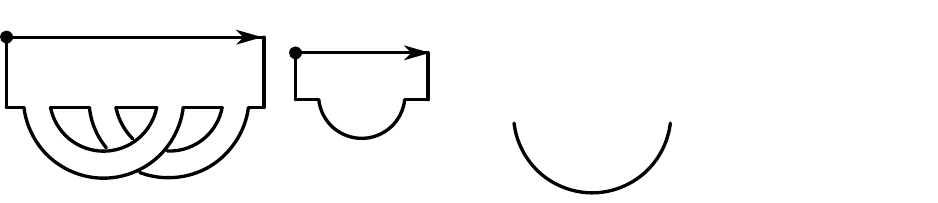}}%
    \put(0.02920176,0.184293){\color[rgb]{0,0,0}\makebox(0,0)[lt]{\lineheight{1.25}\smash{\begin{tabular}[t]{l}$\sigma$\end{tabular}}}}%
    \put(0.22787382,0.18625372){\color[rgb]{0,0,0}\makebox(0,0)[lt]{\lineheight{1.25}\smash{\begin{tabular}[t]{l}$x$\end{tabular}}}}%
    \put(0.29250763,0.12232652){\color[rgb]{0,0,0}\makebox(0,0)[lt]{\lineheight{1.25}\smash{\begin{tabular}[t]{l}$\cdot$\end{tabular}}}}%
    \put(0,0){\includegraphics[width=\unitlength,page=2]{actionBoundaryOnEmpty.pdf}}%
    \put(0.47492418,0.12213245){\color[rgb]{0,0,0}\makebox(0,0)[lt]{\lineheight{1.25}\smash{\begin{tabular}[t]{l}$=$\end{tabular}}}}%
    \put(0,0){\includegraphics[width=\unitlength,page=3]{actionBoundaryOnEmpty.pdf}}%
    \put(0.03878865,0.14560222){\color[rgb]{0,0,0}\makebox(0,0)[lt]{\lineheight{1.25}\smash{\begin{tabular}[t]{l}$_X$\end{tabular}}}}%
    \put(0,0){\includegraphics[width=\unitlength,page=4]{actionBoundaryOnEmpty.pdf}}%
    \put(0.5299499,0.20765956){\color[rgb]{0,0,0}\makebox(0,0)[lt]{\lineheight{1.25}\smash{\begin{tabular}[t]{l}$\sigma$\end{tabular}}}}%
    \put(0.64651582,0.209368){\color[rgb]{0,0,0}\makebox(0,0)[lt]{\lineheight{1.25}\smash{\begin{tabular}[t]{l}$x$\end{tabular}}}}%
    \put(0.76655243,0.16389875){\color[rgb]{0,0,0}\makebox(0,0)[lt]{\lineheight{1.25}\smash{\begin{tabular}[t]{l}$=$\end{tabular}}}}%
    \put(0,0){\includegraphics[width=\unitlength,page=5]{actionBoundaryOnEmpty.pdf}}%
    \put(0.84406247,0.20871522){\color[rgb]{0,0,0}\makebox(0,0)[lt]{\lineheight{1.25}\smash{\begin{tabular}[t]{l}$\sigma$\end{tabular}}}}%
    \put(0.89331854,0.20834686){\color[rgb]{0,0,0}\makebox(0,0)[lt]{\lineheight{1.25}\smash{\begin{tabular}[t]{l}$x$\end{tabular}}}}%
    \put(0.52297239,0.11652434){\color[rgb]{0,0,0}\makebox(0,0)[lt]{\lineheight{1.25}\smash{\begin{tabular}[t]{l}$_X$\end{tabular}}}}%
    \put(0.83169216,0.18011322){\color[rgb]{0,0,0}\makebox(0,0)[lt]{\lineheight{1.25}\smash{\begin{tabular}[t]{l}$_X$\end{tabular}}}}%
    \put(0.76655243,0.05560655){\color[rgb]{0,0,0}\makebox(0,0)[lt]{\lineheight{1.25}\smash{\begin{tabular}[t]{l}$=\sigma(x)$\end{tabular}}}}%
    \put(0,0){\includegraphics[width=\unitlength,page=6]{actionBoundaryOnEmpty.pdf}}%
  \end{picture}%
\endgroup%

\end{center}
which means that $\mathfrak{i}_{\partial_{\scriptstyle 1,0}}(\varphi) \cdot \varepsilon = \varphi(1_H)\varepsilon$ for all $\varphi \in \mathcal{L}_{0,1}(H)$, since the empty diagram corresponds to the counit $\varepsilon = {_{\Bbbk}\phi^{\mathrm{id}}_1}$. The result now easily follows from Lemma \ref{lemmaHEquivRepL10}, \eqref{exchangeRelPreQMM} and the definition of $\Phi_{0,1}$:
\begin{align*}
\mathfrak{i}_{\partial_{\scriptstyle 1,0}}(\varphi) \cdot \psi &= \mathfrak{i}_{\partial_{\scriptstyle g,n}}(\varphi) \mathfrak{i}_b(\psi) \cdot \varepsilon = \mathfrak{i}_b\bigl( \mathrm{coad}^r\bigl( S^{-1}(R^1_{[2]}R^2_{[1]}) \bigr)(\psi) \bigr) \mathfrak{i}_{\partial_{\scriptstyle 1,0}}\bigl( R^2 _{[2]} \triangleright \varphi \triangleleft R^1_{[1]} \bigr) \cdot \varepsilon\\
&= \mathfrak{i}_b\bigl( \mathrm{coad}^r\bigl(S^{-1}(\Phi_{0,1}(\varphi)) \bigr)(\psi) \bigr) \cdot \varepsilon = \mathrm{coad}^r\bigl(S^{-1}(\Phi_{0,1}(\varphi)) \bigr)(\psi)
\end{align*}
for all $\psi \in H^{\circ}$. We have $\smallsquare = \mathrm{coad}^r$ in this case by \eqref{HActionRepsOfLgn}, so we are done.
 
\noindent \textbullet~{\em General case:} It is divided in three steps: first prove the result for $\mathcal{L}_{g,0}(H)$ by induction on $g$ thanks to the decomposition $\mathcal{L}_{g+1,0}(H) = \mathcal{L}_{g,0}(H) \,\widetilde{\otimes}\, \mathcal{L}_{1,0}(H)$, then do the same for $\mathcal{L}_{0,n}(H)$ by induction on $n$, and finally deduce the result in general from the decomposition $\mathcal{L}_{g,n}(H) = \mathcal{L}_{g,0}(H) \,\widetilde{\otimes}\, \mathcal{L}_{0,n}(H)$. We give details only for the first step, the others are based on completely similar computations. Assume that the Proposition is true for $\mathcal{L}_{g,0}(H)$. We use the notation $\underline{\Delta}(h) = h_{\underline{(1)}} \otimes h_{\underline{(2)}}$ for the coproduct \eqref{defBraidedCoproduct}. Then for all $\varphi \in \mathcal{L}_{0,1}(H)$ and $w \,\widetilde{\boxtimes}\, \psi \in (H^{\circ})^{\widetilde{\boxtimes}\, g} \,\widetilde{\boxtimes}\, H^{\circ}$,
\begin{align*}
&\mathfrak{i}_{\partial_{\scriptstyle g+1,0}}(\varphi) \cdot (w \,\widetilde{\boxtimes}\, \psi) \overset{\eqref{decompositionPreQMM}}{=} \bigl( \mathfrak{i}_{\partial_{\scriptstyle g,0}}(\varphi_{(1)}) \,\widetilde{\otimes}\, \mathfrak{i}_{\partial_{\scriptstyle 1,0}}(\varphi_{(2)}) \bigr) \cdot (w \,\widetilde{\boxtimes}\, \psi)\\
&\overset{\eqref{moduleStructOnBrTensProd}}{=} \bigl( \mathfrak{i}_{\partial_{\scriptstyle g,0}}(\varphi_{(1)}) \cdot (w \smallsquare R_{[1]})  \bigr) \,\widetilde{\boxtimes}\, \bigl( \mathfrak{i}_{\partial_{\scriptstyle 1,0}}\bigl( \mathrm{coad}^r(R_{[2]})(\varphi_{(2)}) \bigr) \cdot \psi \bigr)\\
&= \Bigl( w \smallsquare R_{[1]}S^{-1}\bigl( \Phi_{0,1}(\varphi_{(1)}) \bigr)  \Bigr) \,\widetilde{\boxtimes}\, \Bigl( \mathrm{coad}^r\bigl[ S^{-1}\bigl(\Phi_{0,1}\bigl(\mathrm{coad}^r(R_{[2]})(\varphi_{(2)}) \bigr) \bigr) \bigr](\psi) \Bigr)\\
&\overset{\eqref{Phi01HopfMorphism}}{=} \Bigl( w \smallsquare R_{[1]}S^{-1}\bigl( \Phi_{0,1}(\varphi_{(1)}) \bigr)  \Bigr) \,\widetilde{\boxtimes}\, \Bigl( \mathrm{coad}^r\bigl[ S^{-1}\bigl(\mathrm{ad}^r(R_{[2]})\bigl(\Phi_{0,1}(\varphi_{(2)}) \bigr) \bigr) \bigr](\psi) \Bigr)\\
&\overset{\eqref{Phi01HopfMorphism}}{=} \Bigl( w \smallsquare R_{[1]}S^{-1}\bigl( \Phi_{0,1}(\varphi)_{\underline{(1)}} \bigr)  \Bigr) \,\widetilde{\boxtimes}\, \Bigl( \mathrm{coad}^r\bigl[ S^{-1}\bigl(\mathrm{ad}^r(R_{[2]})\bigl(\Phi_{0,1}(\varphi)_{\underline{(2)}} \bigr) \bigr) \bigr](\psi) \Bigr)\\
&\overset{\eqref{otherFormulaBraidedCoproduct}}{=} \Bigl( w \smallsquare R^1_{[1]}S^{-1}\bigl( \Phi_{0,1}(\varphi)_{(2)} R^2_{[1]} \bigr)  \Bigr) \,\widetilde{\boxtimes}\, \Bigl( \mathrm{coad}^r\bigl[ S^{-1}\bigl(\mathrm{ad}^r(R^2_{[2]} R^1_{[2]})\bigl(\Phi_{0,1}(\varphi)_{(1)} \bigr) \bigr) \bigr](\psi) \Bigr)\\
&\overset{\eqref{skewInverseR}}{=} \Bigl( w \smallsquare S^{-1}\bigl( \Phi_{0,1}(\varphi)_{(2)} \bigr)  \Bigr) \,\widetilde{\boxtimes}\, \Bigl( \mathrm{coad}^r\bigl[ S^{-1}\bigl(\Phi_{0,1}(\varphi)_{(1)} \bigr) \bigr](\psi) \Bigr) \overset{\eqref{HActionRepsOfLgn}}{=} (w \,\widetilde{\boxtimes}\, \psi) \smallsquare S^{-1}\bigl( \Phi_{0,1}(\varphi) \bigr)
\end{align*}
where the unnumbered equality uses the induction hypothesis and the case $(g,n) = (1,0)$ established previously.
\end{proof}

\indent {\em Now assume that $H$ is factorizable}, \textit{i.e.} that $\Phi_{0,1}$ is an isomorphism, and define
\[ \mu_{g,n} : H \xrightarrow{\Phi_{0,1}^{-1}} \mathcal{L}_{0,1}(H) \xrightarrow{\mathfrak{i}_{\partial_{g,n}}} \mathcal{L}_{g,n}(H). \]
In particular $\mu_{0,1} = \Phi_{0,1}^{-1}$. The morphism of algebras $\mu_{g,n}$ is a quantum moment map (QMM, see Def.\,\ref{defQMM}). This property was obtained in \cite{JordanQuiver} for $H = U_q(\mathfrak{gl}_n)$ and generalized to any $H$ in \cite[\S 7.2]{BFR}; the proof of the QMM axiom uses the equality \eqref{exchangeRelPreQMM}. Due to \eqref{decompositionPreQMM} and the third equality in \eqref{Phi01HopfMorphism} we have the factorization
\begin{equation}\label{fusionQMMLgn}
\forall \, h \in H, \quad \mu_{g,n}(h) = \mu_{1,0}(h_{\underline{(1)}}) \,\widetilde{\otimes}\, \ldots \,\widetilde{\otimes}\, \mu_{1,0}(h_{\underline{(g)}}) \,\widetilde{\otimes}\, \mu_{0,1}(h_{\underline{(g+1)}}) \,\widetilde{\otimes}\, \ldots \,\widetilde{\otimes}\, \mu_{0,1}(h_{\underline{(g+n)}})
\end{equation}
where the underlined subscripts are a Sweedler notation for the braided coproduct \eqref{defBraidedCoproduct}.

\indent The existence of the QMM $\mu_{g,n}$ implies that {\em any} $\mathcal{L}_{g,n}(H)$-module $(W,\cdot)$ has automatically a $H$-equivariant structure, defined by $w \smallsquare_{\!\mu_{\scriptstyle g,n}} \, h = \mu_{g,n}\bigl( S(h) \bigr) \cdot w$ (Lemma \ref{lemmaQMMImpliesEquivariance}). This applies in particular to the modules $(H^{\circ})^{\otimes g} \otimes X_1 \otimes \ldots \otimes X_n$ constructed in \S\ref{sectionSomeEquivModules}, which are already endowed with the {\it ad hoc} $H$-equivariant structure $\smallsquare$ defined in \eqref{HActionRepsOfLgn}. These two $H$-equivariant structures agree:
\begin{corollary}\label{propActionQMMOnModules}
For all $h \in H$ and $w \in (H^{\circ})^{\otimes g} \otimes X_1 \otimes \ldots \otimes X_n$, we have $w \smallsquare_{\!\mu_{\scriptstyle g,n}} \, h = w \smallsquare h$.
\end{corollary}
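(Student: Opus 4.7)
The plan is to reduce this corollary to a direct chain of definitions and a single application of Proposition~\ref{actionBoundaryOnEquivReps}; there is no real obstacle here, since all the difficult work has already been done in establishing that proposition. The role of the corollary is essentially to translate the content of Proposition~\ref{actionBoundaryOnEquivReps} from the language of ``acting by the boundary loop'' into the language of quantum moment maps, now that the existence of $\mu_{g,n}$ has been recorded.

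Concretely, I would start from the definition of $\smallsquare_{\!\mu_{g,n}}$ given in Lemma~\ref{lemmaQMMImpliesEquivariance}, namely
\[ w \smallsquare_{\!\mu_{g,n}}\, h \;=\; \mu_{g,n}\bigl(S(h)\bigr) \cdot w, \]
and then unfold $\mu_{g,n} = \mathfrak{i}_{\partial_{g,n}} \circ \Phi_{0,1}^{-1}$ to rewrite the right-hand side as $\mathfrak{i}_{\partial_{g,n}}\bigl(\Phi_{0,1}^{-1}(S(h))\bigr) \cdot w$. Applying Proposition~\ref{actionBoundaryOnEquivReps} with $\varphi = \Phi_{0,1}^{-1}(S(h)) \in \mathcal{L}_{0,1}(H)$ gives
\[ \mathfrak{i}_{\partial_{g,n}}\bigl(\Phi_{0,1}^{-1}(S(h))\bigr) \cdot w \;=\; w \smallsquare S^{-1}\!\bigl(\Phi_{0,1}(\Phi_{0,1}^{-1}(S(h)))\bigr) \;=\; w \smallsquare S^{-1}(S(h)) \;=\; w \smallsquare h, \]
using that $\Phi_{0,1}$ is a bijection by factorizability and that $S$ is invertible on $H$. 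This is exactly the claim.

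Since the argument is just a two-line substitution, there is no ``hard part'' to isolate; the genuine content sits in Proposition~\ref{actionBoundaryOnEquivReps}, whose proof handled the cases $(0,1)$ and $(1,0)$ directly and then proceeded by induction via the braided tensor product decomposition of $\mathcal{L}_{g,n}(H)$. In writing the proof up, the only care needed is to make the direction of the antipode explicit, so that $S(S^{-1}) = \mathrm{id}$ is applied in the correct order, and to record that the invertibility of $\Phi_{0,1}$ (and hence the existence of $\mu_{g,n}$) is precisely what allows the factorizability hypothesis to enter.
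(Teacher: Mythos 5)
Your proof is correct and coincides with the paper's own argument: both unfold $w \smallsquare_{\!\mu_{g,n}} h = \mu_{g,n}(S(h))\cdot w = \mathfrak{i}_{\partial_{g,n}}\bigl(\Phi_{0,1}^{-1}(S(h))\bigr)\cdot w$ and then apply Proposition~\ref{actionBoundaryOnEquivReps} together with $S^{-1}\circ S = \mathrm{id}$. Nothing is missing.
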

\begin{proof}
By Proposition \ref{actionBoundaryOnEquivReps}, $w \smallsquare_{\! \mu_{\scriptstyle g,n}} \, h = \mu_{g,n}\bigl( S(h) \bigr) \cdot w = \mathfrak{i}_{\partial_{\scriptstyle g,n}}\bigl( \Phi_{0,1}^{-1}(S(h)) \bigr) \cdot w = w \smallsquare h$.
\end{proof}

\begin{remark}\label{remarkFusionQMM}
The formula \eqref{fusionQMMLgn} shows that the QMM $\mu_{g,n}$ is a ``fusion'' of copies of the QMMs $\mu_{1,0}$ and $\mu_{0,1}$. Such a fusion operation is general, as we now explain. Here we only assume that $H$ is quasitriangular. Let $A_1$ and $A_2$ be right $H$-module-algebras and assume that $A_i$ has a QMM $\mu_i$. Then the algebra morphism
\[ \mu_{12} : H \xrightarrow{\:\,\underline{\Delta}\:\,} H \,\widetilde{\otimes}\,H \xrightarrow{\:\,\mu_1 \otimes \mu_2\:\,} A_1 \,\widetilde{\otimes} \,A_2 \]
is a QMM for $A_1 \,\widetilde{\otimes}\,A_2$, where we use the coproduct \eqref{defBraidedCoproduct}. Let us prove the QMM axiom \eqref{QMMaxiom}. It is more convenient to check it separately on elements of the form $1_{A_1} \,\widetilde{\otimes}\, a_2$ and $a_1 \,\widetilde{\otimes}\, 1_{A_2}$.
For $1_{A_1} \,\widetilde{\otimes}\, a_2$, note first from \eqref{defBraidedCoproduct} and \eqref{equivarianceQMM} that
\begin{equation}\label{2ndFormulaMu12}
\mu_{12}(h) = \mu_1(h_{(1)}) \smallsquare R_{[1]} \,\widetilde{\otimes}\, \mu_2\bigl( S(R_{[2]})h_{(2)} \bigr)
\end{equation}
for all $h \in H$. Hence we have
\begin{align*}
&(1_{A_1} \,\widetilde{\otimes}\, a_2)\mu_{12}(h) = \mu_1(h_{(1)}) \smallsquare R^1_{[1]} R^2_{[1]} \,\widetilde{\otimes}\, (a_2 \smallsquare R_{[2]}^2) \mu_2\bigl( S(R^1_{[2]})h_{(2)} \bigr) \qquad \text{\footnotesize by \eqref{2ndFormulaMu12}, \eqref{braidedProduct}}\\
&= \mu_1(h_{(1)}) \smallsquare R^1_{[1]} R^2_{[1]} \,\widetilde{\otimes}\, \mu_2\bigl( S(R^1_{[2]})_{(1)} h_{(2)} \bigr) \bigl( a_2 \smallsquare R_{[2]}^2 S(R^1_{[2]})_{(2)} h_{(3)} \bigr) \qquad \text{\footnotesize by \eqref{QMMaxiom}}\\
&= \mu_1(h_{(1)}) \smallsquare R^1_{[1]}R^3_{[1]} R^2_{[1]} \,\widetilde{\otimes}\, \mu_2\bigl( S(R^1_{[2]}) h_{(2)} \bigr) \bigl( a_2 \smallsquare R_{[2]}^2 S(R^3_{[2]}) h_{(3)} \bigr) \qquad \text{\footnotesize by \eqref{quasitriang2}}\\
&= \mu_1(h_{(1)}) \smallsquare R^1_{[1]} \,\widetilde{\otimes}\, \mu_2\bigl( S(R^1_{[2]}) h_{(2)} \bigr) (a_2 \smallsquare h_{(3)}) \qquad \text{\footnotesize by \eqref{skewInverseR}}\\
&=\mu_{12}(h_{(1)}) \bigl( 1_{A_1} \,\widetilde{\otimes}\, (a_2 \smallsquare h_{(2)}) \bigr) \qquad \text{\footnotesize by \eqref{2ndFormulaMu12}, \eqref{braidedProduct}, \eqref{conditionModAlgUnit}, \eqref{epsilonR}}\\
&= \mu_{12}(h_{(1)}) \bigl( (1_{A_1} \,\widetilde{\otimes}\, a_2) \smallsquare h_{(2)}) \bigr) \qquad \text{\footnotesize by  \eqref{HmodStructureOnBraidedProduct}, \eqref{conditionModAlgUnit}.}
\end{align*}
The proof for $a_1 \,\widetilde{\otimes}\, 1_{A_2}$ is similar and left to the reader.

Recall that if $M_i$ is an $A_i$-module for $i=1,2$ then it is automatically a $H$-equivariant module with the right action $\smallsquare_{\!\mu_{\scriptstyle i}}$ (Lemma \ref{lemmaQMMImpliesEquivariance}) and we can form the $(A_1 \,\widetilde{\otimes}\,A_2)$-module $M_1 \,\widetilde{\boxtimes}\, M_2$ as in Lemma \ref{lemBraidedProductHequivModules}. Now $M_1 \,\widetilde{\boxtimes}\, M_2$ has two $H$-equivariant structures. The first is the diagonal action \eqref{HActionOnBrTensProd}, given by $(v_1 \,\widetilde{\boxtimes}\, v_2) \smallsquare h = \bigl( v_1 \smallsquare_{\!\mu_{\scriptstyle 1}} h_{(1)} \bigr) \,\widetilde{\boxtimes}\, \bigl( v_2 \smallsquare_{\!\mu_{\scriptstyle 2}} h_{(2)} \bigr) = \mu_1( S(h_{(1)}) \cdot v_1 \,\widetilde{\boxtimes}\, \mu_2(S(h_{(2)})) \cdot v_2$. The second comes directly from the QMM $\mu_{12}$ and is thus given by $(v_1 \,\widetilde{\boxtimes}\, v_2) \smallsquare_{\!\mu_{12}} h= \mu_{12}\bigl( S(h) \bigr) \cdot (v_1 \,\widetilde{\boxtimes}\, v_2)$. These two $H$-actions are equal:
\begin{align*}
\mu_{12}(S(h)) \cdot (v_1 \,\widetilde{\boxtimes}\, v_2) &= \bigl[ \mu_1\bigl( S(h_{(1)}) R_{[1]} \bigr) \,\widetilde{\otimes}\, \mu_2(S(h_{(2)})) \smallsquare R_{[2]} \bigr] \cdot (v_1 \,\widetilde{\boxtimes}\, v_2) \qquad \text{\footnotesize by \eqref{otherFormulaBraidedCoproduct} and \eqref{equivarianceQMM}}\\
&= \mu_1\bigl( S(h_{(1)}) R^1_{[1]} \bigr) \cdot \bigl(v_1 \smallsquare_{\!\mu_1} R^2_{[1]} \bigr) \,\widetilde{\boxtimes}\, \bigl( \mu_2(S(h_{(2)})) \smallsquare R^1_{[2]}R^2_{[2]} \bigr) \cdot v_2 \qquad \text{\footnotesize by \eqref{moduleStructOnBrTensProd}}\\
&= \mu_1\bigl( S(h_{(1)}) R^1_{[1]}S(R^2_{[1]}) \bigr) \cdot v_1 \,\widetilde{\boxtimes}\, \bigl( \mu_2(S(h_{(2)})) \smallsquare R^1_{[2]}R^2_{[2]} \bigr) \cdot v_2 \qquad \text{\footnotesize by \eqref{equivariantStructFromQMM}}\\
&= \mu_1( S(h_{(1)})) \cdot v_1 \,\widetilde{\boxtimes}\, \mu_2(S(h_{(2)})) \cdot v_2 \qquad \text{\footnotesize by \eqref{inverseR}}.
\end{align*}
This can be seen as a generalization of Corollary \ref{propActionQMMOnModules}, which in Proposition \ref{actionBoundaryOnEquivReps} is proven from the building blocks $\mu_{0,1}$ and $\mu_{1,0}$.
\end{remark}

\medskip

\noindent \textbf{Conflict of Interest Statement.} The author has no conflicts of interest.

\bigskip

\noindent \textbf{Data Availability Statement.} No datasets were generated or analysed during the current study.

\end{document}